\newcommand{\sid}[1]{\index[subject]{#1}}
\newcommand{\syid}[1]{\index[symbol]{#1}}
\newtheoremstyle{thm}{}{}{\itshape}{}{\scshape\bfseries}{.}{.5em}{\thmname{#1}\thmnumber{ #2}\textnormal{\thmnote{ (\scshape#3)}}}
\newtheoremstyle{rmk}{}{}{}{}{\scshape}{.}{.5em}{\thmname{#1}\thmnumber{ #2}\textnormal{\thmnote{ (\scshape#3)}}}
\newtheoremstyle{def}{}{}{}{}{\scshape\bfseries}{.}{.5em}{\thmname{#1}\thmnumber{ #2}\textnormal{\thmnote{ (\scshape#3)}}}  
\theoremstyle{thm}
\newtheorem{thm}{Theorem}[chapter]
\newtheorem{prop}[thm]{Proposition}
\newtheorem{lem}[thm]{Lemma}
\newtheorem{cor}[thm]{Corollary}
\theoremstyle{def}
\newtheorem{definition}[thm]{Definition}
\newtheorem{example}[thm]{Example}
\theoremstyle{rmk}
\newtheorem{remark}[thm]{Remark}
\crefname{thm}{Theorem}{Theorems}
\crefname{prop}{Proposition}{Propositions}
\crefname{lem}{Lemma}{Lemmas}
\crefname{cor}{Corollary}{Corollaries}
\crefname{claim}{Claim}{Claims}
\crefname{step}{Step}{Steps}
\crefname{definition}{Definition}{Definitions}
\crefname{example}{Example}{Examples}
\crefname{remark}{Remark}{Remarks}
\numberwithin{equation}{chapter}
\newcommand*{\dt}[1]{%
	\accentset{\mbox{\large\bfseries .}}{#1}}
\newcommand{\pair}[2]{\ensuremath\langle#1,#2\rangle}
\newcommand{\norm}[1]{\ensuremath\lVert#1\rVert}
\newcommand{\compl}{^\mathsf{c}}
\newcommand{\defeq}{\vcentcolon=}
\renewcommand{\leq}{\leqslant}
\renewcommand{\geq}{\geqslant}
\newcommand{\ssubset}{%
\Subset}
\renewcommand{\phi}{\varphi}
\renewcommand{\epsilon}{\varepsilon}
\newcommand{\Fd}{$\call F$\Hyphdash*}
\newcommand{\tildee}[1]{\widetilde{#1}}
\newcommand{\ap}{\textsuperscript}
\DeclareMathOperator{\LSC}{LSC}
\DeclareMathOperator{\USC}{USC}
\DeclareMathOperator{\epi}{epi}
\DeclareMathOperator{\conv}{conv}
\DeclareMathOperator{\Diff}{Diff}
\DeclareMathOperator{\intr}{int}
\DeclareMathOperator*{\osc}{osc}
\DeclareMathOperator*{\aplimsup}{ap\,lim\,sup}
\DeclareMathOperator*{\apliminf}{ap\,lim\,inf}
\DeclareMathOperator*{\aplim}{ap\,lim}
\newcommand{\C}{\call C^+}
\newcommand{\N}{\mathbb{N}}
\newcommand{\cN}{\mathcal{N}}
\newcommand{\Sc}{\mathscr{S}}
\newcommand{\de}{\partial}
\newcommand{\di}{\mathrm{d}}
\newcommand{\R}{{\mathbb{R}}}
\newcommand{\I}{I}
\newcommand{\call}[1]{\ensuremath\mathcal{#1}}
\newcommand{\frk}[1]{\ensuremath\mathfrak{#1}}
\newcommand{\scr}[1]{\ensuremath\mathscr{#1}}
\newcommand{\barr}[1]{\ensuremath\overline{#1}}
\newcommand{\dto}{\ensuremath{\searrow}}
\newcommand{\uto}{\ensuremath{\nearrow}}
\newcommand{\Q}{Q}
\newcommand{\RR}{R}
\newcommand\restr[2]{{
  \left.\kern-\nulldelimiterspace 
  #1 
  \vphantom{|} 
  \right|_{#2} 
  }}
\setlist[enumerate]{leftmargin=2em}
\setlist[itemize]{leftmargin=2em}
\begin{document}


\date{\today} \linespread{1.1}

\let\thefootnote\relax

\title[Semiconvex functions in general potential theories]{A primer on semiconvex functions in general potential theories\footnotetext{\bf Revised version of the work \emph{A primer on quasi-convex functions in nonlinear potential theories}, with modified title, reorganization of introductory material and rewritten last chapter. Preprint version of content to be published in Springer \emph{Lecture Notes in Mathematics}, vol.\ 2371.}}
\author{Kevin R.\ Payne}
\address{Dipartimento di Matematica ``F.\ Enriques''\\ Universit\`a degli Studi di Milano\\ Via~C.~Saldini~50\\ 20133 Milano, Italy}
\email{kevin.payne@unimi.it (Kevin R. Payne)}
\author{Davide Francesco Redaelli}
\address{Dipartimento di Matematica \\ Universit\`a degli Studi di Roma  ``Tor Vergata''\\ Via della Ricerca Scientifica 1, 00133 Roma, Italy}
\email{redaelli@mat.uniroma2.it (Davide Francesco Redaelli)}

\keywords{semiconvexity, upper contact jets, subequation constraint sets, generalized subharmonics, viscosity solutions, comparison principles, fully nonlinear degenerate elliptic operators }

\subjclass[2010]{26B25, 26B05, 31-01, 31C35, 35B51, 35D40}

\frontmatter
\maketitle

\def\thefootnote{\arabic{footnote}}

\makeatletter
\def\l@subsection{\@tocline{2}{0pt}{2.5pc}{5pc}{}}
\makeatother

\setcounter{secnumdepth}{3}

\setcounter{tocdepth}{2}
\tableofcontents

\chapter*{Preface}

This book gives a detailed and self-contained treatment of the fundamental role that {\em semiconvex functions} play in general (nonlinear second order) potential theories. Such potential theories are concerned with the study of {\em generalized subharmonics}; that is, the class of functions whose partial derivatives up to second order are constrained to belong to a suitable closed subset (called a {\em subequation}) of the space of $2$-jets. The boundary of a subequation provides a second order partial differential equation. In any potential theory, there is a natural Dirichlet problem for {\em generalized harmonics} with prescribed boundary values. There are two natural notions of subharmonics in any potential theory: the {\em classical notion} for twice differentiable functions and the {\em viscosity notion} for upper semicontinuous functions, in which the classical second order differential at each point is replaced by the set of all {\em upper contact jets} of second order. 

The utility of the viscosity theory stems from two considerations. First, a harmonic function in a general (nonlinear) potential theory may not be twice differentiable in the classical sense. Second, the viscosity notion has many stability properties which are extremely useful in limiting constructions, such as those needed for existence results by way of Perron's method and for uniqueness results by way of the comparison principle (which is also needed for Perron's method).

Semiconvex functions are used to build a bridge between the classical and viscosity notions of solutions to the natural Dirichlet problem in any potential theory. Since semiconvex functions are (by definition) quadratic perturbations of convex functions, they inherit the differentiability properties of convex functions, such as second order differentiability (in the Peano sense) on sets of full Lebesgue measure (Alexandrov's theorem). More is true. There are deep and underused properties of their upper contact jets (essential for the viscosity theory) which makes the class of semiconvex functions a useful waystation when investigating potential theoretic results. In particular, the class of functions which are $\lambda$-semiconvex for some $\lambda \geq 0$ are the subharmonics of a certain potential theory. Moreover, as is well known, one can (locally) approximate upper semicontinuous functions which are (locally) bounded above with semicontinuous functions by way of the {\em sup-convolution}. This often allows one to transfer the wealth of information on upper contact jets for the semiconvex approximations to the upper semicontinuous limits. This deep and ``hard'' part of the viscosity theory is a central focus of this book. A brief discussion of the highlights of this analysis will be discussed in the introduction.

While general potential theories are interesting in and of themselves, the results presented here take on greater significance in light of a rapidly developing and fruitful interplay between general potential theories and the operator theory of fully nonlinear elliptic partial differential equations. Historically, research on the complex Monge-Amp\`ere equation has had a strong connection to complex pluripotential theory. Beginning with a trio of papers in 2009 by Harvey and Lawson \cite{hlpotcg, hlpluri09, hldir09}, this interplay is being applied with much success in order to embed a given partial differential operator into an associated potential theory and then to deduce properties about the operator theory from the corresponding potential theoretic properties. Moreover, these properties are shared by any operator which is {\em compatible} with the subequation that determines the potential theory. This often simplifies and unifies the operator theory. Results include, but are not limited to, the validity of the comparison principle (which implies uniqueness for the Dirichlet problem) and the application of Perron's method for existence (which makes use of the comparison principle as well as a suitable notion of {\em boundary pseudoconvexity} that has an elegant potential theoretic formulation). The reader is invited to consult \cite{chlp, cpaux, cpmain} and \cite{cprdir} as well as the organic survey presented in \cite{hpsurvey}. We will give a short history of this program at the end of the overview in the introduction. In that light, this book can be viewed as a prequel to the above mentioned works and our main aim is to provide a comprehensive presentation of the foundations on which this theory rests.

An important remark on terminology is in order. With some reservation, we have decided to use the more conventional term {\em semiconvex} for a function $u$ (defined on a convex subset of Euclidian space) such that $u + \frac{\lambda}{2} |\cdot|^2$ is a convex function  for some $\lambda \geq 0$. This is equivalent to asking that the Hessian of $u$ is bounded below (in the viscosity sense). In the works mentioned above, the less conventional term {\em quasi-convex} is used, which is consistent with the use of \emph{quasi-plurisubharmonic} in several complex analysis, as noted in~\cite{hpsurvey}. However, in convex analysis, quasi-convexity is also used to mean that inverse images of half-lines bounded above are convex sets. In an attempt to avoid confusion over a wide audience, we have adopted the more conventional term semiconvex, as used in much of the viscosity literature, such as \cite{user}. We would like to stress however that semiconvex functions are not ``half-convex'' in the same way that semicontinuous functions are ``half-continuous''. Moreover, any $C^2$ function is locally semiconvex, which shows that semiconvex functions are far from being ``half-convex''. Perhaps a different term such as ``convexable'' might be advisable, or to use \emph{Hessian bounded below} as in \cite{slod}. 

Some declarations of intent are also in order. The present work was born with the purpose of unifying and harmonizing the two articles \cite{hlqc} and \cite{hlae} of Harvey and Lawson on semiconvex functions and their use in general potential theories, which were deposited in the arXiv in 2016. These two preprints are full of foundational aspects of non-smooth analysis, some well-known, others less well-known and many new things. The authors of this work are extremely grateful to Harvey and Lawson for their encouragement to attempt this synthesis, which was the basis of the Master's Thesis of the second author, supervised by the first author. 

This book is more than a mere merging of the two preprints \cite{hlqc} and \cite{hlae}, whose combined length is less than one third of the present work. Beyond the question of length, the current presentation benefits from the numerous developments in the theory over the past decade. A discussion of what is ``new'' here can be found in the commentary section of the introduction.  

This book is aimed at a wide audience, including professional mathematicians working in fully nonlinear PDEs as well as masters and doctoral students with interest in mathematical analysis. On
the one hand, the book collects a wealth of results (some classical and some new)
on how to treat second order differentiability in a pointwise manner for functions
which are merely semicontinuous and the (viscosity) subharmonics in general potential theories. As such, we believe that much of this material should be more widely known by working mathematicians. On the other hand, much of the book has been tested in the classroom in the context of a master’s level course on the
foundations of the viscosity theory for fully nonlinear elliptic PDEs. 

We note that very little prior knowledge is necessary to read this book, which aims to be relatively self-contained. However, some familiarity with basic notions of mathematical analysis, such as the standard differential calculus of several variables, Lebesgue measure and integral, the notions of limsup and liminf are assumed. Good background references are \cite{flem} and \cite{wheedzyg}, and also \cite{evansgar} for many advanced topics. In addition, some familiarity with harmonic functions and maximum principles for elliptic equations is certainly useful, such as what one finds in \cite{evans}. For further complementary reading, other than the papers mentioned in the book, we mention the classic monographs on convexity \cite{horm} and \cite{rock}, on general potential theories \cite{chlp}, and elliptic equations \cite{gilbtrud}, \cite{cafcab} and \cite{krylov}.

The authors express their deep gratitude to Blaine Lawson and Reese Harvey for their generosity and encouragement to perform and updated and enriched synthesis of the unpublished manuscripts \cite{hlqc} and \cite{hlae}. In addition, numerous useful conversations with Reese Harvey have contributed to the development of this work. The authors are partially supported by the Gruppo Nazionale per l'Analisi
Matematica, la Probabilit\`a e le loro Applicazioni (GNAMPA) of the Istituto Nazionale di Alta Matematica
(INdAM), Italy. Redaelli is partially supported by the Fondazione Cassa di Risparmio di Padova e Rovigo, Italy and by the Italian (EU Next Generation) PRIN project 2022W58BJ5.

\vspace{\baselineskip}
\begin{flushright}\noindent
	Milano, Roma \hfill {\it Kevin R.\ Payne }\\
	March 2025\hfill {\it Davide Francesco Redaelli}\\
\end{flushright}

\chapter*{Introduction}\label{chap:intro}

As described in the preface, this book is dedicated to foundational aspects of general (nonlinear second order) potential theories and fully nonlinear elliptic PDEs. In particular, we will systematically develop the fundamental role played by semiconvex functions as a bridge between the classical and viscosity theory of generalized subharmonics determined by a given subequation (constraint set) in the bundle of $2$-jets over open subsets of Euclidian spaces. In this introduction, we will first give an overview of the book, describing the organization of the presentation, the main concepts and important results that are contained within. Then we will give a commentary, which aims to place the book in the context of the rapidly developing and fruitful interplay between the general potential theories and the operator theory of elliptic PDEs. In addition, remarks concerning what is ``new'' in this book will be presented. The reader is invited to read the preface before beginning to look at the rest of the book and may want to use this introduction as guide to consult before or after having delved into the book.

\vspace{2ex}
\begin{center}
	{\bf Overview}
\end{center}
\vspace{2ex}

The book is divided into two parts and also contains two appendices which complement the first part. In this section, we will present a synopsis of the contents. 

Part~\ref{pt1}, which consists of Chapters 1---4, is dedicated to four fundamental and complementary aspects of semiconvex analysis. The first aspect (treated in Chapter \ref{chap:diff} concerns differentiability properties of first and second order for locally semiconvex functions, including the essential  {\bf {\em Alexandrov's Theorem}}~\ref{aleks}  on the almost everywhere differentiability of second order for (locally semi-) convex functions. A complete and self-contained proof of Alexandrov's theorem is provided in Appendix~\ref{ap:legalex} and depends on two key ingredients: several important properties of the \emph{Legendre transform} and a {\em Lipschitz version of Sard's theorem}. For completeness, the highly technical proof of this last ingredient is also given in Appendix~\ref{proofsard}. 

The second aspect (treated in Chapter \ref{chap:SCUCJ}) concerns a detailed analysis of {\em upper contact jets} and {\em upper contact points} of locally semiconvex functions. This investigation culminates in two fundamental results on locally semiconvex functions at or near points of upper (quadratic) contact: the {\bf {\em Upper Contact Jet Theorem}}~\ref{ucjt} and the {\bf {\em Summand Theorem}}~\ref{pusc:sum}. The first result groups together three fundamental properties of first and second order differentiability of a locally semiconvex function at or near a point of upper (quadratic) contact. The second result produces interesting and important information about upper contact points of a pair of locally semiconvex $u$ and $v$ at or near a point of upper (quadratic) contact of the sum $w = u + v$. Interestingly, the Summand Theorem immediately yields a {\bf {\em semiconvex version of the Theorem on Sums}} (as will be noted in Corollary \ref{tosqc2}). The celebrated semicontinuous version of the Theorem on Sums is obtained after doubling variables; that is, by considering $w(x,y)\defeq  u(x) + v(y)$ and is the linchpin on which much of the conventional viscosity theory of Crandall-Ishii-Lions~\cite{user} rests. For completeness, in Section \ref{ss:tosu}, we will also give a novel proof of the semicontinuous version as a corollary of the semiconvex version and the standard device of semiconvex approximation by way of the \emph{sup-convolution} which produces a controllable displacement of the upper contact points of every given upper contact jet. Elementary properties of this approximation are discussed in Section \ref{ch:qca}. 

The third aspect (treated in Chapter \ref{chap:js}) concerns additional deep analytical results needed for a robust viscosity theory (which is based on quadratic upper test functions). One has the central problem of how to ensure \emph{a priori} the existence of a sufficient amount of upper contact points for an upper semicontinuous function. This is accomplished by combining Alexandrov's Theorem, the sup-convolution approximation of semicontinuous functions by semiconvex functions and the results presented in this chapter. The results of this Chapter ensure the existence of sets of contact points with positive Lebesgue measure nearby points of {\em strict} upper contact of locally semiconvex functions. This part of the analysis will culminate in the so-called {\bf {\em Jensen--S{\l}odkowski Theorem}}~\ref{jenslod}, which unifies and generalizes two historically distinct approaches to the central problem in a viscosity approach in potential theory and operator theory. 
The most well-known result of this type is due to Jensen \cite{jensen} and is recalled in {\bf {\em Jensen's lemma}}~\ref{u:jen}. This result says that nearby a {\em strict local maximum} point of a semiconvex function $w$ there is a set of positive measure for which small linear perturbations of $w$ still has a local maximum. An older result of S{\l}odkowski~\cite{slod}, is recalled in {\bf {\em S{\l}odkowski's density estimate}}~\ref{slodthm} and concerns a lower bound on the Lebesgue density of sublevel sets of the largest eigenvalue of the Hessian of a convex function. An important and impressive insight of Harvey and Lawson \cite{hlqc} is that if one translates these two results into the language of upper contact points, these results are equivalent. We state these reformulations as the {\bf {\em HL--Jensen Lemma}}~\ref{hl:jen} and the  {\bf {\em HL--S{\l}odkowski Lemma}}~\ref{hl:slod} and prove that they are 
equivalent in Proposition~\ref{prop:HLJS}. Moreover, these two reformulations can be merged into the aforementioned Jensen--S{\l}odkowski Theorem. This story represents one of the most interesting and unique aspects of this book. We will return to this point at the commentary section below as an illustration of parallel developments in general potential theories and nonlinear elliptic PDEs.

Chapter \ref{chap:js} also includes various consequences of the Jensen--S{\l}odkowski Theorem including the proof of the {\em partial upper semicontinuity of second derivatives} which is the most delicate part of the aforementioned Upper Contact Jet Theorem as well as a semiconvex version of {\bf {\em S{\l}odkowski's Largest Eigenvalue Theorem}}~\ref{thm:LET_qc}. Moreover, in the last section of the chapter, an alternative proof of the Jensen--S{\l}odkowski Theorem is given which only makes use of an {\bf {\em Area Theorem}}~\ref{area:cor} for gradients of locally semiconvex functions and the resulting {\bf {\em Alexandrov's maximum principle}}~\ref{alexmp}  for locally semiconvex functions.

The fourth and final apect is treated in Chapter \ref{chap:apqc} and concerns the essential role that semiconvex functions play in the viscosity theory of semicontinuous functions by way of the aforementioned {\em semiconvex approximation of upper semicontinuous functions} (see Theorem~\ref{baspropsc}). One also finds an alternative proof of the conventional {\bf {\em Theorem on Sums}} of Crandall--Ishii--Lions which makes use of the upper contact jet technology developed up to this point.

These four aspects of Part~\ref{pt1} are essential for the implementation of the viscosity theory of subharmonics and supersolutions of Part~\ref{sas}, which will be described next. We repeat the important assertion that the ingredients of Part~\ref{pt1} represent the ``hard analysis'' part of the viscosity theory. 

To facilitate the synopsis of Part~\ref{sas}, it will be useful to anticipate the definitions of a subequation (constraint set) and their subharmonics which are the basic objects in any potential theory. A {\em subequation} on an open subset $X \subset \R^n$ is a (closed) subset 
$$ \call F \subset \call J^2(X) \defeq  X \times \call J^2 \defeq  X \times \R \times \R^n \times \call S(n)
$$ 
of the bundle of $2$-jets over $X$ satisfying the properties of {\em positivity} (P), {\em negativity} (N) and {\em topological stability} (T). Properties (P) and (N) are {\em monotonicity} properties which combine to ask that for each $x \in X$ the fibers $\call F_x \defeq  \{J  \in \call J^2: (x,J) \in \call F\}$ satisfy
\begin{equation*}
J = (r,p,A) \in \call F_x \ \ \Rightarrow \ \ (r + s,p,A + P) \in \call F_x \quad \forall \, s \leq 0 \ \text{in} \ \R, \forall \, P \geq 0 \ \text{in} \ \call S(n).
\end{equation*}
When $\call F$ is defined by an operator $F \in C(\call J^2(X))$; for example, if 
$$
\call F = \{ (x,r,p,A) \in \call J^2(X): \ F(x,r,p,A) \geq 0 \},
$$
these monotonicity properties correspond to the differential operator being {\em proper elliptic}. This is the minimal monotonicity that must be assumed for the validity of the natural comparison principles associated to both $\call F$ and $F$. Property (T) helps to ensure a robust potential theory associated to a subequation $\call F$ (see Definitions~\ref{defnsubeq} and \ref{defn_prim_subeq}) including a natural notion of {\em duality} (see Chapter~\ref{chap:duality}). Given a subequation $\call F$, a function $u \in C^2(X)$ is said to be {\em \Fd subharmonic in $x \in X$} if
\begin{equation}\label{subh1_intro}
J^2_x u \defeq  (u(x), Du(x), D^2u(x)) \in \call F_x
\end{equation}
and {\em \Fd subharmonic on $X$} if \eqref{subh1_intro} holds for each $x \in X$. For merely upper semicontinuous functions $u \in \USC(X)$, the definition \eqref{subh1_intro} is taken in the {\em viscosity sense}; that is, 
\begin{equation*}
J^{2,+}_x u \subset \call F_x 
\end{equation*}
where $J^{2,+}_x u$ is the set of {\em upper contact jets for $u$ at $x$}:
$$ 
J^{2,+}_x u \defeq  \{ J^2_x \varphi: \varphi \ \text{is $C^2$ near $x$ and}\ u \leq \varphi \ \text{near $x$ with equality at $x$} \}.
$$

Part~\ref{sas}, which consists of Chapters 5---10, is dedicated to general (nonlinear) potential theories of second order and the fundamental role that (locally) semiconvex functions play in such theories. The presentation will involve four stages. Stage one consists of three chapters and is dedicated to the foundations of general potential theories. In Chapter~\ref{chap:conset} we start by deducing the minimal properties that a {\em subequation} constraint set $\call F$ in the $2$-jet bundle over a Euclidean domain needs to satisfy in order to have a well-defined and coherent potential theory of {\em \Fd subharmonic functions}. A few representative examples are given at the end of the chapter, which include both convex functions and $\lambda$-semiconvex functions. In Chapter \ref{chap:duality}, we briefly review the important notions of {\em duality} and {\em monotonicity}. Duality yields a reformulation of {\em $\call F $-superharmonics} in terms subharmonics for the dual subequation $\widetilde{\call F}$ and monotonicity is a unifying notion in the theory, with many implications. When combined, these two notions are the basis of an important method for proving the comparison principle. Finally, Chapter~\ref{chap:tools} presents basic tools for establishing that a given function is \Fd subharmonic including the {\bf {\em Bad Test Jet Lemma}}~\ref{l:btj} and the {\bf {\em Definitional Comparison Lemma}}~\ref{defcompa} as well as several often used constructions in Proposition~\ref{elemprop}.

Stage two is presented in Chapter~\ref{chap:SCSH} and treats the main theme of this work on the efficient use of semiconvex functions in nonlinear potential theories and PDEs. There are two main results which were first established in Harvey and Lawson \cite{hlae}. The first main result is the {\bf \emph{Almost Everywhere Theorem}}~\ref{aet}, which states that  for any (primitive) subequation $\call F$ on $X$, a locally semiconvex function will be \Fd subharmonic on $X$ whenever it is \Fd subharmonic on a set of full (Lebesgue) measure. When combined with Alexandrov's Theorem~\ref{aleks:qc} and the coherence property of Remark \ref{rem:coherence}, the \Fd subharmonicity for a locally semiconvex function then reduces to a classical statement almost everywhere. The second main result is the {\bf \em{Subharmonic Addition Theorem}}~\ref{add} for locally semiconvex functions. Subharmonic addition theorems are extremely useful and merit a brief explanation now. Consider a trio $\call F, \call G$ and $\call H$ of subequation constraint sets in $\call J^2(X)$ over an open set $X \subset \R^n$, which then determine spaces of subharmonic functions $\call F(X), \call G(X)$ and $\call H(X)$ on $X$. A subharmonic addition theorem concerns the following implication: from a purely algebraic statement of \emph{jet addition} in the fibers over each $x \in X$ one concludes the functional analytic statement of \emph{subharmonic addition}; that is,
\begin{equation}\label{SA1}
\call F_x + \call G_x \subset \call H_x, \ \ \forall \, x \in X \implies \call F(X)   +  \call G(X) \subset \call H(X).
\end{equation}
When the subequations have constant coefficients, the Subharmonic Addition Theorem is extended in Corollary~\ref{ccsa} to upper semicontinuous subharmonics by {\em semiconvex approximation}, which preserves subharmonicity in the constant coefficient setting (see Proposition~\ref{prop:approx}). 

Although we will not discuss this in the book, in the variable coefficient setting an additional assumption of {\em fiberegularity} of the subequations yields a suitable alternative semiconvex approximation scheme and the Subharmonic Addition Theorem extends to the variable coefficient setting (see \cite{cprdir} for details). The notion of fiberegularity is explained in the Commentary towards the end of this introduction.

Stage three is presented in Chapter~\ref{chap:comp} and concerns the use of semiconvex analysis in the proof of {\em comparison principles} for nonlinear potential theories. Two distinct settings will be discussed. The first setting (treated in Section~\ref{sec:cccc}) concerns the case of {\em sufficient monotonicity} in the sense that the subequation $\call F$ defining the potential theory admits a constant coefficient {\em monotonicity cone subequation} $\call M$. This means that there exists a subequation $\call M$, with constant fibers that are a closed convex cone in $\call J^2$ (with vertex at the origin), such that 
$$
\forall \, x \in X: \quad	\call F_x + \call M \subset \call F_x \qquad \text{($\call F$ is $\call M$-monotone)};
$$
that is, the jet addition formula of \eqref{SA1} holds with $\call G = \call M$ and $\call H = \call F$.
Duality then gives the jet addition formula
\begin{equation*}
\forall \, x \in X: \quad	\call F_x + \widetilde{\call F}_x \subset \widetilde{\call M}.
\end{equation*}
If the Subharmonic Addition Theorem holds, then the comparison principle on a domain $\Omega$ reduces to the {\em Zero Maximum Principle} for $\widetilde{\call M}$-subharmonics:
$$
w \leq 0 \ \text{on} \ \partial \Omega \implies w \leq 0 \ \text{in} \ \Omega, \quad \forall \, w \in \USC(\overline{\Omega}) \cap \widetilde{\call M}(\Omega).
$$
This is the idea of the {\em monotonicity-duality method} initiated by Harvey and Lawson in \cite{hldir09} in the constant coefficient pure second order case, which has developed into a robust and general method which can also be transported to every nonlinear degenerate elliptic PDE which is {\em compatible} with a given potential theory. The reader is invited to consult \cite{chlp, cpaux, cpmain, cprdir, hldir} for many of these developments. 

We will derive this method from first principles and then discuss the constant coefficient pure second order and gradient-free cases, which also includes the characterization of the dual cone subharmonics, which are the {\em subaffine} and {\em subaffine-plus} functions, respectively. 

The second setting, presented in Section~\ref{sec:SC}, treats the opposite situation in which there is only the {\em minimal monotonicity} that every subequation possesses. We will prove a weaker form of the comparison principle, called {\em strict comparison}. In Theorem~\ref{scqc}, we treat the variable coefficient case but require that the subharmonics are semiconvex. The proof of strict comparison in this case is a simple consequence of the Summand Theorem~\ref{pusc:sum}. On the other hand, in the constant coefficient case strict comparison also holds for semicontinuous subharmonics, as shown in Theorem~\ref{sccc}. The two approaches to strict comparison both use semiconvex approximations; one implicitly and one explicitly.

Part~\ref{sas} concludes in Chapter~\ref{sec:mflds} with the final stage which shows why the semiconvex analysis presented in the book extends from Euclidian spaces to manifolds. The key point is Theorem~\ref{BM_lqcstable} which proves that the class of locally semiconvex functions on an open subsets of $\R^n$ is stable under pull-backs via smooth changes of coordinates.

\vspace{2ex}
\begin{center}
	{\bf Commentary}
\end{center}
\vspace{2ex}

We conclude this introduction with some additional commentary of how this book fits into the rapidly developing and fruitful interplay between general potential theories and the operator theory of fully nonlinear elliptic PDEs. In particular, we will comment on and contrast the parallel developments in general potential theories and the conventional viscosity theory for fully nonlinear elliptic PDEs, including additional comments about some missed opportunities of synergy between the two realms. This will lead to a brief discussion about what is ``new'' in this primer, with respect to the two unpublished manuscripts \cite{hlqc} and \cite{hlae}. Finally, we will give a somewhat more detailed history of the progress on the interplay between general potential theories and fully nonlinear elliptic PDEs. 

As described above, the ``hard analysis part'' in treating subharmonics in general potential theories is contained in two deep results on semiconvex functions; namely Alexandrov's Theorem and the Jensen--S{\l}odkowski Theorem. These two theorems yield the Summand Theorem~\ref{pusc:sum} and the Almost Everywhere Theorem~\ref{aet}, which ensure that, for locally semiconvex functions, \emph{almost all} (Alexandrov) points admit an upper test function, and they are \emph{enough} for the viscosity-theoretic purposes. Hence, locally semiconvex functions can be seen as a potential bridge between the classical theory (twice differentiable functions) and the viscosity theory (semicontinuous functions). 

This bridge is completed by the well-known device of approximating a given upper semicontinuous function by a sequence of semiconvex functions by way of the {\em sup-convolution}, provided that one has suitable ``stability'' or ``control'' of the approximation procedure in order to close the circle and obtain results for the semicontinuous functions. This is a crucial problem, which we will address only in a few basic situations, in Part~\ref{sas}. For further investigations on the possibility of applying such a semiconvex approximation technique inside the context of a general potential theory, we invite the reader to consult ~\cite{cpaux,cpmain,cprdir}.

Of course, the conventional viscosity theory for ellipitc PDEs, as codified in \cite{user}, also makes use of the same ingredients: Alexandrov's Theorem, Jensen's Lemma and ``magical properties'' of the sup-convolution. However, the more detailed semiconvex analysis presented here can be exploited in a more efficient manner than the conventional approach, as well as providing a more regular ``test case'' for checking whether a desired result for semicontinuous function can be true. This is accomplished by first proving a (locally) semiconvex version with the help of Alexandrov's Theorem and the Almost Everywhere Theorem, and then by analyzing the sup-convolution approximation as a last step. In particular, for proving the comparison principle, one can make good use of the structure of the constraint set of a given potential theory (or structural properties of a differential operator) which need not satisfy the standard structural conditions of the conventional viscosity theory. It should be stressed that in the conventional approach to the comparison principle the most delicate step appears in the Theorem on Sums. This is a deep, important and useful general result about the local extrema of sums of upper semicontinuous functions, but in this step the structure of the PDE (or associated potential theory) is completely ignored.

Having recalled and contrasted some of the parallel developments in the viscosity theory of general potential theories and fully nonlinear elliptic PDEs, we repeat our point of view is that an increasing synergy is possible and desirable between the two realms. Historically, there are instances of lost opportunities of such a synergy. The example we will discuss at length concerns the crucial Jensen and S{\l}odkowski Lemmas. Jensen's Lemma was developed in the context of proving uniqueness for viscosity solutions of fully nonlinear elliptic PDEs~\cite{jensen}, while the earlied result of S{\l}odkowski's Lemma~\cite{slod} was developed in the context of pluripotential theory. A major aim of this book is to contribute to the many opportunities for productive interplay between these two realms. 

We now will comment on what is is ``new'' in this primer, with respect to the two unpublished manuscripts \cite{hlqc} and \cite{hlae}.  In a general sense, almost everything one finds in the two preprints of Harvey and Lawson is included here, albeit sometimes arranged in a different manner, with modified, clarified and more detailed proofs. Many additional results and remarks have been included to enrich the presentation, which also takes into account the numerous developments in general nonlinear potential theory over the past decade. 

In a more technical sense, they are many things here that are not included in \cite{hlqc} and \cite{hlae}. Some of these are well known, such as elements of convex function theory and the proof of the Lipschitz version of Sard's theorem, which have been included in order to make the primer as self-contained as reasonably possible. Others are less well known, but needed for the exposition, such as the background material on subequations and their subharmonics in Chapters \ref{chap:conset}, \ref{chap:duality} and \ref{chap:tools}. Finally, there are some genuinely ``new'' things, many of which revolve around the following consideration.

A key point in Harvey and Lawson's study of Jensen--S{\l}odkowski equivalences was to replace the spherical contact points considered by S{\l}odkowski with paraboloidal contact points which are more natural for the viscosity theory. This ``paraboloidal'' machinery is examined in detail in Section~\ref{sec:vertex_map}. In addition, using this machinery, we augment the results in \cite{hlqc}. In particular, we formulate and prove a paraboloidal version of S{\l}odkowski's upper bound on the largest eigenvalue of the Hessian (Lemma~\ref{hlKb}) and we give a paraboloidal proof of S{\l}odkowski's density estimate (Lemma~\ref{slodthm}) and S{\l}odkowski's Largest Eigenvalue Theorem~\ref{slodle}. Finally, in Section \ref{sec:JSAlex}, making use of an important observation of Harvey~\cite{har:pc}, we prove that the general Jensen--S{\l}odkowski Theorem~\ref{jenslod} can be derived by Alexandrov's maximum principle (Theorem~\ref{alexmp}), which is turn is derived from the Area formula for gradients of semiconvex functions (Theorem~\ref{area:cor}). This leads to the interesting conclusion that, in the end, the hard analysis part rests on the validity of the area formula. 

In Part~\ref{sas}, an organic presentation of the semiconvex approximation of $\call F$ subharmonics is discussed in Chapter~\ref{chap:SCSH}, which is then used to give an alternate proof of strict comparison for semicontinuous subharmonics (Theorem~\ref{sccc}) by reducing to the semiconvex version of strict comparison (Theorem~\ref{scqc}).

We conclude our commentary with a brief history of the program of using general potential theories to treat fully nonlinear elliptic PDEs. The program initiated by Harvey and Lawson was in part inspired by the geometric approach of Krylov~\cite{kr} in order to formulate a general notion of ellipticity. In the pure second order case, Krylov calls a differential operator $u \mapsto F(D^2u)$ elliptic if there exists an {\em elliptic set} for $F$; that is, an open subset $\Theta \subset \call S(n)$ such that $F$ is increasing for $A \in \Theta$ and the differential inclusion $D^2 u(x) \in \partial \Theta$ for each $x \in \Omega$  is called an \emph{elliptic branch} of the equation $F(D^2u) = 0$ if $\partial \Theta \subset \{ F = 0 \}$. One can then ``forget'' about the particular form of the operator $F$ and concentrate instead on the constraint set $\Theta$ as many operators $F$ will give rise to the same $\Theta$. Harvey and Lawson recognized similarities of this idea in pluripotential theory and starting instead from closed sets $\call F\subset \call S(n)$ (the closure of $\Theta$), formulated a suitable notion of \emph{duality} which recasts the notion of viscosity solutions in a precise topological framework and developed a \emph{monotonicity-duality} method for proving the comparison principle in general potential theories. With additional work, this yields the comparison principle for any fully nonlinear second-order partial differential equation which is compatible with a given potential theory. The comparison principle of course implies uniqueness of solutions for the associated Dirichlet problem in both contexts (potential theory and PDEs). Another major breakthrough in the Harvey-Lawson approach is to use asymptotic interiors of the constraint set to determine the correct notion of {\em boundary pseudoconvexity} for which one can construct the barriers needed to complete Perron's method for proving existence of solutions for the Dirichlet problem. 

After their pioneering paper \cite{hldir09} for pure second order constant coefficient potential theories, a sequence of ambitious papers followed, such as \cite{hldir} which studies the Dirichlet problem for fully nonlinear second-order equations on Riemannian manifolds, \cite{hlcharsmp} which characterizes the validity of the strong maximum principle, \cite{hllagma} and \cite{hlpssl} which treat the special Lagrangian potential equation, \cite{hlpotcg} on potential theory in calibrated geometry and \cite{hlidir} on the inhomogeneous Dirichlet problem for ``natural'' operators. 

However, the work of Harvey and Lawson (almost always) remains at the level of potential theory, where differential operators appear as possible generators of interesting potential theories and the operators are interesting in and of themselves when they are smooth, so that one can differentiate the differential equation. An important general question concerning the interplay between potential theory and operator theory remained; namely, how to take a given partial differential operator of interest and embed it into a potential theory. Recently, powerful versions of what we call the {\em correspondence principle} have been established. The validity of a correspondence principle involves showing that the viscosity subharmonics/superharmonics of a potential theory determined by a subequation $\call F$ correspond to the viscosity subsolutions/supersolutions of a differential operator $F$. A correspondence principle is used to embed a given (elliptic) partial differential operator into an associated potential theory and then to deduce properties of the operator theory from the corresponding potential theory. Moreover, these properties are shared by any operator which is {\em compatible} with the associated potential theory. Such correspondence principles have been established in the presence of {\em sufficient monotonicity} for compatible operator subequation pairs $(F, \call F)$ in the constant coefficient case in the monograph \cite{chlp} and for variable coefficient pairs $(F, \call F)$ under the additional assumption of {\em fiberegularity} in the paper \cite{cprdir}. For subequations $\call F$ on an open subset $X \subset \R^n$, fiberegularity just means continuity of the fiber map $\Theta : X \to \call K(X)$ defined by
$$
\Theta(x) \defeq \call F_x = \{ J \in \call J^2: (x,J) \in \call F \},
$$
where the closed subsets $\call K(X) \subset \call J^2(X)$ are equipped with the Hausdorff metric.

Inspired by \cite{hldir09}, Marco Cirant and the first author began an investigation on embedding large classes of differential operators into the potential theoretic framework of Harvey and Lawson. The variable coefficient pure second order and gradient-free cases were treated in \cite{cpaux} and \cite{cpmain} respectively, where the continuity of the fiber map $x \mapsto \mathcal{F}_x = \Theta(x)$ (now called fiberegularity) was shown to be a sufficient additional condition in order to transport the monotonicity-duality method from the constant coefficient to the variable coefficient setting. In addition, a rudimentary version of the correspondence principle was established in \cite{cpaux} and \cite{cpmain} but followed more explicitly the ideas of Krylov ((proper) elliptic maps $\Theta$ and associated branches of the PDE) as opposed to the subequation formulation. Joining forces in \cite{chlp}, a complete and robust treatment of constant coefficient potential theories and PDEs in Euclidian spaces by the monotonicity-duality method was given, including the aforementioned correspondence principle, a classification of {\em monotonicity cone subequations} and numerous illustrations of the method. Finally, the general variable coefficient case in the presence of sufficient monotonicity ({\em proper ellipticity} and {\em directionality}) and fiberegularity is treated by the authors and Cirant in \cite{cprdir}.

.

\mainmatter

\part{Semiconvex apparatus} \label{pt1}

This first part is dedicated to four fundamental and complementary aspects of semiconvex analysis for the viscosity theory of subharmonic functions in general potential theories. The first aspect concerns differentiability properties of first and second order for locally semiconvex functions, including the essential theorem of Alexandrov on the almost everywhere differentiability of second order for (locally semi-)convex functions. The second aspect concerns a detailed analysis of their upper contact jets and upper contact points, culminating in the Upper Contact Jet Theorem and the Summand Theorem. The third aspect concerns the deep analytical results which ensure the existence of sets of contact points with positive measure with the presentation of the Jensen--S{\l}odkowski Theorem and the final aspect presents the well-known device of semiconvex approximation of upper semicontinuous functions, which completes the bridge between the classical and viscosity notions of subharmonics in any potential theory. 

\vspace{2ex}

\chapter{Differentiability of convex functions} \label{chap:diff}

In this chapter, we present the well-established theory of differentiability of first and second order for convex functions and their extensions to locally semiconvex functions. This will culminate in the essential theorem of Alexandrov on second order differentiability in the Peano sense on sets of full Lebesgue measure. 

\section{First order theory}

We begin by recalling the notion of a convex function and showing that they are {\em subdifferentiable} everywhere, where subdifferentiability is a weak and, in general, multi-valued notion. Here we follow and expand upon the treatment in~\cite{hlqc}. The results that we are going to discuss actually hold in a more general context, but we limit ourselves to the context that best suits our purposes (see Remark \ref{rem:convex} below). 

\begin{definition} \label{defn:convex} \sid{convex!function}
	Let $u \colon X \to \R$ be a function defined on a convex subset $X\subset \R^n$.\syid{subset@\detokenize{$X\subset Y$}!the \emph{non-strict} inclusion of set $X$ in $Y$} We say that $u$ is \emph{convex} if its epigraph\sid{graph!epi-}\sid{epigraph|seeonly{graph, epi-}}\syid{epi@\detokenize{$\epi(u)$}!the epigraph of the function $u$} $\epi(u) \defeq \{ y \in \R : y \geq u(x) \}$ is convex.
	
	We will also say that $u$ is \emph{concave} if $-u$ is convex.\sid{concave function}
\end{definition}

\begin{remark}[Reduction to the one-dimensional notion] \label{rmk:convexpd}
	It is immediate to see that convexity of $u$ on $X \subset \R^n$ convex is equivalent to the convexity of the restriction of $u$ to every segment  \syid{segm@\detokenize{$[x,y]$}!the linear segment $\{ x+t(y-x) \}_{t \in [0,1]}$ with endpoints $x$ and $y$}
	\[
	[x_1, x_2] \defeq \{ x \in \R^n :\, x = t x_1 + (1-t)x_2,\  t\in [0,1]\}, \quad x_1, x_2 \in X.
	\]
This translates into the standard inequality formulation for convexity; that is, $u$ is convex in the sense of \Cref{defn:convex} if and only if for each $x_1, x_2 \in X$ one has
	\begin{equation}\label{Jensen_2}
	u(t x_1 + (1-t)x_2) \leq t u(x_1) + (1-t) u(x_2)\quad  \text{for each} \  t \in [0,1].
	\end{equation}
	 Moreover, the inequality \eqref{Jensen_2} is a version of {\em Jensen's inequality} for convex functions.
\end{remark}

\begin{lem}[Jensen's inequality\sid{Jensen's inequality}; finite form] \label{lem:Jensen} Let $u: X \to \R$ be convex on $X \subset \R^n$ convex. Then for each collection of points $\{x_j\}_{j=1}^N \subset X$ with $N \geq 2$ 
	\begin{equation}\label{Jensen_N}
	u \biggl(\, \sum_{j=1}^N t_j x_j \!\biggr) \leq  \sum_{j=1}^N t_j u(x_j), \quad  \text{for each}  \ \{t_j\}_{j=1}^N  \subset [0,1] \ \text{with} \ \sum_{j=1}^N t_j = 1.
	\end{equation}	
\end{lem}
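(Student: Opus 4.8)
The plan is to prove Jensen's inequality in its finite form \eqref{Jensen_N} by induction on the number of points $N$, using the two-point inequality \eqref{Jensen_2} as the base case and as the single analytic ingredient in the inductive step.

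First I would dispose of the base case $N = 2$: this is precisely \eqref{Jensen_2}, so nothing is to be done. For the inductive step, I would assume the inequality holds for some $N \geq 2$ and establish it for $N+1$. Given points $\{x_j\}_{j=1}^{N+1} \subset X$ and weights $\{t_j\}_{j=1}^{N+1} \subset [0,1]$ with $\sum_{j=1}^{N+1} t_j = 1$, there are two cases. If $t_{N+1} = 1$, then all other $t_j = 0$ and both sides of \eqref{Jensen_N} equal $u(x_{N+1})$, so the inequality is trivial. Otherwise $s \defeq \sum_{j=1}^N t_j = 1 - t_{N+1} \in (0,1]$, and I would write
$$
\sum_{j=1}^{N+1} t_j x_j = s \Bigg( \sum_{j=1}^N \frac{t_j}{s}\, x_j \Bigg) + t_{N+1} x_{N+1}.
$$
The point $y \defeq \sum_{j=1}^N (t_j/s) x_j$ lies in $X$ because the coefficients $t_j/s$ are nonnegative and sum to $1$ and $X$ is convex, so $y$ is a genuine convex combination of points of $X$. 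Then applying \eqref{Jensen_2} to the two points $y$ and $x_{N+1}$ with weights $s$ and $t_{N+1}$ gives
$$
u\Bigg( \sum_{j=1}^{N+1} t_j x_j \Bigg) \leq s\, u(y) + t_{N+1}\, u(x_{N+1}),
$$
and applying the inductive hypothesis to $u(y) = u\big(\sum_{j=1}^N (t_j/s) x_j\big) \leq \sum_{j=1}^N (t_j/s) u(x_j)$ yields, after multiplying by $s$ and adding $t_{N+1} u(x_{N+1})$, exactly $\sum_{j=1}^{N+1} t_j u(x_j)$. This closes the induction.

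There is essentially no serious obstacle here; the only points requiring a modicum of care are the bookkeeping around the degenerate case (when $s = 0$, i.e. $t_{N+1} = 1$) so that the division by $s$ is legitimate, and the observation that the rescaled weights $t_j/s$ still form a valid probability vector so that the inductive hypothesis applies and that the intermediate point $y$ lies in the convex domain $X$. If one prefers to avoid the division, an alternative is a slightly different induction that peels off the largest weight, or simply to note that \eqref{Jensen_N} is the standard statement of Jensen's inequality for finitely many points and refer to any convex analysis text; but since the paper is aiming to be self-contained, the short induction above is the natural choice.
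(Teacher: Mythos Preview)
Your proof is correct and follows exactly the approach the paper indicates: the paper itself does not spell out the details but simply states that the proof is standard and done by induction on $N \geq 2$, with the base case $N = 2$ being \eqref{Jensen_2}. Your argument is precisely this standard induction, with the degenerate case $t_{N+1}=1$ handled cleanly.
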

The proof of Lemma \ref{lem:Jensen} is standard and typically done by induction on $N \geq 2$, where the case $N = 2$ is \eqref{Jensen_2}.

We also note that the notion of convexity can be localized. 

\begin{definition} \label{defn:locally_convex} \sid{locally convex function|seeonly{convex, function, locally}}  \sid{convex!function!locally}
	A function $u \colon X \to \R$ on an open subset $X \subset \R^n$ is said to be \emph{locally convex} if for every $x \in X$ there exists a ball $B \subset X$ centered in $x$ on which $u$ is convex. 
	\end{definition}

\begin{lem}[Localization]\label{lem:localization}
Let $u \colon X \to \R$ be a function defined on $X$ open and convex. Then
	\begin{equation*}
	\mbox{$u$ is convex on $X\ \Leftrightarrow \ u$ is locally convex on $X$}.
	\end{equation*}	
	\end{lem}
The forward implication is obvious and the reverse implication can be proved by exploiting the restriction of $u$ to arbitrary segments $[x_1,x_2] \subset X$ of \Cref{rmk:convexpd}.

We recall now a few fundamental examples of convex functions; examples which can be localized in light of \Cref{lem:localization}

\begin{remark}[Examples of convex functions]\label{rmk:obfc} One easily verifies the following facts:
		\begin{enumerate}[label=(\alph*)]
		\item a quadratic function $u$ is convex if and only if its Hessian $D^2u$ is a non-negative definite matrix; in particular, every affine function is convex;
		\item conical combinations (weighted sums with non-negative coefficients) of a finite family of convex functions are convex;
		\item the (pointwise) supremum  of any family of convex functions is convex.	
		\end{enumerate}
\end{remark}

We will see that convex functions have the following multi-valued differentiability property at each point in their domain.

\begin{definition} \label{def:subd} \sid{subdifferential}
	Let $u \colon X \to \R$ be a (not necessarily convex) function defined on a subset $X\subset\R^n$. We define its \emph{subdifferential at $x\in X$} as the set \syid{prod@\detokenize{$\langle{x,y}\rangle$}!the Euclidean inner product between vectors $x$ and $y$}
	\begin{equation} \label{defsubd} 
	\de u(x) \defeq \{ p \in \R^n :\ u(y) \geq u(x) + \pair{p}{y-x}\ \; \forall y \in X \}.
	\end{equation}
	Each $p \in \de u(x)$ will be called a \emph{subgradient} of $u$ at $x$, and we will say that {\em $u$ is subdifferentiable at $x$} if $\de u(x) \neq \emptyset$.
\end{definition}

\begin{remark} \label{geointerpsubdiff}
	Geometrically, this means that the hyperplane which is the graph of the affine function $u(x) + \pair{p}{\cdot - x}$ over $X$ is a supporting hyperplane from below at $x$ for the epigraph of $u$, for all $p \in \de u(x)$. One can also say that if $u$ is subdifferentiable at $x$, then $x$ belongs to the {\em (global) lower contact set of $u$}, using a terminology which will be introduced in \Cref{chap:SCUCJ}. Moreover, one should note the simple but useful fact that $x$ is a global minimum point for $u$ if and only if $0 \in \de u(x)$. In particular, all functions are subdifferentiable at global minima.
\end{remark}

\begin{remark} \syid{d@\detokenize{$\partial u$}!the subdifferential of the function $u$}
	We have essentially two ways to interpret the \emph{subdifferential} of $u$: as a map\syid{Part@\detokenize{$\scr P(X)$}!the power set of the set $X$}
	\[
	\de u \colon X \to \scr P(\R^n), \quad x \mapsto \de u(x),
	\]
	or as a subset of a trivial bundle over $X$ whose fiber over each $x \in X$ is $\de u(x)$; that is,
	\[
	\de u \defeq \bigsqcup_{x \in X} \de u(x) \subset X \times \R^n.
	\]
	Note that in the latter interpretation, if one considers the projection $\pi \colon \de u \to X$ which maps $\de u(x) \ni p \mapsto x$, then adopting the former interpretation means that we are calling $\pi^{-1} = \de u$.  We will mainly adopt the former interpretation; for example, when using the notation
	\[
	\de u(\Omega) = \bigcup_{x \in \Omega} \de u (x) \subset \R^n, \quad \text{$\Omega\subset X$},
	\]
	yet we will also write
	\[
	(x,p) \in \de u \quad\iff\quad p \in \de u(x),
	\]
	thus employing the latter interpretation.
\end{remark}

Convex functions $u$ on $X$ (open and convex) are {\em subdifferentiable} in the sense of \Cref{def:subd}. Moreover, this property characterizes the convexity of $u$.

\begin{prop}[Subdifferentiability of convex funtions]\label{prop:subd_convex}
	For any function $u: X \to \R$ defined on an open and convex subset $X\subset \R^n$, one has 
	\begin{equation*}
	u \ \text{is convex} \ \ \Leftrightarrow \ \ \partial u(x) \neq \emptyset \ \ \forall \, x \in X;
	\end{equation*}
	that is, $u$ is convex on $X$ if and only if $u$ is subdifferentiable on $X$.
\end{prop}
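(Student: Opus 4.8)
The plan is to prove the two implications separately, the forward direction being essentially trivial and the reverse direction being the substantive one.

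\medskip

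\emph{($\Rightarrow$) Convexity implies subdifferentiability.} Fix $x_0 \in X$. The epigraph $\epi(u) = \{(x,y) \in X \times \R : y \geq u(x)\}$ is a convex subset of $\R^{n+1}$, and the point $(x_0, u(x_0))$ lies on its boundary. By the supporting hyperplane theorem for convex sets, there is a nonzero $(q, c) \in \R^n \times \R$ such that $\langle q, x \rangle + c\,y \geq \langle q, x_0\rangle + c\, u(x_0)$ for all $(x,y) \in \epi(u)$. Letting $y \to +\infty$ with $x = x_0$ forces $c \geq 0$; and $c = 0$ is impossible since $X$ is open (one could move $x$ in all directions around $x_0$, contradicting $q \neq 0$ unless $q=0$ too). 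Hence $c > 0$, and dividing through and setting $p = -q/c$ gives $u(y) \geq u(x_0) + \langle p, y - x_0\rangle$ for all $y \in X$ (taking $y = x$ and the minimal admissible value $y_{\text{coord}} = u(x)$ in the epigraph inequality), i.e.\ $p \in \de u(x_0)$. If one prefers to avoid invoking the supporting hyperplane theorem as a black box, an elementary alternative is to note that, by the one-variable theory, for each direction $v$ the difference quotient $t \mapsto \frac{u(x_0 + tv) - u(x_0)}{t}$ is monotone nondecreasing, so the one-sided directional derivatives $u'(x_0; v)$ exist; one then checks $v \mapsto u'(x_0;v)$ is sublinear and uses Hahn--Banach to extract a linear functional $p$ dominated by it, which is exactly a subgradient.

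\medskip

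\emph{($\Leftarrow$) Subdifferentiability on all of $X$ implies convexity.} This is the direction I expect to carry the real content, though it too is short. Suppose $\de u(x) \neq \emptyset$ for every $x \in X$. Fix $x_1, x_2 \in X$ and $t \in [0,1]$, and set $x_t = t x_1 + (1-t) x_2 \in X$ (using convexity of $X$). Pick any $p \in \de u(x_t)$. Applying the subgradient inequality \eqref{defsubd} with $y = x_1$ and with $y = x_2$ gives
\begin{equation*}
u(x_1) \geq u(x_t) + \langle p, x_1 - x_t \rangle, \qquad u(x_2) \geq u(x_t) + \langle p, x_2 - x_t \rangle.
\end{equation*}
Multiplying the first inequality by $t$, the second by $1-t$, and adding, the inner-product terms combine to $\langle p,\, t x_1 + (1-t)x_2 - x_t\rangle = \langle p, 0 \rangle = 0$, leaving $t u(x_1) + (1-t) u(x_2) \geq u(x_t)$. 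Since $x_1, x_2, t$ were arbitrary, this is exactly \eqref{Jensen_2}, so $u$ is convex.

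\medskip

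The only real obstacle is in the forward direction: justifying that a supporting hyperplane to $\epi(u)$ at a boundary point is non-vertical, i.e.\ that $c \neq 0$. This is where openness of $X$ is essential and where one must be slightly careful. I would handle it exactly as above — ruling out $c = 0$ by observing that a vertical supporting hyperplane at an interior point $x_0$ of $X$ would force the linear functional $q$ to vanish on a neighborhood of $x_0$, hence $q = 0$, contradicting $(q,c) \neq 0$. Everything else is routine, and the reverse implication needs no nontrivial input beyond the definition of the subdifferential and convexity of the domain.
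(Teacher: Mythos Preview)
Your proof is correct and matches the paper's approach essentially line for line: both directions use exactly the same arguments (supporting hyperplane/Hahn--Banach for $\Rightarrow$, with openness of $X$ to rule out a vertical hyperplane; and the convex-combination trick with a subgradient at the midpoint for $\Leftarrow$). One small quibble with your framing: you call the forward direction ``essentially trivial'' and the reverse ``substantive,'' but if anything it is the other way around --- the forward direction is the one that needs the separation theorem, while the reverse is a two-line computation from the definition.
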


\begin{proof}
	If $u$ is convex, then by the Hahn--Banach Theorem for any $x \in X$ there exists a supporting hyperplane for the convex set $C \defeq \epi(u)$ at the point $(x,u(x))$; that is, there exists $(q,r) \in \R^n \times \R \setminus \{(0,0)\}$ such that
	\begin{equation} \label{subdiffnonempty:hb}
	\pair{q}{y-x} + r(z-u(x)) \geq 0 \quad \text{for each $(y,z) \in C$.}
	\end{equation}
	With the choices $y=x$ and $z > u(x)$, the inequality \eqref{subdiffnonempty:hb} implies that $r \geq 0$. By choosing now $z = u(x)$, one has that
	\[
	\pair{q}{y-x} + r(u(y)-u(x)) \geq 0 \quad \forall\, y \in X,
	\]
	which yields $-r^{-1}q \in \de u(x)$, provided that $r \neq 0$. To show that in fact $r\neq 0$, notice that otherwise by inequality \eqref{subdiffnonempty:hb} one would have $\pair{q}{y-x} \geq 0$ for each $y \in X$, which is impossible since $X$ is open.
	
	Conversely, if $x = \lambda x_1 + (1-\lambda) x_2$ for some $\lambda \in [0,1]$, we have
	\[
	u(x_1) \geq u(x) + (1-\lambda)\pair{p}{x_1-x_2} \quad \text{and} \quad u(x_2) \geq u(x) - \lambda \pair{p}{x_1-x_2}
	\] 
	for every $p\in \de u(x)$, thus yielding $u(x) \leq \lambda u(x_1) + (1-\lambda)u(x_2)$; that is, $u$ is convex.  
\end{proof}

Convex functions are also locally bounded.

\begin{lem}\label{lem:convex_loc_bdd} Let $u: X \to \R$ be convex on $X$ open and convex. Then 
	\begin{equation*}
	\mbox{$u$ is bounded on every compact subset $K \subset X$,}
	\end{equation*}
	or equivalently,
	\begin{equation*}
	\mbox{$u$ is bounded on some neighborhood of every $x \in X$.}
	\end{equation*}
\end{lem}

\begin{proof}
	Let $K \subset X$ be compact. By choosing any $x \in K$
	and any $p \in \partial u(x) \neq \emptyset$, one has 
	\[
	u(y) \geq u(x) + \langle p,y-x \rangle \defeq  a_x(y), \ \ \text{for every} \ y \in X,
	\]  
	where $a_x$ is affine and hence bounded from below on $K$ compact. Hence $u$ is bounded from below on $K$.
	
	For the local boundedness from above, let $x \in X$ be arbitrary and consider any compact cube $\mathcal{C}$ centered in $x$ such that $\mathcal{C} \subset X$. We denote by $\{v_j\}_{j=1}^{N}$ the collection of vertices of $\mathcal{C}$. Every closed cube is the (closed) convex hull of its vertices and hence for each $y \in \mathcal{C}$ one has
	\[
	y = \sum_{j=1}^N t_j v_j \quad \text{with} \ \ \sum_{j=1}^N t_j = 1 \ \ \text{and each} \  t_j \in [0,1].
	\]
	By Jensen's inequality \eqref{Jensen_N} we have
	\[
	u(y) = u \biggl(\, \sum_{j=1}^N t_j v_j  \biggr) \leq \sum_{j=1}^N t_j  u(v_j) \leq \max_{1 \leq j \leq N} \lvert u(v_j) \rvert < + \infty,
	\]
	and hence $u$ is bounded from above on every compact cube contained in $X$.
\end{proof}

Two interesting properties concerning the subifferential of a convex function are given in the following lemma. 

\begin{lem}\label{lem:intpropsubdiff}
	For each convex function $u: X \to \R$, one has:
	\begin{itemize}
		\item[(a)]	the fiber $\de u(x) \subset \R^n$ is non-empty, closed and convex  for each $x \in X$;
		\item[(b)]	for each pair of points $x$ and $y$ in $X$,
		\begin{equation} \label{slope}
		\pair{p}{y-x} \leq u(y) - u(x) \leq \pair{q}{y-x} \quad \forall p\in \de u(x),\ q\in \de u(y),
		\end{equation}
	\end{itemize}
\end{lem}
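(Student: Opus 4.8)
The plan is to derive both statements directly from the defining inequality \eqref{defsubd} of the subdifferential, which holds globally on $X$ since $u$ is convex (and in particular finite, by Proposition \ref{prop:subd_convex}). For part (a), I would fix $x \in X$ and observe that $\de u(x)$ is an intersection of closed half-spaces: indeed, for each fixed $y \in X$ the condition $u(y) \geq u(x) + \pair{p}{y-x}$ is a (non-strict) linear inequality in the variable $p$, hence defines a closed half-space (or all of $\R^n$ when $y = x$). Therefore $\de u(x) = \bigcap_{y \in X} \{ p : \pair{p}{y-x} \leq u(y) - u(x) \}$ is an intersection of closed convex sets, and so is itself closed and convex. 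This is the routine half.

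For part (b), I would simply apply the definition \eqref{defsubd} twice, once at $x$ and once at $y$. If $p \in \de u(x)$, then testing the defining inequality at the point $y$ gives $u(y) \geq u(x) + \pair{p}{y-x}$, which rearranges to the left inequality $\pair{p}{y-x} \leq u(y) - u(x)$. Symmetrically, if $q \in \de u(y)$, then testing the defining inequality for $\de u(y)$ at the point $x$ gives $u(x) \geq u(y) + \pair{q}{x-y} = u(y) - \pair{q}{y-x}$, which rearranges to $u(y) - u(x) \leq \pair{q}{y-x}$, the right inequality. Concatenating the two yields \eqref{slope}.

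There is essentially no obstacle here: the lemma is a direct unwinding of Definition \ref{def:subd}, and the only mild subtlety is making sure that in part (a) one includes the degenerate case $y = x$ (which contributes the trivial constraint $0 \leq 0$, i.e. all of $\R^n$) so that the intersection is over all of $X$ without exception. One could optionally remark that \eqref{slope} expresses the monotonicity of the (multivalued) map $\de u$, since subtracting gives $\pair{q - p}{y - x} \geq 0$ for all $p \in \de u(x)$, $q \in \de u(y)$, but this is not needed for the statement as written.
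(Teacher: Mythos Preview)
Your proposal is correct and follows essentially the same argument as the paper: part (a) is obtained by writing $\de u(x)$ as the intersection over $y \in X$ of the closed half-spaces $\{p : \pair{p}{y-x} \leq u(y)-u(x)\}$, and part (b) by applying the defining inequality of the subdifferential once at $x$ and once at $y$ and rearranging. Your optional remark on monotonicity is exactly what the paper records next as Proposition~\ref{subdmon}.
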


\begin{proof}
	For the claims of part (a), it is easy to see that
	\[
	\de u(x) = \bigcap_{y \in X} \big\{ p \in \R^n: u(x) + \pair{p}{y-x} \leq u(y) \big\} 
	\]
	is the intersection of closed affine half-spaces and therefore it is closed and convex. This follows from the fact that intersections of convex sets are clearly convex. Notice that the above argument applies to any function $u$; the convexity ensures that $\de u(x)$ is not empty.
	
	For the chain of inequalities of part (b), by \Cref{prop:subd_convex}, the subdifferentials $\partial u(x)$ and $\partial u(y)$ are non-empty. By the definition (\ref{defsubd}), for each fixed $x \in X$ one has
	\begin{equation*} 
	u(y) \geq u(x) + \pair{p}{y-x}  \quad \forall p\in \de u(x), \ \forall \, y \in X 
	\end{equation*}
	and for each fixed $y \in X$ one has
	\begin{equation*} 
	u(x) \geq u(y) + \pair{q}{x-y}  \quad \forall q\in \de u(y), \ \forall \, x \in X,
	\end{equation*}
	from which \eqref{slope} easily follows.
\end{proof}

We will see that the chain of inequalities in \eqref{slope} is extremely useful. Notice that the geometric meaning of (\ref{slope}) is that if we restrict our attention to the segment $[x,y]$ with intrinsic real coordinate such that $x\leq y$, then $\barr p \leq s \leq \barr q$, where $\barr p$, $\barr q$ are the slopes of the supporting hyperplane at $x,y$ identified by $p,q$ and $s$ is the slope of the chord connecting $(x, u(x))$ and $(y,u(y))$. An immediate consequence of \eqref{slope} is the following monotonicity property.

\begin{prop} \label{subdmon}
	The subdifferential $\de u \colon X \to \scr P(\R^n)$ is a monotone operator\sid{operator!monotone}; that is,
	\begin{equation} \label{eq:subdmon}
	\pair{q-p}{y-x} \geq 0 \qquad \forall \, (x,p), (y,q) \in \de u.
	\end{equation}
\end{prop}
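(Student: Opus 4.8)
The plan is to read the conclusion off directly from the chain of inequalities \eqref{slope} established in Lemma \ref{lem:intpropsubdiff}(b); indeed, the remark preceding the statement already flags \eqref{eq:subdmon} as an immediate consequence of \eqref{slope}. First I would fix arbitrary $(x,p), (y,q) \in \de u$, which by definition means $p \in \de u(x)$ and $q \in \de u(y)$. Applying \eqref{slope} to the pair of points $x, y \in X$ gives
$$\pair{p}{y-x} \leq u(y) - u(x) \leq \pair{q}{y-x},$$
and discarding the middle term yields $\pair{p}{y-x} \leq \pair{q}{y-x}$, that is $\pair{q-p}{y-x} \geq 0$, which is exactly \eqref{eq:subdmon}.

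Alternatively, and without invoking convexity or Lemma \ref{lem:intpropsubdiff} at all, one can argue straight from \Cref{def:subd}: the inclusion $p \in \de u(x)$ gives $u(y) \geq u(x) + \pair{p}{y-x}$ and the inclusion $q \in \de u(y)$ gives $u(x) \geq u(y) + \pair{q}{x-y}$; summing these two inequalities and cancelling $u(x) + u(y)$ produces $0 \geq \pair{p}{y-x} + \pair{q}{x-y} = \pair{p-q}{y-x}$, which is again \eqref{eq:subdmon}. There is no genuine obstacle here — the statement is a formal consequence of the defining inequality of the subdifferential — and the only point requiring a little care is the bookkeeping of signs when the two defining inequalities are added.
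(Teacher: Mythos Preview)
Your proposal is correct and matches the paper's approach exactly: the paper states the proposition as an immediate consequence of \eqref{slope} without further elaboration, and your first paragraph spells out precisely that one-line deduction. Your alternative argument directly from Definition~\ref{def:subd} is also valid and is essentially how \eqref{slope} itself was obtained, so it amounts to the same thing with the middle step inlined.
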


This generalizes the definition of (nondecreasing) monotonicity for single-valued functions from $\R$ to $\R$, since in that case \eqref{eq:subdmon} is equivalent to $(\de u(y) - \de u(x))\cdot(y-x) \geq 0$, that is $\de u(x) \leq \de u(y)$ whenever $x\leq y$.

\begin{remark}[Monotonicity of the subdifferential] \label{equiv:subdmonconv}
	Since the chain of inequalities \eqref{slope} directly follows from the subdifferentiability of \Cref{def:subd}, \Cref{subdmon} tells us that the subdifferential $\de u$ of \emph{any} function $u \colon X \to \R$ is monotone, in the sense that \eqref{eq:subdmon} holds whenever $x,y$ are such that $\de u(x) \neq \emptyset \neq \de u(y)$. In general, this is of no use, since $\de u$ can be empty in ``too many'' (possibly all) points. However, if  $u$ is convex on $X$ open and convex then \Cref{lem:intpropsubdiff} shows that the inequality \eqref{eq:subdmon} is meaningful for each pair of points $x,y \in X$, since the domain of $\de u$ is the whole $X$; that is, $\mathrm{dom}\,\de u \defeq \{ x \in X : \de u(x) \neq \emptyset \} = X$. Furthermore, for differentiable functions $u \colon \R \to \R$ one knows that convexity is equivalent to the (nondecreasing) monotonicity of $Du = u'$, which is in fact equivalent to the monotonicity of $\de u$ in the sense of \eqref{eq:subdmon} (cf.~also  \Cref{deusingv} below). Hence, one might wonder whether a characterization of convexity for generic functions exists in term of the monotonicity of their subdifferential. We have essentially just commented on the fact that the answer is no. Indeed, all functions have monotone (possibly empty) subdifferential. Moreover, adding a hypothesis like $\mathrm{dom}\,\de u \neq \emptyset$ would also be too weak; one needs to assume that $\mathrm{dom}\,\de u = X$. In fact, consider $u$ defined by $u \defeq f$ on $X \setminus \{x_0\}$ and $u(x_0) \defeq \alpha > f(x_0)$ for some convex function $f$ on $X$. Note that $\mathrm{dom}\,\de u = X \setminus \{ x_0 \}$ but $u$ is not convex. On the other hand, the assumption $\mathrm{dom}\,\de u = X$ is too strong because it is in fact itself equivalent to the convexity of $u$. In any case, for the sake of completeness, we mention that a characterization of convexity in terms of the monotonicity of ``a subdifferential'' actually exists, and uses the so-called \emph{proximal subdifferential} (see \cite[Theorem 4.1]{csw} and the references therein).
\end{remark}

For future use, we prove a particular case of the equivalence discussed in \Cref{equiv:subdmonconv}.

\begin{prop} \label{conviffmon}
	Let $u$ be differentiable on an open convex set $X\subset \R^n$; then \syid{Der@$Du$!the first-order derivative of $u$, also the gradient/Jacobian of $u$}
	\[
	\text{$u$ is convex} \quad \iff \quad \text{$Du$ is monotone}.\footnote{That is, $\pair{Du(x) - Du(y)}{x-y} \geq 0$ for all $x,y \in X$.}
	\]
\end{prop}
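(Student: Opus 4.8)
The plan is to prove the two implications separately, using the one-variable characterization of convexity together with a restriction-to-segments argument in each direction.

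For the forward implication ($u$ convex $\Rightarrow$ $Du$ monotone), the natural route is to reduce to a single real variable. Given $x, y \in X$, consider the segment parametrization $\gamma(t) = x + t(y-x)$ for $t \in [0,1]$, which stays in $X$ by convexity, and set $g(t) := u(\gamma(t))$. Since the restriction of a convex function to a segment is convex (as noted in the remark following Definition~\ref{defn:convex}), $g$ is a convex function of one real variable, and by the chain rule $g'(t) = \pair{Du(\gamma(t))}{y-x}$. For differentiable convex functions of one variable, $g'$ is nondecreasing, so $g'(1) \geq g'(0)$, which reads $\pair{Du(y)}{y-x} \geq \pair{Du(x)}{y-x}$, i.e. $\pair{Du(y) - Du(x)}{y - x} \geq 0$. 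Alternatively, and even more directly, one can bypass the one-variable lemma entirely by invoking the chain of inequalities \eqref{slope} from Lemma~\ref{lem:intpropsubdiff}(b): since $u$ is differentiable and convex, $\de u(x) = \{Du(x)\}$ and $\de u(y) = \{Du(y)\}$, so \eqref{slope} gives $\pair{Du(x)}{y-x} \leq u(y) - u(x) \leq \pair{Du(y)}{y-x}$, and subtracting the outer terms yields \eqref{eq:subdmon}. This is essentially Proposition~\ref{subdmon} specialized to the differentiable case.

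For the reverse implication ($Du$ monotone $\Rightarrow$ $u$ convex), I again restrict to a segment. Fix $x_1, x_2 \in X$ and, with $\gamma(t) = x_1 + t(x_2 - x_1)$ and $g(t) = u(\gamma(t))$, observe that $g$ is differentiable on $[0,1]$ with $g'(t) = \pair{Du(\gamma(t))}{x_2 - x_1}$. For $0 \leq s < t \leq 1$, monotonicity of $Du$ applied to the points $\gamma(s), \gamma(t)$ gives $\pair{Du(\gamma(t)) - Du(\gamma(s))}{\gamma(t) - \gamma(s)} \geq 0$; since $\gamma(t) - \gamma(s) = (t-s)(x_2 - x_1)$ with $t - s > 0$, this says exactly $g'(t) \geq g'(s)$. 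Hence $g'$ is nondecreasing on $[0,1]$, so $g$ is convex on $[0,1]$ (the standard one-variable fact), which gives $u(\lambda x_1 + (1-\lambda)x_2) = g(1-\lambda) \leq \lambda g(0) + (1-\lambda)g(1) = \lambda u(x_1) + (1-\lambda)u(x_2)$ for all $\lambda \in [0,1]$. Since $x_1, x_2$ were arbitrary, $u$ is convex by the inequality formulation \eqref{Jensen_2}.

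There is no serious obstacle here; the proof is routine once one reduces to one real variable. The only point requiring a little care is the reverse direction, where one must legitimately invoke the one-variable result that a differentiable function with nondecreasing derivative is convex (a consequence of the mean value theorem), and make sure the segment $[x_1, x_2]$ lies in $X$, which is exactly where convexity of the domain $X$ is used. The forward direction via \eqref{slope} is the cleanest and is the one I would actually write down, since it makes the proposition an immediate corollary of material already established.
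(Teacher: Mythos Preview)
Your proposal is correct and follows essentially the same approach as the paper: both directions proceed by restriction to segments, with the forward implication also noted as an immediate consequence of the monotonicity of the subdifferential (Proposition~\ref{subdmon}) together with $\de u = \{Du\}$ in the differentiable case. The only cosmetic difference is in the reverse direction: you package the one-variable step as ``$g'$ nondecreasing $\Rightarrow$ $g$ convex,'' whereas the paper unwinds this by applying the Mean Value Theorem explicitly to verify the slope-comparison characterization of convexity; the content is the same.
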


\begin{proof}
	The implication ($\Longrightarrow$) immediately follows from the monotonicity of the subdifferential (\Cref{subdmon}) and the upcoming \Cref{deusingv} that characterizes convex and differentiable functions. Alternatively, a direct proof is also instructive: the convexity of $u$ is equivalent to the convexity of $u$ restricted to every segment $[x,y] \subset X$ convex with arbitrary endpoints $x,y \in X$; that is, to the convexity of all functions of the form
	\[
	\tilde u(t) \defeq u(x+t(y-x)), \quad t \in [0,1],
	\]
	with $x,y \in X$. The convexity of each $\tilde u$ is equivalent to
	\begin{equation} \label{convonline}
	\frac{\tilde u(t_2) - \tilde u(t_1)}{t_2-t_1} \leq \frac{\tilde u(t_3) - \tilde u(t_2)}{t_3-t_2} \quad \text{whenever $0 \leq t_1 < t_2 < t_3 \leq 1$}.
	\end{equation}
	Letting $t_2 \dto t_1 = 0$ one obtains
	\begin{equation} \label{monder:1}
	\pair{Du(x)}{y-x} \leq \frac{u(x+t(y-x)) - u(x)}{t} \quad \forall t \in (0,1],
	\end{equation}
	while letting $t_2 \uto t_3 = 1$ one obtains
	\begin{equation} \label{monder:2}
	\frac{u(y) - u(x+t(y-x))}{1-t} \leq \pair{Du(y)}{y-x} \quad \forall t \in [0,1).
	\end{equation}
	Combining \eqref{convonline}, \eqref{monder:1} and \eqref{monder:2} yields
	\[
	\pair{Du(x)}{y-x} \leq \pair{Du(y)}{y-x},
	\]
	which is the monotonicity of $Du$.
	
	For the reverse implication ($\Longleftarrow$), by the Mean Value Theorem, for $j=1,2$ there exist $\xi_j \in [x+t_j(y-x), x+t_{j+1}(y-x)]$ such that $\tilde u(t_{j+1}) - \tilde u(t_j) = \pair{Du(\xi_j)}{(t_{j+1} - t_j)(y-x)}$, and hence condition \eqref{convonline} holds if and only if
	\begin{equation} \label{quasimon}
	\pair{Du(\xi_1)}{y-x} \leq \pair{Du(\xi_2)}{y-x},
	\end{equation}
	for some
	\[
	\xi_1 = x + s_1(y-x), \ \ \xi_2 = x+ s_2(y-x), \quad 0 < s_1 < s_2 < 1.
	\]
	Therefore we have
	\[
	\xi_2 - \xi_1 = (s_2 - s_1)(y-x),
	\]
	and we can rewrite \eqref{quasimon} as
	\[
	(s_2-s_1)^{-1} \pair{Du(\xi_2) - Du(\xi_1)}{\xi_2 - \xi_1} \geq 0,
	\]
	which is satisfied by the monotonicity of $Du$ (since $s_2 - s_1 > 0$). This concludes the proof.
\end{proof}

Additional important consequences of the chain of inequalities \eqref{slope} concern the restrictions of convex functions to compact subsets.

\begin{thm}\label{thm:convex_Lip} Let $u: X \to \R$ be convex on $X \subset \R^n$ open and convex. Then for each $K \subset X$ compact
	\begin{itemize}
		\item[(a)] $\de u(K) = \bigcup_{x \in K} \de u (x)$ is compact in $\R^n$;
		\item[(b)] $u \in \rm{Lip}(K)$ \syid{Lip@\detokenize{$\rm{Lip}(X)$}!the space of all Lipschitz functions on the set $X$} with \syid{abs@\detokenize{$\vert x\vert $}!if $x$ is a vector, the Euclidean norm of $x$}
		\begin{equation}\label{convex_Lip}
		|u(x) - u(y)| \leq \max_{p \in \partial u(K)} \!|p| \cdot |x - y|, \ \ \forall \, x,y \in K.
		\end{equation}
		In particular, $u$ is continuous on $X$.
	\end{itemize}
\end{thm}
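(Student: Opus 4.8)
The plan is to establish the two assertions in sequence, deriving (b) from (a) together with the chain of inequalities \eqref{slope}, and obtaining continuity on $X$ as a local consequence of (b).

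For part (a), the strategy is first to show boundedness and then closedness of $\de u(K)$. Boundedness: fix any point $x_0 \in X$ and a small closed ball $\barr B(x_0, 2\rho) \subset X$; by \Cref{lem:convex_loc_bdd}, $u$ is bounded, say $|u| \leq M$, on $\barr B(x_0, 2\rho)$. For any $x \in \barr B(x_0,\rho)$ and any $p \in \de u(x)$, testing the subgradient inequality $u(y) \geq u(x) + \pair{p}{y-x}$ with $y = x + \rho \, p/|p|$ (which lies in $\barr B(x_0, 2\rho) \subset X$) gives $\rho |p| \leq u(y) - u(x) \leq 2M$, hence $|p| \leq 2M/\rho$. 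Covering the compact set $K$ by finitely many such balls $\barr B(x_0^{(i)}, \rho_i)$ yields a uniform bound on $\de u(K)$, so this set is bounded. Closedness: suppose $p_k \in \de u(x_k)$ with $x_k \in K$ and $(x_k, p_k) \to (x,p)$; since $K$ is closed, $x \in K \subset X$. Passing to the limit in $u(y) \geq u(x_k) + \pair{p_k}{y-x_k}$ for each fixed $y \in X$, and using that $u$ is continuous at $x$ (which we will already know from \Cref{lem:convex_loc_bdd} combined with the local Lipschitz estimate below, or alternatively one verifies lower semicontinuity of $u$ at interior points directly), we obtain $u(y) \geq u(x) + \pair{p}{y-x}$ for all $y \in X$, i.e. $p \in \de u(x)$, so $(x,p) \in \de u(K)$. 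Thus $\de u(K)$ is closed and bounded, hence compact.

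For part (b), let $x,y \in K$. Since $X$ is open and convex and $K \subset X$, a slight technical point is that the segment $[x,y]$ lies in $X$ (by convexity of $X$) but need not lie in $K$; however, applying \eqref{slope} of \Cref{lem:intpropsubdiff} directly to the pair $x,y \in X$ gives
\[
\pair{p}{y-x} \leq u(y) - u(x) \leq \pair{q}{y-x} \qquad \forall\, p \in \de u(x),\ q \in \de u(y).
\]
Choosing any $p \in \de u(x)$ and $q \in \de u(y)$ (nonempty by \Cref{prop:subd_convex}), the Cauchy--Schwarz inequality yields $|u(y) - u(x)| \leq \max\{|p|,|q|\}\,|x-y|$; since $x,y \in K$ we have $p,q \in \de u(K)$, so $\max\{|p|,|q|\} \leq \max_{p \in \de u(K)} |p|$, which is finite by part (a). This proves \eqref{convex_Lip} and hence $u \in \Lip(K)$. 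Finally, continuity on all of $X$: every $x \in X$ has a compact neighborhood $K' \subset X$ (e.g. a small closed ball), and the Lipschitz estimate \eqref{convex_Lip} applied on $K'$ shows $u$ is (locally Lipschitz, hence) continuous at $x$.

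The main obstacle I anticipate is the closedness argument in part (a): to pass to the limit in the subgradient inequality one needs $u(x_k) \to u(x)$, i.e. continuity of $u$ at the limit point, and this circles back to the Lipschitz estimate of part (b) which in turn uses part (a). The clean way to break the circle is to prove the boundedness of $\de u(K)$ first (it uses only \Cref{lem:convex_loc_bdd}, not continuity), then deduce the local Lipschitz estimate \eqref{convex_Lip} on any compact ball — which already gives continuity of $u$ on $X$ — and only then return to prove closedness of $\de u(K)$ using that continuity. One must therefore be slightly careful about the logical order: boundedness in (a) $\Rightarrow$ estimate \eqref{convex_Lip} and continuity in (b) $\Rightarrow$ closedness in (a).
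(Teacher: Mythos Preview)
Your proposal is correct and follows essentially the same route as the paper: prove boundedness of $\de u(K)$ first (using only local boundedness of $u$ and the subgradient inequality tested at $y=x+\rho\,p/|p|$), deduce the Lipschitz estimate \eqref{convex_Lip} via \eqref{slope}, obtain continuity, and only then establish closedness by passing to the limit in the subgradient inequality. The paper uses the single fattened compact $K^\delta=\{x:d(x,K)\le\delta\}$ rather than a finite cover by balls, but this is a cosmetic difference; your concern about the logical order is exactly right and is how the paper proceeds. One small slip: in your closedness argument you assume $(x_k,p_k)\to(x,p)$, but only $p_k\to p$ is given---you need to extract a convergent subsequence of $\{x_k\}\subset K$ by compactness, as the paper does.
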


\begin{proof}
	First, we prove that $\de u(K)$ is bounded. Since $K$ is compact in $X$ there exists  $\delta > 0$ such that $K_{\delta}\defeq  \{ x \in X: \ d(x,K) \leq \delta \} \subset X$, where
	$K_{\delta}$ is compact. By Lemma \ref{lem:convex_loc_bdd}, $u$ is bounded on $K_{\delta}$ and by \eqref{slope} one has
	\begin{equation}\label{bounded1}
	\pair{p}{y-x} \leq u(y) - u(x) \leq 2 \max_{K_{\delta}}|u| < + \infty, \ \ \forall \, x \in K,\,  y \in K_{\delta},\,  p \in \partial u(x).
	\end{equation}
	With $y\defeq  x + \delta p/{|p|}$ for $p\neq 0$ in \eqref{bounded1}, one has
	$$
	\frac{\delta}{|p|} \pair{p}{p} \leq 2 \max_{K_{\delta}}|u|,
	$$
	and hence
	$$
	|p| \leq \frac{2}{\delta} \max_{K_{\delta}}|u| \quad \forall \, p \in \partial u(K),
	$$
	which gives $\de u(K)$ is bounded in $\R^n$.
	
	Next, we prove the Lipschitz estimate \eqref{convex_Lip}. Again using the inequality chain \eqref{slope} we have
	$$
	\forall x,y \in K: \quad |u(x) - u(y)| \leq \max \{|p|, |q| \} \, |x-y|, \quad \forall \, p \in \partial u(x), q \in \partial u(y),
	$$
	which implies \eqref{convex_Lip}, and hence the continuity of $u$ on $X$.
	
	Finally, we show that $\partial u(K) \subset \R^n$ is closed, which together with the boundedness of $\partial u(K)$ shows that $\partial u(K)$ is compact. Let $\{ p_k\}_{k \in \N}  \subset \partial u(K)$ be any convergent sequence with $p_k \to p$ in $\R^n$ for $k \to \infty$. We need to show that $p \in \partial u(K)$. For each $k \in \N$, $p_k \in \partial u(K)$ implies that there exists $x_k \in K$ such that $(x_k, p_k) \in \partial u$. Since $K$ is compact, there exist $x \in K$ and a subsequence $\{x_{k_j}\}_{j \in \N}$ such that $x_{k_j} \to x$ as $j \to \infty$. Since $p_{k_j} \in \partial u(x_{k_j})$ for each $j \in \N$, we have 
	\[
	u(y) \geq u(x_{k_j}) + \pair{ p_{k_j}}{ y - x_{k_j} } \quad \forall \, y \in X,\ \forall \, j \in \N
	\] 
	and passing to the limit $j \to \infty$ gives $u(y) \geq u(x) + \pair{p}{ y - x}$ 	for all $y \in X$, and hence $p \in \partial u(x) \subset \partial u(K)$.
\end{proof}

\begin{remark}
	In the end, for $u: X \to \R$ convex on $X$ open and convex, the following are equivalent:
	\begin{enumerate}[label=(\arabic*)]
		\item[(1)] $u$ is locally Lipschitz on $X$;
		\item[(2)] $u$ is continuous on $X$;
		\item[(3)] $u$ is continuous at some point $x \in X$;
		\item[(4)] $u$ is locally bounded on $X$;
		\item[(5)] $u$ is locally bounded from above on $X$.
	\end{enumerate}
	In fact, the implications $(1) \Rightarrow (2) \Rightarrow (3) \Rightarrow (5)$ and $(2) \Rightarrow (4) \Rightarrow (5)$ are obvious. Hence it is enough to show that $(5) \Rightarrow (1)$, which follows from the considerations above. 
\end{remark}

An important consequence on the differentiability of a convex function follows. 

\begin{cor} \label{deusingv}
	For $u: X \to \R$ convex on $X$ open and convex, one has \syid{C1@\detokenize{$C^1(X)$}!the space of all continuous functions with continuous gradient on $X$} 
	\begin{equation}\label{equivalences}
	u \ \text{is differentiable on $X$} \ \ \Leftrightarrow \ \ \de u \ \text{is single-valued on $X$}  \ \ \Leftrightarrow \ \  u\ \text{is $ C^1(X)$},
	\end{equation}
	with $\de u (x) = Du (x)$ for each $x \in X$.
\end{cor}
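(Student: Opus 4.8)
The plan is to close the cycle $u\in C^1(X)\Rightarrow u$ differentiable on $X$ $\Rightarrow$ $\de u$ single-valued on $X$ $\Rightarrow$ $u\in C^1(X)$, tracking the identity $\de u(x)=Du(x)$ throughout. The first implication is immediate. For the second, if $u$ is differentiable at $x$ and $p\in\de u(x)$, then $u(x+tv)-u(x)\geq t\pair{p}{v}$ for all small $t>0$; dividing by $t$ and letting $t\dto 0$ gives $\pair{Du(x)-p}{v}\geq 0$ for every $v$, whence $p=Du(x)$. Since $u$ is convex, \Cref{prop:subd_convex} gives $\de u(x)\neq\emptyset$, so $\de u(x)=\{Du(x)\}$.

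The substantive step is $\de u$ single-valued $\Rightarrow$ $u$ differentiable. I would first record the elementary one-dimensional fact that, by convexity, for each $x$ and $v$ the difference quotient $t\mapsto t^{-1}\big(u(x+tv)-u(x)\big)$ is nondecreasing on the set of $t>0$ with $x+tv\in X$ and is bounded below by $\pair{p}{v}$ for any $p\in\de u(x)$; hence the one-sided directional derivative $u'(x;v):=\lim_{t\dto 0}t^{-1}(u(x+tv)-u(x))$ exists, equals the infimum of these quotients, satisfies $u'(x;v)\leq u(x+v)-u(x)$ whenever $x+v\in X$, and $v\mapsto u'(x;v)$ is positively homogeneous and subadditive (the latter from $u(x+t(v_1+v_2))\leq\tfrac12 u(x+2tv_1)+\tfrac12 u(x+2tv_2)$). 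Now suppose $\de u(x_0)=\{p\}$. I claim $u'(x_0;\cdot)=\pair{p}{\cdot}$: the inequality $\geq$ is clear from the lower bound above, and if it were strict in some direction, applying the Hahn--Banach theorem to the sublinear function $u'(x_0;\cdot)$ would yield a linear functional $q\neq p$ with $\pair{q}{v}\leq u'(x_0;v)\leq u(x_0+v)-u(x_0)$ for all $v$ with $x_0+v\in X$, i.e. a second subgradient — a contradiction. Therefore $g(h):=u(x_0+h)-u(x_0)-\pair{p}{h}$ is convex, nonnegative, vanishes at $0$, is locally Lipschitz near $0$ (it differs from $u$ by an affine function, \Cref{thm:convex_Lip}), and has $t^{-1}g(tv)\dto 0$ for each $v$. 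To upgrade this to $g(h)=o(|h|)$, I would argue by contradiction: if $g(h_k)\geq\epsilon|h_k|$ along some $h_k\to 0$, write $v_k:=h_k/|h_k|$ and pass to a subsequence with $v_k\to v$; then the local Lipschitz bound $|g(|h_k|v_k)-g(|h_k|v)|\leq L|h_k|\,|v_k-v|$ gives $|h_k|^{-1}g(|h_k|v)\geq\epsilon/2$ for large $k$, contradicting $t^{-1}g(tv)\to 0$. Hence $u$ is differentiable at $x_0$ with $Du(x_0)=p$.

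Finally, for $u$ differentiable $\Rightarrow u\in C^1(X)$: by the previous steps $Du(x)$ is the unique element of $\de u(x)$ at every point. Given $x_k\to x$ in $X$, fix a compact neighborhood $K\subset X$ of $x$; by \Cref{thm:convex_Lip} the set $\de u(K)$ is compact, so every subsequence of $(Du(x_k))$ has a further subsequence converging to some $q$, and passing to the limit in $u(y)\geq u(x_{k_j})+\pair{Du(x_{k_j})}{y-x_{k_j}}$ — the closed-graph argument already used in the proof of \Cref{thm:convex_Lip} — shows $q\in\de u(x)=\{Du(x)\}$; thus $Du(x_k)\to Du(x)$ and $Du$ is continuous. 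I expect the main obstacle to be the middle implication, and within it the passage from ``all directional derivatives are linear'' to ``$u$ is genuinely differentiable'': a general function can fail this, and convexity enters precisely through the monotone difference quotients and the local Lipschitz bound that make the sphere-compactness argument work.
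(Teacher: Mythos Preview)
Your proof is correct, but the route you take for the key implication ``$\de u$ single-valued $\Rightarrow$ $u$ differentiable'' is genuinely different from the paper's. You build the one-sided directional derivative $u'(x_0;\cdot)$, show it is sublinear, use Hahn--Banach to conclude $u'(x_0;\cdot)=\pair{p}{\cdot}$ when $\de u(x_0)=\{p\}$, and then upgrade pointwise-in-direction convergence to $o(|h|)$ via a sphere-compactness/Lipschitz argument. The paper instead first proves a subgradient continuity statement: using the compactness of $\de u(K)$ for nested compacta $K_j\downarrow\{x\}$, it obtains $\lim_{y\to x,\ q\in\de u(y)}q=p$; then the two-sided slope inequality \eqref{slope} gives directly
\[
0\leq\frac{u(y)-u(x)-\pair{p}{y-x}}{|y-x|}\leq\pair{q-p}{e},\qquad e=\frac{y-x}{|y-x|},\ q\in\de u(y),
\]
so the right-hand side tends to $0$ and differentiability follows in one line. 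This same continuity statement then immediately yields $Du$ continuous, whereas you re-derive it separately via a closed-graph/subsequence argument. The trade-off: the paper's argument is shorter and packages both differentiability and $C^1$ into the single limit relation \eqref{contconv}, exploiting the already-available compactness of $\de u(K)$; your approach is more self-contained and closer to classical convex-analysis textbook arguments, avoiding any use of \eqref{slope} or $\de u(K)$ in the differentiability step.
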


\begin{proof}
	We first show that if $\de u(x) = \{p\}$ then $u$ is differentiable in $x$ with $Du(x) = p$. Consider a nested sequence of compact sets $\{K_j\}_{j \in \N}$ such that $\bigcap_{j \in \N} K_j = \{x\}$, then by compactness
	\[
	\{p\} = \bigcap_{j \in \mathbb{N}} \de u(K_j),
	\]
	and thus
	\begin{equation} \label{contconv}
	p = \lim_{\substack{y\, \to\, x \\ q\, \in\, \de u(y)}} q.
	\end{equation}
	By (\ref{slope}) we have for each $x,y \in X$ with $x \neq y$:
	\[
	0 \leq \frac{ u(y)-u(x) - \pair{p}{y-x} }{ |y-x| } \leq \pair{q-p}{e},
	\]
	where $e \defeq (y-x)/|y-x| \in \mathbb{S}^{n-1}$.\syid{Sph@\detokenize{$\mathbb{S}^{n-1}$}!the unit sphere in $\R^n$} Hence for $y \to x$, \eqref{contconv} shows that $u$ is differentiable in $x$ with $Du(x) = p$. This also completes the implication that $\de u$ being single valued implies that $u$ is differentiable everywhere. 
	
	Conversely, if $u$ is differentiable at $x$, let $p \in \de u(x) \neq \emptyset$. Again by (\ref{slope}) we have $t\pair{p}{e} \leq u(x+te) -u(x)$ for all $e \in \mathbb{S}^{n-1}$ and $t$ small. This implies that $\pair pe \leq \pair{Du(x)}e$, which, by linearity, forces $p = Du(x)$. This completes the first equivalence in \eqref{equivalences}.
	
	Finally, we show that $u$ convex is differentiable on $X \ \ \Leftrightarrow \ \ u \in C^1(X)$. The implication $(\Leftarrow)$ is obvious and the implication $(\Rightarrow)$ follows from (\ref{contconv}) since it now gives
	\[
	Du(x) = \lim_{y \to x} Du(y). \qedhere
	\]
\end{proof}

Since $u$ convex is locally Lipschitz by Theorem \ref{thm:convex_Lip}(b), one can deduce first-order differentiability properties of convex functions, by exploiting the following famous result of Rademacher~\cite{rademacher}.

\begin{thm}[Rademacher] \label{rade} \sid{Theorem!Rademacher}
	Let $\Omega\subset \R^n$ be open and $G\colon \Omega \to \R^m$ be Lipschitz continuous. Then $G$ is differentiable almost everywhere in $\Omega$, with respect to the Lebesgue measure.
\end{thm}

The unidimensional version of Rademacher's theorem can be proved as a corollary of the analogous result for monotone functions, which can be extended to functions of bounded variation thanks to Jordan's decomposition, and thus to the subspace of Lipschitz continuous functions (see e.g.\ \cite{evansgar,anr}). The passage to higher dimensions essentially uses the Fubini--Tonelli theorem (see e.g.\ \cite[Theorem~3.1.6]{fed:geo} or \cite[Theorem~2.2.4]{meast}).

An immediate consequence of Theorem \ref{thm:convex_Lip}(b) (convex functions are locally Lipschitz) and Rademacher's \Cref{rade} is the following.

\begin{cor} \label{rade:conv}
	A convex function is differentiable almost everywhere, with respect to the Lebesgue measure.
\end{cor}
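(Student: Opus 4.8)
The plan is immediate: \Cref{rade:conv} is simply the composition of two facts that are already in hand. First I would invoke \Cref{thm:convex_Lip}(b), which asserts that a convex function $u$ on an open convex set $X \subset \R^n$ is locally Lipschitz; more precisely, $u \in \Lip(K)$ for every compact $K \subset X$. Then I would cover $X$ by countably many such compact sets (for instance, a countable exhaustion $X = \bigcup_{j} K_j$ by compact sets, which exists since $X$ is open in $\R^n$, or equivalently cover $X$ by countably many closed balls contained in $X$), so that $u$ is Lipschitz on each piece of a countable cover of $X$.

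Second, I would apply Rademacher's \Cref{rade} on each $K_j$ (or on a slightly larger open set on which the Lipschitz bound still holds, so that differentiability in the interior makes sense): on each such set $u$ is differentiable outside a set of Lebesgue measure zero, say $N_j$. Taking $N := \bigcup_j N_j$, which is a countable union of null sets and hence null, we conclude that $u$ is differentiable at every point of $X \setminus N$, which is all of $X$ up to a Lebesgue-null set.

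There is essentially no obstacle here — the statement is a one-line corollary, and the only minor point of care is the bookkeeping that ``locally Lipschitz plus Rademacher'' upgrades to ``differentiable a.e.'' on all of $X$, which is handled by the countable-cover argument and the countable subadditivity of Lebesgue measure. (One should also note that almost-everywhere differentiability does not depend on the choice of cover, and that the differential agrees with $\de u$ wherever it exists, by \Cref{deusingv}, though this last remark is not needed for the statement.) If one wanted a fully self-contained alternative avoiding Rademacher in dimension $n$, one could instead use monotonicity of $\de u$ (\Cref{subdmon}) together with a Fubini-type slicing argument reducing to the one-dimensional case of differentiability a.e.\ of monotone functions, but given that \Cref{rade} is already stated, the direct route above is the natural one.
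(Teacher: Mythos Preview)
Your proposal is correct and matches the paper's approach exactly: the paper simply states that the corollary is an immediate consequence of \Cref{thm:convex_Lip}(b) (convex functions are locally Lipschitz) together with Rademacher's \Cref{rade}, without spelling out the countable-cover bookkeeping. Your write-up is in fact more detailed than the paper's one-line justification.
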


This is the first-order counterpart of Alexandrov's theorem on second-order differentiability of convex functions (\Cref{aleks}), which we are going to prove in \Cref{sec:so}.

\medskip
Finally, we conclude this section by highlighting two properties about sums and differences of convex functions, when one of them is quadratic. We note that it is false in general that $\de (u+v) = \de u + \de v$; nevertheless, the equality holds if, for example, $v = \phi$ is a quadratic function. 

\begin{lem} \label{lem:subdiffsum}
	Let $u$ be convex on a convex open set $X \subset \R^n$ and let $\phi$ be a quadratic convex function; then
	\begin{equation} \label{eq:equalitysubdiff}
	\de(u + \phi) = \de u + D\phi \quad \text{on $X$}.
	\end{equation}
\end{lem}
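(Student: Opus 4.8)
The plan is to establish the two inclusions comprising \eqref{eq:equalitysubdiff} separately, at each fixed $x \in X$. First I would record a convenient representation of the quadratic function: write $\phi(x) = \tfrac12 \pair{Px}{x} + \pair{b}{x} + c$ with $P = P^{\mathsf T}$ positive semidefinite (semidefiniteness is forced by the convexity of $\phi$), $b \in \R^n$, $c \in \R$. Then $D\phi(x) = Px + b$ and one has the exact second-order Taylor identity
\[
\phi(y) = \phi(x) + \pair{D\phi(x)}{y-x} + \tfrac12 \pair{P(y-x)}{y-x} \qquad \text{for all } x,y \in \R^n,
\]
which is the only property of $\phi$ I expect to use, and which lies at the heart of the argument.

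The inclusion ``$\supseteq$'' is the easy one. Since $\phi$ is convex and differentiable, \Cref{deusingv} gives $\de\phi(x) = \{D\phi(x)\}$ (alternatively, the Taylor identity together with $P \geq 0$ shows directly that $D\phi(x) \in \de\phi(x)$). For an arbitrary pair of functions the ``sum rule'' $\de u(x) + \de\phi(x) \subseteq \de(u+\phi)(x)$ holds by simply adding the two defining subgradient inequalities $u(\cdot) \geq u(x) + \pair{p}{\cdot-x}$ and $\phi(\cdot) \geq \phi(x) + \pair{D\phi(x)}{\cdot-x}$ on $X$. Hence $\de u(x) + D\phi(x) \subseteq \de(u+\phi)(x)$ for every $x \in X$.

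For the reverse inclusion I would fix $x \in X$, take $r \in \de(u+\phi)(x)$, and rewrite the defining inequality for $r$, substituting the Taylor identity for $\phi(x) - \phi(y)$, to obtain
\[
u(y) \geq u(x) + \pair{r - D\phi(x)}{y-x} - \tfrac12 \pair{P(y-x)}{y-x}, \qquad \forall\, y \in X.
\]
The hard part is that the remainder $-\tfrac12\pair{P(y-x)}{y-x}$ is \emph{nonpositive} (since $P \geq 0$), so this estimate is weaker than the subgradient inequality $u(y) \geq u(x) + \pair{r - D\phi(x)}{y-x}$ one wants, and cannot be used directly. I would resolve this by the standard scaling trick, which is exactly where quadraticity of $\phi$ is essential: for $t \in (0,1]$ and a given $y \in X$, apply the displayed inequality at $y_t := (1-t)x + ty$, which lies in $X$ by convexity of $X$, to get $u(y_t) - u(x) \geq t\pair{r-D\phi(x)}{y-x} - \tfrac{t^2}{2}\pair{P(y-x)}{y-x}$; combine this with the convexity estimate $u(y_t) - u(x) \leq t\big(u(y) - u(x)\big)$; divide by $t > 0$ and let $t \to 0^+$. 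The second-order remainder is annihilated while the first-order term survives, yielding $u(y) \geq u(x) + \pair{r - D\phi(x)}{y-x}$ for all $y \in X$, i.e.\ $r - D\phi(x) \in \de u(x)$, equivalently $r \in \de u(x) + D\phi(x)$. Together with the first inclusion, holding at every $x \in X$, this proves \eqref{eq:equalitysubdiff}.
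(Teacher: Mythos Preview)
Your proof is correct and is essentially the same approach as the paper's: both obtain the inequality $u(y) \geq u(x) + \pair{p}{y-x} - \tfrac12\pair{P(y-x)}{y-x}$ and then use a scaling argument together with the convexity of $u$ to kill the quadratic remainder. The paper phrases the scaling step geometrically, as a dilation of epigraphs centered at $(x,u(x))$ by factors $t \geq 1$ with $t \to \infty$, which unwinds to precisely your substitution $y_t = (1-t)x + ty$ with $t \to 0^+$.
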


\begin{proof}
	Fix $x \in X$. By the definition \eqref{defsubd} of the subdifferential we always have $\de (u+ \phi)(x) \supset \de u(x) + \de \phi(x)$, and since $\phi$ is smooth by \Cref{deusingv} we know that $\de \phi(x) = \{D\phi(x)\}$. So let
	\begin{equation} \label{barpinde}
	\barr p \in \de(u+\phi)(x)
	\end{equation}
	(which is nonempty by \Cref{prop:subd_convex}), and set $p \defeq \barr p - D\phi(x)$. We want to show that $p \in \de u(x)$. Since one can write 
	$$
	\phi(\cdot) = \phi(x) + \pair{D\phi(x)}{\cdot - x} + \tfrac12\pair{P(\cdot-x)}{\cdot-x},
	$$
	where $P\geq 0$ is such that $D^2\phi \equiv P$ on $X$, we see that \eqref{barpinde} is equivalent to
	\[
	u(\cdot) \geq \psi(\cdot) \defeq u(x) + \pair p{\cdot - x} - \tfrac12\pair{P (\cdot -x)}{\cdot - x}.
	\]
	Hence $\epi(u) \subset \epi(\psi)$. Now, any dilation $\rho_t$ by $t>0$ centered at $(x, u(x))$ preserves this inclusion; that is,
	\begin{equation}\label{sum_quad1}
	\rho_t(\epi(u)) \subset \rho_t(\epi(\psi)), \quad \forall \, t > 0.
	\end{equation}
	By the convexity of $u$ we also have 
	\begin{equation}\label{sum_quad2}
	\epi(u) \subset \rho_t\epi(u), \quad \forall \, t \geq 1. 
	\end{equation}
	Combining \eqref{sum_quad1} and \eqref{sum_quad2}, for all $t\geq 1$ one has $\epi(u) \subset \rho_t\epi(\psi)$ where 
	$$
	\rho_t\epi(\psi) = (x,u(x)) + t\,\big\{ (z-x,y-u(x)) \in (X-x) \times \R :\ y \geq \psi(z) \big\} 
	$$
	which can be written as
	$$
	\big\{ (z',y') \in (t(X-x)+x) \times \R :\ y' \geq u(x) + \pair{p}{z'-x} - \tfrac{1}{2t}\pair{P(z'-x)}{z'-x} \big\}.
	$$
	Letting $t \to +\infty$ shows that the dilations of the epigraph of $\psi$ decrease to the half-space $H$ which is the closed epigraph of the affine function $u(x) + \pair{p}{\cdot-x}$. Hence $\epi(u) \subset H$, yielding $p \in \de u(x)$.
\end{proof}

\begin{remark}
	One might wonder if an analogous result could hold for either $u$ or $\phi$ not necessarily convex. The answer is no, in general. For example, if $u = \psi$ is a quadratic convex function and $\phi = -2\psi$, then $\de(u+\phi)(x)$ is empty for each $x \in X$, while $\de u(x) + D\phi(x)$ is not.
	
	On the other hand, the answer is yes if both $u$ and $\phi$ are not convex (though one gets the useless identity $\emptyset = \emptyset$). In that case the right-hand side of \eqref{eq:equalitysubdiff} is clearly empty, and so is the left-hand side. Indeed, if it were nonempty, then $u+\phi$ would be convex (by \Cref{prop:subd_convex}); since $\phi$ is supposed to be smooth and not convex near $x$, it is (strictly) concave near $x$, yielding $u = (u+\phi) + (-\phi)$ convex near $x$ (being it the sum of two convex functions), thus contradicting the assumption that $u$ is not convex.
\end{remark}

In addition to the result of \Cref{lem:subdiffsum} on sums of a convex function $u$ and a quadratic convex function $\phi$, one has an interesting result about differences as well. It states that convex functions $u$ are differentiable at points $x$ of {\em quadratic upper contact} and is a precursor to the results on {\em upper contact jets} in Section~\ref{qcfaj} which play a key role in all that will follow.

\begin{lem}[Differentiability at points of quadratic upper contact] \label{ch1:datucp}
	Let $u$ be convex on $X$ (open and convex) and suppose there exists a quadratic function $\phi$ such that
\begin{equation}\label{QUTF}
	(u- \phi)(y) \leq 0 \qquad \text{$\forall y$ near $x \in X$, with equality at $x$}.
	\end{equation}
	Then $u$ is differentiable at $x$ and $Du(x) = D\phi(x)$.
\end{lem}

\begin{proof}
	Since $u$ is convex, $\de u(x)$ is nonempty and hence there exists $q\in\R^n$ such that $u(x) + \pair q{\cdot-x} \leq u(\cdot)$ on $X$. Consider now the convex function $\tilde u(\cdot) \defeq u(\cdot) - u(x) - \pair q{\cdot-x}$ which satisfies $\tilde u \geq 0$ on $X$ and $\tilde u(x) = 0$.
	Hence $0\in\de\tilde u(x)$ and then, since \syid{Der2@$D^2u$! the second-order derivative of $u$, also the Hessian of $u$}
	\begin{equation} \label{phiexp}
	\phi = \phi(x) + \pair{D\phi(x)}{\cdot-x} + \tfrac12\pair{D^2\phi(x)(\cdot-x)}{\cdot-x}, \qquad \phi(x) = u(x),
	\end{equation}
	by defining $\tilde\phi(\cdot) \defeq \phi(\cdot) - \phi(x) - \pair q{\cdot-x}$ one has
	\[
	(\tilde u- \tilde \phi)(y) \leq 0 \qquad \text{$\forall y$ near $x$, with equality at $x$.}
	\] 
	It follows that
	\[
	0 \leq \tilde u(y) \leq \pair{D\phi(x)-q}{y-x} + o(|y-x|)	\qquad	\forall \, y\ \text{near}\ x,
	\]
	and thus $q=D\phi(x)$. Indeed, if that were not the case, by choosing $y=x+t(D\phi(x)-q)$, for sufficiently small $t\in \R$, one has $0 \leq t + o(t)$ for each $t$ near $0$, which is false since $t$ can change sign.
	Finally, since  
	$0\leq \tilde u(y) \leq o(|y-x|)$ for all $y$ near $x$, one sees that
	\[
	\frac{|\tilde u(y)|}{|y-x|} = o(1) 	\qquad	\text{as}\ y\to x,
	\]
	and hence $\tilde u$ is differentiable at $x$ with $D\tilde u(x) = 0$, which shows that $u$ is differentiable at $x$ with $Du(x) = D\phi(x)$. Alternatively, since our argument shows that $\de u(x) = \{ D\phi(x) \}$, one knows that $u$ is differentiable at $x$ with $Du(x) = D\phi(x)$ by \Cref{deusingv}.
\end{proof}

\begin{remark}
	Note that a consequence of \Cref{ch1:datucp} is that \emph{all} upper contact quadratic functions for a convex function at some point $x$ share the same differential at $x$. 
\end{remark}

We conclude this section with some comments concerning our choice of the setting in which we consider convex functions.

\begin{remark}[On the setting of convex functions] \label{rem:convex} We have discussed real-valued convex functions on Euclidian spaces. Much of what we have presented can be generalized to extended real-valued functions on Banach spaces. We are only interested in the finite dimensional case, where we note that Chapter \ref{sec:mflds} will show that local convexity can be defined on manifolds by using the invariance of the notion under smooth coordinate changes. It is common to consider convex functions on $X$ convex as being $(-\infty, +\infty]$-valued functions, where $+\infty$ is an allowed value and one keeps track of the domain ${\rm dom}(u) \defeq \{x \in X: u(x) \neq +\infty\}$. This extension has the advantage in simplifying many things, but, for us, will only have a substantial impact when we consider the Legendre transform in Appendix \ref{ap:legalex}. This is because the Legendre transform is more natural when defined for extended real-valued convex functions. In particular, this would avoid the pathologies that lead to the counterexample discussed in Remark \ref{rem:subdiff}. Another example of the utility of considering  $(-\infty, +\infty]$-valued convex functions concerns the validity of an identity in the subdifferential of the sum of convex functions, as discussed in Lemma \ref{lem:subdiffsum}. Under the validity of the {\em Attouch--Brezis condition} (see \cite[formula~(0.3)]{attbre}) the sum formula continues to hold.

On the other hand, having to keep track of the domain ${\rm dom}(u)$, where $u$ is finite, would complicate many of the statements we give. In particular, this would be true for Alexandrov's theorem and the discussion of upper contact jets, which are essential for the viscosity theory. In addition, we want to consider locally convex functions in relation to a general potential theory, where the generalized subharmonics are $[-\infty, +\infty)$-valued upper semicontinuous functions. The choice of this codomain is natural for the viscosity theory for two reasons. First, one maintains the Weierstrass's theorem: upper semicontinuous functions assume a real-valued maximum value on each compact subset. This is fundamental when using a contradiction argument for the comparison principle, amongst other things. Second, allowing the value $-\infty$ gives rise to the standard viscosity theory localization of subsolutions/subharmonics by extendinng the function to be $-\infty$ outside of a set of interest. This is because there are no upper contact functions and hence the extension is automaticallly a subsolution/subharmonic. We note that with this convention, an upper semicontinuous function $u$ is a viscosity subsolution of the mininal eigenvalue operator if and only if it is locally convex on the connected components where $u$ is not identically $-\infty$.
	
	\end{remark}

\section{Second order theory: Alexandrov's theorem} \label{sec:so}

We begin with the following well-known definition.

\begin{definition} \sid{differentiable (twice, in the Peano sense)}
	Let $X \subset \R^n$ be open. A function $u \colon X \to \R$ is \emph{twice differentiable (in the Peano sense) at $x \in X$} if there exist $p \in \R^n$ and a symmetric $n\times n$ real matrix $A \in \call S(n)$ \syid{Symn@$\mathcal{S}(n)$! the space of all symmetric $n\times n$ real matrices} such that
	\begin{equation} \label{2od:def}
	u(y) = u(x) + \pair{p}{y-x} + Q_A(y-x) + o(|y-x|^2) \quad \text{as $y \to x$},
	\end{equation}
	where $Q_A \defeq \frac12 \pair{A\cdot}\cdot$ is the quadratic form associated to $\frac12 A \in \call S(n)$ whose Hessian is $A$.\syid{QA@$Q_A$!the quadratic form associated to the matrix $\frac12 A \in \mathcal{S}(n)$} We will denote by \syid{Diffk@$\mathrm{Diff}^k u$!the set of points where $u$ is $k$ times ($k \in \{1,2\}$) differentiable (in the Peano sense)}
	\begin{equation*}
	\mathrm{Diff}^2 u \defeq  \{x \in X: \ \text{$u$ is twice differentiable in $x$} \}.
	\end{equation*}
\end{definition}
The following elementary lemma will be used often.

\begin{lem}
	Let $u: X \to \R$ and $x \in X$ and suppose that there exist $p \in \R^n$ and $A \in \call S(n)$ such that \eqref{2od:def} holds. Then the following hold:
	\begin{itemize}
		\item[(a)] $u$ is differentiable in $x$ with gradient $Du(x) = p$ (in particular, $p$ is unique);
		\item[(b)] $A$ is unique as well, and we denote $D^2u(x) = A$.
	\end{itemize}
\end{lem}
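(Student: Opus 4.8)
The plan is to prove both uniqueness statements by exploiting the quadratic rate of vanishing in the Peano expansion \eqref{2od:def} and testing along rays. First I would address (a). Suppose \eqref{2od:def} holds with some $p \in \R^n$ and $A \in \call S(n)$. Subtracting the zeroth-order term and dividing by $|y-x|$, one gets
\[
\frac{u(y) - u(x) - \pair{p}{y-x}}{|y-x|} = \frac{Q_A(y-x) + o(|y-x|^2)}{|y-x|} = o(|y-x|) \to 0 \quad \text{as } y \to x,
\]
since $Q_A(y-x) = O(|y-x|^2)$. This is precisely the statement that $u$ is (Fr\'echet) differentiable at $x$ with $Du(x) = p$. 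Uniqueness of $p$ is then the standard uniqueness of the Fr\'echet derivative: if $p_1, p_2$ both work, then $\pair{p_1 - p_2}{y-x} = o(|y-x|)$, and choosing $y = x + t(p_1 - p_2)$ for $t \to 0$ forces $|p_1 - p_2|^2 = o(1)$, hence $p_1 = p_2$.

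Next I would prove (b). Suppose \eqref{2od:def} holds with the same (now unique) $p = Du(x)$ but with two matrices $A_1, A_2 \in \call S(n)$. Subtracting the two expansions and cancelling the common terms $u(x) + \pair{p}{y-x}$ gives $Q_{A_1}(y-x) - Q_{A_2}(y-x) = o(|y-x|^2)$, i.e. $Q_{A_1 - A_2}(h) = o(|h|^2)$ as $h \to 0$. Writing $B \defeq A_1 - A_2 \in \call S(n)$, for any fixed unit vector $e \in \mathbb{S}^{n-1}$ and $h = te$ with $t \to 0^+$, this reads $\tfrac12 t^2 \pair{Be}{e} = o(t^2)$, hence $\pair{Be}{e} = 0$ for every unit vector $e$, and by homogeneity for every vector. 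Since $B$ is symmetric, the quadratic form $e \mapsto \pair{Be}{e}$ vanishing identically forces $B = 0$ (e.g. via polarization $2\pair{Be}{f} = \pair{B(e+f)}{e+f} - \pair{Be}{e} - \pair{Bf}{f}$, or diagonalization), so $A_1 = A_2$. This justifies the notations $Du(x) = p$ and $D^2 u(x) = A$.

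There is no serious obstacle here; the only point requiring a little care is the logical order — one must first establish (a) (hence fix $p$ uniquely) before subtracting the two second-order expansions in (b), since a priori the two expansions realizing \eqref{2od:def} could use different linear parts; but part (a) shows the linear part is forced to be $Du(x)$ in either case, so they coincide and the subtraction is legitimate. Everything else is the elementary fact that a symmetric bilinear form is determined by its associated quadratic form over $\R$.
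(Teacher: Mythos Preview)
Your proof is correct and follows essentially the same approach as the paper: differentiability and uniqueness of $p$ via the ray test $y = x + t(p_1 - p_2)$, and uniqueness of $A$ by subtracting the two expansions and testing along rays. The only cosmetic difference is that in part (b) the paper tests directly along an eigenvector of $A_1 - A_2$ with nonzero eigenvalue to get an immediate contradiction, whereas you test along all unit directions and then invoke polarization; both are equivalent one-line linear algebra facts.
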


\begin{proof}
	It is clear that twice differentiability at $x$ implies differentiability at $x$ since \eqref{2od:def} implies
	\begin{equation}\label{Diff1}
	u(y)  = u(x) + \pair{p}{y-x} + o(|y-x|) \quad \text{as $y \to x$}.
	\end{equation}
	The uniqueness of $p$ follows easily from \eqref{Diff1}. In fact, if \eqref{Diff1} holds for $p$ and $p'$ with $p \neq p'$ one has
	$$
	\pair{p-p'}{y-x} = o(|y-x|) \quad \text{as $y \to x$},
	$$
	but choosing $y\defeq x+t(p-p')$ with $t \neq 0$ so small that $y \in X$ one obtains
	$$
	t|p - p'|^2 = |p - p'| o(t) \quad \text{as $t \to 0$},
	$$
	which gives $|p - p'| = o(1)$ for $t \to 0$, which contradicts $p-p'\neq 0$.
	
	To see that $A$ is also unique, suppose that there exist $A \neq A' \in \call S(n)$ such that \eqref{2od:def} holds with $p = Du(x)$. Hence one has
	$$
	\pair{(A - A')(y-x)}{y-x} = o(|y-x|^2) \quad \text{as $y \to x$}.
	$$
	Choosing $y = x + te$ for some eigenvector $e$ of $A-A'$ with eigenvalue $\lambda \neq 0$ one obtains a contradiction, since $\lambda t^2 = o(t^2)$ for $t \to 0$.
\end{proof}

It is well-known that a twice differentiable locally convex function $u$ can be characterized by having positive semidefinite Hessian (see the next \Cref{prop:Hpd}). In fact, this follows from the more general property that locally convex functions are those having non-negative Hessian in the viscosity sense; in other words, they are viscosity subsolutions of $\lambda_1(D^2 u) = 0$, where $\lambda_1$ denotes the minimal eigenvalue operator. This will be the content of \Cref{prop:P_Ptilde}(a), which uses the potential-theoretic formalism of \Cref{sas}.

\begin{prop} \label{prop:Hpd}
Let $u \colon X \to \R$ with $X \subset \R^n$ open. Then
\[
\mbox{$u$ locally convex in $X \quad \implies \quad D^2 u \geq 0$ on $\mathrm{Diff}^2 u$},
\]
and the converse is true if $\mathrm{Diff}^2 u = X$.
\end{prop}

\begin{proof}
It suffices to prove the proposition under the additional assumption that $X$ is convex, so that $u$ being convex is equivalent to $u$ being locally convex (by \Cref{lem:localization}). Indeed, if the proposition holds in that case, then it suffices to apply it to each open ball (hence convex) contained in $X$ to prove the general statement. 

So, suppose that $u$ is convex in $X$ (convex). We are going to prove the more general property that, for each $x \in X$, one has $D^2\phi \geq 0$ for each quadratic upper test function $\phi$ for $u$ in $x$ (that is, a quadratic function satisfying \eqref{QUTF}). Let $t \in [0,1)$ and let $v \in \R^n$ be such that $x+v \in X$. By the convexity of $u$ and the properties of $\phi$, we have
\[
\begin{split}
u(x+tv) \leq tu(x+v)+(1-t)u(x) &\leq t\phi(x+v) + (1-t)\phi(x) 
\\
&= \phi(x) + t \pair{D\phi(x)}{v} + \frac{t}{2}\pair{D^2\phi\, v}{v},
\end{split}
\]
where the last equality comes from the second order Taylor formula of $\varphi$ quadratic (see formula~\eqref{phiexp}). By \Cref{ch1:datucp} we know that $u$ is differentiable at the point $x$ of upper quadratic contact with $Du(x) = D\phi(x)$, and $\phi(x) = u(x)$, so the above inequalities give
\begin{equation} \label{D2phi>01}
t\pair{D^2\phi\,v}{v} \geq u(x + tv) -  u (x) - \pair{Du(x)}{tv} = o(t|v|).
\end{equation}
Since $X$ is open, we can choose any $v \in B_\delta(0)$ for some sufficiently small $\delta > 0$, so \eqref{D2phi>01} is equivalent to 
\[
\pair{D^2\phi\,\nu}{\nu} \geq o(1) \quad \text{as}\ t \dto 0, \quad \forall \nu \in \mathbb S^{n-1},
\]
which implies that $D^2\phi$, as desired.

Now, if $u$ is twice differentiable at $x$, then in the above argument one can choose the quadratic upper test function $\phi_{\epsilon}(\cdot) = u(x) + \pair{Du(x)}{\cdot-x} + \frac12\pair{(D^2u(x)+\epsilon I)(\cdot-x)}{\cdot-x}$, for each $\epsilon > 0$. This yields $D^2u(x) \geq -\epsilon I$ for all $\epsilon > 0$ and the conclusion follows by letting $\epsilon \dto 0$. For the converse, if $u$ is twice differentiable on $X$, then given $x,y \in X$, by the mean value theorem, there exists $z \in [x,y]$ such that
\[
\pair{Du(x)-Du(y)}{x-y} = \pair{D^2u(z)(x-y)}{x-y} \geq 0.
\]
Then it suffices to recall that the monotonicity of $Du$ characterizes the convexity of differentiable functions $u$ (\Cref{conviffmon}).
\end{proof}

We conclude this first section by stating the celebrated theorem of Alexandrov on second-order differentiability of convex functions~\cite{alex}. For the convenience of the reader, a complete proof of this crucial theorem can be found in Appendix~\ref{ap:legalex}, complemented by Appendix~\ref{proofsard}.

\begin{thm}[Alexandrov] \label{aleks} \sid{Theorem!Alexandrov}
	Let $X \subset \R^n$ be open and convex, and let $u \colon X \to \R$ be convex. Then $u$ is twice differentiable (in the Peano sense) almost everywhere in $X$, with respect to the Lebesgue measure.
\end{thm}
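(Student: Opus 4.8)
The plan is to prove Alexandrov's theorem by reducing the problem to the first-order differentiability of a suitable auxiliary map, following the classical Legendre-transform route which the paper itself indicates will be used in the appendices. First I would reduce to the case of a bounded convex function: since twice differentiability is a local property, it suffices to prove the statement on an arbitrary open convex set $X' \ssubset X$, and on such a set $u$ is bounded by \Cref{lem:convex_loc_bdd}. Moreover, by adding a large multiple of $|x|^2$ we may assume $u$ is \emph{uniformly convex}, so that $f := u + \tfrac{\lambda}{2}|\cdot|^2$ satisfies $D^2 f \geq \lambda I$ in the subdifferential sense; this only shifts the Hessian by a constant and does not affect the set $\Diff^2 u$.

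The core of the argument is then to consider the \emph{Legendre transform} $f^*(p) := \sup_{x}\big(\pair px - f(x)\big)$, which is again convex (as a sup of affine functions), finite on an open set, and Lipschitz on compacts by \Cref{thm:convex_Lip}; hence by \Cref{rade:conv} it is differentiable a.e. The key structural facts I would establish are: (i) $p \in \de f(x)$ if and only if $x \in \de f^*(p)$ (the involutive duality of subgradients), so $\de f^*$ is a kind of inverse of $\de f$; and (ii) the uniform convexity of $f$ forces the map $\de f$ to be single-valued and, more importantly, forces $f^*$ to be $C^{1,1}$ — in fact $\de f^*$ is Lipschitz with constant $1/\lambda$ — because the monotonicity inequality \eqref{eq:subdmon} for $\de f$ upgrades, under strong convexity, to $\pair{q-p}{y-x} \geq \lambda|x-y|^2$, which inverts to a Lipschitz bound on $\de f^*$. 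One then shows that at every point $p$ where $\de f^*$ is differentiable \emph{with invertible derivative}, the point $x = Df^*(p)$ is a point of second-order differentiability of $f$, with $D^2 f(x) = (D(Df^*)(p))^{-1}$: this is a Taylor-expansion computation using the duality $\pair px - f(x) = f^*(p)$ together with the first-order expansion of $Df^*$ at $p$.

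It remains to control the set of \emph{bad} points — those $x$ which are images under $Df^*$ of points $p$ where either $Df^*$ fails to be differentiable, or is differentiable with singular (non-invertible) derivative. The first type has measure zero in $p$-space by Rademacher, but we need its image under $Df^*$ to have measure zero in $x$-space; since $Df^*$ is Lipschitz, Lipschitz maps send null sets to null sets, so this is fine. The genuinely delicate part — and I expect this to be the main obstacle — is the set of $p$ where $Df^*$ is differentiable but $D(Df^*)(p)$ is singular: the image of this set under $Df^*$ must be shown to be Lebesgue-null. This is exactly a \emph{Lipschitz version of Sard's theorem} (the statement the paper flags as \Cref{sard} in the appendix): the set of critical points of a Lipschitz map, where the (a.e.-defined) derivative is singular, has null image. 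The proof of that Sard-type lemma is the technical heart; it is typically done by covering the critical set at each scale by cubes on which the map, being close to its singular linear part, has image squeezed into a thin slab, and summing the resulting volume estimates. Granting \Cref{sard}, the union of the two bad image sets is null, its complement consists of points of twice differentiability of $f$, and subtracting back $\tfrac{\lambda}{2}|\cdot|^2$ gives twice differentiability of $u$ a.e.
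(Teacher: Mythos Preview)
Your proposal is correct and follows essentially the same Legendre-transform route as the paper's proof in Appendix~A: quadratic perturbation $f = ru + \tfrac12|\cdot|^2$, the contractive inverse map $G = \de(\scr Lf)$, the key \Cref{mplt} that differentiability of $G$ with invertible derivative at $y_0$ yields twice differentiability of $f$ at $G(y_0)$, and the Lipschitz Sard \Cref{sard} to kill the critical-value set. The only cosmetic difference is that the paper keeps the perturbation parameter $r$ free and sends $r\dto 0$ at the end (so $\delta = 2\sqrt{rM}\dto 0$ and $X_{2\delta}\uparrow X$), whereas you absorb the boundary loss into the initial localization $X'\ssubset X$; both work.
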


The proof we offer in Appendix \ref{ap:legalex} depends on two ingredients, which we can roughly describe here as a crucial property about the non-critical points of the Legendre transform of convex quadratic perturbations of a convex function (see Lemma \ref{mplt}) and a Lipschitz version of Sard's theorem (see Lemma \ref{sard}), which shows that the set of ``bad'' points (the critical values) has Lebesgue measure zero. For completeness, we also give a proof of Lemma~\ref{sard} in Appendix \ref{proofsard} which relies on the Besicovitch covering Lemma~\ref{app:bes}, whose proof is also given. The reader is invited to have a look at Appendix~\ref{ap:legalex} for a presentation of the Legendre transform and a more detailed discussion of the needed ingredients.

\chapter{Semiconvex functions and upper contact jets} \label{chap:SCUCJ}  

With Alexandrov's theorem now in hand, this chapter will discuss the next important ingredient for the theory of viscosity solutions of differential equations and differential inclusions that involve semiconvex functions. This detailed analysis concerns upper contact jets and upper contact points of (locally) semiconvex functions and culminates in two important results: the Upper Contact Jet Theorem and the Summand Theorem. The main question concerns the existence of a sufficient amount of upper (and lower) test functions. The proof of the most delicate parts of this analysis will also make use of so-called Jensen--S{\l}odkowski Theorem, which is the subject of the following chapter. In addition, in order to appreciate the notion of semiconvex/semiconcave functions, we will present a known characterization of Lipschitz functions with Lipschitz gradients. 


\section{Semiconvex functions}

The definition we adopt of a (locally) semiconvex function is the following.

\begin{definition} \label{def:qc}
	A function $u\colon C\to \R$ is said $\lambda$-\emph{semiconvex}\sid{convex!function!semi- (\emph{also} locally semi-)}\sid{semiconvex!function (\emph{also} locally)|seeonly {convex, function, semi-}} on the convex set $C \subset \R^n$ if there exists a real number $\lambda \geq 0$ such that the function $u + \frac{\lambda}{2}|\cdot|^2$ is convex on $C$.
	
	We say that $u$ is \emph{locally semiconvex} on $X$ if for every $x\in X$ there exists a ball $B\subset X$ with $x\in B$ such that $u$ is $\lambda$-semiconvex on $B$, for some nonnegative real number $\lambda = \lambda(x)$. 
\end{definition}

Note that we have included convexity into the definition when $\lambda = 0$. In many proofs involving semiconvex functions, we can assume without loss of generality that the functions are in fact convex. For instance, this is possible for results about differentiability, since the quadratic perturbation $\frac{\lambda}{2}|\cdot|^2$ is smooth. This is the case for Alexandrov's~\Cref{aleks}, which clearly extends from convex to locally semiconvex functions; that is, the following result holds.

\begin{thm}[Alexandrov: locally semiconvex version] \label{aleks:qc} \sid{Theorem!Alexandrov!locally semiconvex version}
	Let $X \subset \R^n$ be open and let $u$ be locally semiconvex on $X$. Then $u$ is twice differentiable (in the Peano sense) almost everywhere on $X$ with respect to the Lebesgue measure.
\end{thm}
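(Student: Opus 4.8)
The plan is to reduce the locally quasi-convex case to the convex case, which is precisely Alexandrov's \Cref{aleks}, by exploiting the fact that the quadratic perturbation $\frac{\lambda}{2}|\cdot|^2$ is a polynomial and hence contributes nothing to the exceptional set. First I would fix a point $x_0 \in X$ and, using local quasi-convexity (\Cref{def:qc}), choose an open ball $B = B(x_0) \subset X$ and a constant $\lambda = \lambda(x_0) \geq 0$ such that $v \defeq u + \frac{\lambda}{2}|\cdot|^2$ is convex on $B$. Since $B$ is open and convex, \Cref{aleks} applies to $v$: there is a set $N_B \subset B$ of Lebesgue measure zero such that $v$ is twice differentiable in the Peano sense at every point of $B \setminus N_B$.

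Next I would observe that the map $y \mapsto \frac{\lambda}{2}|y|^2$ is smooth, hence twice differentiable in the Peano sense at every point, with the expansion $\frac{\lambda}{2}|y|^2 = \frac{\lambda}{2}|x|^2 + \pair{\lambda x}{y-x} + Q_{\lambda I}(y-x)$ (this is exact, so the remainder is identically zero, in particular $o(|y-x|^2)$). Subtracting this expansion from the Peano expansion \eqref{2od:def} for $v$ at a point $x \in B \setminus N_B$ shows that $u = v - \frac{\lambda}{2}|\cdot|^2$ admits the expansion
\[
u(y) = u(x) + \pair{Du(x)}{y-x} + Q_{D^2u(x)}(y-x) + o(|y-x|^2) \quad \text{as } y \to x,
\]
with $Du(x) = Dv(x) - \lambda x$ and $D^2u(x) = D^2v(x) - \lambda I$; that is, $B \setminus N_B \subset \Diff^2 u$, so $u$ is twice differentiable a.e.\ on $B$. (Here I use \Cref{lem:Diff2} to know these data are well defined.)

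Finally I would globalize by a standard covering argument: $X$ is an open subset of $\R^n$, hence Lindelöf, so the open cover $\{B(x_0)\}_{x_0 \in X}$ constructed above admits a countable subcover $\{B_k\}_{k \in \N}$. On each $B_k$ the set $N_k$ of points where $u$ fails to be twice differentiable has measure zero by the previous step, and the set of non-twice-differentiability points of $u$ on $X$ is contained in $\bigcup_{k \in \N} N_k$, which is a countable union of null sets and therefore null. Hence $u$ is twice differentiable (in the Peano sense) at Lebesgue-a.e.\ point of $X$. There is no serious obstacle here: the only things to be careful about are that the perturbation constant $\lambda$ may depend on the point (handled by working locally first), that one must invoke a countable subcover so as not to take an uncountable union of null sets, and that the Peano-expansion data transform correctly under the smooth shift — all routine given \Cref{aleks}, \Cref{lem:Diff2} and \Cref{def:qc}.
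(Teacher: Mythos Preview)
Your proof is correct and follows essentially the same approach as the paper: extract a countable subcover of balls on which $u$ is $\lambda_i$-quasi-convex (via the Lindel\"of property), apply Alexandrov's \Cref{aleks} to each convex perturbation $v_i = u + \frac{\lambda_i}{2}|\cdot|^2$, note that $\Diff^2 v_i = \Diff^2 u$ because the quadratic is smooth, and conclude by $\sigma$-subadditivity. The paper is slightly terser in asserting $\Diff^2 v_i = \Diff^2 u$ directly rather than writing out the expansion subtraction, but the argument is the same.
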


\begin{proof}
	Since the Euclidean space is Lindel{\"o}f (that is, from any open covering of a set we can extract a countable subcovering), we know that there exists a countable family of balls $\{ B_i \}_{i\in\N}$ such that $u$ is $\lambda_i$-semiconvex on each $B_i \subset X$, for some $\lambda_i \geq 0$, and $X = \bigcup_{i\in\N} B_i$. Since each $v_i \defeq u + \frac{\lambda_i}2|\cdot|^2$ is convex on $B_i$, by Alexandrov's \Cref{aleks} \syid{abs@\detokenize{$\vert x\vert$}!if $x$ is a set, the Lebesgue measure of $x$}
	\[
	\big| B_i \setminus \Diff^2\! v_i \big| = 0 \quad \forall i \in \N,
	\]
	where we also note that $\Diff^2\! v_i = \Diff^2\! u$ since the norm squared is a smooth function. Hence we have, by the $\sigma$-subadditivity of the Lebesgue measure,
	\[
	\big| X \setminus \Diff^2\! u \big| \leq \sum_{i\in\N} \big| B_i \setminus \Diff^2\! u \big| = 0;
	\]
	that is, $u$ is twice differentiable almost everywhere on $X$, as desired.
\end{proof}

\begin{remark} \label{rmk:ch2:datucp}
	Another property that semiconvex functions inherit from convex functions is the differentiability at points of quadratic upper contact (\Cref{ch1:datucp}); indeed, note that if $\phi$ is an upper contact quadratic test function for a $\lambda$-semiconvex function $u$ at some point $x$, then $\tilde\phi \defeq \phi + \frac\lambda2|\cdot|^2$ is an upper contact quadratic test function for the convex function $v \defeq u + \frac\lambda2|\cdot|^2$ at $x$, and one recovers the identity $Du(x) = D\phi(x)$. This property will be used shortly in the proof of \Cref{charc11}, and it will be also paraphrased in the language of {\em upper contact jets} later on in \Cref{datucp}.
\end{remark}

\section{A semiconvexity characterization of $C^{1,1}$}

It is well known that a function is affine if and only if it is simultaneously convex and concave. 
An interesting fact which helps to understand the notion of semiconvexity is that functions which are both semiconvex and semiconcave must be differentiable with Lipschitz gradient (that is, of class $C^{1,1}$).\syid{C11@$C^{1,1}(X)$!the space of functions in $C^1(X)$ with Lipschitz gradient on $X$}
Of course, we say that $u$ is semiconcave if $-u$ is semiconvex. 

This characterization is included in \cite{hlkry} as an appendix, and was previously to be found in~\cite{hlqc}, in an appendix beginning as follows: ``It is interesting that the condition that a function be $ C^{1,1}$ is directly related to semiconvexity, in fact it is equivalent to the function being simultaneously semiconvex and semiconcave. This was probably first observed by Hiriart-Urruty and Plazanet in \cite{hiriart}. An alternate proof appeared in \cite{eber}.''

The proof exploits the fact that semiconvexity is preserved when one convolves with a \emph{mollifier},\sid{mollifier} that is a function $\eta$ such that \syid{Cinftyc@$C^{\infty}_\mathrm{c}(X)$!the space of smooth (i.e., differentiable to all orders) functions with compact support on $X$} \syid{supp@$\mathrm{supp}(u)$!the support of $u$, namely the closure of the set of points where $u$ does not vanish} 
\[
\eta \in  C_\mathrm{c}^\infty(\R^n), \quad \mathrm{supp}(\eta) \subset B_1,\quad \eta \geq 0,\quad \int_{\R^n} \eta = 1.
\]
This fact is formalized in the following lemma, where we use the standard notation \syid{epsilon@$\eta_\epsilon$ mollifier family (\emph{see also} $u_\epsilon$)!see~(\ref{etaepsdef})}
\begin{equation} \label{etaepsdef}
\eta_\epsilon \defeq \frac1{\epsilon^n}\,\eta\!\left(\frac\cdot\epsilon\right), \quad \forall \epsilon > 0,
\end{equation}
so that the collection $\{\eta_\epsilon\}_{\epsilon>0}$ is be the so-called \emph{approximation of the identity} based on $\eta$ (cf., e.g., \cite[Section~9.2]{wheedzyg}).

\begin{lem} \label{convqc}
	Suppose $u$ is $\lambda$-semiconvex on $\R^n$. Let $\eta$ be a mollifier, and let $\{\eta_\epsilon\}_{\epsilon>0}$ the approximation of the identity based on $\eta$. Then $u_{\epsilon} \defeq u \ast \eta_\epsilon$ is $\lambda$-semiconvex. \syid{uepsb@$u_\epsilon$ (\emph{see also} $\eta_\epsilon$)!see Lemma~\ref{convqc} or~(\ref{mollify})} \syid{ast@$\ast$!the integral convolution operator} \sid{approximation!smooth}
\end{lem}

\begin{proof}
	Let $f\defeq u + \frac\lambda2 |\cdot |^2$ be convex. It is well known that the mollification $f_{\epsilon} \defeq f \ast \eta_\epsilon$ is smooth and, moreover, is still convex. Indeed, since
	\[\begin{split}
	f_{\epsilon}(tx + (1-t)z) &\defeq \int_{\R^n} f(tx + (1-t)z - y) \eta_{\epsilon}(y) \, \di y 
	\\
	&= \int_{\R^n} f(t(x-y) + (1-t)(z - y)) \eta_{\epsilon}(y) \, \di y,
	\end{split}\]
by the convexity of $f$ and the nonnegativity of $\eta$ one has
$$
f_{\epsilon}(tx + (1-t)z) \leq \int_{\R^n} \bigl( tf(x-y) + (1-t)f(z - y) \bigr) \eta_{\epsilon}(y)  \, \di y \defeq  t f_{\epsilon}(x) + (1-t) f_{\epsilon}(z).
$$
In addition, by making the change of variables $z = \epsilon y$ and using that $\int_{\R^n} \eta = 1$, one has
	\[ \begin{split}
	\left(|\cdot|^2  \ast \eta_\epsilon \right)\!(x) \defeq \int_{\R^n} |x-z|^2\eta(z/\epsilon) \epsilon^{-n} \, \di z 
	= |x|^2 - 2\epsilon \pair{x}{a} + \epsilon^2 b \defeq  q_{\epsilon}(x),
	\end{split} \]
	where
	\[
	a \defeq \int_{\R^n} y\, \eta(y)\, \di y \in \R^n \quad \text{and}\quad b \defeq \int_{\R^n} |y|^2\,\eta(y)\, \di y \in \R.
	\]
	Therefore, since $f_{\epsilon}$ is smooth and convex\syid{I@$I$!the identity matrix}
	$$
		0 \leq D^2 f_{\epsilon} = D^2 \Bigl(u_{\epsilon} + \frac{\lambda}{2} q_{\epsilon}\Bigr) = D^2u_{\epsilon} + \lambda I,
		$$ 
		and the thesis follows from \Cref{prop:Hpd}.
\end{proof}

\begin{prop} \label{charc11}
	$u$ is of class $\lambda$-$ C^{1,1}$ (that is, $u \in C^{1,1}$ and the Lipschitz constant of $Du$ is $\lambda$) if and only if both $\pm u$ are $\lambda$-semiconvex.
\end{prop}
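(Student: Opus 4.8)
The plan is to prove both implications by working with the convex perturbations $u_\pm \defeq \pm u + \frac\lambda2|\cdot|^2$ and transferring between a pointwise second-order control and the $C^{1,1}$ bound via mollification. Throughout we may assume $u$ is defined on all of $\R^n$ (the statement is local in nature, so one reduces to a ball and uses that mollifiers shrink supports; alternatively one argues directly on $\R^n$).

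For the implication ($\Rightarrow$), suppose $u \in C^{1,1}$ with $\Lip(Du) = \lambda$. Then for every $x$ and every unit vector $e$, the one-variable function $t \mapsto \langle Du(x+te), e\rangle$ is Lipschitz with constant $\lambda$, so its distributional derivative lies in $[-\lambda,\lambda]$; integrating, $t \mapsto u(x+te) + \frac\lambda2 t^2$ has nondecreasing derivative, i.e.\ is convex. Since convexity on all segments is equivalent to convexity (cf.\ the remark after \Cref{defn:convex}, or \Cref{conviffmon}), $u + \frac\lambda2|\cdot|^2$ is convex, so $u$ is $\lambda$-quasi-convex; replacing $u$ by $-u$ gives the same for $-u$.

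For the implication ($\Leftarrow$), assume $u_\pm = \pm u + \frac\lambda2|\cdot|^2$ are both convex. First mollify: by \Cref{convqc}, $u \ast \eta_\epsilon$ is $\lambda$-quasi-convex for every $\epsilon > 0$, and likewise $-u \ast \eta_\epsilon = -(u\ast\eta_\epsilon)$ is $\lambda$-quasi-convex; moreover $u\ast\eta_\epsilon$ is smooth. For a smooth function $v$ with both $\pm v + \frac\lambda2|\cdot|^2$ convex, one has $D^2 v + \lambda \I \geq 0$ and $-D^2 v + \lambda\I \geq 0$ pointwise, i.e.\ $-\lambda\I \leq D^2v \leq \lambda\I$, hence $\trinorm{D^2 v} \leq \lambda$ in operator norm everywhere; by the mean value theorem along segments this gives $\Lip(Dv) \leq \lambda$. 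Applying this to $v = u\ast\eta_\epsilon$, the family $\{u\ast\eta_\epsilon\}$ is uniformly Lipschitz on compacta with gradients uniformly Lipschitz with constant $\lambda$. Since $u$ is continuous (it is locally quasi-convex, hence locally the sum of a convex function and a smooth one, and convex functions on open sets are continuous by \Cref{thm:convex_Lip}), $u\ast\eta_\epsilon \to u$ locally uniformly; by Arzelà–Ascoli applied to the gradients, along a subsequence $D(u\ast\eta_\epsilon)$ converges locally uniformly to some continuous $G$ with $\Lip(G) \leq \lambda$, and passing to the limit in $u\ast\eta_\epsilon(y) = u\ast\eta_\epsilon(x) + \int_0^1 \langle D(u\ast\eta_\epsilon)(x+t(y-x)), y-x\rangle\,\di t$ identifies $G = Du$. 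Thus $u \in C^{1,1}$ with $\Lip(Du) \leq \lambda$; combined with the reverse direction already proved (a $\mu$-$C^{1,1}$ function with $\mu < \lambda$ would be $\mu$-quasi-convex, forcing the optimal $\lambda$ in the hypothesis down), one gets $\Lip(Du) = \lambda$ exactly.

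The main obstacle is the passage from the smooth mollified estimate back to a statement about $u$ itself: one must be careful that the convergence $u\ast\eta_\epsilon \to u$ is strong enough (locally uniform) to pass to the limit in the integral representation of the gradient, and that the uniform Lipschitz bound on $D(u\ast\eta_\epsilon)$ survives the limit. This is where \Cref{convqc}, the continuity of quasi-convex functions, and a compactness argument (Arzelà–Ascoli) do the work; the purely algebraic fact that $-\lambda\I \leq D^2 v \leq \lambda\I$ is equivalent to $\Lip(Dv)\leq\lambda$ for smooth $v$ is routine and will be stated without much ado.
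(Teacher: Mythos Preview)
Your proof is correct and follows the same overall architecture as the paper's (mollify via \Cref{convqc}, establish the smooth case via the two-sided Hessian bound, then pass to the limit), but the two directions are each executed a bit differently.

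For ($\Rightarrow$) the paper argues directly via the monotonicity criterion for convexity (\Cref{conviffmon}): from $|Du(x)-Du(y)|\leq\lambda|x-y|$ one gets $\langle Df(x)-Df(y),x-y\rangle\geq 0$ for $f=u+\frac\lambda2|\cdot|^2$. Your segment-by-segment argument is equally valid and perhaps more elementary.

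For ($\Leftarrow$) the paper first proves that $u$ is $C^1$ \emph{before} mollifying, by noting that both $-f_\pm=\mp u-\frac\lambda2|\cdot|^2$ are $2\lambda$-quasi-convex and admit affine upper contact functions at every point, so \Cref{ch1:datucp} (differentiability at upper contact points) and \Cref{deusingv} give $u\in C^1$. With $u\in C^1$ in hand, the paper then invokes the standard fact that $Du_\epsilon\to Du$ locally uniformly, so the $\lambda$-Lipschitz bound passes to the limit directly. You instead bypass the a priori $C^1$ step: you use Arzel\`a--Ascoli on the equicontinuous (indeed $\lambda$-Lipschitz) family $\{D(u\ast\eta_\epsilon)\}$ to extract a locally uniform limit $G$, and then identify $G=Du$ via the integral representation. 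This works, and has the mild advantage of not needing \Cref{ch1:datucp} or \Cref{deusingv}; the paper's route has the advantage of making the $C^1$ regularity an explicit intermediate conclusion and avoiding the compactness/subsequence step. Your closing remark about recovering $\Lip(Du)=\lambda$ ``exactly'' is unnecessary: the statement is naturally read as $\Lip(Du)\leq\lambda$, which is what both proofs deliver.
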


\begin{proof}
	Suppose $u$ is of class $\lambda$-$ C^{1,1}$ and let $f\defeq u + \frac\lambda2|\cdot|^2$.
	We prove that $u$ is $\lambda$-semiconvex; the proof for $-u$ is analogous. We have
	\[
	\pair{Df(x) - Df(y)}{x-y} \geq \left(\lambda|x-y| - |Du(x) - Du(y)| \right) |x-y| \geq 0, 
	\]
	thus proving (cf.~\Cref{conviffmon}) that $f$ is convex; that is, $u$ is $\lambda$-semiconvex.
	
	Conversely, suppose $\pm u$ are $\lambda$-semiconvex. We first assume that $u$ is smooth; then we have $-\lambda\I \leq D^2u(x) \leq \lambda \I$ for all $x$. By the Mean Value Theorem,
	\[
	\text{$Du(x) - Du(y) = D^2u(z) (x-y)$ \quad for some $z\in[x,y]$,}
	\]
	therefore $|Du(x) - Du(y)| \leq \lambda|x-y|$.
	
	For the general case, note that semiconvexity and semiconcavity together imply $u$ being $C^1$. Indeed, both $f_\pm \defeq \pm u + \frac\lambda2 |\cdot|^2$ have a supporting hyperplane from below at every point; this means that, for every $x$, there exist $p_\pm=p_\pm(x)$ such that the affine (and thus quadratic) functions $\phi_\pm \defeq -f_\pm(x) - \pair{p_\pm}{\cdot - x}$ are upper contact functions for $-f_\pm$ at $x$ (in the sense of \Cref{ch1:datucp}). Note now that $-f_\pm$ are both $2\lambda$-semiconvex, since both $\pm u$ are $\lambda$-semiconvex, and thus by \Cref{ch1:datucp} (cf.~also \Cref{rmk:ch2:datucp}) we know that $p=D(-f_+)(x)$, $q=D(-f_-)(x)$, yielding $(q-p)/2 = Du(x)$. That is to say, $u$ is differentiable everywhere. By \Cref{deusingv} we conclude that $u$ is $C^1$.
	
	Finally, approximate $u$ by $u_{\epsilon}$ and recall that $Du_{\epsilon} \to Du$ locally uniformly, since $u$ is $C^1$ (see, for instance, \cite[Theorems 9.3 and 9.8]{wheedzyg}). Therefore, since by \Cref{convqc} and what we showed above in the smooth case we know that $Du_{\epsilon}$ is $\lambda$-Lipschitz, so is $Du$ and we are done.
\end{proof}

\section{Upper contact jets}

The next definition is of crucial importance for all that follows. We recall that $\call S(n)$ denotes the space of symmetric $n\times n$ real matrices which is equipped with its standard partial ordering (the Loewner order) where the quadratic form associated to $A \in \call S(n)$ will often be normalized as \syid{Loewner@$\geq$!if between symmetric matrices, the Loewner order (that is, $A \geq B$ means that $A-B$ is positive semi-definite)}
\[
Q_A(y) \defeq \tfrac12\pair{Ay}y, \quad \text{so that}\ D^2Q_A \equiv A.
\]

\begin{definition} \label{def:ucp}
	We say that a point $x \in X$ is an \emph{upper contact point for $u$} \sid{contact!upper!point/jet} if there exists $(p,A)\in \R^n\times \call S(n)$ such that 
	\begin{equation}	\label{ucp}
	u(y) \leq u(x) + \pair p{y-x} + Q_A(y-x) \qquad \forall y \in X\ \text{near}\ x;
	\end{equation}
	in this case, we write $(p,A) \in J_x^{2,+}u$, \syid{J2p@\detokenize{$J_x^{2,+}u$}!the set of all upper contact jets for $u$ at $x$} and we say that $(p,A)$ is an \emph{upper contact jet} for $u$ at $x$. In addition, if \eqref{ucp} holds with strict inequality for $y\neq x$, then $x$ is called a \emph{strict upper contact point}, and $(p,A)$ is called a \emph{strict upper contact jet}, for $u$ at $x$.
\end{definition}

\begin{remark} \label{rmk:ucqf}
	Note that we always have equality in~(\ref{ucp}) for $y=x$ which justifies the use of \emph{contact} point for $u$ at $x$. Also notice that $(p,A) \in J^{2,+}_xu$ if and only if the unique quadratic function $\phi$ such that $J^2_x \phi \defeq (\phi, D\phi, D^2\phi)(x) = (u(x), p , A)$ satisfies \syid{J2@\detokenize{$J^2_xu$}!the $2$-jet associated to a twice differentiable function $u$ at $x$}
	\begin{equation} \label{eq:ucqf}
	u \leq \phi \quad \text{near $x$, with equality at $x$,}
	\end{equation} 
	that is, if and only if $\phi$ is a \emph{quadratic upper test function for $u$ at $x$} in the viscosity sense. \sid{upper test function!quadratic}
\end{remark}

\begin{remark}[Alternate spaces of upper contact jets]\label{rem:UCJ}
	In addition to the notion of strictness of upper contact jets given in \Cref{def:ucp}, there are other useful variants, including a quantified version of ``uniform'' strictness, which we call \emph{$\epsilon$-strictness}: given $\epsilon > 0$ fixed, we say that a point $x \in X$ is an \emph{$\epsilon$-strict upper contact point for $u$} 
	 if there exists $(p,A) \in \R^n \times \call S(n)$ such that
	\begin{equation*}	
	u(y) \leq u(x) + \pair p{y-x} + Q_A(y-x) - \epsilon | y - x |^2 \qquad \forall y \in X\ \text{near}\ x;
	\end{equation*}
	and, as above, $(p,A)$ will be called an \emph{$\epsilon$-strict upper contact jet for $u$ at $x$}. 

	When it comes to viscosity theory, both for subharmonics of a subequation constraint set $\call F$ and for subsolutions of an equation determined by an operator $F$, it is equivalent to work with upper contact jets or $\epsilon$-strict upper contact jets in the sense described in the next paragraph.  We discuss here the more well-known case of a continuous operator $F$, but the same considerations hold for a potential theory determined by $\call F$ which is closed.
	
	Notice that the validity of inequality $F[\phi] \defeq F(x,\phi(x),D\phi(x),D^2\phi(x)) \geq 0$ for any upper test function $\phi$ for $u$ at $x$, which defines the notion of $u$ being a subsolution of the equation $F[u] = 0$ at $x$, is equivalent to the validity, for $(p,A) = (D\phi(x), D^2\phi(x))$ for some such $\phi$, of
	\begin{equation} \label{visceq:jet}
	F(x,u(x), p, A) \geq 0.
	\end{equation}
	Such jets are not upper contact jets in general, yet they are \emph{little-o upper contact jets}
	; that is (cf.~\Cref{rmk:ucqf}),
	\begin{equation}	\label{loucp}
	u(y) \leq u(x) + \pair p{y-x} + Q_A(y-x) + o(|y-x|^2) \qquad \forall y \in X\ \text{near}\ x.
	\end{equation}
	Hence, we have five families of upper contact jets (for $u$ at $x$):
	\begin{enumerate}
		\item {$J_S(x,u)$, \emph{$\epsilon$-strict upper contact jets}, for some $\epsilon > 0$ possibly different from jet to jet};
		\item {$J_s(x,u)$, \emph{strict upper contact jets} according to \Cref{def:ucp}};
		\item {$J_Q(x,u)$, \emph{quadratic upper contact jets}; that is, $J_Q(x,u) = J^{2,+}_x u$ is the set of all the upper contact jets for $u$ at $x$ according to \Cref{def:ucp}};
		\item {$J_{ C^2}(x,u)$, \emph{$C^2$ upper contact jets} associated to upper test functions $\phi \in C^2$; the usual ones of viscosity theory};
		\item {$J_o(x,u)$, \emph{little-o upper contact jets} as in \eqref{loucp}, also often employed in viscosity theory}.
	\end{enumerate}
	It is easy to see that, for $(x,u)$ fixed,
	\begin{equation} \label{inclusionsformulations}
	J_S(x,u) \subset J_s(x,u) \subset J_Q(x,u) \subset J_{C^2}(x,u) \subset J_o(x,u),
	\end{equation}
	and one can prove (see \cite[Lemma C.1]{chlp}) that
	\[
	J_o(x,u) \subset \barr{J_S(x,u)},
	\]
	so that the closures of all sets in \eqref{inclusionsformulations} in fact coincide.\syid{barE@\detokenize{$\barr E$}!the closure of the set $E$} By the continuity of the operator $F$, this shows that, in order to prove that $u$ is a viscosity subsolution of $F[u]=0$ at $x$, one can check for \eqref{visceq:jet} to hold for all upper contact jets in any of the families above.
\end{remark}

As noted by Harvey and Lawson \cite{hlae}, there are two extreme cases where the set of upper contact jets of a function $u$ at a point $x$ are completely understood. The first extreme case is where $u$ has no upper contact jets at $x$; for instance, this is true of $u = |\cdot|$, at $x = 0$. The other extreme case is when the function is twice differentiable at $x$; by basic differential calculus one has the following result.

\begin{prop} \label{baspropucp}
	Suppose $u$ is twice differentiable at $x \in X$ (open subset); then 
	\[
	(p,A) \in J^{2,+}_x u \implies (p,A) = \big(Du(x), D^2u(x) + P\big),\ P\geq 0
	\]
	and the converse is true if $P > 0$.
\end{prop}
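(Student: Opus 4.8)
The plan is to prove both directions by reducing to the definition \eqref{2od:def} of twice differentiability in the Peano sense. Write the second-order Taylor expansion $u(y) = u(x) + \pair{Du(x)}{y-x} + Q_{D^2u(x)}(y-x) + o(|y-x|^2)$ as $y \to x$, which holds by hypothesis. For the forward implication, suppose $(p,A) \in J^{2,+}_x u$, so that $u(y) \leq u(x) + \pair{p}{y-x} + Q_A(y-x)$ for all $y$ near $x$. Subtracting the Taylor expansion from the upper contact inequality gives
\[
0 \leq \pair{p - Du(x)}{y-x} + Q_{A - D^2u(x)}(y-x) + o(|y-x|^2) \qquad \text{as } y \to x.
\]
First I would extract $p = Du(x)$ from the first-order term: restricting to $y = x + te$ with $e$ a unit vector and $t \to 0^+$, dividing by $t$ and letting $t \to 0$ forces $\pair{p - Du(x)}{e} \geq 0$ for all unit $e$, hence $p = Du(x)$. (This is precisely the argument already used in \Cref{lem:Diff2}(a) and in \Cref{ch1:datucp}.) With the linear term gone, the remaining inequality reads $0 \leq Q_{A - D^2u(x)}(y-x) + o(|y-x|^2)$; setting $P := A - D^2u(x)$, restricting to $y = x + te$ and dividing by $t^2$ yields $\pair{Pe}{e} \geq 0$ for every unit $e$, i.e.\ $P \geq 0$. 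This gives $(p,A) = (Du(x), D^2u(x) + P)$ with $P \geq 0$, as claimed.

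For the converse, suppose $P > 0$ and set $(p,A) = (Du(x), D^2u(x) + P)$. Then using the Taylor expansion,
\[
u(y) - u(x) - \pair{p}{y-x} - Q_A(y-x) = -Q_P(y-x) + o(|y-x|^2) = -\tfrac12\pair{P(y-x)}{y-x} + o(|y-x|^2).
\]
Since $P > 0$, there is $c > 0$ with $\pair{Pv}{v} \geq c|v|^2$ for all $v$, so the right-hand side is $\leq (-\tfrac{c}{2} + o(1))|y-x|^2 < 0$ for all $y \neq x$ sufficiently close to $x$. Hence the upper contact inequality \eqref{ucp} holds (indeed with strict inequality near $x$, so $(p,A)$ is in fact a strict upper contact jet), giving $(p,A) \in J^{2,+}_x u$.

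There is no real obstacle here; the only mild subtlety is that the converse genuinely requires $P > 0$ rather than merely $P \geq 0$, since when $P$ has a nontrivial kernel the sign of the $o(|y-x|^2)$ remainder along the kernel directions is not controlled, and $(Du(x), D^2u(x))$ itself need not be an upper contact jet (e.g.\ $u(y) = |y|^3$ at $x=0$ has $D^2u(0)=0$ but $u(y) \leq Q_0(y-x)$ fails near $0$). One might remark after the proof that the gap between $P \geq 0$ and $P > 0$ is exactly what motivates the finer analysis of upper contact jets in the results that follow.
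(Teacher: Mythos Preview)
Your proof is correct and follows essentially the same approach as the paper: both subtract the Taylor expansion from the upper-contact inequality and then probe along rays $y=x+te$ to extract first $p=Du(x)$ and then $P\geq 0$, with the converse handled by the strict positivity of $P$ dominating the $o(|y-x|^2)$ remainder. The only cosmetic difference is that the paper picks the specific direction $e=Du(x)-p$ (allowing $t$ of both signs) for the first step and eigenvectors of $D^2u(x)-A$ for the second, whereas you use arbitrary unit vectors throughout; both variants are standard and equivalent.
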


\begin{proof}
	Since $u$ is twice differentiable at $x$ one has the Taylor formula
	\begin{equation*} 
	u(y) = u(x) + \pair{Du(x)}{y-x} + Q_{D^2u(x)}(y-x) + o(|y-x|^2) \quad \forall\, y\ \text{near}\ x.
	\end{equation*}
	and by the definition of upper contact jet (\ref{ucp}) one sees that $(p,A) \in J^{2,+}_x u$ if and only if
	\begin{equation} \label{d-a}
	\pair{Du(x)-p}{y-x} + Q_{D^2u(x)-A}(y-x) + o(|y-x|^2) \leq 0 \quad \forall\, y\ \text{near}\ x.
	\end{equation}
	By choosing $y = x + t(Du(x) - p)$ for $t$ small, one has
	\[
	t|Du(x)-p|^2 + o{(t)} \leq 0 \quad \forall t\ \text{small},
	\]
	which forces $Du(x) = p$. At this point, recall that the Hessian of a twice differentiable function is symmetric and let $e$ be a unit eigenvector of $D^2u(x) - A$, relative to an eigenvalue $\lambda$; then by choosing $y = x+te$ with $t$ small one has
	\[
	\lambda t^2 + o(t^2) \leq 0,
	\]
	forcing $\lambda \leq 0$, and thus all the eigenvalues of $D^2u(x) - A$ have to be nonpositive, that is $A = D^2u(x) + P$ for some $P\geq 0$. Conversely, by (\ref{d-a}) it immediately follows that $(p,A) = (D^2u(x), D^2u(x) + P)$ is an upper contact jet for $u$ at $x$ if $P> 0$.
\end{proof}

A pair of remarks concerning contact functions and jets are in order.

\begin{remark}[Lower contact points and jets] 
\sid{contact!lower}
One can also define \emph{lower} contact points and jets by reversing the inequality in (\ref{def:ucp}). Hence the set $J^{2,-}_xu$ of all lower contact jets for $u$ at $x$ can be considered as a \emph{local second-order subdifferential} of $u$ at $x$; indeed, if $(p,0) \in J^{2,-}_xu$, then $p \in \de u(x)$ according to \Cref{def:subd}, provided that we restrict the domain of $u$ to a neighborhood of $x$ in which (\ref{ucp}) holds.
\end{remark}

\begin{remark}[Flat contact points and local maxima] 
As we pointed out in \Cref{geointerpsubdiff}, if $x$ is an upper (resp.~lower) contact point whose associated upper (resp.~lower) contact jets have zero matrix component, then $u$ has, for each such jet, one locally supporting hyperplane from above (resp.~below) at $x$. Hence, we will call such points \emph{flat} contact points. Also, it is worth noting that $(0,0) \in J^{2,+}_x u$ (resp.~$(0,0) \in J^{2,-}_xu$) is equivalent to $x$ being a local maximum (resp.~minimum) point for $u$. \sid{contact!flat}
\end{remark}

\section{The theorems on upper contact jets and on summands} \label{qcfaj}

The main result on upper contact jets we wish to present can be found in \cite[Theorem~4.1]{hlqc}. Two thirds of this result have been essentially proven above since these parts are the easy and natural extensions to semiconvex functions of first order properties already established for convex functions in \Cref{chap:diff}. The third part concerning the {\em partial upper semicontinuity of second derivatives} will be ``new'' also for convex functions.

\sid{upper contact|seeonly {contact, upper}}\sid{lower contact|seeonly {contact, lower}} \sid{flat contact|seeonly {contact, flat}}

\begin{thm}[Upper Contact Jet Theorem] \label{ucjt} \sid{Theorem!Upper Contact Jet} 
	Let $u$ be semiconvex on a neighborhood of $x$ and suppose $(p,A)$ is an upper contact jet for $u$ at $x$. Then the following holds:
	\begin{itemize}
		\item {\sf D\;at\;UCP}, \emph{differentiability at upper contact points}: if $(p,A)$ is an upper contact jet for $u$ at $x$, then $u$ is differentiable at $x$ and $p=Du(x)$ is unique.
	\end{itemize}
	Furthermore, for every set $E$ of full measure in a neighborhood of $x$ there exist a sequence $\{x_j\}_{j\in\N} \subset E \cap \Diff^2\! u$, with $x_j \to x$ as $j\to\infty$, and $\bar A \in \call S(n)$ such that the following hold:
	\begin{itemize}
		\item {\sf PC\;of\;FD}, \emph{partial continuity of first derivatives}: $Du(x_j) \to Du(x) = p$;
		\item {\sf PUSC\;of\;SD}, \emph{partial upper semicontinuity of second derivatives}: $D^2u(x_j) \to \bar A \leq A$.
	\end{itemize}
\end{thm}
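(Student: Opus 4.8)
The plan is to dispatch (D at UCP) immediately by invoking \Cref{ch1:datucp} (together with \Cref{rmk:ch2:datucp}): since $u$ is $\lambda$-quasi-convex near $x$ and $(p,A)$ is an upper contact jet, the quadratic $\phi$ with $J^2_x\phi = (u(x),p,A)$ is an upper contact quadratic function, so $u$ is differentiable at $x$ with $Du(x)=p$; uniqueness of $p$ is then automatic. The substance of the theorem is the last display, which produces a whole sequence of Alexandrov points $x_j\to x$ inside the prescribed full-measure set $E$ along which the first derivatives converge to $p$ and the Hessians converge to some $\bar A\le A$.

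For (PC of FD) and (PUSC of SD), I would first reduce to the convex case by replacing $u$ with $v\defeq u+\frac\lambda2|\cdot|^2$ on a small ball $B$ around $x$: this is convex, $(p+\lambda x, A+\lambda I)$ is an upper contact jet for $v$ at $x$, $Dv = Du + \lambda(\cdot)$, and $D^2v = D^2u + \lambda I$ on $\Diff^2 u = \Diff^2 v$, so convergence statements transfer verbatim. The first key step is then to exhibit a sequence $x_j\to x$ lying in $E\cap\Diff^2 v$: this uses the locally quasi-convex version of Alexandrov's theorem, \Cref{aleks:qc}, which guarantees $|B\setminus\Diff^2 v|=0$, so $E\cap\Diff^2 v$ still has full measure in every neighborhood of $x$ and in particular accumulates at $x$. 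The second key step is to control $Dv(x_j)$: because $v$ is convex and locally Lipschitz (\Cref{thm:convex_Lip}), the subdifferentials $\partial v(x_j)$ are bounded near $x$, and since $Dv(x)=p+\lambda x$ is the \emph{unique} subgradient at $x$, the single-valuedness plus the closed-graph property of $\partial v$ (as established in the proof of \Cref{deusingv}, cf.\ the limit formula \eqref{contconv}) forces $Dv(x_j)=\partial v(x_j)\to Dv(x)$; this is exactly (PC of FD) after undoing the quadratic shift. The third key step is to extract $\bar A$: the Hessians $D^2v(x_j)$ are \emph{nonnegative} (convexity at Alexandrov points) and, after passing to a subsequence, they are bounded — the bound being the real crux.

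\textbf{The main obstacle} is precisely the upper bound $D^2v(x_j)\le A+\lambda I$ needed to conclude both that $\{D^2v(x_j)\}$ is bounded (hence has a convergent subsequence $\to\bar A$) and that the limit satisfies $\bar A\le A+\lambda I$. This is the "partial upper semicontinuity of second derivatives" and it is the genuinely new content; it cannot come from soft convexity arguments alone. The natural route is a second-order comparison at the contact point: since $v(y)\le v(x)+\pair{Dv(x)}{y-x}+Q_{A+\lambda I}(y-x)$ near $x$, one wants to propagate this one-sided quadratic bound from $x$ to nearby Alexandrov points $x_j$ and then read off $D^2v(x_j)$ from the Taylor expansion \eqref{twdiff}. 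Naively this fails because the contact bound is only anchored at $x$. The fix — and here I expect the paper to use the Jensen--S\l odkowski machinery announced in the introduction — is to perturb: replace the jet $A$ by $A+\epsilon I$ to make the contact \emph{strict}, apply the (HL-)Jensen/S\l odkowski Lemma to produce a positive-measure set of upper contact points near $x$ for small linear perturbations, intersect with $E\cap\Diff^2 v$ (still full measure), apply \Cref{baspropucp} at each such point to get $D^2v(x_j)\le A+\epsilon I$, extract a convergent subsequence with limit $\bar A_\epsilon\le A+\epsilon I$, and finally let $\epsilon\downarrow 0$ through a diagonal argument to obtain $\bar A\le A$. The delicate bookkeeping is ensuring the sequence can be taken inside the \emph{single} prescribed set $E$ simultaneously for all $\epsilon$ and that the accumulation at $x$ survives all the intersections — which is exactly where the positive-\emph{density} (not merely positive-measure) conclusion of the S\l odkowski-type estimate is indispensable.
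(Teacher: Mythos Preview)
Your proposal is correct and matches the paper's approach: (D at UCP) via \Cref{ch1:datucp}/\Cref{rmk:ch2:datucp}, (PC of FD) via the limit \eqref{contconv}, and (PUSC of SD) via the Jensen--S{\l}odkowski theorem applied to the strict jets $(p,A+\epsilon I)$, intersected with the full-measure set $E\cap\Diff^2 u$. The paper's execution of the last step is slightly leaner than your diagonal sketch---it takes $\epsilon_k\downarrow 0$ from the start, picks a single point $x_k\in E\cap\Diff^2 u\cap\C(u,\barr B_{\epsilon_k}(x),A+\epsilon_k I)$ per level (positive \emph{measure}, not density, suffices here since $E\cap\Diff^2 u$ has full measure), and then extracts a convergent subsequence of $\{D^2u(x_k)\}\subset[-\lambda I, A+\epsilon_k I]$; no separate diagonal argument is needed.
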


The proof of the theorem reduces to the proof of the three Lemmas~\ref{datucp}, \ref{pcfd} and \ref{pusc} below. As suggested above, the first two lemmas follow directly from corresponding results that we proved for convex functions. In particular, the {\em differentiability at upper contact points} of Lemma \ref{datucp} is the extension to semiconvex functions (reformulated in the language of upper contact jets) of the differentiability at points of upper quadratic contact for convex functions of \Cref{ch1:datucp} and the {\em partial continuity of first derivatives} of Lemma \ref{pcfd} for semiconvex functions is immediately deduced from formula \eqref{contconv} of Corollary \ref{deusingv} which showed that convex  differentiable functions are of class $C^1$. For the sake of completeness, we will include their short proofs. 

\begin{lem}[Differentiability at upper contact points, {\sf D\;at\;UCP}] \label{datucp}
	Let $u$ be locally semiconvex. If $x$ is an upper contact point for $u$, then $u$ is differentiable at $x$. Moreover, if $(p, A)$ is an upper contact jet for $u$ at $x$, then $p = Du(x)$ is unique.
\end{lem}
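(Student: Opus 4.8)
The plan is to reduce everything to the already-established convex case via a smooth quadratic perturbation. Since $u$ is locally quasi-convex, fix a ball $B \subset X$ with $x \in B$ and a constant $\lambda = \lambda(x) \geq 0$ such that $v \defeq u + \tfrac{\lambda}{2}|\cdot|^2$ is convex on $B$. The key observation is that the notion of upper contact is local and stable under adding a smooth function: if $(p,A)$ is an upper contact jet for $u$ at $x$, then by \Cref{rmk:ucqf} the quadratic function $\phi$ with $J^2_x\phi = (u(x),p,A)$ satisfies $u \leq \phi$ near $x$ with equality at $x$; hence $\tilde\phi \defeq \phi + \tfrac{\lambda}{2}|\cdot|^2$ is a quadratic function with $v \leq \tilde\phi$ near $x$ and equality at $x$. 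In other words, $x$ is a point of upper quadratic contact for the convex function $v$ in the sense of \Cref{ch1:datucp}.

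Next I would apply \Cref{ch1:datucp} to $v$ on the open convex set $B$: it yields that $v$ is differentiable at $x$ with $Dv(x) = D\tilde\phi(x)$. Since the perturbation $\tfrac{\lambda}{2}|\cdot|^2$ is smooth, $u = v - \tfrac{\lambda}{2}|\cdot|^2$ is then differentiable at $x$, and
\[
Du(x) = Dv(x) - \lambda x = D\tilde\phi(x) - \lambda x = D\phi(x) = p .
\]
Because the differential of $u$ at $x$ is unique, this simultaneously proves the differentiability of $u$ at $x$ and the fact that every upper contact jet $(p,A)$ for $u$ at $x$ has the same first component $p = Du(x)$. This is exactly the transfer already anticipated in \Cref{rmk:ch2:datucp}.

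As for difficulties, there is essentially no obstacle: the entire argument is a one-line transfer along the smooth quadratic perturbation, and all of the genuine analytic content is contained in \Cref{ch1:datucp} (which in turn rested only on the subdifferentiability of convex functions, \Cref{prop:subd_convex}). The only point requiring a word of care is that \Cref{ch1:datucp} was phrased for a function convex on all of its domain, whereas here $v$ is only convex on $B$; but the conclusion is purely local at $x$ and $B$ is itself open and convex, so applying that lemma with the domain taken to be $B$ is harmless.
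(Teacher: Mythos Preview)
Your proof is correct and follows essentially the same approach as the paper's own proof: both reduce to the convex case by adding the quadratic perturbation $\tfrac{\lambda}{2}|\cdot|^2$, then invoke \Cref{ch1:datucp} to obtain differentiability of the perturbed function, and finally subtract the perturbation to recover $Du(x)=p$. The paper's write-up is slightly terser (it uses $\tilde u$ for your $v$) but the argument is the same line for line.
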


\begin{proof}
	There exists a ball $B$ about $x$ and a quadratic function $\phi$ such that $u$ is $\lambda$-semiconvex on $B$ for some $\lambda>0$ and $u \leq \phi$ on $B$, with $u(x) = \phi(x)$ (that is, $\phi$ is an upper contact quadratic function for $u$ at $x$). Hence $\tilde\phi \defeq \phi + \frac\lambda2|\cdot|^2$ is an upper contact quadratic function for the convex function $\tilde u \defeq u + \frac\lambda2|\cdot|^2$ at $x$. Therefore by \Cref{ch1:datucp} $\tilde u$ is differentiable at $x$ with $D\tilde u(x) = D\tilde \phi(x)$; that is, $u$ is differentiable at $x$ with $Du(x) = D\phi(x)$. Finally, the same argument also proves that $p = Du(x)$ is unique for any $(p,A) \in J^{2,+}_xu$ (cf.~also \Cref{baspropucp}).
\end{proof}

\begin{lem}[Partial continuity of first derivatives, {\sf PC\;of\;FD}] \label{pcfd}
	Let $u$ be locally semiconvex, and $x_j \to x$. If $u$ is differentiable at each $x_j$ and at $x$, then $Du(x_j) \to Du(x)$.
\end{lem}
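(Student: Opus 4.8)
The plan is to reduce immediately to the convex case and then invoke the local version of the fact, already extracted inside the proof of \Cref{deusingv}, that a differentiable convex function is automatically of class $C^1$.

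First I would localize. By local quasi-convexity, choose a ball $B$ with $x \in B$ on which $u$ is $\lambda$-quasi-convex for some $\lambda \geq 0$, so that $v \defeq u + \frac{\lambda}{2}|\cdot|^2$ is convex on $B$. Discarding finitely many indices, we may assume $x_j \in B$ for all $j$. Since $\frac{\lambda}{2}|\cdot|^2$ is smooth, $u$ is differentiable at a point $z \in B$ if and only if $v$ is, in which case $Dv(z) = Du(z) + \lambda z$; in particular $v$ is differentiable at $x$ and at each $x_j$. Hence it suffices to prove $Dv(x_j) \to Dv(x)$, and undo the quadratic perturbation at the end.

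Next I would recall that, by \Cref{deusingv} applied to the convex function $v$ on $B$, differentiability of $v$ at a point forces its subdifferential there to be the singleton consisting of the gradient; the key quantitative input is formula \eqref{contconv} from the proof of \Cref{deusingv}, namely $Dv(x) = \lim_{y \to x,\ q \in \de v(y)} q$. Concretely this says: for every $\epsilon > 0$ there is $\delta > 0$ such that $|q - Dv(x)| < \epsilon$ whenever $y \in B$ satisfies $|y-x| < \delta$ and $q \in \de v(y)$. (Alternatively, one can argue purely by compactness: taking a compact neighborhood $K \subset B$ of $x$ containing all the $x_j$, the subgradients $Dv(x_j) \in \de v(x_j) \subset \de v(K)$ form a bounded set by \Cref{thm:convex_Lip}(a), and by the closedness of $\de v$ established there, any limit of a convergent subsequence of $\{Dv(x_j)\}$ lies in $\de v(x) = \{Dv(x)\}$, so the whole sequence converges to $Dv(x)$.)

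Finally I would conclude. Since $x_j \to x$, eventually $|x_j - x| < \delta$, and $Dv(x_j) \in \de v(x_j)$, so $Dv(x_j) \to Dv(x)$ by the displayed limit; therefore $Du(x_j) = Dv(x_j) - \lambda x_j \to Dv(x) - \lambda x = Du(x)$, as claimed. I do not expect a genuine obstacle here: the statement is essentially the assertion that the first-order jet map $z \mapsto Du(z)$ is continuous along sequences contained in the differentiability set, which is exactly what \eqref{contconv} delivers for convex functions, transported through a smooth quadratic perturbation.
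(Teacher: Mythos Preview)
Your proof is correct and follows essentially the same approach as the paper: localize to a ball on which $u+\frac{\lambda}{2}|\cdot|^2$ is convex, invoke the continuity property \eqref{contconv} from \Cref{deusingv} to get convergence of the gradients of the convexified function, and then undo the quadratic perturbation. The alternative compactness argument you sketch (via \Cref{thm:convex_Lip}(a)) is a valid variant not spelled out in the paper, but the core route is identical.
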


\begin{proof}
	As above, let $B$ be a ball about $x$ on which $\tilde u \defeq u+\frac\lambda2|\cdot|^2$ is convex. Eventually, $x_j,x \in B \cap \Diff^1\!\tilde u$, hence by the continuity property \eqref{contconv} (being $\de \tilde u(x_j) = \{ D\tilde u(x_j) \}$ and $\de \tilde u(x) = \{ D\tilde u(x) \}$), $D\tilde u(x_j) \to D\tilde u(x)$; this immediately yields $Du(x_j) \to Du(x)$, as desired.
\end{proof}

\begin{proof}[Proof of Theorem \ref{ucjt}]
	\Cref{datucp} clearly gives the first point of \Cref{ucjt}, while \Cref{datucp,pcfd} together give the second point. Indeed, since $(p,A) \in J^{2,+}_x u$, by \Cref{datucp}, $x \in \Diff^1\!u$, and by Rademacher's and Alexandrov's theorems, denoting by $\call U$ the neighborhood of $x$ which we consider, $|{\Diff^k\! u \cap \call U}| = \left|\,\call U\right|$ for $k=1,2$. Also, we know that $x$ is a limit point for $E \subset \call U$ with $|E| = |\,\call U|$ (otherwise there would exists a small ball $B_\rho(x) \subset \call U \cap E\compl$,\syid{compl@\detokenize{$E\compl$}!the complement of the set $E$} yielding $|E| < |\,\call U|$, contradiction). Analogously $x$ is a limit point for $E \cap \Diff^1\! u$ as well, hence we can conclude by invoking \Cref{pcfd}. 
	
	The third point of \Cref{ucjt} is the content of the following lemma, which states indeed the ``deepest'' of the three properties; we postpone its proof in \Cref{sec:proof:pusc}, as we will exploit the Jensen--S{\l}odkowski Theorem \ref{jenslod} in combination with Alexandrov's \Cref{aleks:qc}.
\end{proof}

\begin{lem}[Partial upper semicontinuity of second derivatives, {\sf PUSC\;of\;SD}] \label{pusc}
	Let $u$ be a locally semiconvex function, and $E$ be a set of full measure in a neighborhood of $x$. If $(p,A)$ is an upper contact jet for $u$ at $x$, then there exists a sequence of upper contact points $\{x_j\}_{j\in\N} \subset E\cap \Diff^2(u)$ such that $D^2u(x_j) \to \bar A \in \call S(n)$ with $\bar A \leq A$.
\end{lem}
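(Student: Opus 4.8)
\emph{Sketch of proof.} The plan is to derive the lemma from the Jensen--S{\l}odkowski Theorem \ref{jenslod}, applied to a suitable auxiliary function, together with Alexandrov's \Cref{aleks:qc}. First, fix a ball $B$ about $x$ on which $u$ is $\lambda$-quasi-convex and on which the upper contact inequality \eqref{ucp} for $(p,A)$ holds; then $D^2u(y) \geq -\lambda\I$ at every $y \in B \cap \Diff^2\!u$, since the Hessian of the convex function $u + \frac{\lambda}{2}|\cdot|^2$ is nonnegative there. Since the bound ``$\bar A \leq A$'' will be obtained only in a limit, it suffices to establish the following claim: for every $\epsilon > 0$ and every $r > 0$ there is a point $y \in E \cap \Diff^2\!u \cap B_r(x)$ which is an upper contact point for $u$ and satisfies $D^2u(y) \leq A + \epsilon\I$. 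Granting the claim, one takes $\epsilon = r = 1/j$ to produce upper contact points $x_j \to x$ in $E \cap \Diff^2\!u$ with $-\lambda\I \leq D^2u(x_j) \leq A + \tfrac1j\I$; the sequence $\{D^2u(x_j)\}$ is then bounded in $\call S(n)$, so $D^2u(x_j) \to \bar A$ along a subsequence, and passing to the limit in $D^2u(x_j) \leq A + \tfrac1j\I$ (the Loewner order being closed) yields $\bar A \leq A$.

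To prove the claim, fix $\epsilon > 0$, put $\psi(\cdot) \defeq u(x) + \pair p{\cdot - x} + Q_A(\cdot - x) + \tfrac\epsilon2|\cdot - x|^2$, and set $w \defeq u - \psi$, which is again locally quasi-convex near $x$ (a quadratic has been subtracted). Because $(p,A) \in J^{2,+}_x u$, for $y$ near $x$ we have
\[
w(y) = u(y) - \psi(y) \leq \big( u(x) + \pair p{y-x} + Q_A(y-x) \big) - \psi(y) = -\tfrac\epsilon2|y-x|^2,
\]
with equality at $y = x$; that is, $(0,0)$ is an (indeed $\tfrac\epsilon2$-)strict upper contact jet for $w$ at $x$ in the sense of \eqref{esucp}. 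The Jensen--S{\l}odkowski Theorem \ref{jenslod} (equivalently, the HL-Jensen Lemma \ref{hl:jen}) now guarantees that for all sufficiently small $r > 0$ the set of $y \in B_r(x)$ admitting a flat upper contact jet $(q,0) \in J^{2,+}_y w$ for some $q$ has strictly positive Lebesgue measure. (If \ref{jenslod} is phrased for a single fixed radius, a rescaling of the estimate about $x$ upgrades it to all small $r$.)

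Now $E \cap \Diff^2\!u$ has full measure near $x$ --- $E$ by hypothesis, and $\Diff^2\!u = \Diff^2\!w$ because $\psi$ is smooth, so $\Diff^2\!w$ has full measure by \Cref{aleks:qc} --- hence the set of $y \in E \cap \Diff^2\!u \cap B_r(x)$ which are flat upper contact points of $w$ is still of positive measure, thus nonempty. Pick such a $y$: then $w$ is twice differentiable at $y$ with $(q,0) \in J^{2,+}_y w$, so by \Cref{baspropucp} one has $0 = D^2w(y) + P$ with $P \geq 0$, i.e.\ $D^2w(y) \leq 0$, whence $D^2u(y) = D^2w(y) + D^2\psi(y) \leq A + \epsilon\I$. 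Finally, adding the jet $J^2_y\psi = (\psi(y), D\psi(y), A + \epsilon\I)$ of the smooth function $\psi$ to $(q,0)$ shows $(q + D\psi(y),\, A + \epsilon\I) \in J^{2,+}_y u$, so $y$ is genuinely an upper contact point for $u$. This proves the claim, and with it the lemma.

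The only non-routine ingredient is the Jensen--S{\l}odkowski Theorem \ref{jenslod} itself --- precisely the ``hard analysis'' input isolated earlier in this part --- and once it is available in the upper-contact-jet formulation the rest is the elementary bookkeeping above (shifting jets between $u$ and $w$, intersecting a positive-measure set with a full-measure one, extracting a convergent subsequence of bounded Hessians). The one point that deserves care is that the positive-measure set of contact points furnished by \ref{jenslod} must be found inside arbitrarily small balls $B_r(x)$, so that the $x_j$ can be made to converge to $x$; this is automatic when \ref{jenslod} is stated in terms of a lower Lebesgue-density bound at $x$, and otherwise follows from the indicated rescaling.
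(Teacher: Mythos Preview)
Your proof is correct and takes essentially the same approach as the paper: both invoke the Jensen--S{\l}odkowski Theorem together with Alexandrov's theorem to locate, for each small $\epsilon$, points of $E \cap \Diff^2\!u$ arbitrarily close to $x$ with $D^2u \leq A+\epsilon I$, and then extract a convergent subsequence using the two-sided Hessian bound coming from quasi-convexity. The only cosmetic difference is that the paper applies Theorem~\ref{jenslod} directly to $u$ with the strictified jet $(p,A+\epsilon I)$, whereas you first subtract the quadratic $\psi$ and apply the flat-jet case (the HL--Jensen Lemma~\ref{hl:jen}) to $w=u-\psi$; since this reduction is exactly how Theorem~\ref{jenslod} is deduced from Lemma~\ref{hl:jen} in the paper, the two arguments coincide.
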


\medskip
While the Upper Contact Jet \Cref{ucjt} collects the basic information about upper contact jets of a single locally convex function $u$, the following result shows that for the sum of two locally semiconvex functions one obtains useful information on the upper contact jets of the summands from each upper contact jet of the sum. Interestingly, this theorem, which is \cite[Theorem~7.1]{hlae}, can be interpreted in multiple ways:
\begin{enumerate}
	\item it obviously complements the partial upper semicontinuity of second derivatives for semiconvex functions;
	\item it easily yields a semiconvex version of the so-called {\em Theorem on Sums}, which plays a crucial role in the viscosity theory;
	\item  it conceals within a very useful \emph{addition theorem}.
\end{enumerate} 
The second interpretation is given below in \Cref{tosqc2} while the third one will be explained later on in \Cref{rmkone}.

\begin{thm}[Summand Theorem] \label{pusc:sum} \sid{Theorem!Summand (\emph{or} on Summands)}\sid{Theorem!on Summands|seeonly{Summand}}
	Suppose $u$ and $v$ are locally semiconvex and that the sum $w \defeq u+v$ has an upper contact jet $(p,A)$ at $x$. Then the following hold:
	\begin{enumerate}[label=(\roman*)]
		\item	$x$ is an upper contact point for both the summands $u$ and $v$ (which are therefore differentiable at $x$ by \Cref{datucp}), whose upper contact jets at $x$ are of the form $(Du(x), -)$ and $(Dv(x), -)$;
		\item 	for every set $E$ of full measure in a neighborhood of $x$, there exist $B,C \in \call S(n)$ and a sequence $\{x_j\}_{j\in\N}$ of upper contact points for $w$, with $x_j\in E \cap \Diff^2\! u \cap \Diff^2 \! v$ and $x_j \to x$ as $j \to \infty$, such that 
		\begin{gather*}
		Du(x_j) \to Du(x), \qquad Dv(x_j) \to Dv(x),\\
		D^2u(x_j) \to B, \quad D^2v(x_j) \to C,\quad \text{with $B+C \leq A$}.
		\end{gather*}
	\end{enumerate}
\end{thm}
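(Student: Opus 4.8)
The plan is to obtain part~(i) from the elementary observation that a locally quasi-convex summand can be ``subtracted'' from a quadratic upper test function, and to obtain part~(ii) by feeding the sum $w$ into the Upper Contact Jet Theorem~\ref{ucjt} and then splitting the resulting Hessian limit by a compactness argument.

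First fix a ball $B \ni x$ and constants $\lambda_1,\lambda_2 \geq 0$ with $u+\tfrac{\lambda_1}{2}|\cdot|^2$ and $v+\tfrac{\lambda_2}{2}|\cdot|^2$ convex on $B$; then $w$ is $(\lambda_1+\lambda_2)$-quasi-convex on $B$. For part~(i), let $\phi$ be the quadratic function with $J^2_x\phi=(w(x),p,A)$, so $w\leq\phi$ near $x$ with equality at $x$. By \Cref{prop:subd_convex} the convex function $v+\tfrac{\lambda_2}{2}|\cdot|^2$ is subdifferentiable at $x$, which amounts to saying that $v$ admits a quadratic lower test function $\psi$ at $x$ with $D^2\psi\equiv-\lambda_2 I$; that is, $v\geq\psi$ near $x$, with equality at $x$. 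Then $u=w-v\leq\phi-\psi$ near $x$ with equality at $x$, and $\phi-\psi$ is quadratic, so $(D\phi(x)-D\psi(x),\,A+\lambda_2 I)\in J^{2,+}_x u$. Hence $x$ is an upper contact point for $u$, and \Cref{datucp} gives that $u$ is differentiable at $x$ and that every element of $J^{2,+}_x u$ has first component $Du(x)$. Exchanging the roles of $u$ and $v$ yields the same for $v$; this is part~(i).

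For part~(ii), set $E'\defeq E\cap\Diff^2 u\cap\Diff^2 v$. Since $\Diff^2 u$ and $\Diff^2 v$ have full measure near $x$ by the quasi-convex Alexandrov \Cref{aleks:qc}, the set $E'$ is of full measure in a neighborhood of $x$; moreover $E'\subset\Diff^2 w$, since twice differentiability is stable under sums, and on $E'$ one has $Dw=Du+Dv$ and $D^2w=D^2u+D^2v$. As $w$ is locally quasi-convex with $(p,A)\in J^{2,+}_x w$, \Cref{ucjt} applied to $w$ with the full-measure set $E'$ produces $x_j\to x$, $x_j\in E'$ (hence upper contact points for $w$, by \Cref{baspropucp}), with $Dw(x_j)\to Dw(x)=p$ and $D^2w(x_j)\to\bar A$ for some $\bar A\leq A$. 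The summand gradients then converge to the desired limits directly from \Cref{pcfd} applied separately to $u$ and to $v$ ($x_j\to x$, each differentiable at every $x_j$ and at $x$ by~(i)). For the Hessians, quasi-convexity on $B$ gives $D^2u(x_j)\geq-\lambda_1 I$ and $D^2v(x_j)\geq-\lambda_2 I$, while $D^2u(x_j)=D^2w(x_j)-D^2v(x_j)\leq D^2w(x_j)+\lambda_2 I$ is bounded above since $D^2w(x_j)$ converges; hence $\{D^2u(x_j)\}$ and, symmetrically, $\{D^2v(x_j)\}$ are bounded in $\call S(n)$. Passing to a subsequence and relabelling, $D^2u(x_j)\to B$ and $D^2v(x_j)\to C$ with $B+C=\lim_j D^2w(x_j)=\bar A\leq A$, and the relabelled sequence keeps all previously established properties. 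This proves part~(ii).

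I expect the one genuinely non-formal point to be this last extraction of $B$ and $C$: the limit $\bar A=B+C$ cannot be split without first knowing that $\{D^2u(x_j)\}$ and $\{D^2v(x_j)\}$ are bounded, and that bound rests precisely on the interplay between the one-sided Hessian bounds of quasi-convex functions (from below) and the convergence of the Hessian of the sum $w$ (from above), after which Bolzano--Weierstrass applies. Everything else is bookkeeping over \Cref{datucp,pcfd,ucjt} and \Cref{aleks:qc}.
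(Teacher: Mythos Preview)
Your proof is correct and follows essentially the same approach as the paper: part~(i) via subtracting a quadratic lower support of one summand from the upper test function for $w$, and part~(ii) via applying the Upper Contact Jet Theorem (specifically \Cref{pusc}) to $w$ on the full-measure set $E\cap\Diff^2 u\cap\Diff^2 v$, then using the one-sided Hessian bounds from quasi-convexity together with the convergence of $D^2w(x_j)$ to extract convergent subsequences for $D^2u(x_j)$ and $D^2v(x_j)$. Your argument is in fact slightly more explicit than the paper's in two places: you keep the $\lambda$-perturbation visible in part~(i) rather than reducing to the convex case, and you invoke \Cref{pcfd} separately for $u$ and $v$ to justify the gradient convergences, which the paper leaves implicit.
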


\begin{proof}
	For the claim (i), we can suppose without loss of generality that $u$ and $v$ are convex. By the Hahn--Banach Theorem there exists $q\in \R^n$ such that $(-q, 0)$ is an upper contact jet for $-v$ at $x$, thus $(p-q, A)$ is an upper contact jet for $u = w-v$ at $x$. Analogously we prove that $x$ is an upper contact point for $v$ as well. 
	
	For the claim (ii), we begin by noting that the sum $w$ of locally semiconvex functions $u$ and $v$ is clearly locally semiconvex. Now, by applying \Cref{pusc} to the sum $w$ there exist a sequence $\{x_j\}\subset E \cap \Diff^2\!u \cap \Diff^2\!v$ and a matrix $\bar A \in \call S(n)$, $\bar A \leq A$, such that $x_j \to x$ and $D^2w(x_j) \to \bar A$ as $j\to\infty$.\footnote{Note that to apply the lemma we are considering $E\cap \Diff^2(u)$ as the set of full measure near $x$.} Finally, if
	\begin{equation} \label{arebounded} 
	\{ D^2u(x_j) \}_{j\in\N} \ \text{and} \ \{ D^2v(x_j) \}_{j\in\N} \ \text{are bounded in $\call S(n)$},
	\end{equation}
	then there exist $B,C \in \call S(n)$ such that, up to a subsequence, $D^2u(x_j) \to B$ and $D^2v(x_j) \to C$, where $B+C = \lim_{j\to\infty}\left( D^2u(x_j) + D^2v(x_j) \right) = \lim_{j\to\infty} D^2w(x_j) = \bar A \leq A$. In order to prove \eqref{arebounded}, let $\lambda >0$ be such that both $u$ and $v$ are $\lambda$-semiconvex in a neighborhood of $x$, and note that for any $\epsilon > 0$ there exists $j_0 = j_0(\epsilon) \in \N$ such that 
	\[
	-\lambda I \leq D^2u(x_j) = D^2w(x_j) - D^2v(x_j) \leq \bar A + \epsilon I + \lambda I \qquad \forall j \geq j_0.
	\]
	By fixing for instance $\epsilon = 1$ one sees that the tail $\{ D^2u(x_j) \}_{j\geq j_0}$ is bounded, thus the whole sequence $\{ D^2u(x_j) \}_{j\in\N}$ is bounded as well. The proof for $\{D^2v(x_j)\}_{j\in\N}$ is analogous.
\end{proof}

\begin{remark} 
	Note that since upper contact points for $w$ are also upper contact points for both $u$ and $v$, if $(p,A) \in J^{2,+}_x w$, then (cf.\ the proof above) we have $(p-q, A) \in J^{2,+}_x u$ and $(p-q', A) \in J^{2,+}_x v$, with $q+q' = p$, because one knows by \Cref{datucp} that $(p-q)+(p-q') = Du(x) + Dv(x) = Dw(x) = p$.
\end{remark}

\begin{remark} 
	We used the basic fact that each interval $[A, B] \subset (\call S(n), \leq)$ is compact  (with respect to the subspace topology inherited from $\R^{n^2}$). To prove that, let $Y \in [A,B]$ and note that by the Courant--Fischer min-max theorem one has that $\lambda_i(A) \leq \lambda_i(Y) \leq \lambda_i(B)$ for all $i = 1,\dots,n$.\syid{lambdai@$\lambda_i(A)$!the $i$-th eigenvalue of the matrix $A$, with the ordering $\lambda_i(A) \leq \lambda_{i+1}(A)$} Hence, if the eigenvalues are arranged in increasing order, the entries of $Y$ are bounded by $n\left(|\lambda_1(A)| \vee |\lambda_n(B)|\right)$, thus, since it is trivial that $[A,B]$ is closed, one concludes by invoking the Heine--Borel theorem.
\end{remark}

As we said, a version of the Theorem on Sums for semiconvex functions immediately follows if we double the variables, as it is easy to see.

\begin{cor}[Theorem on Sums for semiconvex functions] \label{tosqc2} \sid{Theorem!on Sums!for semiconvex functions}
	Let $u$ and $v$ be $\lambda$-semiconvex on the open subsets $X$ and $Y$ of $\R^n$, respectively. Define 
	\[
	w(x,y) \defeq u(x) + v(y)
	\]
	on $X\times Y \subset \R^n \times \R^n$ and suppose that $(p,A)$ is an upper contact jet for $w$ at $z = (\hat x,\hat y) $. Then $\hat x$, $\hat y$ are contact points for $u$, $v$, respectively, and for each set $E$ of full measure near $z$ there exists a sequence $(x_j, y_j) \in E \cap \left( \Diff^2(u) \times \Diff^2(v) \right)$ such that 
	\[\begin{split}
	(x_j, u(x_j), Du(x_j), D^2u(x_j)) &\to (\hat x, u(\hat x), Du(\hat x), A_1), \\
	(y_j, v(y_j), Dv(y_j), D^2v(y_j)) &\to (\hat y, u(\hat y), Du(\hat y), A_2),
	\end{split}\] 
	with
	\[
	-\lambda I \leq \left(\! \begin{array}{cc}
	A_1 &  0 \\
	0  & A_2 \\
	\end{array}\! \right)
	\leq A.
	\]
\end{cor}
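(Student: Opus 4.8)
The plan is to obtain the statement as a direct corollary of the Summand Theorem \ref{pusc:sum}, applied to the ``lifts'' of $u$ and $v$ to the product $X \times Y$. Define $\tilde u, \tilde v \colon X \times Y \to \R$ by $\tilde u(x,y) \defeq u(x)$ and $\tilde v(x,y) \defeq v(y)$, so that $w = \tilde u + \tilde v$. The first task is to check that $\tilde u$ and $\tilde v$ are $\lambda$-quasi-convex on $X \times Y \subset \R^{2n}$: writing $|(x,y)|^2 = |x|^2 + |y|^2$ one has $\tilde u + \tfrac\lambda2|(\cdot,\cdot)|^2 = \bigl(u + \tfrac\lambda2|\cdot|^2\bigr)(x) + \tfrac\lambda2|y|^2$, which is a sum of a convex function of the variable $x$ and a convex function of the variable $y$, hence jointly convex; the argument for $\tilde v$ is symmetric. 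Thus the hypotheses of \Cref{pusc:sum} are met for $w = \tilde u + \tilde v$ with the given upper contact jet $(p,A)$ at $z = (\hat x, \hat y)$.

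The second task is the elementary identification of the jets of the lifted functions. Since $\tilde u(x,y)$ does not depend on $y$, a comparison of second-order Taylor expansions shows that $\tilde u$ is twice differentiable at $(x,y)$ if and only if $u$ is twice differentiable at $x$, in which case $D\tilde u(x,y) = (Du(x), 0)$ and $D^2\tilde u(x,y)$ is the symmetric $2n \times 2n$ matrix whose upper-left $n \times n$ block is $D^2u(x)$ and whose other blocks vanish; symmetrically for $\tilde v$. In particular $\Diff^2\tilde u \cap \Diff^2\tilde v = \Diff^2 u \times \Diff^2 v$, so the full-measure set $E$ near $z$ figuring in the corollary is exactly the object that \Cref{pusc:sum} consumes. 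Part (i) of that theorem tells us $z$ is an upper contact point for both $\tilde u$ and $\tilde v$; restricting the defining inequality \eqref{ucp} for $\tilde u$ at $z$ to the slice $\{y = \hat y\}$ (resp.\ the one for $\tilde v$ to $\{x = \hat x\}$) yields that $\hat x$ is an upper contact point for $u$ and $\hat y$ one for $v$ — in particular, by \Cref{datucp}, $u$ is differentiable at $\hat x$ and $v$ at $\hat y$.

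The third task is to read off part (ii) of \Cref{pusc:sum}: there exist $(x_j,y_j) \in E \cap (\Diff^2 u \times \Diff^2 v)$ with $(x_j,y_j)\to z$, and matrices $B,C \in \call S(2n)$, such that $D\tilde u(x_j,y_j)\to D\tilde u(z)$, $D\tilde v(x_j,y_j)\to D\tilde v(z)$, $D^2\tilde u(x_j,y_j)\to B$, $D^2\tilde v(x_j,y_j)\to C$, and $B + C \leq A$. Using the identification of step two, this gives $Du(x_j)\to Du(\hat x)$ and $Dv(y_j)\to Dv(\hat y)$; moreover, since for each $j$ only the upper-left block of $D^2\tilde u(x_j,y_j)$ is nonzero, continuity of the entrywise projections forces $B$ to have the same shape, with upper-left block $A_1 \defeq \lim_j D^2u(x_j)$, and likewise $C$ has only its lower-right block nonzero, equal to $A_2 \defeq \lim_j D^2v(y_j)$. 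Hence $B + C$ is the block-diagonal matrix with blocks $A_1, A_2$, and $B + C \leq A$ is precisely the upper bound in the statement. The lower bound is pure quasi-convexity: each $x_j \in \Diff^2 u$ with $u + \tfrac\lambda2|\cdot|^2$ convex satisfies $D^2u(x_j) + \lambda I \geq 0$, since a convex function has positive semidefinite Hessian at every point of twice differentiability; letting $j \to \infty$ gives $A_1 \geq -\lambda I$, and similarly $A_2 \geq -\lambda I$, so that $\begin{pmatrix} A_1 & 0 \\ 0 & A_2 \end{pmatrix} \geq -\lambda I$.

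There is no genuine obstacle here — the corollary is a repackaging of \Cref{pusc:sum}. The only points demanding a little care are the two bookkeeping observations above: that a sum of a convex function of $x$ and a convex function of $y$ is jointly convex (so the lifts are really $\lambda$-quasi-convex on the product), and that $\Diff^2\tilde u \cap \Diff^2\tilde v$ equals $\Diff^2 u \times \Diff^2 v$ (so a full-measure set in $\R^n \times \R^n$ is exactly what the Summand Theorem requires), together with the remark that the block structure of the approximating Hessians passes to the limit, which is immediate from continuity of the coordinate projections on $\call S(2n)$.
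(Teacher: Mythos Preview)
Your proof is correct and follows exactly the approach the paper indicates: the paper does not give a detailed argument, saying only that the corollary ``immediately follows if we double the variables, as it is easy to see,'' and your lifting of $u,v$ to $\tilde u,\tilde v$ on the product together with the application of \Cref{pusc:sum} is precisely this. Your careful bookkeeping (quasi-convexity of the lifts, the identity $\Diff^2\tilde u \cap \Diff^2\tilde v = \Diff^2 u \times \Diff^2 v$, and the persistence of the block structure in the limit) fills in all the details the paper omits.
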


By induction, one can also extend this result by adding additional variables, so that it assumes a form which more closely resembles the classical Theorem on Sums for semicontinuous functions, in the equivalent formulation one finds in~\cite[Appendix, Theorem 3.2$'$]{user}. We leave it to the reader to write an appropriate statement.

For the sake of completeness, in Section~\ref{ss:tosu}, we will give a proof of the classical Theorem on Sums for semicontinuous functions (see Theorem~\ref{tosu}). It is easily deduced from its semiconvex counterpart because, roughly speaking, it is possible to transform a function which is bounded above into a semiconvex function in such a way that it produces a regular displacement of the contact points of every given upper contact jet. More precisely, the displacement is proportional to the gradient-component of the jet, as we will see in Section~\ref{ch:qca}. In addition, in Section \ref{sec:SC} we will introduce and prove a potential theoretic version of Harvey and Lawson in Theorem \ref{tospt}, which they used in their proof of strict comparison principles with minimal monotonicity for constant coefficient potential theories.

\chapter{The lemmas of Jensen and S{\l}odkowski} \label{chap:js}

In this chapter we treat the remaining deep analytical results which are needed for a robust viscosity theory for subharmonics in general potential theories and subsolutions of fully nonlinear ellipitc PDEs. We address the central problem of how to ensure \emph{a priori} the existence of a sufficient amount of upper contact points for locally semiconvex functions which, when combined with Alexandrov's Theorem and the sup-convolution approximation of semicontinuous functions by semiconvex functions, will complete the foundations of the  ``hard analysis'' part of the viscosity theory. The analysis will culminate in the so-called Jensen--S{\l}odkowski Theorem \ref{jenslod}, which unifies and generalizes two historically distinct approaches (Jensen's Lemma and S{\l}odkowski's density estimate) to the central problem in a viscosity approach to potential theory and operator theory. Several important consequences of the Jensen--S{\l}odkowski Theorem are also given, as well as a novel proof of this fundamental theorem which is shown to follow from the area formula for gradients of locally semiconvex functions.

\section{Classical statements: a review}

In the crucial passage ``from almost everywhere to everywhere'' in the viscosity theory, two classical instruments dealing with (semi-)convex functions are {\em S{\l}odkowski's density estimate}~\cite[Theorem~3.2]{slod} and {\em Jensen's Lemma}~\cite[Lemma~3.10]{jensen}. The first instrument yields S{\l}odkowski's \emph{Largest Eigenvalue Theorem} \cite[Corollary~3.5]{slod} which, in turn, plays an essential role in Harvey and Lawson's proof of the Subaffine Theorem~\cite[Theorem~6.5]{hldir09}. The second instrument yields Jensen's maximum principle \cite[Theorem~3.1]{jensen} which, in turn, is a key ingredient in the proof of the Theorem on Sums~\cite[Theorem~3.2]{user}. These two instruments were developed independently. The first in the context of pluripotential theory and the second in the context of the machinery for proving the uniqueness of viscosity solutions. As we will see, they are in some sense equivalent. This long unrecognized equivalence is a good example of a missed opportunity for synergy between potential theory and operator theory where such a synergy is a main theme of the program initiated by Harvey and Lawson.  

S{\l}odkowski's original proof of his density estimate is based on two nontrivial properties~\cite[Proposition~3.3(iii) and Lemma~3.4]{slod}. The first property concerns the largest generalized eigenvalue of the Hessian at $x$ of a convex function $u$~\cite[Definition~3.1]{slod} and the second property concerns spheres which support from above the graph $\Gamma(u)$ of $u$ at the point $(x,u(x))$; that is, spheres in $\R^{n+1}$ lying above $\Gamma(u)$ and touching it only at $(x, u(x))$. Following Harvey and Lawson~\cite{hlqc}, our proof of S{\l}odkowski's estimate will be based on the simpler ``paraboloidal'' counterparts of such properties.

This approach will lead to an important consequence, which Harvey and Lawson call S{\l}odkowski's lemma, and which turns out to be equivalent to a reformulation of theirs of Jensen's lemma. This eventually allows one to merge the two lemmas into a \emph{Jensen--S{\l}odkowski's theorem}, and enlightens an interesting path to prove both Jensen's lemma and S{\l}odkowski's Largest Eigenvalue Theorem.

At the end of the section, a second non classical proof of Jensen's lemma will be proposed. Jensen points out that his lemma is based ideas of Pucci \cite{pucci} and gives a proof which relies on the area formula (see, e.g.,~\cite[Theorem~3.2.3]{fed:geo}); we will offer a different area-formula-based proof by reformulating Jensen's lemma \emph{à la} Harvey--Lawson~\cite{hlqc} and then following an argument by Harvey~\cite{har:pc} which passes also through Alexandrov's maximum principle (see, for example, \cite{cafcab}).

For the convenience of the reader, we recall some classical statements of Jensen's and S{\l}odkowski's lemmas which we deal with.

In \cite{slod}, S{\l}odkowski proves an estimate for the Lebesgue lower density of sub-level sets of his function $\call K(u,x)$ defining a generalized notion of the largest eigenvalue of the Hessian at $x$ of a convex function $u$. We recall, that for $u$ convex (but not necessarily twice differentiable), S{\l}odkowski defines \syid{Kux@\detokenize{$\mathcal{K}(u,x)$}!see (\ref{LEF})}
\begin{equation}\label{LEF}
\call K(u,x) \defeq \limsup_{\epsilon \to 0} 2\epsilon^{-2} \max_{{\mathbb S}^{n-1}} \! \big( u(x+\epsilon\,\cdot\,) - u(x) - \epsilon\pair{Du(x)}{\cdot\,} \big)
\end{equation}
if $x \in \Diff^1\! u$ and $\call K(u,x) \defeq +\infty$ otherwise.

\begin{remark}
	$\call K(u,x)$ indeed generalizes the concept of \emph{largest eigenvalue} of the Hessian of a convex function $u$ at $x$, since, if $u$ has second-order Peano derivatives at $x$, then $\call K(u,x)$ is the largest eigenvalue of the Hessian $D^2u(x)$:
	\[
	\call K(u,x) = \limsup_{\epsilon \to 0}\, 2 \epsilon^{-2} \displaystyle\max_{|h|=1}\! \Big(  \epsilon^2 Q_{D^2u(x)}(h) + o(\epsilon^2) \Big) = \Vert D^2u(x) \Vert = \lambda_n(D^2u(x)).
	\]
\end{remark}

S{\l}odkowski's density estimate is stated next. See  \cite[Theorem~3.2]{slod} for the original proof. We will give a novel ``paraboloidal'' proof of this result, which will be a corollary of our Lemma~\ref{hlKb}.

\begin{lem}[S{\l}odkowski's density estimate] \label{slodthm} \sid{S{\l}odkowski's density estimate}
	Let $u$ be convex near $x^* \in \R^n$, and suppose that $\call K(u,x^*) = k^* < +\infty$. Then for any $k > k^*$ the set $\{ x : \ \call K(u,x) < k \}$ is Borel and its Lebesgue lower density at $x^*$ satisfies
	\[
	\delta^-_{x^*}(\{ \call K(u,\cdot) < k\}) \geq \left(\frac{k-k^*}{2k}\right)^n.
	\]
\end{lem}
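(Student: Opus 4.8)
The plan is to reduce the estimate to a covering argument built from the ``paraboloidal'' version of the two key properties of $\call K(u,\cdot)$, packaged in the (to-be-proven) \Cref{hlKb}. First I would record the two ingredients that the paraboloidal machinery is supposed to supply. The first is the characterization $\call K(u,x) = \inf\{ \lambda \geq 0 : (p,\lambda I) \in J^{2,+}_x u \text{ for some } p \}$ for $x \in \Diff^1\! u$ (with $p = Du(x)$ forced by \Cref{datucp}); this recasts the sublevel set $\{ \call K(u,\cdot) < k\}$ as the set of points admitting an upper contact jet of the special form $(Du(x), \lambda I)$ with $\lambda < k$. In particular this set is Borel, being describable through a countable intersection/union of conditions on difference quotients (this is the easy ``Borel'' assertion). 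The second ingredient, the geometric heart, is: if a paraboloid $y \mapsto u(x^*) + \pair{q}{y-x^*} + \tfrac{k}{2}|y-x^*|^2$ lies above the graph of $u$ near $x^*$ — which holds for suitable $q$ once $k > k^* = \call K(u,x^*)$ — then translating/sliding this paraboloid downward until it first touches the graph produces, at the new contact point $x$, an upper contact jet whose matrix part is still $\leq k I$, hence $\call K(u,x) \leq k$ (strict after a further perturbation). This is the paraboloidal analogue of S{\l}odkowski's sphere-support argument.

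Next I would run the measure-theoretic estimate. Fix $k > k^*$ and a small radius $\rho$ so that the paraboloid $P_q(y) = u(x^*) + \pair{q}{y-x^*} + \tfrac{k}{2}|y-x^*|^2$ with $q = Du(x^*)$ dominates $u$ on $\overline{B_\rho(x^*)}$. For each $b$ in a suitable small ball of ``slope'' parameters, consider the tilted/shifted paraboloid $P_{q+b}(y) - c$ and lower $c$ from a large value until first contact with the graph of $u$ over $\overline{B_\rho(x^*)}$; when $|b|$ is small enough the contact point $x = x(b)$ lies in the interior $B_\rho(x^*)$ and satisfies $Du(x) = q + b - k(x - x^*)$, i.e. the gradient map $b \mapsto Du(x(b)) - q + k(x(b) - x^*) = b$ is onto a full ball of radius proportional to $(k - k^*)\rho$. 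On the other hand, at such a contact point $x \in \{\call K(u,\cdot) < k'\}$ for any $k' > k$ (after an $\epsilon$-perturbation of the paraboloid), and the map $x \mapsto Du(x) + k x$ is the gradient of the convex function $u + \tfrac{k}{2}|\cdot|^2$, hence monotone, so it does not expand Lebesgue measure too wildly — more precisely, one bounds the image of $\{\call K(u,\cdot) < k'\} \cap B_\rho(x^*)$ under $x \mapsto Du(x) + kx$ and compares it to the ball of admissible $b$'s. Chasing constants — the admissible $b$-ball has radius $\sim (k - k^*)\rho$ while $Du(x) + kx$ on points with $\call K < k'$ moves at ``rate'' at most $\sim 2 k' \rho$ in each direction — yields $|\{\call K(u,\cdot) < k'\} \cap B_\rho(x^*)| / |B_\rho(x^*)| \geq \big((k-k^*)/(2k')\big)^n$, and letting $k' \downarrow k$ and $\rho \downarrow 0$ gives the claimed lower density bound with $k$ in place of $k'$; relabeling $k$ as the generic parameter $> k^*$ finishes the proof.

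The main obstacle, as I see it, is making the ``first-contact sliding'' argument fully rigorous at the level of jets rather than just geometrically: one must ensure that lowering the paraboloid until first contact genuinely produces an \emph{interior} contact point (not one on $\partial B_\rho$) for \emph{all} small $b$, and that the jet one reads off is a bona fide upper contact jet with matrix part $\leq k I$ — this needs the quantitative gap $k > k^*$ to control how far the contact point can drift, exactly the role of \Cref{hlKb}. The second delicate point is the Jacobian/measure comparison: the clean inequality comes from the fact that $x \mapsto Du(x) + k x$ is monotone and, restricted to Alexandrov points, has Jacobian $\det(D^2 u(x) + kI)$; on $\{\call K(u,\cdot) < k'\}$ one has $D^2 u(x) \leq k' I$ (this uses \Cref{baspropucp} together with the characterization of $\call K$), so the Jacobian is at most $(2k')^n$, and an application of the area formula for gradients of quasi-convex functions (or directly Alexandrov's theorem plus change of variables, cf.\ \Cref{aleks:qc}) converts the surjectivity onto the $b$-ball into the volume estimate. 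Everything else — the Borel measurability, the reduction to convex $u$, the passage to the limit in $k'$ and $\rho$ — is routine once these two points are in place.
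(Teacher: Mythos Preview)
Your overall architecture --- slide paraboloids of opening $k$ to produce contact points, invoke \Cref{hlKb} to place those points in $\{\call K(u,\cdot)<k'\}$, then run a measure comparison --- matches the paper's. But there is a sign error that breaks the measure step. Differentiating the upper contact paraboloid $y\mapsto \pair{q+b}{y-x^*}+\tfrac{k}{2}|y-x^*|^2$ at the contact point $x$ gives $Du(x)=q+b+k(x-x^*)$, not $q+b-k(x-x^*)$; hence the map sending contact points to admissible parameters $b$ is $x\mapsto Du(x)-kx$ (up to a shift), i.e.\ $-k$ times the \emph{vertex map} $V_{1/k}=I-k^{-1}Du$. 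Your monotonicity justification --- ``$x\mapsto Du(x)+kx$ is the gradient of the convex $u+\tfrac{k}{2}|\cdot|^2$'' --- therefore applies to the wrong map, and the $b$-ball is not contained in the image of the contact set under it. The correct map is the gradient of $u-\tfrac{k}{2}|\cdot|^2$, which is only $k$-quasi-convex, so a Jacobian bound at Alexandrov points does not by itself control images of the null set where $u$ is not twice differentiable; you still need $Du-kx$ to be Lipschitz on the contact set.

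That Lipschitz property is precisely the ingredient the paper isolates and you are missing: via the Slabbed Hull Lemma (\Cref{shp}) one proves the vertex map $V_{1/k}$ is \emph{nonexpansive} on $\C(u,\barr B_\rho,kI)$ (\Cref{prop:contr}), so $|V_{1/k}(\C)|\leq |\C|$ with no area-formula machinery at all. The paper then combines this with \Cref{p:uvm} --- which gives $B_{\rho-\delta}\subset V_{1/k}(\C)$ for $\delta=\sqrt{2k^{-1}\osc u}$ --- and \Cref{hlKb}, to obtain (after letting an auxiliary opening $K\downarrow k^*$ and $r\downarrow k^{-1}$) the bound $(1-\sqrt{k^*/k})^n\geq ((k-k^*)/(2k))^n$. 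Your quantitative claims are also off even modulo the sign issue: the admissible-$b$ ball radius is governed by $\sqrt{K/k}$ via the oscillation estimate in \Cref{p:uvm}, not by $(k-k^*)$ linearly, and it is this square root that yields the sharp constant.
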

We recall that the Lebesgue lower density at $x^*$ of a measurable set $E$ is defined as \syid{deltaminusx@\detokenize{$\delta_x^-(E)$}!the (Lebesgue) lower density at $x$ of the set $E$}
\[
\delta^-_{x^*}(E) \defeq \liminf_{\epsilon \dto 0} \frac{|E \cap B_\epsilon(x^*)|}{|B_\epsilon(x^*)|} .
\]

The two nontrivial properties on which S{\l}odkowski's proof of the density estimate are recalled in the following two lemmas which correspond, respectively, to Proposition~3.3(iii) and Lemma~3.4 in \cite{slod}.

\begin{lem}[Upper bound on the largest eigenvalue of the Hessian]\label{slod(iii)}
	Let $u \colon U \to \R$ be convex on $U\subset \R^n$ open and convex. Suppose there exists a sphere $\mathbb S(c,r) \defeq \de B_r(c) \subset \R^{n+1}$ which supports the graph of $u$\syid{Gamma@$\Gamma(u)$!the graph of $u$}\sid{graph} from the above at $(x,u(x))$ with $x \in \Diff^1\!u$. Then one has
	\[
	\call K(u,x) \leq \frac{\left(1+|Du(x)|^2\right)^{\frac32}}{r}.
	\]
\end{lem}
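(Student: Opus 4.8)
The plan is to exploit directly the geometric hypothesis: the sphere $\mathbb S(c,r) = \partial B_r(c) \subset \R^{n+1}$ supports the graph $\Gamma(u) = \{(y,u(y))\}$ from above at the point $P_0 \defeq (x, u(x))$. Since $u$ is differentiable at $x$, the tangent hyperplane to $\Gamma(u)$ at $P_0$ is the graph of the affine function $y \mapsto u(x) + \pair{Du(x)}{y-x}$, and this hyperplane is also tangent to the sphere at $P_0$; hence the center $c$ lies on the normal line to $\Gamma(u)$ at $P_0$, on the side \emph{above} the graph. The unit upward-pointing normal to the graph of an affine function with gradient $Du(x)$ is $\nu = (1+|Du(x)|^2)^{-1/2}(-Du(x), 1) \in \R^{n+1}$, so we may write $c = P_0 + r\nu$; that is, $c = (x - r(1+|Du(x)|^2)^{-1/2}Du(x),\ u(x) + r(1+|Du(x)|^2)^{-1/2})$.

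Next I would write down the support condition analytically. ``$\mathbb S(c,r)$ supports $\Gamma(u)$ from above at $P_0$'' means the graph lies in the \emph{closed} ball $\overline{B_r(c)}$ near $x$ — indeed, since $u$ is convex it lies below the lower hemisphere of the sphere — hence $|(y,u(y)) - c|^2 \le r^2$ for all $y$ near $x$, with equality at $y=x$. Expanding with $c = (c', c_{n+1})$ where $c' \defeq x - r\beta\, Du(x)$, $c_{n+1} \defeq u(x) + r\beta$, and $\beta \defeq (1+|Du(x)|^2)^{-1/2}$, this reads
\[
|y - x + r\beta\, Du(x)|^2 + (u(y) - u(x) - r\beta)^2 \le r^2 \qquad \forall\, y \text{ near } x.
\]
Now fix a unit vector $h \in \mathbb S^{n-1}$ and substitute $y = x + \epsilon h$ for small $\epsilon > 0$. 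Using the differentiability of $u$ at $x$ to write $u(x+\epsilon h) = u(x) + \epsilon \pair{Du(x)}{h} + \omega(\epsilon)$ where, for the purpose of an upper bound, it is enough to retain the quantity $m(\epsilon) \defeq \max_{\mathbb S^{n-1}}(u(x+\epsilon\,\cdot\,) - u(x) - \epsilon\pair{Du(x)}{\cdot\,})$ appearing in the definition \eqref{LEF} of $\call K(u,x)$, I would expand the inequality, cancel the terms that sum to $r^2$, divide by $\epsilon$, and isolate $m(\epsilon)$. Schematically, the $O(\epsilon)$ terms cancel because the hyperplane is tangent, and one is left with an inequality of the form $m(\epsilon) \le \dfrac{\epsilon^2}{2r\beta}\big(1 + o(1)\big) = \dfrac{\epsilon^2}{2r}(1+|Du(x)|^2)^{1/2}(1+o(1))$ as $\epsilon \to 0$. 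Multiplying by $2\epsilon^{-2}$, taking $\limsup_{\epsilon\to 0}$, and recalling $\call K(u,x) = \limsup_{\epsilon\to0} 2\epsilon^{-2} m(\epsilon)$ gives $\call K(u,x) \le (1+|Du(x)|^2)^{1/2}/r$. To reach the stated exponent $3/2$ I must be more careful: the cancellation leaves not $m(\epsilon)$ alone but a term proportional to $\beta\, m(\epsilon)$ (from the cross term with $r\beta\,Du(x)$) plus $m(\epsilon)$ itself from the $(n+1)$-st coordinate, so one actually gets $m(\epsilon)(1 + \beta^{-1}|\cdots|)$ — tracking the algebra correctly, the factor that multiplies $\epsilon^2/(2r)$ is $\beta^{-2} = 1+|Du(x)|^2$ rather than $\beta^{-1}$, and combined with the leading $\beta^{-1}$ one obtains the power $\tfrac32$.

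The main obstacle — and the step requiring genuine care rather than routine computation — is the bookkeeping of the expansion: correctly identifying which terms are $O(\epsilon)$ and cancel (the tangency identities), controlling the $(u(y)-u(x)-r\beta)^2$ term, which contributes both a $-2r\beta\,m(\epsilon)$-type piece at order $\epsilon$-through-$m$ and higher-order pieces, and verifying that the surviving coefficient is exactly $(1+|Du(x)|^2)^{3/2}/(2r)$ and not merely comparable to it. A clean way to organize this is to note that, because $u$ is convex and lies below the lower spherical cap, the inequality $|(x+\epsilon h, u(x+\epsilon h)) - c| \le r$ can be rearranged, after moving the $(n+1)$-st coordinate, into an explicit formula bounding $u(x+\epsilon h) - u(x)$ above by the lower hemisphere function $c_{n+1} - \sqrt{r^2 - |x+\epsilon h - c'|^2}$, and then a second-order Taylor expansion of that square root in $\epsilon$ produces the bound directly; this avoids squaring and re-extracting and makes the coefficient transparent. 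If the degenerate case $Du(x) = 0$ needs separate mention it is immediate ($\beta = 1$), and the case where no such sphere exists is vacuous.
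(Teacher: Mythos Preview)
The paper does not actually prove Lemma~\ref{slod(iii)}: it is merely recalled as S{\l}odkowski's classical result \cite[Proposition~3.3(iii)]{slod}, and the paper's own contribution is the \emph{paraboloidal} counterpart Lemma~\ref{hlKb}, whose proof is a one-liner. So there is no ``paper proof'' to compare yours against; your proposal is the more interesting direct argument.

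That said, your primary approach contains a genuine sign error. You write that ``the graph lies in the closed ball $\barr{B_r(c)}$'' and hence $|(y,u(y))-c|^2 \le r^2$; but ``supporting from above'' means (cf.\ the paper's description just before Lemma~\ref{slod(iii)}, and the setup of Lemma~\ref{slodestlem}) that the ball sits in the epigraph and touches $\Gamma(u)$ only at $(x,u(x))$. A point \emph{below} the lower hemisphere lies \emph{outside} the ball, so the correct inequality is
\[
|(y,u(y)) - c|^2 \;\ge\; r^2 \qquad \text{for $y$ near $x$, with equality at $y=x$.}
\]
With your reversed inequality the expansion yields only $\omega \ge 0$ (true by convexity) and no upper bound on $m(\epsilon)$ at all. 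With the correct sign the computation goes through: writing $\beta = (1+|Du(x)|^2)^{-1/2}$, $c' = x - r\beta\,Du(x)$, $c_{n+1} = u(x) + r\beta$, and $\omega = u(x+\epsilon h) - u(x) - \epsilon\pair{Du(x)}{h}$, the $r^2$ terms cancel via $r^2\beta^2(1+|Du(x)|^2)=r^2$, the first-order terms in $\epsilon$ cancel by tangency, and one is left with
\[
2r\beta\,\omega \;\le\; \epsilon^2\big(1 + \pair{Du(x)}{h}^2\big) + o(\epsilon^2),
\]
which after maximizing over $|h|=1$ and dividing by $\epsilon^2$ gives $\call K(u,x) \le (1+|Du(x)|^2)/(r\beta) = (1+|Du(x)|^2)^{3/2}/r$.

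Your closing ``clean'' alternative --- bounding $u(x+\epsilon h)$ above by the lower hemisphere function $c_{n+1} - \sqrt{r^2 - |x+\epsilon h - c'|^2}$ and Taylor-expanding the square root --- is correct as stated and indeed the tidier way to organize the computation; it is also consistent with the correct direction of the support inequality. So the fix is simply to flip the sign in your main derivation (or commit fully to the hemisphere-function route), after which your argument is complete.
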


We will formulate and prove an original paraboloidal version of this upper bound in Lemma \ref{hlKb}, which exploits the paraboloidal machinery developed by Harvey and Lawson in \cite{hlqc} and leads to the aforementioned paraboloidal proof of Lemma \ref{slodthm}.

\begin{lem}[Lower bound on the lower density of spherical contact points]\label{slodestlem}
	Let $u$ be nonnegative and convex on $B_\rho \subset \R^n$, for some $\rho > 0$, with $u(0) = 0$ and $Du(0) = 0$. Assume that there exists a ball $B_R((0,\dots,0,R)) \subset \R^{n+1}$ which intersects $\Gamma(u)$ (the graph of $u$) only at $0 \in \R^{n+1}$, and let, for $0<r<R$,
	\[
	X_r \defeq \big\{ x \in B_\rho :\ \text{$\exists\,c \in \R^{n+1}$\! s.t.\ $\mathbb S(c,r)$ supports $\Gamma(u)$ from the above at $(x,u(x))$} \big\}.
	\]
	Then the lower density of $X_r$ at $0$ satisfies
	\[
	\delta^-_0(X_r) \geq \left(\frac{R-r}{2r}\right)^n.
	\]
\end{lem}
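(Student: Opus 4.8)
The plan is to realise a definite fraction of a small ball around $0$ as radius-$r$ contact points by a dropping-ball construction, and then to compare measures through the change of variables furnished by the contact geometry. Fix $\delta<\rho-r$. For $a\in B_\delta(0)$, lower the ball $\barr{B}_r((a,t))\subset\R^{n+1}$: for $t$ large it lies above $\Gamma(u)$, and as $t$ decreases it first meets $\Gamma(u)$ at some value $t(a)$, necessarily at an interior point $(x_a,u(x_a))$ with $x_a\in\barr{B}_r(a)\subset B_\rho$ (using $u\ge 0$ and the continuity of $u$); at that instant the open ball $B_r((a,t(a)))$ is still disjoint from the closed hypograph of $u$, so $\mathbb{S}((a,t(a)),r)$ supports $\Gamma(u)$ from above at $(x_a,u(x_a))$, whence $T(a):=x_a\in X_r$. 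Since the lower hemisphere of that sphere is a quadratic upper contact function for $u$ at $x_a$, \Cref{ch1:datucp} gives that $u$ is differentiable at $x_a$, and since the segment from $(x_a,u(x_a))$ to the centre $(a,t(a))$ is normal to $\Gamma(u)$ there, one reads off $a=N(x_a)$ with
\[
N(x):=x-\frac{r\,Du(x)}{\sqrt{1+|Du(x)|^2}} .
\]
Thus $N\circ T=\mathrm{id}$ on $B_\delta(0)$, so $N$ carries $X_r$ near $0$ onto a neighbourhood of $0$.

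Next I would extract the flatness of $u$ near $0$ from the radius-$R$ hypothesis. Assuming $\barr{B}_R(Re_{n+1})$ lies above $\Gamma(u)$ and meets it only at $0$ amounts to $0\le u(y)\le R-\sqrt{R^2-|y|^2}$ for $|y|<\min(R,\rho)$, with equality only at $y=0$. Inserting this upper bound (at the point $x+te$, where $|x+te|\le|x|+t$) into the convexity inequality $u(x+te)\ge u(x)+t\pair{Du(x)}{e}$ and optimising over the step $t$, with $e=Du(x)/|Du(x)|$, yields $|Du(x)|\le 2R|x|/(R^2-|x|^2)$ for $|x|$ small. Hence the displacement is controlled: $|T(a)-a|=r\,|Du(T(a))|/\sqrt{1+|Du(T(a))|^2}=O(|T(a)|)$, and so, for $\eta$ small, $N(X_r\cap B_\eta(0))$ contains a ball $B_{\delta(\eta)}(0)$ with $\delta(\eta)/\eta$ bounded below by a positive constant — the naive computation gives $(R-2r)/R$, while the sharp value (see below) is $(R-r)/(2r)$.

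The density bound then follows from the area formula. By Alexandrov's \Cref{aleks}, $u$ is twice differentiable a.e.; on $X_r$ the radius-$r$ support and \Cref{slod(iii)} give $D^2u\le\frac{(1+|Du|^2)^{3/2}}{r}\,I$ a.e., while $D\Phi(q)$, with $\Phi(q):=q/\sqrt{1+|q|^2}$, is symmetric positive definite with $\norm{D\Phi(q)}=(1+|q|^2)^{-1/2}$. Hence $D\Phi(Du)\,D^2u$ has non-negative real eigenvalues, bounded above by $\norm{D\Phi(Du)}\cdot\norm{D^2u}\le(1+|Du|^2)/r$; therefore the a.e. differential $DN=I-r\,D\Phi(Du)\,D^2u$ has real eigenvalues in $[-|Du|^2,1]$, so $|\det DN|\le 1$ near $0$. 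Applying the area formula to $N$ on $X_r\cap B_\eta(0)$ (cf.\ \Cref{area:cor}) gives
\[
\big|B_{\delta(\eta)}(0)\big|\le\big|N(X_r\cap B_\eta(0))\big|\le\int_{X_r\cap B_\eta(0)}|\det DN|\,\di x\le\big|X_r\cap B_\eta(0)\big| ,
\]
so $|X_r\cap B_\eta(0)|/|B_\eta(0)|\ge(\delta(\eta)/\eta)^n$; letting $\eta\to 0$ gives $\delta^-_0(X_r)\ge\big(\tfrac{R-r}{2r}\big)^n$.

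The main obstacle is obtaining the sharp constant: the estimates above only yield $\big(\tfrac{R-2r}{R}\big)^n$ and, worse, degenerate when $r\ge R/2$, whereas the statement demands $\big(\tfrac{R-r}{2r}\big)^n$ for all $0<r<R$. Getting there requires more than the gradient bound: one must compare the dropped radius-$r$ sphere directly with the radius-$R$ sphere lying above $\Gamma(u)$ and control how far the contact point $T(a)$ can move away from $a$ in terms of $R-r$ rather than through $|Du|$, which is the delicate geometric argument of \cite[Lemma 3.4]{slod}.
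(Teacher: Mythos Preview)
The paper does not prove this lemma. It is recalled (without proof) as one of S{\l}odkowski's original spherical ingredients \cite[Lemma~3.4]{slod}, and the paper's point in that section is precisely to \emph{replace} it by the paraboloidal analogue \Cref{jen:den}, whose proof is clean because the paraboloidal vertex map $V_r=I-rDu$ is genuinely nonexpansive (\Cref{prop:contr}) and the displacement $|x-V_r(x)|$ is controlled sharply via the oscillation of $u$ (\Cref{p:uvm}). Your argument is recognisably the spherical transcription of that paraboloidal proof: your spherical vertex map $N(x)=x-r\,Du(x)/\sqrt{1+|Du(x)|^2}$ plays the role of $V_r$, the dropping-ball construction replaces \Cref{p:uvm}, and your a.e.\ Jacobian bound $|\det DN|\le 1$ near the origin stands in for nonexpansiveness. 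That the spherical version is messier is exactly why Harvey and Lawson abandon spheres for paraboloids.

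You identify the gap honestly, and it is genuine. Your gradient bound $|Du(x)|\lesssim 2|x|/R$ only yields $|T(a)-a|\lesssim \tfrac{2r}{R}|T(a)|$, hence $\delta(\eta)/\eta\to (R-2r)/R$; one checks that $(R-2r)/R<(R-r)/(2r)$ for every $r\in(0,R)$, and your bound collapses at $r=R/2$ whereas the stated one survives up to $r=R$. S{\l}odkowski's original proof does not pass through a gradient estimate on $u$: it controls the contact point of the dropped $r$-sphere by comparing it \emph{directly} with the ambient $R$-sphere, and the constant $(R-r)/(2r)$ comes out of the geometry of two spheres in $\R^{n+1}$ rather than from differential estimates on $u$. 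So your final paragraph is the correct diagnosis: the architecture (surjectivity of $N$ onto a small ball, plus a Jacobian bound) is right, but without the direct spherical comparison you defer to \cite{slod}, the proof does not close.
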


As a corollary of his density estimate in Lemma \ref{slodthm}, S{\l}odkowski obtains the following interesting result (see \cite[Corollary~3.5]{slod}).

\begin{thm}[S{\l}odkowski's Largest Eigenvalue Theorem] \label{slodle} \sid{Theorem!S{\l}odkowski's Largest Eigenvalue}
	Let $u$ be locally convex on an open set $X$ and suppose $\mathcal K(u,x) \geq M$ for a.e.\ $x\in X$. Then $\mathcal K(u,x) \geq M$ for all $x \in X$.
\end{thm}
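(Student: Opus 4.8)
The statement I want to prove is the following: if $u$ is locally convex on an open set $X \subset \R^n$ and $\call K(u,x) \geq M$ for a.e.\ $x \in X$, then $\call K(u,x) \geq M$ for *every* $x \in X$. My plan is to argue by contradiction using the density estimate of Lemma~\ref{slodthm}. Suppose there is a point $x^* \in X$ with $k^* \defeq \call K(u,x^*) < M$. Since $\call K(u,x^*) < +\infty$, the point $x^*$ lies in $\Diff^1\!u$ and Lemma~\ref{slodthm} applies.

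First I would fix any $k$ strictly between $k^*$ and $M$, so that $k^* < k < M$. Lemma~\ref{slodthm} then gives that the set $S_k \defeq \{ x :\ \call K(u,x) < k \}$ is Borel and its Lebesgue lower density at $x^*$ satisfies
\[
\delta^-_{x^*}(S_k) \geq \left(\frac{k-k^*}{2k}\right)^n > 0,
\]
the strict positivity coming from $k > k^* \geq 0$. In particular $S_k$ has positive Lebesgue measure in every neighborhood of $x^*$: there exist radii $\epsilon \dto 0$ with $|S_k \cap B_\epsilon(x^*)| \geq c\,|B_\epsilon(x^*)|$ for some $c > 0$, so certainly $|S_k \cap B_\epsilon(x^*)| > 0$ for $\epsilon$ small enough that $B_\epsilon(x^*) \ssubset X$ and $u$ is convex on $B_\epsilon(x^*)$.

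On the other hand, every $x \in S_k$ satisfies $\call K(u,x) < k < M$, so $S_k \subset \{ x : \call K(u,x) < M \}$, and the hypothesis says this last set is Lebesgue-null. This contradicts $|S_k \cap B_\epsilon(x^*)| > 0$, and the contradiction proves that no such $x^*$ exists; hence $\call K(u,x) \geq M$ everywhere. The only minor point to verify is that the a.e.\ hypothesis and the conclusion are both *local* in nature, so it suffices to run the argument on a ball where $u$ is genuinely convex — this is immediate from the definition of local convexity, since $\call K(u,\cdot)$ is computed pointwise from the local behavior of $u$. I do not expect a serious obstacle here: essentially all the work is hidden in Lemma~\ref{slodthm}, and this theorem is a short contradiction argument exploiting the strict positivity of the density lower bound together with the fact that a set of positive measure cannot be contained in a null set.
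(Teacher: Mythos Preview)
Your argument is correct: the density estimate of Lemma~\ref{slodthm} immediately forces the sublevel set $\{ \call K(u,\cdot) < k \}$ to have positive measure near $x^*$, contradicting the a.e.\ hypothesis. This is exactly S{\l}odkowski's original route, and the paper explicitly acknowledges it as a valid alternative just before Theorem~\ref{thm:LET_qc}.

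The paper, however, chooses a different proof. Rather than invoking Lemma~\ref{slodthm}, it works through the language of upper contact jets: from $\call K(u,\hat x) = M - 3\delta$ one extracts a \emph{strict} upper contact jet $(Du(\hat x), (M-\delta)I)$ for $u$ at $\hat x$, and then applies the Jensen--S{\l}odkowski Theorem~\ref{jenslod} to conclude that the contact set $\C(u,\barr B_\rho(\hat x),(M-\delta)I)$ has positive measure; this contact set is then shown to lie inside $\{ \call K(u,\cdot) < M \}$, yielding the same contradiction. The payoff of the paper's route is that Theorem~\ref{jenslod} is stated directly for locally \emph{quasi}-convex functions, so the argument proves the stronger Theorem~\ref{thm:LET_qc} without first reducing to the convex case. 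Your approach, by contrast, appeals to Lemma~\ref{slodthm} as stated for convex $u$ and hence handles the locally convex case cleanly (which is all that Theorem~\ref{slodle} asks), but would require the quasi-convex extension of the density estimate (which the paper sketches just above Theorem~\ref{thm:LET_qc}) to reach the stronger statement.
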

We will give a proof of this result under the weaker hypothesis that $u$ is just locally semiconvex in Theorem~\ref{thm:LET_qc} as a consequence of the Jensen--S{\l}odkowski Theorem~\ref{jenslod}.

We conclude this section by recalling {\em Jensen's lemma}, which we present as one finds in Lemma A.3 of the \emph{User's guide}~\cite{user}.

\begin{lem}[Jensen's Lemma] \label{u:jen} \sid{Lemma!Jensen}
	Let $w\colon \R^n \to \R$ be semiconvex and $\hat x$ be a strict local maximum point of $w$. For $p\in\R^n$, set $w_p(x) =w(x) + \pair px$. Then, for $r,\delta>0$, the set
	\[
	K\defeq\{x\in \barr B_r(\hat x):\ \text{$\exists\, p\in \barr B_\delta$ for which $w_p$ has a local maximum at $x$} \}
	\]
	has positive Lebesgue measure. 
\end{lem}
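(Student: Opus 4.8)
The plan is to run Jensen's classical area‑formula argument, but first to mollify $w$ so that the maximizers one produces are genuine critical points of a $C^2$ function, and then to pass to the limit. Since $K=K_{r,\delta}$ is non‑decreasing in both $r$ and $\delta$, and $\hat x$ is a strict local maximum, I translate so that $\hat x=0$ and $w(0)=0$, fix $r>0$ so small that $w<0$ on $\barr B_r\setminus\{0\}$, set $\eta:=-\max_{\de B_r}w>0$, and (shrinking $\delta$) assume $\delta\le\eta/(4r)$. Let $w_\epsilon:=w\ast\eta_\epsilon$; by \Cref{convqc} each $w_\epsilon$ is $\lambda$-quasi-convex, so $D^2w_\epsilon\geq-\lambda I$, it is of class $C^\infty$, and $w_\epsilon\to w$ uniformly on $\barr B_r$.

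The heart of the matter is a lower bound on a set of maximizers of the mollifications that is \emph{uniform} in $\epsilon$. For $\epsilon$ and $|p|\le\delta$ small, comparing the values of $w_{\epsilon,p}:=w_\epsilon+\pair p\cdot$ on $\de B_r$ with its value at $0$ shows that $w_{\epsilon,p}$ attains its maximum over $\barr B_r$ at an interior point, and one can fix $r_\ast<r$ so that all such maximizers lie in $\barr B_{r_\ast}$, uniformly in $\epsilon$ and $p$; let $\tilde K_\epsilon\subset\barr B_{r_\ast}$ be the (closed) set of all of them as $p$ ranges over $\barr B_\delta$. At any $x\in\tilde K_\epsilon$ the first- and second-order conditions for an interior maximum of the $C^2$ function $w_{\epsilon,p}$ give $Dw_\epsilon(x)=-p$ and $D^2w_\epsilon(x)\le 0$, so with $D^2w_\epsilon(x)\geq-\lambda I$ one has $|\det D^2w_\epsilon(x)|\le\lambda^n$ on $\tilde K_\epsilon$; moreover $(-Dw_\epsilon)(\tilde K_\epsilon)\supseteq B_\delta$, since each $p\in B_\delta$ equals $-Dw_\epsilon(x)$ at its maximizer $x\in\tilde K_\epsilon$. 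The standard area-formula inequality $|G(E)|\le\int_E|\det DG|$ for the $C^1$ map $G:=-Dw_\epsilon$ on $\barr B_r$ then yields
\[
|B_\delta|\;\le\;\big|(-Dw_\epsilon)(\tilde K_\epsilon)\big|\;\le\;\int_{\tilde K_\epsilon}\!|\det D^2w_\epsilon(x)|\,\di x\;\le\;\lambda^n\,|\tilde K_\epsilon|,
\]
whence $|\tilde K_\epsilon|\geq|B_\delta|/\lambda^n=:c>0$ for all sufficiently small $\epsilon$.

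To conclude I pass to the limit $\epsilon\dto 0$ along a sequence $\epsilon_k$. The compacta $C_m:=\overline{\bigcup_{k\geq m}\tilde K_{\epsilon_k}}\subset\barr B_{r_\ast}$ decrease and satisfy $|C_m|\geq|\tilde K_{\epsilon_m}|\geq c$, so $C_\infty:=\bigcap_m C_m$ has $|C_\infty|=\lim_m|C_m|\geq c$. Finally $C_\infty\subset K$: given $x\in C_\infty$, pick $x_m\in\tilde K_{\epsilon_{k_m}}$ with $k_m\to\infty$ and $x_m\to x$, write $p_m\in\barr B_\delta$ for the associated slope, pass to a subsequence with $p_m\to p_0\in\barr B_\delta$, and let $k_m\to\infty$ in the maximality inequalities $w_{\epsilon_{k_m}}(y)+\pair{p_m}{y}\le w_{\epsilon_{k_m}}(x_m)+\pair{p_m}{x_m}$ ($y\in\barr B_r$); by the uniform convergence $w_{\epsilon_{k_m}}\to w$ and the continuity of $w$, $x\in\barr B_{r_\ast}$ is a (local) maximum of $w_{p_0}$, i.e.\ $x\in K$. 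Hence $|K|\geq c>0$.

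The main obstacle is precisely the uniform estimate $|\tilde K_\epsilon|\geq c$, together with checking that these maximizers accumulate only on $K$; smoothing is what makes the Hessian bound and the change of variables legitimate, since working with $w$ directly one must reckon with maximizers sitting at points where $w$ is only once differentiable. One could instead avoid smoothing by invoking \Cref{datucp} (every point of $K$ is a differentiability point of $w$ with gradient $-p$), the area formula for gradients of quasi-convex functions (\Cref{area:cor}) on the full-measure set $\Diff^2\!w$, where $-\lambda I\le D^2w\le 0$ at maximizers, and the Legendre-transform/Sard analysis underlying Alexandrov's \Cref{aleks} to ensure that almost every slope is attained at a twice-differentiable maximizer; or, once the rest of \Cref{sec:js} is developed, simply reformulate the statement in terms of quadratic upper contact jets (\Cref{hl:jen}) and cite the Jensen--S{\l}odkowski equivalences of \Cref{prop:HLJS}.
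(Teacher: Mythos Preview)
Your proof is correct. The reductions are sound (monotonicity of $K$ in $r,\delta$ lets you shrink both), the mollification preserves $\lambda$-quasi-convexity by \Cref{convqc}, the interior-maximizer claim and the existence of a uniform $r_\ast<r$ follow from the boundary/center value comparison you set up with $\delta\le\eta/(4r)$, the area-formula bound $|B_\delta|\le\lambda^n|\tilde K_\epsilon|$ is legitimate because $-Dw_\epsilon$ is $C^1$, and the limsup-of-compacta argument with uniform convergence delivers $C_\infty\subset K$ with $|C_\infty|\ge c$.

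This is, however, a genuinely different route from the paper's. The paper does not prove \Cref{u:jen} directly; instead it builds the paraboloidal machinery (upper contact quadratic functions, the vertex map $V_r$ and its nonexpansiveness, \Cref{prop:contr}), uses it to prove the HL--S{\l}odkowski \Cref{slod} via the density estimate \Cref{jen:den}, and then shows \Cref{u:jen} is equivalent to the HL--Jensen \Cref{hl:jen} (\Cref{cl:j-hlj}), which is in turn equivalent to HL--S{\l}odkowski (\Cref{prop:HLJS}). A second route in \Cref{sec:JSAlex} also avoids smoothing, applying the area formula directly to $Du$ on $\Diff^2 u$ via Alexandrov's maximum principle. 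Your approach is closer to Jensen's original argument: mollify first so that everything is $C^2$, apply the change-of-variables inequality in its elementary form, then pass to the limit. What you gain is self-containment---you need only \Cref{convqc} and the classical area formula, not the vertex-map apparatus or the approximate-differential version of \Cref{area}. What the paper's approach buys is the unification with S{\l}odkowski's density estimate and the reusable contact-point machinery that feeds into \Cref{pusc} and the Summand Theorem; your final paragraph already identifies both of the paper's routes as alternatives.
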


\begin{remark}
	Jensen's original statement~\cite[Lemma~3.10]{jensen} uses seemingly different hypotheses; that is, Jensen assumes that $u$ is continuous, $u$ belongs to $W^{1,\infty}$ and satisfies $D_\nu^2 u \geq -\lambda I$ in the sense of distributions, for all directions $\nu$. Nevertheless, one can see that these requirements are equivalent to asking that $u$ be $\lambda$-semiconvex (for instance, one can use \cite[Theorem~3.1]{dudley}).
\end{remark}

\section{Upper contact quadratic functions and the vertex map}\label{sec:vertex_map}

Our journey towards the reformulation of the classical results of Jensen and S{\l}odkowski begins with a presentation  the ``paraboloidal'' machinery of Harvey--Lawson~\cite{hlqc} which substitutes the original ``spherical'' one of S{\l}odkowski~\cite{slod}. This machinery concerns upper quadratic test functions of a given {\em radius}, upper contact points and jets of {\em type $A$} (meaning with fixed matrix component $A \in \call S(n)$) and the important associated \emph{upper vertex map}. In the next section, these notions will lead to estimates on the measure of contact points of fixed type.  

\begin{definition}
	We say that a function $\phi$ is a \emph{quadratic function of radius $r$} \sid{quadratic function (\emph{also} radius of and vertex point of)}\sid{vertex point (of a quadratic function)|seeonly{quadratic function}}\sid{radius (of a quadratic function)|seeonly{quadratic function}}if it is quadratic and satisfies $D^2\phi \equiv \frac1r\I$. In this case it is easy to see that $\phi$ can be written in a unique way as $\phi = \phi(v) + \frac{1}{2r}|\cdot - v|^2$; we will call $v$ the \emph{vertex point} of $\phi$. 
\end{definition}

\begin{definition} 
	Let $u \colon X \to \R$ be any function. We say that a quadratic function $\phi$ is an \emph{upper contact quadratic function for $u$ at $x\in X$} \sid{contact!upper!quadratic function} if condition \eqref{eq:ucqf} holds; that is, if 
	\begin{equation}\label{ucqf1}
	\text{$u(y) \leq \phi(y)$ for all $y \in X$ near $x$, \quad and \quad $u(x) = \phi(x)$.}
	\end{equation}
	If the inequality is strict for all $y\neq x$, then we say that $\phi$ is \emph{strict} upper contact quadratic function; if it holds for all $y\in X$, we say that $\phi$ is \emph{global} upper contact quadratic function \emph{on $X$}.
\end{definition}

We recall that in Definition \ref{def:ucp}, a point $x \in X$ for which \eqref{ucqf1} holds for some quadratic function $\varphi$ was called an upper contact point for $u$ and we presented the upper contact quadratic functions in the form $\varphi(x) = u(x) + \langle p, y - x \rangle + Q_A(y-x)$ where $Q_A(x) =  \frac{1}{2} \langle Ax,x \rangle$ and the contact condition  \eqref{ucqf1} takes the form
\begin{equation}\label{ucqf2}
u(y) \leq u(x) + \pair p{y-x} + Q_A(y-x) \qquad \forall y \in X\ \text{near}\ x.
\end{equation}
Note that $\varphi$ is the (unique) quadratic function with $J^2_x\phi = (u(x), p, A)$, and $(p,A) \in J^{2,+}_x u$ was called an upper contact jet for $u$ in $x$.

Two variants of upper contact jets will be used; namely those with fixed matrix component $A \in \call S(n)$ and then also those for which \eqref{ucqf2} holds on a fixed subset of $X$.

\begin{definition}\label{defn:ucp}
	An upper contact jet for $u$ at $x$ is said to be of \emph{type $A$} \sid{type} if it is of the form $(p,A)$ for some $p \in \R^n$, and $x \in X$ will be called an \emph{upper contact point of type $A$ for $u$}. 
	The set of all \emph{global} upper contact points of type $A$ for $u$ on $X$ will be denoted by $\C(u, X, A)$; that is, \syid{CuXa@\detokenize{$\C(u,X,A)$}!the set of all global upper contact points of type $A$ for $u$ on $X$}
	\[
	\C(u, X, A) \defeq \{ x \in X :\ \text{$\exists \, p \in \R^n$ such that \eqref{ucqf2} holds $\forall \, y \in X$} \}.
	\]
\end{definition}

Note that, according to \cref{def:ucp}, the set of the contact points for some global upper contact quadratic function of radius $r$ coincides with $\C(u, X, \lambda I)$, for $\lambda = \frac1{r}$.


\begin{remark}\syid{Brx@\detokenize{$B_\rho(x)$}!the open ball in $\R^n$ of radius $\rho$ and center $x$ (which is often omitted when $x$ is the origin)}
	We are going to focus our attention on sets of upper contact points of fixed type which are global on \emph{closed} balls centered at the origin; that is, of the form $\C(u, \barr B_\rho, A)$, up to translations that center the ball at the origin. In particular, we will be mainly interested in the Lebesgue measure of such sets, so let us also point out that, when it comes to measures, if $u$ is continuous, then one can equivalently consider open or closed balls, since
	\[
	\C(u, B_\rho, A) \subset \C(u, \barr B_\rho, A) \quad\text{and}\quad \C(u, \barr B_\rho, A) \setminus \C(u, B_\rho, A) \subset \de B_\rho.
	\]
\end{remark}

\begin{remark} 
	We are going to discuss the Lebesgue measure of the set $\C(u, X, A)$ in the special case when $u$ is locally semiconvex. For the sake of completeness, we note that $\C(u, X, A)$ is a measurable set when $u$ is locally semiconvex. Indeed, if one defines 
	$h_{y,p,A} \defeq u - u(y) + \pair p{y-\,\cdot} + Q_A(y-\,\cdot)$, then it is easy to see that
	\[
	\C(u, X, A) = \bigcup_{p\in \R^n} \bigcap_{y \in X} h_{y,p,A}^{-1}([0,+\infty)),
	\]
	where, by \Cref{datucp},
	\[
	\bigcap_{y \in X} h_{y,p,A}^{-1}([0,+\infty)) =\vcentcolon \C(u,X,A;\,p) \subset Du^{-1}(p);
	\]
	hence in fact
	\[
	\C(u,X,A) = \bigcap_{y \in X} \tilde h_{y,A}^{-1}([0,+\infty)) \subset \Diff^1\! u,
	\]
	where
	\[
	\tilde h_{y,A} \defeq u - u(y) + \pair {Du}{y-\cdot} + Q_A(y-\cdot) \quad \text{on} \ \Diff^1\! u,
	\]
	and $\Diff^1\! u$ has full measure by Rademacher's \Cref{rade} (or Alexandrov's \Cref{aleks:qc}, if one prefers). Now, clearly $\tilde h_{y,A}$ is a measurable function, so that each $\tilde h_{y,A}^{-1}([0,+\infty))$ is a measurable set; and to conclude, note that by the continuity of the map
	\[
	y \mapsto u(x) - u(y) + \pair{Du(x)}{y-x} + Q_A(y-x), \quad \text{with $x$ fixed},
	\]
	we can also write
	\[
	\C(u,X,A) = \bigcap_{y \in D} \tilde h_{y,A}^{-1}([0,+\infty)), \quad \text{for some $D \subset X$ countable and dense},
	\]
	so that one finally sees that $\C(u,X,A)$ is measurable.
\end{remark}

The following result, which is to be found in \cite[Lemma~5.1]{hlqc}, is an interesting exercise that shows an elementary geometric property of the convex hull of two open paraboloids (namely the strict epigraphs of two quadratic functions).\sid{paraboloid} It will play a key role in Harvey and Lawson's paraboloidal treatment of S{\l}odkowski's Lemma~\ref{slodthm}. As before, we will denote the graph of $\phi$ by $\Gamma(\phi)$, indicate its \emph{strict} epigraph by $\epi_S(\phi)$\syid{epis@\detokenize{$\epi_S(u)$}!the strict epigraph of $u$}\sid{graph!epi-!strict}, and the convex hull of a subset $E \subset \R^{n+1}$ by $\conv(E)$\syid{conv@\detokenize{$\conv(X)$}!the convex hull of the set $X$}\sid{convex!hull}. Recall that these objects are defined as follows: $\Gamma(\phi)\defeq\{(x,y)\in\R^{n+1}:\,y=\phi(x)\}$, $\epi_S(\phi)\defeq \epi(\phi) \setminus \Gamma(\phi)$, and $\conv(E) \defeq \bigcap_{E \subset C\, \text{convex}} C$.

\begin{lem}[Slubbed hull property] \label{shp} \sid{property!slubbed hull}
	Let $\frk C \defeq \conv(\epi_S(\phi_1) \cup \epi_S(\phi_2))$. There is an open vertical slab $\frk S\subset \R^n\times\R$, written as the intersection $\call{H}_1 \cap \call{H}_2$ of two parallel vertical open half-spaces $\call H_1$ and $\call H_2$, with the following property:
	\[
	\Gamma(\phi_j) \cap \frk C =  \Gamma(\phi_j)\cap \call H_j \qquad \text{for}\ j=1,2.
	\]
	Moreover, if $v_j$ is the vertex point of $\phi_j$, for $j=1,2$, then the width of $\frk S$ is $|v_1 - v_2|$.
\end{lem}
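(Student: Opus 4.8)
The plan is to work out everything explicitly in coordinates, exploiting that the two quadratic functions $\phi_1,\phi_2$ have the \emph{same} Hessian $\tfrac1r I$ (they are both ``quadratic functions of radius $r$''), so that their difference $\phi_1-\phi_2$ is an affine function. First I would write $\phi_j(x) = \phi_j(v_j) + \tfrac1{2r}|x-v_j|^2$ using the vertex-point normalization from the definition preceding the lemma. Subtracting,
\[
\phi_1(x) - \phi_2(x) = \bigl(\phi_1(v_1) - \phi_2(v_2)\bigr) + \tfrac1{2r}\bigl(|x-v_1|^2 - |x-v_2|^2\bigr) = \ell(x),
\]
where $\ell$ is affine in $x$ with gradient $\tfrac1r(v_2-v_1)$; so the locus $\{\phi_1 = \phi_2\}$ is an affine hyperplane $\Pi$ in $\R^n$ orthogonal to $v_1-v_2$, namely the perpendicular bisector-type hyperplane shifted by the heights. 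This hyperplane $\Pi$, together with the two parallel vertical hyperplanes through $v_1$ and $v_2$ (perpendicular to $v_1-v_2$), will be the candidates for the boundary of the slab $\frk S$. The natural guess is that $\call H_j$ is the open half-space $\{x : \phi_j(x) < \phi_{3-j}(x)\}$ \emph{intersected} with the condition that the vertical line through $x$ actually meets $\frk C$; more precisely I expect $\call H_1 = \{x : \langle x - v_2, v_1 - v_2\rangle < 0\}$ shifted appropriately and $\call H_2$ the mirror image, and $\frk S = \call H_1 \cap \call H_2$ is the open slab between the vertical hyperplanes through $v_1$ and $v_2$ — which immediately gives the asserted width $|v_1 - v_2|$ once one checks the endpoints are exactly at $v_1$ and $v_2$.

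The core of the proof is the identification $\Gamma(\phi_j)\cap\frk C = \Gamma(\phi_j)\cap\call H_j$. For the inclusion ``$\supseteq$'': a point $(x,\phi_1(x))$ with $x\in\call H_1$ should be written as a convex combination of a point of $\epi(\phi_1)$ (e.g.\ $(x,\phi_1(x))$ itself, or a nearby point slightly above the graph) and a point of $\epi(\phi_2)$; here the condition $x\in\call H_1$ is precisely what makes the needed combination have nonnegative coefficients, using convexity of each paraboloid. For the inclusion ``$\subseteq$'': if $(x,\phi_1(x))\in\frk C$ then by Carathéodory it is a convex combination of finitely many points of $\epi(\phi_1)\cup\epi(\phi_2)$; I would group the $\phi_1$-points and the $\phi_2$-points, use convexity of $\phi_1$ (resp.\ $\phi_2$) to replace each group's barycentre by a single point of the respective open epigraph, reducing to a combination of one point above $\Gamma(\phi_1)$ and one above $\Gamma(\phi_2)$; then the fact that $(x,\phi_1(x))$ lies \emph{on} $\Gamma(\phi_1)$ (not strictly above) forces the combination to sit on the common-support side, which after a short computation with the affine function $\ell$ yields $x\in\call H_1$ — and in fact pins the segment's endpoint on $\Gamma(\phi_2)$ to the single common tangency configuration. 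Both directions reduce to one-variable convexity estimates plus the explicit formula for $\ell$, so no genuinely hard analysis is involved.

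The main obstacle I anticipate is bookkeeping rather than depth: getting the boundary hyperplanes of the slab \emph{exactly} right, so that the width comes out to be exactly $|v_1-v_2|$ and not some $r$-dependent quantity, and handling the degenerate possibilities — $v_1 = v_2$ (slab degenerates, $\frk C$ is a single paraboloid up to the height offset), or the heights $\phi_1(v_1),\phi_2(v_2)$ chosen so that the two paraboloids are nested (one epigraph contains the other), in which case one of the $\Gamma(\phi_j)\cap\frk C$ is empty and $\call H_j$ must correspondingly be empty. I would dispatch these at the start by noting that if one graph lies entirely above the other then the statement is vacuous/trivial, and otherwise the affine function $\ell = \phi_1-\phi_2$ is nonconstant, its zero set is a genuine hyperplane $\Pi$, and I can choose coordinates so that $v_1-v_2$ points along $e_1$; then everything becomes a computation in the single variable $x_1$, the slab is literally $\{v_2\cdot e_1 < x_1 < v_1\cdot e_1\}$ (or the reverse), and the width $|v_1-v_2|$ and the two claimed identities all fall out of the quadratic identity for $\ell(x)$ above. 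Finally I would remark that the ``vertical slab'' is $\frk S\times\R$ and that $\call H_1,\call H_2$ as half-spaces in $\R^{n+1}$ are just $\call H_j\times\R$, so the statement as phrased with vertical half-spaces is literally what the coordinate computation produces.
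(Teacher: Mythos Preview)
Your overall strategy—exploiting that $\phi_1-\phi_2$ is affine and reducing to a one-variable computation along the direction $e=(v_2-v_1)/|v_2-v_1|$—is sound and is a reasonable alternative to the paper's approach, which instead constructs the slab directly by locating the common tangent hyperplanes to the two paraboloids. However, your identification of the slab is wrong, and this is not just bookkeeping: the boundary hyperplanes of $\frk S$ do \emph{not} pass through $v_1$ and $v_2$, and $\call H_j$ is \emph{not} the half-space $\{\phi_j<\phi_{3-j}\}$.

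To see the error, carry out your own 1D reduction. With $v_1<v_2$, $c_j:=\phi_j(v_j)$, the unique common tangent line touches $\Gamma(\phi_1)$ at $y_1=v_1+rm$ and $\Gamma(\phi_2)$ at $y_2=v_2+rm$, where $m=(c_2-c_1)/(v_2-v_1)$. The convex envelope $\psi$ of $\min(\phi_1,\phi_2)$ equals $\phi_1$ on $(-\infty,y_1]$, is linear on $[y_1,y_2]$, and equals $\phi_2$ on $[y_2,\infty)$. Hence $(x,\phi_1(x))\in\frk C$ iff $\psi(x)<\phi_1(x)$ iff $x>y_1$: so $\call H_1=\{x>y_1\}\times\R$, a genuine half-space, and similarly $\call H_2=\{x<y_2\}\times\R$. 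The slab is $\{y_1<x<y_2\}$, shifted by $rm$ from the slab $\{v_1<x<v_2\}$ you proposed; its width is still $|v_1-v_2|$, but only because the shift is the same on both sides. Your candidate $\{\phi_1<\phi_2\}=\{x<(y_1+y_2)/2\}$ is yet a third half-space, bounded by the \emph{midpoint} hyperplane, and does not equal $\call H_1$ either. Once you replace your guessed half-spaces by the correct ones through $v_j+rme$ (perpendicular to $e$), your Carath\'eodory/convex-envelope argument goes through; but as written, both inclusions would fail at the verification step. The paper avoids this pitfall by deriving the boundary hyperplanes from the tangency condition at the outset.
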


\begin{remark}[Tangent plane to the union of two paraboloids] \label{parab:cases}
	Let
	\[
	z\in \de\big(\!\epi_S(\phi_1)\cup\epi_S(\phi_2)\big) \setminus \big(\Gamma(\phi_1) \cap \Gamma(\phi_2)\big) =\vcentcolon \frk B
	\] and let $H_z$ be the tangent hyperplane to $\epi_S(\phi_1)\cup\epi_S(\phi_2)$ at $z$. Then one of the following is verified (see \Cref{fig:parabcases}):
	\begin{enumerate}[label=\textit{(\roman*)}]
		\item	$H_z \cap \de(\epi_S(\phi_1) \cup \epi_S(\phi_2))$ is a singleton;
		\item	$H_z \cap \de(\epi_S(\phi_1) \cup \epi_S(\phi_2))$ is a doubleton;
		\item	$H_z \cap \de(\epi_S(\phi_1) \cup \epi_S(\phi_2))$ is infinite.
	\end{enumerate}
	In the cases \emph{(i)} and \emph{(ii)} we have that $z \in \mathrm{ext}\,\barr{\frk C}$ (the \emph{extreme points} of the closure of the convex hull defined in \Cref{shp}),\syid{ext@\detokenize{$\mathrm{ext}\,E$}!\detokenize{the extreme points of $E$, namely those $p \in E$ such that if $p \in [x,y]$ with $x,y \in E$, then $p \in \{x,y\}$}} and in the case \emph{(iii)} we have $z \in \frk B \setminus \frk C$. Also, if $\frk S = \call H_1 \cap \call H_2$ satisfies
	\[
	\de \frk S \cap \de\!\bigcup_{j=1,2}\!\epi_S(\phi_j) = \{ z: H_z\ \text{satisfies \emph{(ii)}} \} = \bigcup_{j=1,2}(\de \call H_j \cap \Gamma(\phi_j)),
	\]
	then $\call H_j \subset (\mathrm{ext}\,\barr{\frk C} \cap \Gamma(\phi_j))\compl$, for $j=1,2$, in such a way that $\de\frk C \setminus \frk S = \mathrm{ext}\,\barr{\frk C}$. We leave the details to the interested reader. 
\end{remark}

\begin{figure}[ht]
	\renewcommand\thesubfigure{\emph{(\roman{subfigure})}}
	\centering
	\subfloat[][\centering The hyperplane touches only one paraboloid.]
	{\scalebox{0.69}{\includegraphics{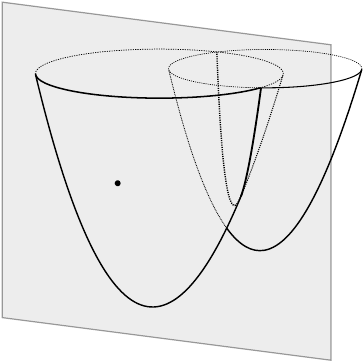}}} \qquad\quad 
	\subfloat[][\centering The hyperplane is tangent to both paraboloids.]
	{\scalebox{0.69}{\includegraphics{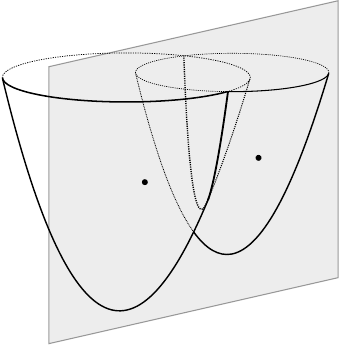}}} \qquad\quad
	\subfloat[][\centering The hyperplane cuts the other paraboloid.] 
	{\scalebox{0.69}{\includegraphics{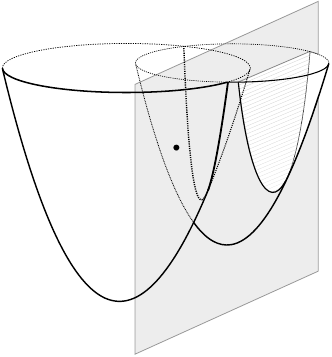}}} 
	\caption{The three situations described in \Cref{parab:cases}.}
	\label{fig:parabcases}
\end{figure}

\begin{proof}[Proof of \Cref{shp}]
	Set 
	\[
	e \defeq \frac{v_2 - v_1}{|v_2-v_1|} \quad \text{and} \quad m \defeq \frac{\phi_2(v_2)-\phi_1(v_1)}{|v_2-v_1|}
	\]
	and consider $z_1\defeq(y_1, \phi_1(y_1))\in\Gamma(\phi_1)$ and $z_2\defeq(y_2, \phi_2(y_2))\in\Gamma(\phi_2)$. We want to determine a condition on $y_1$ and $y_2$ which ensures that $z_1$ and $z_2$ have a common tangent hyperplane $H$. Equating normals $(D\phi_1(y_1), -1)$ and $(D\phi_2(y_2), -1)$ yields 
	\[
	y_1-v_1 = y_2-v_2.
	\]
	Thus $y_1 = v_1 + w$ and $y_2=v_2 + w$ for some $w\in\R^n$, so that $(\frac w r, -1)$ is normal to $H$. Since $z_1, z_2 \in H$ we have
	\[
	\tfrac1r\pair{y_1}w - \phi_1(y_1) = \tfrac1r\pair{y_2}w - \phi_2(y_2), 
	\]
	therefore $r(\phi_2(v_2) - \phi_1(v_1)) = \pair{y_2 - y_1}w = \pair{v_2-v_1}w$. Furthermore, since in particular $|y_1- v_1| = |y_2 - v_2|$, we know that $\phi_2(v_2) - \phi_1(v_1) = \phi_2(y_2) - \phi_1(y_1)$, proving that $\pair ew = rm$. Hence, if we decompose $w\in\R^n =  \langle e \rangle \oplus \langle e \rangle^\perp$,\syid{oplus@$\oplus$!the direct sum (of vector spaces)} then there exists $\barr w\in\langle e \rangle^\perp$\syid{perp@\detokenize{$V^\perp$}!the orthogonal complement of the linear subspace $V$} such that
	\[
	y_j = v_j+ rme + \barr w
	\]
	for $j=1,2$.
	Define now $\call H_1$ to be the open half-space whose boundary hyperplane $\de\call H_1$ has interior normal $(e,0)$ and passes through $(v_1+rme, 0)$. Similarly, define $\call H_2$ to have interior normal $(-e, 0)$ and boundary $\de \call H_2$ passing through $(v_2 + rme, 0)$. Clearly $\frk S = \call H_1 \cap \call H_2$ has width $|v_1-v_2|$. Also, the above proves that the mapping $\barr w \mapsto z_j$ parametrizes $\Gamma(\phi_j) \cap \de\call H_j$, for $j=1,2$, and the conclusion follows.
\end{proof}

\begin{figure}[htb]
	\begin{tikzpicture}
	\pgfdeclareimage{img}{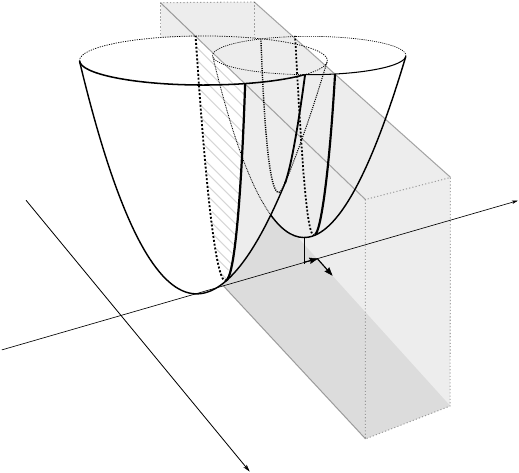}
	\node (S) at (0,0) {\pgfuseimage{img}};
	\node (ort) at (-0.75,-3.85) {\scriptsize $\langle e \rangle^\perp$};
	\node (Re) at (4.2,0.9) {\scriptsize $\langle e \rangle$};
	\node (vj) at (0.75,-0.7) {\scriptsize $v_j$};
	\node (rme) at (1.15,-0.17) {\scriptsize $rme$};
	\node (w) at (1.45,-0.7) {\scriptsize $\overline{w}$};
	\end{tikzpicture}
	\caption{The slab constructed in the proof of \Cref{shp}.}
\end{figure}

The slabbed hull property of Lemma~\ref{shp} will be used to prove a crucial property of the {\em vertex map}, which is the main tool we will need. In preparation for the definition of this map, we note that a generic upper contact quadratic function of radius $r$ for $u$ at $x$ takes the form $\phi(\cdot) = u(x) + \pair p{\cdot - x} + \frac1{2r}|\cdot -\, x|^2$ and has vertex $v=x-rp$. Moreover, if $x\in \mathrm{Diff}^1(u)$, then $p=Du(x)$ is unique. This motivates the following definition.

\begin{definition} 
	Let $u\in \USC(X)$. For each real number $r > 0$, the map\syid{Vr@\detokenize{$V_r$}!the upper vertex map of radius $r$}\sid{upper vertex map}
	\[ \begin{aligned}
	\C(u, X, \tfrac1r\I) \cap \mathrm{Diff}^1(u)\ &\xrightarrow{\,V_r\,}\ \R^n
	\\
	x \ &\longmapsto \ I - rDu(x)
	\end{aligned} \]
	is called the \emph{upper vertex map (of radius $r$) for $u$}.
\end{definition}

\begin{remark} \label{rmk:vertexc}
	Note that the domain of $V_r$ consists of those points $x \in X$ which admit a global upper contact quadratic function of radius $r$ and for which $u$ is differentiable. If $u$ is (locally semi-)convex, then the upper vertex map is well-defined on all of $\C(u, X, \tfrac1rI)$ since $\C(u, X, \frac1rI) \subset \mathrm{Diff}^1(u)$ by \Cref{datucp} (locally semiconvex functions are differentiable at all upper contact points).
	
	In particular, if $u$ is smooth, then the Jacobian of $V_r$ is $D V_r = \I - rD^2u$, and thus $0\leq DV_r \leq \I$ on $\C(u, X, \frac1r\I)$; that is, $V_r$ is nonexpansive (or 1-Lipschitz). This fact extends to $u$ which is merely convex.
\end{remark} 

\begin{prop} \label{prop:contr}
	Given a convex function $u$ defined on an open convex set $X \subset \R^n$ and given $r > 0$, the upper vertex map $V_r\colon \C(u, X, \frac1r\I) \to \R^n$ is nonexpansive.
\end{prop}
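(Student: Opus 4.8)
The plan is to read off nonexpansiveness of $V_r$ from the slabbed hull property of \Cref{shp}. Fix $x_1,x_2 \in \C(u,X,\frac1r\I)$; the goal is $|V_r(x_1)-V_r(x_2)|\leq|x_1-x_2|$. For $j=1,2$ pick a global (on $X$) upper contact quadratic function of radius $r$ at $x_j$, necessarily of the form $\phi_j(\cdot)=u(x_j)+\pair{p_j}{\cdot-x_j}+\frac1{2r}|\cdot-x_j|^2$; since $u$ is convex, \Cref{datucp} (or \Cref{ch1:datucp}) forces $p_j=Du(x_j)$, so $\phi_j$ has vertex point $v_j=x_j-rp_j=V_r(x_j)$. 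If $v_1=v_2$ there is nothing to prove, so assume $v_1\neq v_2$ and set $e\defeq(v_2-v_1)/|v_2-v_1|$.

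Now I would apply \Cref{shp} to the pair $\phi_1,\phi_2$: it produces the slab $\frk S=\call H_1\cap\call H_2$ of width $|v_1-v_2|$, where (from the explicit construction in its proof) $\call H_1=\{\,y:\pair{y-v_1}{e}>rm\,\}$ has inner normal $e$ and $\call H_2=\{\,y:\pair{y-v_2}{e}<rm\,\}$ has inner normal $-e$, for the slope parameter $m$ appearing there, and moreover $\Gamma(\phi_j)\cap\frk C=\Gamma(\phi_j)\cap\call H_j$ with $\frk C=\conv(\epi(\phi_1)\cup\epi(\phi_2))$. From here the chain is short: (i) show the contact point $(x_j,u(x_j))$ does not lie in $\frk C$; (ii) since $(x_j,u(x_j))\in\Gamma(\phi_j)$, and a point of $\Gamma(\phi_j)$ lies in $\frk C$ precisely when its base point lies in the vertical half-space $\call H_j$, deduce $x_j\notin\call H_j$, i.e.\ $\pair{x_1-v_1}{e}\leq rm$ and $\pair{x_2-v_2}{e}\geq rm$; (iii) subtract these two inequalities to get $\pair{x_2-x_1}{e}\geq\pair{v_2-v_1}{e}=|v_1-v_2|$, and conclude $|V_r(x_1)-V_r(x_2)|=|v_1-v_2|=\pair{v_2-v_1}{e}\leq\pair{x_2-x_1}{e}\leq|x_1-x_2|$.

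The one substantial point — and the step I expect to be the main obstacle — is (i). It is exactly here that globality of the contact is used: since $u\leq\phi_j$ on $X$ for both $j$, over $X$ the (open) epigraphs of $\phi_1$ and $\phi_2$ are contained in the convex closed epigraph $E_u=\{(y,t):y\in X,\ t\geq u(y)\}$, hence so is their convex hull; and since $u$ is continuous on $X$ (\Cref{thm:convex_Lip}), the graph point $(x_j,u(x_j))$ lies on $\partial E_u$ and therefore cannot sit inside the open set $\frk C$. When $X=\R^n$ this comparison is literal; when $X\subsetneq\R^n$ one must instead work with the trace $\frk C\cap(X\times\R)$ and check that the slabbed hull dichotomy of \Cref{shp} survives this localization — this bookkeeping, which is where the hypothesis that the $\phi_j$ dominate $u$ on \emph{all} of $X$ really bites, is the only delicate part of the argument. (A purely computational variant of (i), avoiding \Cref{shp}: combine the global inequality $u(x_2)+\pair{p_2}{y-x_2}\leq u(y)\leq\phi_1(y)$, valid for $y\in X$, evaluate the resulting radius-$r$ paraboloid at its vertex $y^\ast=x_1-r(p_1-p_2)$ to obtain $\tfrac r2|p_1-p_2|^2\leq u(x_1)-u(x_2)-\pair{p_2}{x_1-x_2}$, and add the symmetric inequality to get $r|p_1-p_2|^2\leq\pair{p_1-p_2}{x_1-x_2}$; together with the monotonicity of $\de u$ (\Cref{subdmon}) this rearranges to $|V_r(x_1)-V_r(x_2)|^2\leq|x_1-x_2|^2$. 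The domain issue reappears verbatim as the requirement $y^\ast\in X$.)
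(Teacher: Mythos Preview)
Your proof is correct and follows essentially the same route as the paper's: invoke the slabbed hull \Cref{shp}, use the convexity of $u$ (so that $\frk C\subset\epi(u)$ over $X$) to show the contact points $z_j=(x_j,u(x_j))\in\Gamma(\phi_j)$ lie outside $\frk C$, hence outside $\call H_j$, and conclude that $x_1,x_2$ sit on opposite sides of the width-$|v_1-v_2|$ slab. Your concern about the localization to $X\times\R$ is more explicit than the paper (which simply writes $\frk C\subset\epi(u)$ without comment), and the computational variant you sketch in parentheses is a legitimate alternative, but the main argument is identical.
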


\begin{proof}
	For $j=1,2$, given $x_j \in \C(u, X, \frac1r\I)$, let $\phi_j$ be the upper contact quadratic function of radius $r$ for $u$ at $x_j$ with vertex $v_j \defeq V_r(x_j)$. With the notation in \Cref{shp}, since $u$ is convex and its graph lies below those of the $\phi_j$'s, then $\frk C \subset \epi_S(u)$. Recall that $\epi_S$ is the strict epigraph, so that $z_j\defeq(x_j, \phi_j(x_j)) = (x_j, u(x_j))\in \Gamma(\phi_j) \setminus \epi(u)$; hence $z_j\in\Gamma(\phi_j) \setminus\frk C$. Therefore, by \Cref{shp}, $z_j \notin \call H_j$. It follows that the $z_j$'s lie on opposite sides of $\frk S$, therefore $|x_1 - x_2| \geq |v_1 - v_2|$.
\end{proof}

We also have a condition under which $V_r(x)$ cannot be too far from $x$; it is a consequence of the following result, which guarantees a wealth of upper contact points for any upper semicontinuous function which is bounded from below.

\begin{prop}	\label{p:uvm}
	Suppose that $u\in\USC(K)$ is bounded from below on the compact subset $K \subset \R^n$ and let $\osc_K u \defeq \sup_K u - \inf_K u$ denote the oscillation of $u$ on $K$.\syid{osc@\detokenize{$\osc_X u$}!the oscillation of $u$ on $X$, namely $\sup_X u - \inf_X u$} Set 
	\[
	K^\delta \defeq\{y\in K:\ d(y, \de K) > \delta\}, \quad \text{with} \ \delta \defeq \sqrt{2r\osc_K u},
	\]
	and let $\C(u, K, \tfrac1r\I, v)$ be the set of the points of $K$ at which $u$ has an upper contact quadratic function of radius $r$ and vertex point $v$.\syid{CuXrv@\detokenize{$\C(u, X, \tfrac1r\I, v)$}!the set of the points of $X$ at which $u$ has an upper contact quadratic function of radius $r$ and vertex point $v$}
	Then, for each $v\in K_\delta$, the set $\C(u, K, \tfrac1r\I, v)$ is a nonempty and compact subset of $\barr B_\delta(v) \subset K$.
\end{prop}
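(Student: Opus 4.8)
The plan is to recognize $\C(u,K,\tfrac1r\I,v)$ as the set of maximizers over $K$ of the ``sup-convolution integrand''
\[
g_v(y) \defeq u(y) - \tfrac1{2r}|y-v|^2, \qquad y \in K,
\]
and then read off all three assertions from elementary properties of $g_v$. First I would unwind the definitions: a quadratic function of radius $r$ with vertex point $v$ has the form $\phi_c = c + \tfrac1{2r}|\cdot - v|^2$ for some $c = \phi_c(v)\in\R$; imposing the contact condition $\phi_c(x) = u(x)$ at $x\in K$ forces $c = u(x) - \tfrac1{2r}|x-v|^2 = g_v(x)$, and the (global, on $K$) upper contact requirement $u\leq\phi_c$ on $K$ then reads $g_v(y) \leq g_v(x)$ for all $y\in K$. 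Hence $x\in\C(u,K,\tfrac1r\I,v)$ if and only if $g_v(x) = \max_K g_v$. Note it is essential here that $\C$ refers to the \emph{global} notion on $K$ (as in the definition of $\C(u,X,A)$): that is exactly what will make the comparison $g_v(x)\geq g_v(v)$ available.

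Granting this identification, nonemptiness and compactness are soft. The function $g_v$ is upper semicontinuous on $K$ (it is the sum of $u\in\USC(K)$ and a continuous function), and $K$ is compact, so $g_v$ attains its maximum $M \defeq \max_K g_v$; thus $\C(u,K,\tfrac1r\I,v) = \{x\in K : g_v(x) \geq M\}$ is nonempty. Moreover this set is the intersection of the compact set $K$ with the closed superlevel set $\{g_v \geq M\}$ (closed by upper semicontinuity of $g_v$), hence compact.

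It then remains to locate the set inside $\barr B_\delta(v) \subset K$. Since $v\in K_\delta\subset K$ we may evaluate $g_v$ at $v$, getting $g_v(v) = u(v)$; so for any $x\in\C(u,K,\tfrac1r\I,v)$,
\[
u(x) - \tfrac1{2r}|x-v|^2 = g_v(x) \geq g_v(v) = u(v),
\]
whence $\tfrac1{2r}|x-v|^2 \leq u(x) - u(v) \leq \sup_K u - \inf_K u = \osc_K u$, using $\sup_K u < +\infty$ (upper semicontinuity on compact $K$) and $\inf_K u > -\infty$ (hypothesis). This gives $|x-v| \leq \sqrt{2r\,\osc_K u} = \delta$, i.e.\ $x\in\barr B_\delta(v)$. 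Finally, $\barr B_\delta(v)\subset K$: since $d(v,\de K) > \delta$ the point $v$ lies in $\intr K$, and for any $y$ with $|y-v|\leq\delta$ the segment from $v$ to $y$ cannot leave $K$ — otherwise, by connectedness of $[0,1]$, it would meet $\de K$ at some point at distance $\leq\delta$ from $v$, contradicting $d(v,\de K)>\delta$.

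There is no deep obstacle here; the whole argument rests on the identification $\C(u,K,\tfrac1r\I,v) = \{x\in K : g_v(x) = \max_K g_v\}$ and on the fact that $g_v$ is USC on a compact set. The only mildly technical point is the topological inclusion $\barr B_\delta(v)\subset K$, which I would dispose of with the segment/connectedness argument just indicated.
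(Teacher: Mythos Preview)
Your proof is correct and follows essentially the same approach as the paper: identify $\C(u,K,\tfrac1r\I,v)$ with the set of maximizers of $g_v(y)=u(y)-\tfrac1{2r}|y-v|^2$ on $K$, deduce nonemptiness and compactness from upper semicontinuity on a compact set, and obtain the localization $|x-v|\le\delta$ by comparing $g_v(x)\ge g_v(v)$. You even supply the verification of the inclusion $\barr B_\delta(v)\subset K$, which the paper leaves implicit.
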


\begin{proof}
	Set 
	\[
	c\defeq \sup_{y\in X} \left( u(y) - \tfrac1{2r}|y-v|^2 \right),
	\]
	where $c$ is finite since $u\in\USC(K)$. Then $\phi _v= \phi_v(v) + \frac1{2r}|\cdot -\, v|^2$ is an upper contact quadratic function of radius $r$ for $u$ at $x$ with vertex point $v$ if and only if $\phi_v(v) = c$ and the supremum is attained at $x$. Thus,
	\[
	\C(u, K, \tfrac1r\I, v) = \{x\in K: u(x) = \phi_v(x) \}
	\]
	is nonempty and compact.  Now, since $u$ is bounded below, its oscillation is finite. Suppose that $\delta$ is small enough to ensure that $K^\delta$ is not empty and fix $v\in K^\delta$. Clearly, if $x\in\C(u,K,\frac1r\I, v)$, then
	\[
	\tfrac1{2r}|v-x|^2 = \phi_v(x) - \phi_v(v) = u(x) - c \leq u(x) - u(v) \leq  \osc_K u.
	\]
	This proves that $\C(u, K, \frac1r\I, v) \subset \barr B_\delta(v)$.
\end{proof}

\section{A paraboloidal proof of S{\l}odkowski's density estimate}

The tools above allow one to prove the following ``paraboloidal'' counterpart of \Cref{slodestlem} which one finds in~\cite{hlqc} and which gives a lower bound on the measure of global upper contact points of type $\frac{1}{r}I$ on closed balls as defined in  Definition \ref{defn:ucp}.

\begin{lem}[Lower bound on the measure of the set of upper contact points] \label{jen:den}
	Let $u$ be a convex function on $\barr B_{\barr\rho}$ such that, for some $R>0$,
	\[
	0\leq u(y) < \frac{|y|^2}{2R} \quad \text{if $y\neq 0$}.
	\]
	Then
	\[
	\frac{|\C(u, \barr B_\rho, \tfrac1r\I)|}{|B_\rho|} > \left(1-\sqrt{\frac rR} \,\right)^n
	\]
	for all $r \in (0,R\,]$ and $\rho \in (0,\barr\rho\,]$.
\end{lem}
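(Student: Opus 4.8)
The plan is to combine the two facts about the upper vertex map $V_r=\I-rDu$: the existence statement of \Cref{p:uvm} (which produces, for every admissible vertex $v$, a contact point of type $\tfrac1r\I$ having exactly that vertex), together with the non-expansiveness of $V_r$ from \Cref{prop:contr}; the measure bound then drops out of the elementary fact that a $1$-Lipschitz map cannot increase Lebesgue measure.

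\textbf{Step 1: an oscillation estimate.} First I would record that $u(0)=0$: indeed $0\le u(y)<|y|^2/(2R)\to 0$ as $y\to 0$, and $u$ is continuous (locally Lipschitz by \Cref{thm:convex_Lip}; when $\rho=\barr\rho$ one uses that a finite convex function on a closed ball is upper semicontinuous). Hence $\inf_{\barr B_\rho}u=0$, while $u$ attains its supremum on the compact set $\barr B_\rho$ and satisfies $u(y)<|y|^2/(2R)\le\rho^2/(2R)$ for $y\neq 0$ and $u(0)=0<\rho^2/(2R)$; therefore $\osc_{\barr B_\rho}u<\rho^2/(2R)$, so that
\[
\delta\defeq\sqrt{2r\,\osc_{\barr B_\rho}u}\ <\ \sqrt{2r\cdot\tfrac{\rho^2}{2R}}\ =\ \rho\sqrt{\tfrac rR}\ \le\ \rho .
\]

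\textbf{Step 2: the vertex map covers $B_{\rho-\delta}$.} Next I would apply \Cref{p:uvm} with $K=\barr B_\rho$. Since $\delta<\rho$, the set $(\barr B_\rho)_\delta=B_{\rho-\delta}$ is nonempty, and the proposition provides, for every $v\in B_{\rho-\delta}$, a point $x_v\in\barr B_\rho$ at which $u$ has a global upper contact quadratic function of radius $r$ with vertex $v$; in particular $x_v\in\C(u,\barr B_\rho,\tfrac1r\I)$. Because $u$ is convex, $x_v\in\Diff^1\!u$ by \Cref{datucp}; writing the associated upper contact jet as $(p,\tfrac1r\I)$ with $p=Du(x_v)$ and completing the square in $u(x_v)+\pair p{\cdot-x_v}+Q_{\frac1r\I}(\cdot-x_v)$ identifies its vertex as $x_v-rp=V_r(x_v)$, which equals $v$ by construction. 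Hence $B_{\rho-\delta}\subseteq V_r\big(\C(u,\barr B_\rho,\tfrac1r\I)\big)$.

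\textbf{Step 3: the measure bound.} By \Cref{prop:contr}, $V_r$ is non-expansive on $\C(u,\barr B_\rho,\tfrac1r\I)$, which is Lebesgue measurable (as remarked earlier for $\C(u,X,A)$ with $u$ locally quasi-convex). Extending $V_r$ to a non-expansive self-map of $\R^n$ via Kirszbraun's theorem and using that a $1$-Lipschitz map sends a ball of radius $\varrho$ into a ball of radius $\varrho$ — so, by a covering argument, it does not increase Lebesgue measure — one gets $|V_r(\C(u,\barr B_\rho,\tfrac1r\I))|\le|\C(u,\barr B_\rho,\tfrac1r\I)|$. Combining with Step 2,
\[
|\C(u,\barr B_\rho,\tfrac1r\I)|\ \ge\ |B_{\rho-\delta}|\ =\ \Big(1-\tfrac{\delta}{\rho}\Big)^{\!n}|B_\rho|\ >\ \Big(1-\sqrt{\tfrac rR}\,\Big)^{\!n}|B_\rho|,
\]
the last (strict) inequality using $0\le\delta/\rho<\sqrt{r/R}$ from Step 1 and $1-\sqrt{r/R}\ge 0$ (as $r\le R$); dividing by $|B_\rho|$ is the claim. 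The whole argument rests on \Cref{p:uvm} and \Cref{prop:contr}, and Step 2 is the heart of the matter: it is precisely \Cref{p:uvm} that supplies enough contact points — one for each vertex in $B_{\rho-\delta}$ — to force the lower bound. The only points requiring genuine care are the \emph{strictness} in Step 1 (which is what makes the final inequality strict) and the standard, but not entirely trivial, fact that non-expansive maps do not increase Lebesgue measure.
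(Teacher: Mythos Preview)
Your proof is correct and follows essentially the same route as the paper: apply \Cref{p:uvm} to see that $V_r$ maps the contact set onto a ball of radius $\rho-\delta$, then use the non-expansiveness of $V_r$ from \Cref{prop:contr} to pass to the measure inequality, and finish with the strict oscillation bound $\delta<\rho\sqrt{r/R}$. The paper invokes Bauer's maximum principle to bound $\sup_{\barr B_\rho}u$ (the max is attained on $\de B_\rho$), but your direct estimate $u(y)<|y|^2/(2R)\le\rho^2/(2R)$ achieves the same thing; and your explicit mention of Kirszbraun for the Lipschitz extension step is a detail the paper leaves implicit.
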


\begin{proof}
	Apply \Cref{p:uvm} with $K= \barr B_\rho$, $\rho\leq\barr\rho$, and $r\leq R$. Since by our assumptions $\osc_K u = M(\rho) \defeq \sup_{B_\rho} u$, we have $\delta = \delta(\rho) = \sqrt{2rM(\rho)}$ and for every $v\in \barr B_{\rho-\delta}$, the contact set $\C(u, \barr B_\rho, \frac1r\I, v)$ is a nonempty subset of $B_\rho$. Note that, by convexity, if $x$ is a point in the contact set of type $\frac1r I$, then the upper vertex map $V_r$ is well-defined at $x$. Hence, for every $v\in \barr B_{\rho-\delta}$ we have that $v= V_r(x)$ for some $x\in \C(u, \barr B_\rho, \frac1r\I, v) \subset \C(u, \barr B_\rho, \frac1r\I)$; that is,
	\[
	\barr B_{\rho-\delta} \subset V_r\!\left( \C(u, \barr B_\rho, \tfrac1r\I) \right).
	\]
	Since $V_r$ is a contraction, one obtains that
	\begin{equation*} 
	|B_{\rho-\delta}| \leq | \C(u, \barr B_\rho, \tfrac1r\I) |.
	\end{equation*}
	One concludes by noting that Bauer's maximum principle (see~\cite{bau}) says that $M(\rho) = \max_{\de B_\rho} u$, and thus $M(\rho) < \rho^2/(2R)$, whence $\delta < \rho\sqrt{r/R}$. Therefore
	\begin{equation*} 
	\rho - \delta > \rho \left(1-\sqrt{\frac rR}\, \right) \geq 0
	\end{equation*}
	and the desired conclusion easily follows.
\end{proof}

A novel paraboloidal version of \Cref{slod(iii)} is now easy to formulate and prove.

\begin{lem}[Upper bound on the largest eigenvalue of the Hessian; paraboloidal version] \label{hlKb}
	Let $u \colon U \to \R$ be convex on $U\subset \R^n$ open and convex. Suppose that there exists a paraboloid of radius $r$ which supports $\Gamma(u)$ from above at $(x,u(x))$; that is, there exits a quadratic function $\varphi$  of radius $r$ such that $\epi(\phi)$ lies above the graph $\Gamma(u)$ of $u$ and touches $\Gamma(u)$ at $(x,u(x))$. Then one has
	\[
	\call K(u,x) \leq \frac{1}{r} \,.
	\]
\end{lem}

\begin{proof}
	By \Cref{ucjt}({\sf D\;at\;UCP}) one knows that $x \in \Diff^1\! u$, and
	\[
	u(x+\cdot\,) \leq u(x) + \pair{Du(x)}{\cdot\,} + \frac1{2r}|\cdot|^2 \quad \text{near $0$}.
	\]
	The inequality $\call K(u,x) \leq \frac1r$ then immediately follows from the definition of $\call K$.
\end{proof}

These two paraboloidal instruments of Harvey and Lawson are sufficient to prove S{\l}odkowski's density estimate by following the same basic reasoning of S{\l}odkowski with his ``spherical'' instruments.

\begin{proof}[Proof of \Cref{slodthm}]
	Without loss of generality, suppose that $x^* = 0$, $u(0) = 0$ and $Du(0) = 0$. These reductions imply that $u \geq 0$ since $u$ is convex. Take $k^* < K < k$, and note that by the definition of $\call K$ there exists some $\bar\rho > 0$ such that $u < \frac K2 |\cdot|^2$ on $B_{\bar\rho}$.
	Now, for any $r > 0$ such that $k^{-1} < r < K^{-1}$, by \Cref{hlKb}, one has
	\[
	\C(u,\bar B_\rho, \tfrac 1r I) \subset \{ x :\ \call K(u,x) < k \} \qquad \forall \rho \in (0,\bar\rho).
	\]
	Hence, by \Cref{jen:den} with $R = K^{-1}$, one has
	\[
	\delta^-_{0}(\{ \call K(u,\cdot) < k \}) \geq \liminf_{\rho \dto 0} \frac{|\C(u,\bar B_\rho, \frac1r I)|}{|B_\rho|} \geq \left( 1-\sqrt{\frac{r}{R}}\, \right)^n.
	\]
	Letting $K \dto k^*$ (that is, $R \uto (k^*)^{-1}$) and $r \dto k^{-1}$, one finds that
	\[
	\delta^-_{0}(\{ \call K(u,\cdot) < k \}) \geq \bigg( 1-\sqrt{\frac{k^*}{k}} \,\bigg)^n = \left({\frac{k-k^*}{\sqrt{k}(\sqrt{k} + \sqrt{k^*})}} \right)^n \geq \left(\frac{k-k^*}{2k}\right)^n,
	\]
	as desired.
\end{proof}

\section{The Jensen--S{\l}odkowski theorem}

 We are now ready to present the reformulations of Harvey and Lawson of the classical lemmas of S{\l}odkowski and Jensen. There will be a Harvey--Lawson (HL) version for both lemmas, which will be shown to be equivalent. Moreover, the HL versions of the Jensen lemma will be shown to be equivalent to the original formulation. Finally, the two versions will be merged into a more general result in the Jensen--S{\l}odkowski Theorem~\ref{jenslod}.
 
 We begin by using the lower bound on the measure of the set of lower contact points in \Cref{jen:den} to prove the HL version of the S{\l}odkowski lemma as done in~\cite{hlqc}.
 
 \begin{lem}[HL--S{\l}odkowski lemma] \label{hl:slod} \sid{Lemma!S{\l}odkwoski|seeonly{S{\l}odkwoski's density estimate}} \sid{Lemma!S{\l}odkwoski!HL version (HL--S{\l}odkowski)}
 	Suppose that $u$ is a locally convex function on $X\subset \R^n$ with strict upper contact jet $(0,\lambda \I)$ at $x$. Then there exists $\barr\rho > 0$ such that
 	\[
 	|\C(u,\barr B_\rho(x),\lambda \I)| > 0 \qquad \forall \rho \in (0,\barr\rho\,].
 	\]
 \end{lem}
 
 \begin{proof}
 	Since $(0,\lambda \I)$ is strict upper contact jet at $x$ for $u$ convex, one has $\lambda \geq 0$. The case $\lambda = 0$ happens only when $u$ is constant near $x$ and the thesis is trivial in this case. Hence,  we may assume that $\lambda >0$ and, without loss of generality, that $x=0$ and $u(x) = 0$. Then the hypothesis of $(0, \lambda\I)$ being a strict upper contact jet for $u$ at $x=0$ is equivalent to the statement that there exists $\barr\rho >0$ such that
 	\[
 	0\leq u(y) < \frac1{2R}|y|^2 \qquad \text{for}\ 0<|y|\leq \barr\rho,
 	\]
 	where $R=1/\lambda$. Indeed, if $(0, \lambda\I)$ is a strict upper contact jet at $0$, then by \Cref{datucp} of differentiability at upper contact points, $Du(0) = 0$ and by convexity $u \geq 0$; the converse implication is trivial. The thesis now follows from \Cref{jen:den} with $r = R$.
 \end{proof}

 This lemma assures that if $u$ is convex then the set of contact points for some global upper contact quadratic function of radius $r$ on a ball $B_\rho(x)$ cannot be too small (in the sense of the Lebesgue measure), provided that $x$ is the vertex point of a strict upper contact quadratic function of radius $r$ for $u$ at $x$ and $\rho$ is sufficiently small. Indeed, the requirement that $x$ is both a contact point of such a quadratic function and that $x$ is its vertex point is equivalent to the statement that $(0, \lambda I)$ is a strict upper contact jet for $u$ at $x$.
 
 Furthermore, note that the HL--S{\l}odkowski lemma can be paraphrased by saying that if $u$ is locally convex and its quadratic perturbation $u-\frac{\lambda}{2}|\cdot-\,x|^2$ has a strict local maximum at $x$, then the set of global upper contact points of type $\lambda I$ for $u$ on each small ball about $x$ has positive measure. Analogously, Jensen's \Cref{u:jen} considers a semiconvex function with a strict local maximum at $\hat x$ and states that the set of the upper contact points near $\hat x$ whose associated upper contact jets are of the form $(p, 0)$, with $p \in \R^n$ small ($|p| \leq \delta$), has positive measure. 
 
 With the intuition that a bridge between convex and semiconvex functions can relate the two lemmas, Harvey and Lawson~\cite{hlqc} prove that the HL--S{\l}odkowski lemma is equivalent to the following result, which they simply call \emph{Jensen's lemma}. It roughly states that if $u$ is locally semiconvex, then the set of contact points of locally supporting hyperplanes from above near a strict local maximum point cannot be too small.
 
 \begin{lem}[HL--Jensen lemma]  \label{hl:jen} \sid{Lemma!Jensen!HL version (HL--Jensen)}
 	Suppose that $w$ is a locally semiconvex function on $X\subset \R^n$ with strict upper contact jet $(0, 0)$ at $x$. Then there exists $\barr\rho > 0$ such that
 	\[
 	|\C(w,\barr B_\rho(x),0)| > 0	\qquad  \forall \rho \in (0,\barr\rho\,].
 	\]
 \end{lem}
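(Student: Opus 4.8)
The plan is to reduce the quasi-convex statement to the convex one, namely the HL--S{\l}odkowski Lemma~\ref{slod}, by the standard device of a quadratic perturbation. Since $w$ is locally quasi-convex near $x$, choose $\lambda \geq 0$ and a ball $B \ni x$ on which $u \defeq w + \frac{\lambda}{2}|\cdot - x|^2$ is convex. The key observation is that adding the smooth quadratic term $\frac{\lambda}{2}|\cdot - x|^2$ to $w$ shifts every upper contact jet at any point $y$ by $(\lambda(y-x), \lambda I)$; in particular, at the point $x$ itself it sends the jet $(0,0)$ of $w$ to the jet $(0, \lambda I)$ of $u$, and it does so preserving strictness (because the perturbation is an equality-at-$x$ smooth addition). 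Hence $(0, \lambda I)$ is a strict upper contact jet for the locally convex function $u$ at $x$, and Lemma~\ref{slod} applies: there exists $\barr\rho > 0$ with $|\C(u, \barr B_\rho(x), \lambda I)| > 0$ for all $\rho \in (0, \barr\rho\,]$.

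The next step is to transport this positive-measure set back to $w$. This is where one must check that the vertex-type labels match up under the perturbation. If $y \in \C(u, \barr B_\rho(x), \lambda I)$, there exists $p \in \R^n$ with
\[
u(z) \leq u(y) + \pair{p}{z-y} + Q_{\lambda I}(z-y) \qquad \forall z \in \barr B_\rho(x).
\]
Subtracting $\frac{\lambda}{2}|z-x|^2 = \frac{\lambda}{2}|y-x|^2 + \pair{\lambda(y-x)}{z-y} + Q_{\lambda I}(z-y)$ from both sides (an exact identity for the quadratic) turns this into
\[
w(z) \leq w(y) + \pair{p - \lambda(y-x)}{z-y} \qquad \forall z \in \barr B_\rho(x),
\]
which says precisely that $y \in \C(w, \barr B_\rho(x), 0)$ with associated flat upper contact jet $(p - \lambda(y-x), 0)$. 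Thus $\C(u, \barr B_\rho(x), \lambda I) \subset \C(w, \barr B_\rho(x), 0)$, and since the former has positive Lebesgue measure, so does the latter. (Measurability of $\C(w, \barr B_\rho(x), 0)$ is guaranteed by the discussion following Definition~\ref{defn:ucp}, since $w$ is locally quasi-convex.) This completes the proof after shrinking $\barr\rho$, if necessary, so that $\barr B_{\barr\rho}(x) \subset B$.

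I do not expect any genuine obstacle here: the only thing to be careful about is the bookkeeping of jet components under the quadratic shift — in particular, verifying that strictness of the $(0,0)$ jet for $w$ at $x$ corresponds exactly to strictness of the $(0,\lambda I)$ jet for $u$ at $x$, and that a type-$\lambda I$ global contact point for $u$ becomes a type-$0$ (flat) global contact point for $w$ on the same ball. Both are immediate from the exact Taylor identity for the quadratic $\frac{\lambda}{2}|\cdot-x|^2$. One should also remark that the case $\lambda = 0$ is already contained in Lemma~\ref{slod} (where it forces $w$ locally constant near $x$), so no separate treatment is needed here.
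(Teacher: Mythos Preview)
Your proposal is correct and follows essentially the same route as the paper: the paper proves the HL--Jensen lemma via Proposition~\ref{prop:HLJS}, which establishes equivalence with the HL--S{\l}odkowski lemma by the identical quadratic perturbation $u = w + \tfrac{\lambda}{2}|\cdot - x|^2$ and the same identification of contact sets (in fact the paper notes the equality $\C(w,\barr B_\rho(x),0) = \C(u,\barr B_\rho(x),\lambda I)$, of which you verified one inclusion, sufficient for the positive-measure conclusion).
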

 
 \begin{prop}\label{prop:HLJS}
 	The HL--S{\l}odkowski and the HL--Jensen lemmas are equivalent.
 \end{prop}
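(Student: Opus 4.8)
The plan is to build a dictionary between the two lemmas via the quadratic perturbation $\frac{\lambda}{2}|\cdot - x|^2$, which is exactly the device turning $\lambda$-quasi-convexity into convexity. Fix $x \in \R^n$ and $\lambda \geq 0$, and for a function $g$ defined near $x$ write $g^\sharp \defeq g + \frac{\lambda}{2}|\cdot - x|^2$. The first step is the elementary computation, obtained by expanding $\frac{\lambda}{2}|y-x|^2$ about an arbitrary $x_0$, that for every $x_0$ and every $\rho$ with $\barr B_\rho(x)$ inside the common domain,
\[
(p,A) \in J^{2,+}_{x_0} g \ \text{globally on}\ \barr B_\rho(x) \iff \big(p + \lambda(x_0 - x),\, A + \lambda\I\big) \in J^{2,+}_{x_0} g^\sharp \ \text{globally on}\ \barr B_\rho(x),
\]
and likewise with ``globally on $\barr B_\rho(x)$'' replaced throughout by ``for $y$ near $x_0$''; in both forms strictness is preserved, since the two inequalities differ by the smooth quadratic $\frac{\lambda}{2}|y-x|^2$. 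Specializing to $A = 0$ and letting $x_0$ range over $\barr B_\rho(x)$ gives the set identity $\C(g, \barr B_\rho(x), 0) = \C(g^\sharp, \barr B_\rho(x), \lambda\I)$ (a purely formal consequence of the definitions, needing no convexity), while specializing to $x_0 = x$ turns a strict upper contact jet $(0,0)$ for $g$ at $x$ into a strict upper contact jet $(0, \lambda\I)$ for $g^\sharp$ at $x$, and conversely.

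For the implication HL--S{\l}odkowski $\Rightarrow$ HL--Jensen, let $w$ be locally quasi-convex on $X$ with strict upper contact jet $(0,0)$ at $x$; choose a ball $B \ni x$, $B \subset X$, on which $w$ is $\lambda$-quasi-convex, and set $u \defeq w^\sharp = w + \frac{\lambda}{2}|\cdot - x|^2$. Then $u$ is convex on $B$ and, by the first step, $(0, \lambda\I)$ is a strict upper contact jet for $u$ at $x$, so the HL--S{\l}odkowski lemma supplies $\barr\rho > 0$ (shrunk so that $\barr B_{\barr\rho}(x) \subset B$) with $|\C(u, \barr B_\rho(x), \lambda\I)| > 0$ for all $\rho \in (0, \barr\rho\,]$. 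Since $\C(u, \barr B_\rho(x), \lambda\I) = \C(w, \barr B_\rho(x), 0)$, this is precisely the conclusion of the HL--Jensen lemma.

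For the converse, let $u$ be locally convex on $X$ with strict upper contact jet $(0, \lambda\I)$ at $x$ (so $\lambda \geq 0$), and set $w \defeq u - \frac{\lambda}{2}|\cdot - x|^2$. Because $w + \frac{\lambda}{2}|\cdot|^2$ equals $u$ plus an affine function, $w$ is $\lambda$-quasi-convex on every ball where $u$ is convex, hence locally quasi-convex on $X$; and by the first step $(0,0)$ is a strict upper contact jet for $w$ at $x$. Applying the HL--Jensen lemma and using $\C(w, \barr B_\rho(x), 0) = \C(u, \barr B_\rho(x), \lambda\I)$ once more yields the conclusion of the HL--S{\l}odkowski lemma. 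There is no genuine obstacle here; the only point requiring care is the bookkeeping of the gradient shift $p \mapsto p + \lambda(x_0 - x)$, which is harmless because it vanishes exactly at $x_0 = x$, the point where the hypothesis on the jet is imposed.
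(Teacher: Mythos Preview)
Your proof is correct and follows essentially the same approach as the paper: both arguments relate $u$ and $w$ via $u = w + \tfrac{\lambda}{2}|\cdot - x|^2$, observe that this perturbation exchanges convexity with $\lambda$-quasi-convexity, swaps the strict upper contact jets $(0,\lambda\I)$ and $(0,0)$ at $x$, and yields the contact-set identity $\C(w,\barr B_\rho(x),0) = \C(u,\barr B_\rho(x),\lambda\I)$. Your version is slightly more explicit about the gradient shift $p \mapsto p + \lambda(x_0 - x)$ underlying the set identity, but the argument is the same.
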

 
 \begin{proof}
 	Consider $u$ and $w$ related by the identity $u = w + \frac\lambda 2|\cdot -\, x|^2$. Then, on $B_\rho(x)$, $u$ is convex if and only if $w$ is $\lambda$-semiconvex; note that by~\cref{def:qc}, the parameter of semiconvexity $\lambda(x)$ is locally constant, therefore we may assume that the locally semiconvex function $w$ is $\lambda$-semiconvex on $B_\rho(x)$ if $\rho>0$ small enough. Furthermore, $(0,\lambda\I)$ is a strict upper contact jet for $u$ at $x$ if and only if $(0,0)$ is for $w$. This shows that $\C(w,\barr B_\rho(x),0) = \C(u,\barr B_\rho(x),\lambda \I)$, and completes the proof of the desired equivalence.
 \end{proof}
 
 Next we will show that the HL--Jensen Lemma \ref{hl:jen} is indeed a reformulation of Jensen's \Cref{u:jen}. In view of our discussion above, note that one only needs to show that \emph{all} global upper contact jets of type $0 \in \call S(n)$ for $w$ whose associated upper contact points are close enough to $x$ are of the form $(p, 0)$ with $|p|\leq \delta$.
 
 \begin{prop} \label{cl:j-hlj}
 	The Jensen and the HL--Jensen lemmas are equivalent.
 \end{prop}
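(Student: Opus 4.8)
The plan is to show that the two statements are logically equivalent by unwinding the only genuine difference between them: Jensen's Lemma~\ref{u:jen} constrains the gradient-component of the admissible upper contact jets to lie in a small ball $\barr B_\delta$, while the HL--Jensen Lemma~\ref{hl:jen} speaks of \emph{all} global upper contact jets of type $0$ whose contact points are close to $x$. So the core of the argument is the observation that, near a strict upper contact point, these two families of jets coincide. The direction (HL--Jensen $\Longrightarrow$ Jensen) is the easy one: assuming $(0,0)$ is a strict upper contact jet for $w$ at $\hat x$ (which, by Remark~\ref{rmk:FCJ}, is exactly the hypothesis that $\hat x$ is a strict local maximum), the HL--Jensen Lemma produces $\barr\rho>0$ with $|\C(w,\barr B_\rho(\hat x),0)|>0$ for all $\rho\in(0,\barr\rho\,]$. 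Each $x\in\C(w,\barr B_\rho(\hat x),0)$ admits $p=p(x)\in\R^n$ with $w(y)\le w(x)+\pair p{y-x}$ for all $y$ in the ball; in particular $w_p$ (in the notation of Lemma~\ref{u:jen}, $w_{-p}$ up to sign conventions) has a local maximum at $x$. One then needs the quantitative bound $|p|\le\delta$ for $\rho$ small, which follows from an elementary estimate: since $(0,0)$ is a \emph{strict} upper contact jet, $w(x)\le w(\hat x)$ with strict inequality when $x\ne\hat x$, and the supporting-hyperplane inequality evaluated at $y=\hat x$ gives $\pair p{\hat x-x}\ge w(x)-w(\hat x)\ge -\osc$, while evaluated along $y=x+tp/|p|$ it controls $|p|$ in terms of the oscillation of $w$ on a fixed neighborhood; shrinking $\barr\rho$ makes this bound smaller than any prescribed $\delta$. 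Hence the HL contact set is contained in Jensen's set $K$, which therefore has positive measure.

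For the converse (Jensen $\Longrightarrow$ HL--Jensen), I would argue that the set $K$ appearing in Lemma~\ref{u:jen}, for suitable $r,\delta$, is in turn contained in $\C(w,\barr B_\rho(x),0)$ up to a negligible modification. If $x'\in K$ then $w_p$ has a local maximum at $x'$ for some $p$ with $|p|\le\delta$, which means $w(y)+\pair p{y}\le w(x')+\pair p{x'}$ near $x'$, i.e.\ $w(y)\le w(x')+\pair{-p}{y-x'}$ near $x'$; thus $(-p,0)$ is an upper contact jet for $w$ at $x'$, so $x'$ is an upper contact point of type $0$ (local version). The only subtlety is \emph{global} versus \emph{local}: Definition~\ref{defn:ucp} asks the inequality to hold on all of $\barr B_\rho(x)$, whereas the Jensen local maximum is a priori only local. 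Here one uses the standard device --- already invoked implicitly in the HL--S{\l}odkowski setting --- that the strict maximum at $x$ lets one add a small quadratic well and then restrict to a small enough ball so that a local upper contact quadratic function becomes a global one on that ball; since $w$ is locally quasi-convex and we have the freedom to shrink $\rho$, this is routine. Putting the two inclusions together, $|\C(w,\barr B_\rho(x),0)|>0$ follows from $|K|>0$.

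The main obstacle is the bookkeeping in the first direction: one must pin down exactly how small $\barr\rho$ must be so that every supporting slope $p$ arising from a global type-$0$ contact point in $\barr B_\rho(x)$ satisfies $|p|\le\delta$, and this requires a clean oscillation estimate exploiting the \emph{strictness} of the upper contact jet $(0,0)$ (equivalently, $x$ being a strict local maximum). It is worth isolating this as the key lemma: for a locally quasi-convex $w$ with a strict local maximum at $x$, the map sending an upper contact point of type $0$ near $x$ to its (unique, by Lemma~\ref{datucp}) gradient $Dw(x')$ tends to $0$ as $x'\to x$. Granting that, the equivalence of the two formulations is immediate, and --- together with Proposition~\ref{prop:HLJS} --- one concludes that Jensen's Lemma, the HL--Jensen Lemma, and the HL--S{\l}odkowski Lemma are all equivalent, which is the bridge Harvey and Lawson exploit to merge them into the Jensen--S{\l}odkowski Theorem~\ref{jenslod}.
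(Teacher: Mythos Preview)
Your soft argument for the gradient bound---that $p = Dw(x')$ (by Lemma~\ref{datucp}) and $Dw(x') \to Dw(\hat x) = 0$ as $x' \to \hat x$ (by Lemma~\ref{pcfd})---is correct and does establish $\C(w,\barr B_\rho(\hat x),0) \subset K$ for $\rho$ small. This is a genuinely different route from the paper, which instead passes to the convex function $u \defeq w + \tfrac{\lambda}{2}|\cdot - \hat x|^2$ and uses the nonexpansiveness of the upper vertex map $V_{1/\lambda}$ (Proposition~\ref{prop:contr}) to obtain the \emph{explicit} quantitative bound $|p| \leq \lambda r$. The paper's approach yields the clean identity $K = \C(w,\barr B_r(\hat x),0)$ for all $r \leq \delta/\lambda$, handling both implications at once; your approach is more qualitative but avoids the paraboloidal machinery of Subsection~\ref{sec:vertex_map}.

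However, your treatment of the reverse inclusion $K \subset \C$ (needed for Jensen $\Longrightarrow$ HL--Jensen) has a genuine gap. The ``quadratic well'' device you invoke does not obviously upgrade a \emph{truly local} maximum of $w_p$ at some $x'\neq\hat x$ to a global one on $\barr B_\rho(\hat x)$: adding $-\epsilon|\cdot - \hat x|^2$ changes the type of the contact jet, and the strictness of the maximum is at $\hat x$, not at $x'$. The paper does not attempt this upgrade at all. It simply \emph{interprets} ``local maximum'' in Jensen's Lemma~\ref{u:jen} as ``global maximum on $\barr B_r(\hat x)$'' (this is made explicit in the Remark immediately following the proof), under which interpretation $K = \bigcup_{|p|\leq\delta} \C(w,\barr B_r(\hat x),0;\,p) \subset \C(w,\barr B_r(\hat x),0)$ is automatic. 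The vertex-map bound then shows that for $r \leq \delta/\lambda$ the union over $|p|\leq\delta$ already exhausts all $p\in\R^n$, giving equality. If you insist on the truly local reading of Jensen's lemma, the set $K$ is a priori \emph{larger} than $\C$, and deducing $|\C|>0$ from $|K|>0$ would require an independent argument that you have not supplied.
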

 
 \begin{proof}
 	Note that the request that $w$ has a strict local maximum at $\hat x$ is equivalent to the request that $(0,0)$ is a strict upper contact jet for $w$ at $\hat x$, which for $w$ $\lambda$-semiconvex is equivalent to the requirement that $(0, \lambda I)$ is a strict upper contact jet for $u \defeq w + \frac\lambda2|\cdot-\,\hat x|^2$ at $\hat x$. Furthermore, $w_p$ has a local maximum at $x$ if and only if $(-p, 0)$ is an upper contact jet for $w$ at $x$; hence
 	\[
 	K = \bigcup_{|p| \leq \delta} \C(w,\barr B_r(\hat x),0\,;\, p),
 	\]
 	where the set $\C(w,\barr B_r(\hat x),0\,;\, p)$ denotes the set of the global upper contact points on $\barr B_r(\hat x)$ for $w$ associated to the jet $(p,0)$.
 	It is easy to see that $\C(w, \barr B_r(\hat x), 0\,;\, p)$ is the set of those points $x\in \barr B_r(\hat x)$ such that $(p + \lambda(x-\hat x), \lambda I)$ is a global upper contact jet for $u$ at $x$. By~\Cref{datucp}, $Du(x) = p + \lambda(x-\hat x)$ and thus
 	\[
 	V_{\frac{1}{\lambda}}(x) \defeq  \Bigl(I - \frac{1}{\lambda}Du\Bigr)(x) = \hat x - \frac p\lambda \quad \implies\quad  |p| = \lambda\,|V_{\frac{1}{\lambda}}(x) - \hat x|,
 	\]
 	where $V_{\frac{1}{\lambda}}\colon \C(u, \barr B_r(\hat x), \lambda I) \to \R^n$ is the upper vertex map of radius $r = \frac1\lambda$ associated to $u$.
 	Since $V_{\frac{1}{\lambda}}$ is nonexpansive (\Cref{prop:contr}) and $V_{\frac{1}{\lambda}}(\hat x) = \hat x$,  we see that
 	\[
 	|V_{\frac{1}{\lambda}}(x) - \hat x| \leq |V_{\frac{1}{\lambda}}(x) - V_{\frac{1}{\lambda}}(\hat x)| + |V_{\frac{1}{\lambda}}(\hat x)- \hat x| \leq |x-\hat x|,
 	\]
 	whence $|p| \leq \lambda r$. Therefore, for $r \leq \delta/\lambda$, we have
 	\begin{equation*} \label{fatica}
 	K = \bigcup_{p \in \R^n} \C(w,\barr B_r(\hat x),0\,;\, p) = \C(w,\barr B_r(\hat x), 0).\qedhere
 	\end{equation*}
 \end{proof}
 
 \begin{remark}
 	We implicitly assumed that in the statement of Jensen's \Cref{u:jen}, by \emph{local} one means \emph{global on $\barr B_r(\hat x)$}. In this case, the thesis of Jensen's lemma holds for every $\delta >0$ and sufficiently small $r>0$, and the statements of the Jensen and HL--Jensen lemmas are equivalent. If instead, one requires that the thesis of Jensen's lemma must hold for all $r>0$, then, in the definition of $K$, $w_p$ having a local maximum at $x$ means that there exists $\eta=\eta(x)>0$ such that $x\in \C(w, \barr B_\eta(x), 0)$, so that the set $K$ is \emph{a priori} larger than $\C(w, \barr B_\rho(\hat x), 0)$ for any $\rho\leq r$. 
 \end{remark}
 
 \begin{remark}
 	The proof of Proposition \ref{cl:j-hlj} also completes an alternate proof of Jensen's \Cref{u:jen} which exploits Harvey and Lawson's strategy of focusing on upper contact quadratic functions.
 \end{remark}
 
 We now merge the two equivalent Lemmas~\ref{hl:slod} and \ref{hl:jen} into a theorem which, despite being another equivalent reformulation of the previous results, unveils their full generality. This is also done in~\cite{hlqc}.
 
 \begin{thm}[Jensen--S{\l}odkowski Theorem] \label{jenslod} \sid{Theorem!Jensen--S{\l}odkowski}
 	Suppose that $w$ is a locally semiconvex function with strict upper contact jet $(p, A)$ at $x$. Then there exists $\barr\rho > 0$ such that
 	\[
 	|\C(w,\barr B_\rho(x),A)| > 0	\qquad  \forall \rho \in (0,\barr\rho\,].
 	\]
 \end{thm}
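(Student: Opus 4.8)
The plan is to reduce the statement directly to the HL--Jensen Lemma~\ref{hl:jen} (equivalently, through Proposition~\ref{prop:HLJS}, to the HL--S{\l}odkowski Lemma~\ref{slod}) by subtracting off the quadratic upper test function associated to the jet $(p,A)$. Concretely, set
\[
\phi(y) \defeq \pair{p}{y-x} + Q_A(y-x), \qquad \tildee{w} \defeq w - \phi .
\]
Since $\phi$ is a smooth quadratic, $\tildee{w}$ is still locally quasi-convex on the domain of $w$, and the hypothesis that $(p,A)$ is a \emph{strict} upper contact jet for $w$ at $x$ says precisely that $w(y) < w(x) + \phi(y)$ for all $y \neq x$ near $x$; that is, $\tildee{w}$ has a strict local maximum at $x$, so $(0,0)$ is a strict upper contact jet for $\tildee{w}$ at $x$. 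Thus the HL--Jensen Lemma~\ref{hl:jen} applies to $\tildee{w}$ and yields $\barr\rho > 0$, which we may also take small enough that $\barr B_{\barr\rho}(x)$ lies in the domain of $w$, such that $|\C(\tildee{w}, \barr B_\rho(x), 0)| > 0$ for every $\rho \in (0,\barr\rho\,]$.

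The second step is to match the contact sets of $\tildee{w}$ and $w$ on each such ball. Fix $\rho \in (0,\barr\rho\,]$ and $z \in \barr B_\rho(x)$, and use the elementary polarization identity
\[
Q_A(y-x) - Q_A(z-x) = Q_A(y-z) + \pair{A(z-x)}{y-z},
\]
valid for all $y$. Substituting $\tildee{w} = w - \phi$ into the defining inequality for a global upper contact jet shows that $(q,0)$ is a global upper contact jet for $\tildee{w}$ at $z$ on $\barr B_\rho(x)$ if and only if $\big(q + p + A(z-x),\, A\big)$ is a global upper contact jet for $w$ at $z$ on $\barr B_\rho(x)$. Since $q \mapsto q + p + A(z-x)$ is a bijection of $\R^n$, this gives the exact identification
\[
\C(\tildee{w}, \barr B_\rho(x), 0) = \C(w, \barr B_\rho(x), A)
\]
(only the inclusion ``$\subset$'' is needed below). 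Both sides are Lebesgue measurable, being contact sets of fixed type of locally quasi-convex functions, as observed earlier. Combining the two steps,
\[
\big| \C(w, \barr B_\rho(x), A) \big| = \big| \C(\tildee{w}, \barr B_\rho(x), 0) \big| > 0 \qquad \forall \rho \in (0,\barr\rho\,],
\]
which is the assertion of the theorem.

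I expect no serious obstacle here: essentially all the analytic weight sits in the earlier results --- the nonexpansiveness of the vertex map (Proposition~\ref{prop:contr}), the paraboloidal density estimate (Lemma~\ref{jen:den}), and the ensuing HL--S{\l}odkowski / HL--Jensen Lemmas --- while the present theorem is a bookkeeping reduction by a quadratic perturbation, entirely parallel to the identity-component case treated in the proof of Proposition~\ref{prop:HLJS}. The only points requiring a line of care are checking that the quadratic perturbation preserves \emph{local} quasi-convexity (immediate, as $\phi \in C^\infty$) and that it converts the strict upper contact jet $(p,A)$ into the strict upper contact jet $(0,0)$, i.e.\ into a strict local maximum (immediate from the definition of strictness). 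Everything else is the purely algebraic jet translation above.
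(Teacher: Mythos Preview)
Your proof is correct and follows essentially the same approach as the paper: a quadratic perturbation reduces the general jet $(p,A)$ to a special case covered by one of the two equivalent lemmas. The only cosmetic difference is that the paper perturbs by $-\pair{p}{\cdot-x} - Q_A(\cdot-x) + \tfrac{\lambda}{2}|\cdot-x|^2$ (with $\lambda$ large enough to make the result convex) and lands on the HL--S{\l}odkowski Lemma~\ref{slod}, whereas you perturb by $-\pair{p}{\cdot-x} - Q_A(\cdot-x)$ alone, remain in the quasi-convex class, and land on the HL--Jensen Lemma~\ref{hl:jen}; since Proposition~\ref{prop:HLJS} already established the equivalence of the two targets via exactly this kind of shift, the two write-ups are interchangeable.
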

 
 \begin{proof}
 	
 	Take 
 	\[
 	\phi \defeq -\pair p{\cdot - x} - Q_A(\,\cdot - x) + \frac\lambda2 |\cdot-\,x|^2,
 	\]
 	for some $\lambda >0$ to be determined. It is easy to see that $(0, \lambda\I)$ is a strict upper contact jet for $u\defeq w+\phi$ at $x$, and $\C(w, \barr B_\rho(x), A) = \C(u, \barr B_\rho(x), \lambda\I)$. Moreover, let $\alpha, \beta >0$ such that $w$ is $\alpha$-semiconvex and $A\leq \beta\I$; then $u$ is convex if $\lambda \geq \alpha + \beta$. The thesis now follows from \Cref{hl:slod}.
 \end{proof}
 
 \begin{remark}\label{rem:HLSJ}
 	We have just proved that the HL--S{\l}odkowski lemma implies the Jensen--S{\l}odkowski theorem, while it is clear that the converse is also true. Hence, the two results (and the HL--Jensen lemma as well) are in fact equivalent.
 \end{remark}

 \section{Consequences of the Jensen--S{\l}odkowski Theorem} \label{sec:proof:pusc}
 
 As previously mentioned, the main consequence of the Jensen--S{\l}odkowski Theorem~\ref{jenslod} that concerns us in this work is the property of partial upper semicontinuity of second derivatives of a locally semiconvex function stated in \Cref{pusc}({\sf PUSC\;of\;SD}). We will now give the proof, which also competes the proof of the Upper Contact Jet Theorem~\ref{ucjt}.

\begin{proof}[Proof of \Cref{pusc}]
	Given a sequence $\epsilon_k\dto 0$ of positive numbers, pick 
	\[
	x_k \in E \cap \Diff^2(u) \cap \C(u, \barr B_{\epsilon_k}(x), A+ \epsilon_k I).
	\]
	Note that this is certainly possible for every sufficiently small $\epsilon>0$ since $E \cap \Diff^2(u)$ has full measure in a neighborhood of $x$ by Alexandrov's \Cref{aleks:qc} and $(p,A+\epsilon I)$ is a strict upper contact jet for $u$ at $x$, therefore $\C(u, \barr B_{\epsilon}(x), A+ \epsilon I)$ has positive measure for $\epsilon$ small by the Jensen--S{\l}odkowski Theorem~\ref{jenslod}.
	We know that since $u$ is twice differentiable at $x_k$, then $D^2u(x_k) \leq A+\epsilon_k I$. Furthermore, by semiconvexity, $D^2u(x_k) + \lambda I \geq 0$ for some $\lambda\geq0$. By compactness, there exists a subsequence $\{x_j\}$ such that $D^2u(x_j) \to \bar A$ for some $\bar A \in [-\lambda I, A]$.
\end{proof} 

We also wish to note that, as the HL--S{\l}odkowski and the HL--Jensen lemmas are equivalent, despite dealing with convex and semiconvex functions, respectively, one should expect results coming from the former lemma to have a counterpart holding for semiconvex functions as well. This is the case, for instance, of S{\l}odkowski's Largest Eigenvalue \Cref{slodle}. In fact, if $u$ is $\lambda$-semiconvex near $x^*$ with $\call K(u,x) = k^*$, then $\tilde u \defeq u + \frac\lambda2|\cdot|^2$ is convex with $\call K(\tilde u, x^*) = \call K(u,x^*) + \lambda = k^* + \lambda$. Then \Cref{slodthm} tells that, for $k > k^*$,
\[
\delta^-_{x^*}(\{ \call K(u, \cdot) < k \}) = \delta^-_{x^*}(\{ \call K(\tilde u, \cdot) < k + \lambda \}) \geq \left(\frac{k-k^*}{2(k+\lambda)}\right)^n.
\]
Exploiting this density estimate for semiconvex functions one can deduce a semiconvex version of S{\l}odkowski's Largest Eigenvalue Theorem. Alternatively, it is possible to prove it directly from the Jensen--S{\l}odkowski \Cref{jenslod}, as we do here. 

\begin{thm}[S{\l}odkowski's Largest Eigenvalue Theorem: semiconvex version] \label{thm:LET_qc} \sid{Theorem!S{\l}odkowski's Largest Eigenvalue!semiconvex version}
	Let $u$ be locally semiconvex on an open set $X$ and suppose $\call K(u,x) \geq M$ for a.e.\ $x\in X$. Then $\call K(u,x) \geq M$ for all $x \in X$.
\end{thm}
\begin{proof}
	Suppose that $u$ is locally semiconvex on $X$ open and that $\call K(u,x) \geq M$ for a.e.\ $x \in X$. Seeking a contradiction to $\call K(u,x) \geq M$ for every $x \in X$, suppose that there exists $\hat x\in X$ such that $\call K(u, \hat x) < M$; say $\call K(u, \hat x) = M - 3\delta$ for some $\delta > 0$. By the definition~\eqref{LEF} of $\call K$, there exists $\barr\epsilon >0$ such that $\barr B_{\barr\epsilon}(\hat x) \subset X$ and
	\[
	2 \epsilon^{-2} \displaystyle\max_{|h|=1} \big( u(\hat x+\epsilon h) - u(\hat x) - \epsilon\pair{Du(\hat x)}{h} \big) \leq M-2\delta \qquad \forall\, \epsilon \in (0, \barr \epsilon),
	\]
	which is equivalent to
	\[
	u(\hat x + z) \leq  u(\hat x) + \pair{Du(\hat x)}{z} + \tfrac12(M-2\delta)|z|^2 \qquad \forall\, z\in B_{\barr\epsilon}(0).
	\]
	This last inequality is equivalent to $(Du(\hat x), (M-2\delta)I)$ being an upper contact jet for $u$ at $\hat x$. Hence, $(Du(\hat x), (M-\delta)I)$ is strict and by the Jensen--S{\l}odkowski ~\Cref{jenslod} we have 
	\[
	|\C(u, \barr B_\rho(\hat x), (M-\delta)I)| > 0
	\]
	for some $\rho > 0$. Since, by definition,
	\[
	u(y) \leq u(x) + \pair{Du(x)}{y-x} + \tfrac12(M-\delta)|y-x|^2
	\]
	for all $x\in \C(u, \barr B_\rho(\hat x), (M-\delta)I)$ and every $y \in \barr B_\rho(\hat x)$, this implies that
	\[
	\C(u, \barr B_\rho(\hat x), (M-\delta)I) \subset \{ \call K(u,\cdot) \geq M \}\compl,
	\]
	thus contradicting the hypothesis that $\call K(u,\cdot) \geq M$ almost everywhere.
\end{proof}

\section{An area formula proof of the Jensen--S{\l}odkowski theorem}\label{sec:JSAlex}

We conclude this chapter with a very interesting fact observed by Reese Harvey~\cite{har:pc}; namely, that the Jensen--S{\l}odkowski theorem can be seen as an immediate consequence of Alexandrov's maximum principle, a short proof of which can be in turn built around the well-known area formula (see \cite[Area Theorem~3.2.3]{fed:geo}).  Therefore, if one also considers that the Lipschitz version of Sard's theorem used in the proof of Alexandrov's theorem can be seen as a corollary of the area formula as well (cf.~\cite[Notes on the appendix]{user}),\footnote{The proof of Sard's theorem we propose in Appendix~\ref{proofsard}, based on Besicovitch's covering theorem, could seem at first sight more involuted than just invoking the area formula, yet the reader should be aware that classical proofs of the area formula itself (see, e.g.~\cite{evansgar}) rely, among other instruments, on the Radon--Nikodym theorem, which in turn also requires Besicovitch's result.} then the Area Theorem arises as the pillar of the theory we discussed so far.

We recall the following definitions of \emph{approximate (upper/lower) limit} and \emph{approximate differential}; for further details, see \cite[p.~212]{fed:geo}. 

\begin{definition}
	Let $g$ be a measurable function in a neighborhood of a point $x$. The \emph{approximate upper limit}\sid{approximate!(upper/lower) limit} of $g$ at $x$ is the quantity
	\[
	\aplimsup_{y\to x} g(y) \defeq \inf \left\{ t \in \R\ :\ \lim_{r \dto 0} \frac{|g^{-1}((t,+\infty)) \cap B_r(x)|}{|B_r(x)|}= 0 \right\};
	\]
	symmetrically, the \emph{approximate lower limit} is
	\[
	\apliminf_{y\to x} g(y) \defeq \sup \left\{ t \in \R\ :\ \lim_{r \dto 0} \frac{|g^{-1}((-\infty,t) \cap B_r(x)|}{|B_r(x)|}= 0 \right\}.
	\]
	Then, the \emph{approximate limit}, $\aplim_{y \to x} g(y)$, is the common value of approximate upper and lower limits, if it exists.
\end{definition}

\begin{definition}\label{defn:AD}
	Let $f\colon A \to \R^n$ with $A$ and $f$ measurable, $A\subset \R^n$. The \emph{approximate differential}\sid{approximate!differential} of $f$ at $a \in A$ is the unique linear map $\mathrm{ap}\,Df(a) \colon \R^n \to \R^n$ such that 
	\[
	\aplim_{x\to a} \frac{|f(x) - f(a) - \mathrm{ap}\, Df(a) (x-a)|}{|x-a|} = 0.
	\]
\end{definition}

If, for $f$ as in Definition~\ref{defn:AD}, one defines the {\em Stepanov set of $f$}\syid{Su@$S(u)$!the Stepanov set of $u$} by
\[ 
S(f) \defeq \left\{ x \in A:\ \aplimsup_{y\to x} \frac{|f(y) - f(x)|}{|y-x|} < \infty \right\},
\]
then by Federer's extension of Stepanov's theorem \cite[Lemma~3.1.7 and Theorem~3.1.8]{fed:geo}, $f$ has an approximate differential almost everywhere in $S(f)$; also, as noted in the beginning of~\cite[Section~3.2]{fed:geo}, the Area Theorem can be stated as follows.

\begin{thm}[Area Theorem]
\sid{Theorem!Area}
	For each measurable subset $E\subset S(f)$,
	\[
	|f(E)| \leq \int_{\R^n} N(\restr{f}{E}, y)\, \di y = \int_E |\det \mathrm{ap}\,Df(x)|\, \di x,
	\]
	where $N(\restr{f}{E}, y) \equiv \#\{x\in E:\ f(x) = y\}$.
\end{thm}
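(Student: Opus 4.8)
The plan is to prove the two assertions of the statement separately. The inequality $|f(E)| \leq \int_{\R^n} N(\restr{f}{E}, y)\,\di y$ is the easy one, once the measurability of $f(E)$ is taken for granted (it will follow from the structural decomposition carried out below, $f$ being a.e.\ the restriction of countably many continuous maps, whose images of Borel sets are analytic): the pointwise bound $\mathbf 1_{f(E)}(y) \leq N(\restr{f}{E}, y)$ is trivial, since a point of $f(E)$ has at least one preimage in $E$, and integrating it over $\R^n$ gives the claim. Everything therefore reduces to the identity $\int_{\R^n} N(\restr{f}{E}, y)\,\di y = \int_E |{\det \mathrm{ap}\,Df(x)}|\,\di x$.

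First I would reduce to the case of a $C^1$ map. Since $f$ is approximately differentiable a.e.\ on $S(f)$, a Lusin-type argument — cover $S(f)$ by the sets on which $f$ is ``$(\varepsilon,r)$-approximately affine'', which increase to full measure, and apply the Whitney extension theorem on each — produces countably many compact sets $K_j \subset S(f)$ and maps $g_j \in C^1(\R^n;\R^n)$ with $\bigl|S(f)\setminus\bigcup_j K_j\bigr| = 0$, $\restr{f}{K_j} = \restr{g_j}{K_j}$ and $\mathrm{ap}\,Df = Dg_j$ a.e.\ on $K_j$. Replacing $K_j$ by $K_j\setminus\bigcup_{i<j}K_i$ we may take the $K_j$ disjoint; writing $E_j \defeq E \cap K_j$, the disjointness gives $N(\restr{f}{E\cap\bigcup_j K_j},y) = \sum_j N(\restr{g_j}{E_j},y)$, so by monotone convergence the identity over $E\cap\bigcup_j K_j$ follows once one proves $\int_{\R^n} N(\restr{g}{F},y)\,\di y = \int_F |{\det Dg(x)}|\,\di x$ for each $C^1$ map $g$ and each bounded measurable $F$. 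The leftover piece $E\setminus\bigcup_j K_j$ is Lebesgue-null, hence contributes nothing to the right-hand side, and it is mapped by $f$ into a null set (by Luzin's property (N) of the $C^1$ approximants, together with the approximate-Lipschitz control built into the definition of $S(f)$), so it contributes nothing to the left-hand side either.

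For the $C^1$ statement I would split $F = F_0 \cup F_+$ according to whether $\det Dg$ vanishes. On $F_0$ the right-hand side is zero, and a standard slab estimate shows $|g(F_0)| = 0$: decompose $F_0$ into pieces on which $Dg$ stays uniformly close to a singular matrix, cover each by tiny cubes of side $\ell$ on which $g$ differs from its first-order Taylor polynomial by at most $\varepsilon\ell$, observe that $g$ of such a cube lies in a slab of $n$-measure $O(\varepsilon\ell^{\,n})$, and sum; hence the left-hand side is zero over $F_0$ as well. On $F_+$ the inverse function theorem makes $g$ a local diffeomorphism, so $F_+$ is covered by countably many balls on which $g$ is a diffeomorphism onto its image; disjointifying and summing by monotone convergence as above, one is reduced to $g$ a diffeomorphism of a measurable set $G$, where $N(\restr{g}{G},\cdot) = \mathbf 1_{g(G)}$ and the classical change-of-variables formula yields $|g(G)| = \int_G |{\det Dg}|$ — and this last identity is itself obtained, by a further subdivision into regions where $Dg$ is nearly a fixed invertible matrix $L$, from the elementary fact that $|L(Q)| = |{\det L}|\,|Q|$ for cubes $Q$.

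I expect the main obstacle to be the measure-theoretic bookkeeping rather than any isolated hard estimate: one must verify that $y \mapsto N(\restr{f}{E},y)$ is measurable (it is a countable sum of indicators of analytic sets, after refining the decomposition so that $f$ is injective on each piece), keep the successive countable decompositions disjoint so that multiplicities add exactly under monotone convergence, and — the genuinely delicate point — confirm that the null set discarded in the structural reduction is carried by $f$ to a Lebesgue-null set, which is precisely where Luzin's property (N) of the $C^1$ pieces and the approximate-Lipschitz condition defining $S(f)$ are needed.
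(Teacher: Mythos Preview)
The paper does not prove this statement at all: it simply cites it as \cite[Area Theorem~3.2.3]{fed:geo}, noting that the stated form follows from Federer's extension of Stepanoff's theorem \cite[Lemma~3.1.7 and Theorem~3.1.8]{fed:geo}. The theorem is imported wholesale and then specialized (in \Cref{area:cor}) to gradients of locally quasi-convex functions, which is the only form actually used downstream.

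Your outline is the standard Federer/Evans--Gariepy route and is essentially correct in its architecture: the Lusin--Whitney decomposition into $C^1$ pieces, the disjointification so that multiplicities add, the Sard-type slab estimate on the degenerate set, and the local-diffeomorphism reduction to the classical change of variables. The point you flag as ``genuinely delicate'' --- that the exceptional null set in $S(f)$ is carried by $f$ to a null set --- is exactly the place where one must invoke the approximate-Lipschitz structure of $S(f)$ together with property (N) for the $C^1$ extensions, and you have identified it correctly. One small caution: the measurability of $N(\restr{f}{E},\cdot)$ and of $f(E)$ is slightly more involved than ``indicators of analytic sets'' suggests when $E$ is merely Lebesgue-measurable rather than Borel; Federer handles this by working with the Borel regularization and using that the approximate differential is a Borel function on the set where it exists. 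But this is bookkeeping, not a gap in the strategy.
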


Let $u\colon X \to \R$ be locally semiconvex on $X$ open in $\R^n$ and consider the gradient map $Du \colon \Diff^1(u) \to \R^n$. We know that both $\Diff^1(u)$ and $\Diff^2(u)$ have full measure in $X$, and hence the next result follows easily.

\begin{lem}
	Let $u$ be as above. Then $\Diff^2(u) \subset S(Du)$.
\end{lem}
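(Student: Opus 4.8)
The plan is to prove the (formally stronger) statement that for every $x\in\Diff^2\!u$ the ordinary limit superior of $|Du(y)-Du(x)|/|y-x|$ as $y\to x$ along $\Diff^1\!u$ is finite. Since $\Diff^1\!u$ has full measure in a neighbourhood of $x$ (Rademacher's \Cref{rade}, or equivalently \Cref{aleks:qc}), the complementary set is Lebesgue-null and therefore irrelevant to the density computations defining the approximate limit superior, so this finiteness immediately yields $\aplimsup_{y\to x}|Du(y)-Du(x)|/|y-x|<\infty$, that is, $x\in S(Du)$. First I would localise: using local quasi-convexity, fix a ball $B=B_{r_0}(x)\subset X$ and $\lambda\geq0$ for which $v\defeq u+\tfrac\lambda2|\cdot|^2$ is convex on $B$. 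Because the perturbation is smooth, $\Diff^1\!v=\Diff^1\!u$ and $\Diff^2\!v=\Diff^2\!u$, and $Du(y)=Dv(y)-\lambda y$ for $y\in\Diff^1\!u$, so that $|Du(y)-Du(x)|\leq|Dv(y)-Dv(x)|+\lambda|y-x|$; hence it suffices to bound $|Dv(y)-Dv(x)|/|y-x|$ for the convex function $v$, with $x\in\Diff^2\!v$ and $A\defeq D^2v(x)$.

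The heart of the argument is to sandwich $Dv(y)$ between two difference quotients of $v$ and then to evaluate those quotients by the second-order Peano expansion of $v$ at $x$. Concretely, for $y\in\Diff^1\!v$ close to $x$, set $\delta\defeq|y-x|$; for any unit vector $e$ the points $y\pm\delta e$ lie in $B$ once $\delta$ is small, and since $Dv(y)\in\de v(y)$ and $v$ is convex on all of $B$, the subgradient inequalities $v(z)\geq v(y)+\pair{Dv(y)}{z-y}$ applied at $z=y\pm\delta e$ give
\[
\frac{v(y)-v(y-\delta e)}{\delta}\;\leq\;\pair{Dv(y)}{e}\;\leq\;\frac{v(y+\delta e)-v(y)}{\delta}.
\]
Substituting $v(z)=v(x)+\pair{Dv(x)}{z-x}+\tfrac12\pair{A(z-x)}{z-x}+R(z)$, where $|R(z)|\leq\eta(|z-x|)\,|z-x|^2$ with $\eta(s)\to0$ as $s\to0$, the quadratic part of each difference quotient contributes $\pair{Dv(x)+A(y-x)}{e}\pm\tfrac12\delta\pair{Ae}{e}$, while the remainder part has absolute value $\leq C\eta(2\delta)\,\delta$ because all of $y,y\pm\delta e$ lie within distance $2\delta$ of $x$. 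Thus both ends of the sandwich equal $\pair{Dv(x)+A(y-x)}{e}+\theta$ with $|\theta|\leq\big(\tfrac12\norm{A}+C\eta(2\delta)\big)\delta$, \emph{uniformly in }$e$, whence $|Dv(y)-Dv(x)-A(y-x)|\leq\big(\tfrac12\norm{A}+C\eta(2\delta)\big)|y-x|$ and therefore $|Dv(y)-Dv(x)|\leq\big(\tfrac32\norm{A}+C\eta(2\delta)\big)|y-x|$. This bound is uniformly finite for $\delta$ small, so $\limsup_{y\to x,\,y\in\Diff^1\!v}|Dv(y)-Dv(x)|/|y-x|<\infty$, and the reduction above finishes the proof.

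The only genuine work is the error bookkeeping in the last step: one must check that after dividing the $o(|y-x|^2)$ Peano remainder by $\delta=|y-x|$ what survives is still $o(|y-x|)$ (indeed $O(\eta(2\delta)|y-x|)$), and that all estimates are uniform in the direction $e$ — which holds precisely because the Peano bound $|R(z)|\leq\eta(|z-x|)|z-x|^2$ is a pointwise bound on $R$, with no dependence on any directional quantity. One should also not forget to justify the subgradient sandwich by observing that convexity of $v$ on the whole ball $B$ (not merely near $y$) is what makes $z=y\pm\delta e$ legitimate test points in the subgradient inequality. Beyond this there is no deeper obstacle; the decisive idea is the coupling $t=\delta$, which converts the pointwise second-order information available at $x$ into a first-order Lipschitz-type bound at the nearby differentiability point $y$.
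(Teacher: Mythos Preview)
Your proof is correct and follows essentially the same line as the paper's: both argue that for $x\in\Diff^2\!u$ one has $|Du(y)-Du(x)|\leq|D^2u(x)|\,|y-x|+o(|y-x|)$ along $\Diff^1\!u$, which immediately bounds the (approximate) $\limsup$. The paper simply asserts this inequality, whereas you supply the underlying justification via the subgradient sandwich---a worthwhile addition, since Peano second-order differentiability at a point does \emph{not} in general imply differentiability of the gradient there; it is precisely convexity that makes the inequality valid.
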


\begin{proof}
	Let $f= Du$. Fix $x\in \Diff^2(u)$ and set
	\[
	E_t \defeq \left\{ y\in \Diff^1(u):\  g(y)\defeq\frac{|f(y) - f(x)|}{|y-x|} > t \right\}.
	\]
	Since $|Du(y) - Du(x)| \leq |D^2u(x)|\,|y-x| + o(|y-x|)$ for $y \to x$, let $t>|D^2u(x)|$ and $\epsilon > 0$ be such that $g(y) \leq t$ for all $y\in B_\epsilon(x)\cap \Diff^1(u)$. Hence $|E_t \cap B_\epsilon(x)| = 0$,  yielding $\aplimsup_{y\to x} g(y) \leq |D^2u(x)|$ and the desired conclusion follows.
\end{proof}

Hence, we have the following version of the Area Theorem, which is more suitable for our purposes. 

\begin{thm}[Area Theorem for gradients of locally semiconvex functions] \label{area:cor} \sid{Theorem!Area!for gradients of locally semiconvex functions}
	Given $u \colon X \to \R$ a locally semiconvex function, for every measurable set $E\subset \Diff^2(u)$ one has
	\begin{equation}\label{AT_lqc}
	|Du(E)| \leq \int_E |\det D^2u(x)|\, \di x.
	\end{equation}
\end{thm}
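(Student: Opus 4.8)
The statement is an immediate specialization of the Area Theorem (\Cref{area}) combined with the preceding lemma, so the proof is essentially bookkeeping. First I would recall that $u$ locally quasi-convex implies, by Rademacher's \Cref{rade} (applied to $Du$, which is locally Lipschitz on the convex pieces after a smooth quadratic perturbation) together with Alexandrov's \Cref{aleks:qc}, that $\Diff^1(u)$ and $\Diff^2(u)$ both have full measure in $X$. In particular, on $\Diff^2(u)$ the gradient map $f \defeq Du$ is classically differentiable with $Df(x) = D^2u(x)$, so a fortiori $f$ is approximately differentiable there with $\mathrm{ap}\,Df(x) = D^2u(x)$.

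Next I would invoke the lemma just proven, $\Diff^2(u) \subset S(Du)$, which licenses the application of the Area Theorem to any measurable $E \subset \Diff^2(u)$: indeed $E \subset \Diff^2(u) \subset S(f)$, so \Cref{area} gives
\[
|f(E)| \leq \int_{\R^n} N(\restr f E, y)\,\di y = \int_E |\det \mathrm{ap}\,Df(x)|\,\di x.
\]
Finally I would substitute $f = Du$, $\mathrm{ap}\,Df(x) = D^2u(x)$ on $E \subset \Diff^2(u)$, and drop the (nonnegative) middle term $\int N(\restr f E, y)\,\di y$, which is $\geq |f(E)|$ anyway, to obtain exactly
\[
|\Sigma| = |Du(E)| \leq \int_E |\det D^2u(x)|\,\di x,
\]
as claimed in \eqref{AT_lqc}.

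**Main obstacle.** There is no real obstacle here — all the work has been front-loaded into the lemma $\Diff^2(u)\subset S(Du)$ and into Federer's extension of Stepanoff's theorem, both of which are cited and may be assumed. The only point requiring a word of care is the identification $\mathrm{ap}\,Df(x) = D^2u(x)$ at points $x \in \Diff^2(u)$: since $u$ is twice differentiable in the Peano sense at such $x$, the map $Du$ is (ordinarily, hence approximately) differentiable at $x$ with differential $D^2u(x)$, so the approximate differential appearing in the Area Theorem coincides with the classical Hessian on $E$, and the middle integrand $N(\restr f E, y)$ may simply be discarded after the first inequality. Thus the proof is a two-line deduction from \Cref{area} and the preceding lemma.
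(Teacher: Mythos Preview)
Your proposal is correct and matches the paper's (implicit) argument: the paper simply writes ``Hence, we have the following version of the Area Theorem'' immediately after the lemma $\Diff^2(u)\subset S(Du)$ and states \Cref{area:cor} without further proof, so the deduction you spell out is exactly what is intended. One small imprecision: Peano twice differentiability of $u$ at $x$ does \emph{not} in general make $Du$ ordinarily differentiable at $x$; what you actually need (and what suffices) is that $\mathrm{ap}\,D(Du)(x)=D^2u(x)$ on $\Diff^2(u)$, which follows from the same estimate used in the lemma's proof---but this is a cosmetic point and does not affect your argument.
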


Next, we use this Area Theorem \ref{area:cor} for locally semiconvex functions to prove a locally semiconvex version of \emph{Alexandrov's maximum principle}, which takes the form of the following pointwise \emph{a priori} estimate.

\begin{thm}[Alexandrov's maximum principle for locally semiconvex functions] \label{alexmp} \sid{Alexandrov's maximum principle|seeonly{maximum principle, Alexandrov}}\sid{maximum!principle!Alexandrov}
	Let $\Omega \subset \R^n$ be open and bounded. If \syid{diam@\detokenize{$\mathrm{diam}(E)$}!the diameter of the set $E$, namely \detokenize{$\sup\{\vert x-y\vert :\, x,y \in E\}$}}$u\in\USC(\overline\Omega)$ is locally semiconvex, then
	\begin{equation} \label{alexineq}
	\sup_{\overline{\Omega}} u \leq \sup_{\de\Omega} u + \frac{\mathrm{diam}(\Omega)}{\omega_n^{1/n}} \biggl(\int_{E(u)} |\det D^2u(x)|\, \di x\biggr)^{1/n},
	\end{equation}
	where $\omega_n \defeq  |B_1|$ is the Lebesgue measure of the unit ball in $\R^n$ and $E(u)$ is the set of all flat global upper contact points at which $u$ is twice differentiable; that is,
	\begin{equation}\label{E}
	E(u) \defeq  \C(u, \Omega, 0) \cap \Diff^2(u).
	\end{equation}
\end{thm}

\begin{proof} It suffices to prove the weaker inequality\footnote{It is weaker since $\sup_{\de\Omega} u \leq \sup_{\de\Omega} u^+$.}
	\begin{equation} \label{alexineq2}
	\sup_{\overline{\Omega}} u \leq \sup_{\de\Omega} u^+  +  \frac{\mathrm{diam}(\Omega)}{\omega_n^{1/n}} \biggl(\int_{E(u)} |\det D^2u(x)|\, \di x\biggr)^{1/n}.
	\end{equation}	
	Indeed, the translated function $\bar{u}\defeq  u - \sup_{\de\Omega} u$,
	where $\sup_{\de\Omega} u < + \infty$ since $u \in \USC(\overline{\Omega})$, also satisfies $\bar{u} \in \USC(\overline{\Omega})$ and $\bar{u}$ is locally semiconvex. Moreover, one clearly has
	\begin{equation}\label{u_bar}
	\bar{u}^+ = 0 \ \text{on} \ \partial \Omega \quad \text{and} \quad D^2 \bar{u} = D^2 u \ \text{on} \ E \defeq E(\bar{u}) = E(u).
	\end{equation}
	Applying the weaker inequality \eqref{alexineq2} to $\bar{u}$ yields
	$$
	\sup_{\overline{\Omega}} \bar{u}  \leq  \sup_{\de\Omega} \bar{u}^+ + \frac{\mathrm{diam}(\Omega)}{\omega_n^{1/n}} \Bigg(\int_{E(\bar{u})} |\det D^2\bar{u}(x)|\, dx\Bigg)^{1/n}
	$$
	which, using the definition of $\bar{u}$ and the properties \eqref{u_bar}, is equivalent to 
	$$
	\sup_{\overline{\Omega}} \Bigl( u - \sup_{\de\Omega} u \Bigr) \leq  \frac{\mathrm{diam}(\Omega)}{\omega_n^{1/n}} \Bigg(\int_{E(u)} |\det D^2u(x)|\, dx\Bigg)^{1/n},
	$$
	and hence \eqref{alexineq} for $u$.

	The idea of the proof of \eqref{alexineq2} is to show that the image of the gradient map $Du(E)$ (or some superset of it with the same measure) contains an open ball $B_{\delta}$ of a suitable radius $\delta$; in particular, it suffices to choose
	\begin{equation} \label{alex:delta}
	\delta \defeq \frac{\sup_{\overline\Omega} u - \sup_{\de\Omega} u^+}{\mathrm{diam}(\Omega)}\,.
	\end{equation}
	Now, if one can show that $B_{\delta} \subset Du(E)$, then 
	\begin{equation} \label{alexball}
	\omega_n \delta^n = |B_\delta| \leq |Du(E)| \leq \int_{E} |\det D^2u(x) |\, \di x,
	\end{equation}
	where the second inequality is the area formula \eqref{AT_lqc}. Solving the inequality \eqref{alexball} for $\delta$ defined by \eqref{alex:delta} gives the desired inequality \eqref{alexineq}.
	
	Notice that, without loss of generality, we can assume that $\delta$ defined by \eqref{alex:delta} is positive because if $\delta \leq 0$ then $|B_{\delta}| = 0$ and the inequality \eqref{alexineq} is trivial. This also underlines the meaning of the estimate \eqref{alexineq}; one has an upper bound on the value of positive interior maximum for $u$.
	
	Exploiting the argument above, it suffices to show that $B_{\delta} \subset  Du(E)$. We will, in fact, show a weaker inclusion, but one which is still sufficient to use the argument. More precisely, consider the \emph{a priori} enlargement of $E$ defined by \eqref{E} by eliminating the request that $u$ be twice differentiable; that is,
	\[
	\widetilde E \defeq \C(u, \Omega, 0).
	\]
	The map $Du$ is well-defined on $\widetilde E$ since $\C(u, \Omega, 0) \subset \Diff^1(u)$ by property ({\sf D\;at\;UCP}) of the Upper Contact Jet \Cref{ucjt}. With $u, \delta, E$ and $\tilde E$ as above, in the following two lemmas, we will show that
	\begin{equation}\label{E_Etilde}
	|Du(\widetilde E) \setminus Du(E)| = 0 \quad \text{and} \quad B_{\delta} \subset Du(\tilde E),
	\end{equation}
	which clearly yields the desired estimate \eqref{alex:delta} since $|B_{\delta}| \leq |Du(\widetilde E)| = |Du(E)|$ and then one can use the area formula as in \eqref{alexball}.
\end{proof}
As noted, the proof of Theorem \ref{alexmp} has been reduced to the claims in \eqref{E_Etilde}, which we now prove. 

\begin{lem}
	Let $u$, $E$ and $\widetilde E$ be as above. Then $|Du(\tilde E) \setminus Du(E)| = 0$.
\end{lem}

\begin{proof}
	We will prove that the larger set $Du(\widetilde E \setminus E)$ is null. First, note that  $u$ being locally semiconvex implies that $\widetilde E\setminus E$ is null by Alexandrov's \Cref{aleks:qc}. This suggests using the well-known fact that Lipschitz maps on $\R^n$ send null sets to null sets. To this end, the local semiconvexity of $u$ also implies that there is an exhaustion of $\Omega$ by compact convex sets  $\{K_i\}_{i\in\N}$ such that $u$ is $\lambda_i$-semiconvex on $K_i$ for some $\lambda_i > 0$. For each $i \in \N$, set 
	\[
	E_i \defeq \widetilde E \cap K_i \subset \C(u, K_i, 0) \quad  \text{and} \quad u_i \defeq u + \frac{\lambda_i}2|\cdot|^2,
	\]
	which is convex on $K_i$. 
	Since $\C(u, K_i, 0) = \C(u_i, K_i, \lambda_i\I)$, we know that the upper vertex map $V_{1/\lambda_i}$ for $u_i$ is well-defined and $1$-Lipschitz on $E_i$ (by \Cref{rmk:vertexc} and \Cref{prop:contr}); that is,
	\[
	\Bigl| \Bigl( I - \frac{1}{\lambda_i}Du_i\Bigr)(p) - \Bigl( I - \frac{1}{\lambda_i}Du_i\Bigr)(q) \Bigr| \leq |p - q|, \quad \forall\, p,q \in \R^n.
	\]
	This implies that $Du_i$ is $2\lambda_i$-Lipschitz, and thus $Du$ is $3\lambda_i$-Lipschitz, on $E_i$. We can now conclude that $Du(E_i \setminus E)$ is null for every $i\in\N$, by using the fact noted above that Lipschitz functions map null sets into null sets (see, e.g., \cite[Theorem~2.8(i)]{evansgar}). The claim now follows from the countable subadditivity of the Lebesgue measure.
\end{proof}

\begin{lem}
	Let $u, \widetilde{E}$ and $\delta$ be as above. Then $B_\delta \subset Du(\tilde E)$.
\end{lem}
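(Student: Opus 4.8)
The plan is to run the classical argument behind Alexandrov's maximum principle: given $p\in B_\delta$, I will exhibit a point $\bar x\in\Omega$ at which $u$ carries the global flat upper contact jet $(p,0)$, and then read off $Du(\bar x)=p$ from property ({\small\sf D\;at\;UCP}) of the Upper Contact Jet \Cref{ucjt}. Since $p$ will be arbitrary in $B_\delta$, this gives $B_\delta\subset Du(\widetilde E)$ with $\widetilde E=\C(u,\Omega,0)$ as in \eqref{Etilde}.

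In detail, I would first dispose of the trivial case $\delta\leq 0$, where $B_\delta=\emptyset$. Assuming $\delta>0$, I pick a maximum point $x_0\in\overline{\Omega}$ of $u$ over the compact set $\overline{\Omega}$ (which exists since $u\in\USC(\overline{\Omega})$) and set $M\defeq u(x_0)=\sup_{\overline{\Omega}}u$; as $\delta>0$ forces $M>\sup_{\de\Omega}u^+\geq\sup_{\de\Omega}u$, one automatically has $x_0\in\Omega$. Then, fixing $p$ with $|p|<\delta$, I consider $g\defeq u-\pair{p}{\cdot-x_0}\in\USC(\overline{\Omega})$ and let $\bar x\in\overline{\Omega}$ be a maximum point of $g$. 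The one estimate that matters is that $\bar x$ must be interior: for $y\in\de\Omega$ one has $g(y)\leq u^+(y)+|p|\,|y-x_0|\leq\sup_{\de\Omega}u^+ + |p|\,\mathrm{diam}(\Omega)$, which by the definition \eqref{alex:delta} of $\delta$ and $|p|<\delta$ is strictly less than $\sup_{\de\Omega}u^+ + \delta\,\mathrm{diam}(\Omega)=M=g(x_0)$, so the maximum of $g$ over $\overline{\Omega}$ is not attained on $\de\Omega$. Hence $\bar x\in\Omega$, and maximality of $g$ at $\bar x$ rearranges to $u(y)\leq u(\bar x)+\pair{p}{y-\bar x}$ for all $y\in\Omega$; that is, $\bar x\in\C(u,\Omega,0)=\widetilde E$ with upper contact jet $(p,0)$. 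Finally, since $u$ is locally quasi-convex, ({\small\sf D\;at\;UCP}) of \Cref{ucjt} yields that $u$ is differentiable at $\bar x$ with $Du(\bar x)=p$, so $p\in Du(\widetilde E)$.

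I do not expect a genuine obstacle: the whole argument is elementary once the upper contact jet apparatus of Part I is available. The only points demanding a little care are the appeal to upper semicontinuity together with compactness of $\overline{\Omega}$ to guarantee that $g$ attains its maximum, and the use of the Upper Contact Jet Theorem to upgrade the global upper contact inequality $(p,0)\in J^{2,+}_{\bar x}u$ into the precise identity $Du(\bar x)=p$.
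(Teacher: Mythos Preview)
Your proof is correct and follows essentially the same approach as the paper: both maximize the affine tilt $u-\pair{p}{\cdot}$ over $\overline{\Omega}$, show by a direct estimate against $\sup_{\de\Omega}u^+ + |p|\,\mathrm{diam}(\Omega)$ that the maximizer must be interior, and then invoke ({\small\sf D\;at\;UCP}) to read off $Du(\bar x)=p$. The only cosmetic difference is that you fix the interior maximum point $x_0$ of $u$ up front and use it as a comparison value, whereas the paper introduces the analogous point inside a contradiction argument.
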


\begin{proof}
	As noted above, we may assume that $\delta$ defined by \eqref{alex:delta} is positive since the desired inclusion is trivial for $\delta \leq 0$. This means that $u$ has a positive interior maximum which is attained only at interior points.

	For each $p\in B_\delta$, consider $\max_{\overline{\Omega}} \left( u(\cdot) - \langle p, \cdot \rangle \right)$ which is finite since $u(\cdot) - \langle p, \cdot \rangle \in \USC(\overline{\Omega})$, and let $x_p \in \overline{\Omega}$ be a point realizes this maximum; that is
	\begin{equation}\label{x_p}
	u(y) \leq u(x_p) + \langle p, y -  x_p \rangle \quad \forall \, y \in \overline{\Omega}.
	\end{equation}
	If we can show that
	\begin{equation}\label{x_p_interior}
	x_p \in \Omega \quad  \text{($x_p$ is an interior point)}
	\end{equation}
	then \eqref{x_p} says that $x_p$ a contact point of type $0$ for $u$ on $\Omega$; that is, $x_p \in \widetilde E = \C(u, \Omega, 0)$. Hence, by \Cref{ucjt}({\sf D\;at\;UCP}), $u$ is differentiable at $x_p$ with $Du(x_p) = p$ so that $p \in Du(\widetilde E)$, which proves the lemma since $p \in B_{\delta}$ was arbitrary.
	
	To see that \eqref{x_p_interior} holds we use an argument by contradiction. Suppose that $x_p\in\de\Omega$ and pick $y\in\overline\Omega$ with $u(y) = \sup_{\overline\Omega} u$ (in fact, $y \in \Omega$ in light of the assumption $\delta > 0$). Then from \eqref{x_p} one has
	\[
	\sup_{\overline\Omega} u = u(y) \leq u(x_p) + \pair p{y-x_p} \leq \sup_{\partial \Omega} u^+ + |p| \, {\rm diam}(\Omega),
	\]
	and hence $|p| \geq ( \sup_{\overline\Omega} u - \sup_{\partial \Omega} u^+)/{\rm diam}(\Omega) =\vcentcolon \delta$, which contradicts $p\in B_\delta$.
\end{proof}

A proof of the Jensen--S{\l}odkowski Theorem \ref{jenslod} using Alexandrov's maximum principle (Theorem \ref{alexmp}) now proceeds as follows.

\begin{proof}[An area formula proof of \Cref{jenslod}]
	We recall that Remark~\ref{rem:HLSJ} noted that the HL--Jensen \Cref{hl:jen} is an equivalent formulation of the Jensen--S{\l}odkowski Theorem~\ref{jenslod}. Hence, we will prove that for all $\rho > 0$ small, $|\C(u, \barr B_\rho(x), 0)| > 0$, assuming that $u$ is a locally semiconvex with strict upper contact jet $(0, 0)$ at $x$. By translating $u$ we can assume that $u(x) > 0$ and hence, for all $\rho > 0$ small, the radius $\delta$ defined by~(\ref{alex:delta}) is positive. By (\ref{alexball}) with $\Omega =  B_\rho(x)$ (recall the definition~\eqref{E} of $E(u)$), for all sufficiently small $\rho > 0$ one has
	\[
	0 < |B_\delta| \leq \int_{E} |\det D^2 u(y)|\, \di y \quad \text{with} \ E \defeq  \C(u, B_\rho(x), 0) \cap \Diff^2(u),
	\]
	which forces $\C(u, \barr B_\rho(x), 0)$ to have positive measure. 
\end{proof}

The ``paraboloidal'' proof of the Jensen--S{\l}odkowski theorem passed through a density estimate for a certain set of global upper contact points for a convex function (\Cref{jen:den}); likewise, we can deduce a similar estimate, for the set of flat global upper contact points for a semiconvex function on small balls around a strict local maximum point, using Alexandrov's maximum principle.

\begin{cor}
	Let $u \colon X \to \R^n$ be $\lambda$-semiconvex, with strict upper contact jet $(0,0)$ at $x$. Then there exists $\bar\rho > 0$ such that
	\[
	\frac{|\C(u, \barr B_\rho(x), 0)|}{|B_\rho(x)|} \geq  \left(\frac{\sup_{B_\rho(x)} u - \sup_{\de B_\rho(x)} u^+ }{2\lambda\rho^2}\right)^n \qquad \forall \, \rho \in (0,\bar\rho\,].
	\]
\end{cor}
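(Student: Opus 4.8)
The plan is to run the Alexandrov-maximum-principle argument of the area-formula proof of \Cref{jenslod}, but this time \emph{without} discarding the constant $\delta$ and with an additional pointwise bound on $|\det D^2u|$.

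\emph{Preliminaries.} Since $(0,0)$ is a strict upper contact jet for $u$ at $x$, the point $x$ is a strict local maximum and $u$ is locally quasi-convex near $x$; fix $\bar\rho>0$ so small that $u$ is $\lambda$-quasi-convex — hence, by \Cref{thm:convex_Lip} applied to $u+\frac\lambda2|\cdot|^2$, continuous — on $\barr B_{\bar\rho}(x)$, and $u(y)<u(x)$ for all $y\in\barr B_{\bar\rho}(x)\setminus\{x\}$. Then for every $\rho\in(0,\bar\rho\,]$ the maximum of $u$ over $\barr B_\rho(x)$ is attained only at the interior point $x$, so $\sup_{B_\rho(x)}u=\sup_{\barr B_\rho(x)}u=u(x)>\sup_{\de B_\rho(x)}u$. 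Translating $u$ by a constant — which changes neither $\C(u,\cdot,0)$ nor the parameter $\lambda$ — we may assume $u(x)>0$, and shrinking $\bar\rho$ we then have $u>0$ on $\barr B_{\bar\rho}(x)$, so that $u^+=u$ there and the number
\[
\delta\defeq\frac{\sup_{\barr B_\rho(x)}u-\sup_{\de B_\rho(x)}u^+}{\mathrm{diam}(B_\rho(x))}=\frac{\sup_{B_\rho(x)}u-\sup_{\de B_\rho(x)}u^+}{2\rho}
\]
is strictly positive for each $\rho\in(0,\bar\rho\,]$.

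\emph{Main step.} Apply \Cref{alexmp} with $\Omega=B_\rho(x)$. Its proof establishes $B_\delta\subset Du(\widetilde E)$ together with $|Du(\widetilde E)\setminus Du(E)|=0$, where $\widetilde E=\C(u,B_\rho(x),0)$ and $E=\widetilde E\cap\Diff^2(u)$; hence, by the Area Theorem \eqref{AT_lqc},
\[
\omega_n\delta^n=|B_\delta|\leq|Du(\widetilde E)|=|Du(E)|\leq\int_E|\det D^2u(y)|\,\di y.
\]
Now bound the integrand. For $y\in E$ one has $(0,0)\in J^{2,+}_y u$ with $u$ twice differentiable at $y$, so \Cref{baspropucp} gives $0=D^2u(y)+P$ for some $P\geq 0$, i.e.\ $D^2u(y)\leq 0$; on the other hand $\lambda$-quasi-convexity gives $D^2u(y)\geq-\lambda I$. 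Thus every eigenvalue of $D^2u(y)$ lies in $[-\lambda,0]$, whence $|\det D^2u(y)|\leq\lambda^n$. Consequently
\[
\omega_n\delta^n\leq\lambda^n|E|\leq\lambda^n\,|\C(u,B_\rho(x),0)|=\lambda^n\,|\C(u,\barr B_\rho(x),0)|,
\]
the last equality because the two contact sets differ by a subset of the null set $\de B_\rho(x)$ (continuity of $u$). Dividing by $|B_\rho(x)|=\omega_n\rho^n$ and substituting the value of $\delta$ gives
\[
\frac{|\C(u,\barr B_\rho(x),0)|}{|B_\rho(x)|}\geq\frac{\delta^n}{\lambda^n\rho^n}=\left(\frac{\sup_{B_\rho(x)}u-\sup_{\de B_\rho(x)}u^+}{2\lambda\rho^2}\right)^n,
\]
as claimed.

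The only genuinely substantive point is the pointwise estimate $|\det D^2u|\leq\lambda^n$ on $E$, i.e.\ recognizing that at a flat upper contact point of twice differentiability the Hessian is squeezed between $-\lambda I$ and $0$; everything else is a direct invocation of \Cref{alexmp} and \Cref{area:cor}. The remaining care is bookkeeping: passing from the supremum over the closed ball to that over the open ball (legitimate since $x$ is a strict maximum) and disposing of the $u^+$ via the harmless normalization $u(x)>0$.
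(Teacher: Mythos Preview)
Your proof is correct and follows essentially the same route as the paper: apply Alexandrov's maximum principle on $B_\rho(x)$, bound $|\det D^2u|\leq\lambda^n$ on $E$ by squeezing the Hessian between $-\lambda I$ and $0$, and divide through. One small slip: for a general $y\in E=\C(u,B_\rho(x),0)\cap\Diff^2(u)$ the upper contact jet is $(p,0)$ for \emph{some} $p$, not $(0,0)$; but \Cref{baspropucp} still gives $0=D^2u(y)+P$ with $P\geq 0$, so the conclusion $D^2u(y)\leq 0$ is unaffected.
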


\begin{proof}
	Consider $y\in E\defeq   \C(u, B_\rho(x), 0) \cap \Diff^2(u)$. By the semiconvexity of $u$ we have $D^2u(y) \geq - \lambda \I$, for some $\lambda>0$, and since $y$ is a maximum point of $u + \pair p\cdot$ for some $p\in \R^n$, we have $D^2u(y) \leq 0$. Hence $|\det D^2 u| \leq \lambda^n$ on $E$, and, letting $\delta$ be as in \eqref{alex:delta}, \Cref{alexmp} with $\Omega = B_\rho(x)$ (along with \Cref{aleks:qc}) yields $|B_\delta| \leq \lambda^n |\widetilde E|$, with $\widetilde E = \C(u, \barr B_\rho(x), 0)$. The desired conclusion now is obtained by noting that
	\[
	|B_\delta|= \left(\frac{\sup_{B_\rho(x)} u - \sup_{\de B_\rho(x)} u^+ }{2\rho^2}\right)^n |B_\rho(x)|. \qedhere
	\]
\end{proof}

\chapter{Semiconvex approximation of upper semicontinuous functions} \label{chap:apqc}

In this chapter we present the well-known semiconvex approximation of upper semicontinuous functions by way of the {\em sup-convolution}. This approximation is of crucial importance for both general potential theories and the viscosity theory for PDEs since it allows one to use information on upper contact jets along the approximating sequences. In addition, we will also give  an alternative proof of the conventional Theorem on Sums of Crandall--Ishii--Lions which makes use of the upper contact jet technology developed up to this point.

\section{Elementary properties of semicontinuous functions}

We begin by recalling the definitions and some well-known and useful properties of semicontinuous functions. Here and below $\dt{B}_{\rho}(x_0) \defeq  B_{\rho}(x_0) \setminus \{x_0\}$ denotes the punctured open ball of radius $\rho > 0$ and center $x_0$.\syid{Brxd@\detokenize{$\dt{B}_{\rho}(x)$}!the punctured open ball \detokenize{$B_{\rho}(x) \setminus \{x\}$}}

\begin{definition}\label{defn:usc_lsc} \sid{semicontinuous}
	Let $X \subset \R^n$.
	\begin{itemize}
		\item[(a)] A function $u \colon X \to  [-\infty, +\infty)$ is \emph{upper semicontinuous at $x_0 \in X$} if
		\begin{equation}\label{usc1}
		u(x_0) \geq \limsup_{X \ni x \to x_0} u(x) \defeq \lim_{\rho \dto 0} \sup_{x \in \dt{B}_{\rho}(x_0) \cap X} u(x) = \inf_{\rho > 0} \sup_{x \in \dt{B}_{\rho}(x_0) \cap X} u(x)
		\end{equation}
		If \eqref{usc1} holds for each limit point $x_0 \in X$,  $u$ is  \emph{upper semicontinuous on $X$} which is denoted by $u \in \USC(X)$.\syid{USC@$\USC(X)$!the space of all upper semicontinuous functions on $X$}
		\item[(b)] A function $u \colon X \to (-\infty, +\infty]$ is \emph{lower semicontinuous at $x_0 \in X$} if
		\begin{equation}\label{lsc1}
		u(x_0) \leq \liminf_{X \ni x \to x_0} u(x) \defeq \lim_{\rho \dto 0} \sup_{x \in \dt{B}_{\rho}(x_0) \cap X} u(x) = \sup_{\rho > 0} \sup_{x \in \dt{B}_{\rho}(x_0) \cap X} u(x).
		\end{equation}
		If \eqref{lsc1} holds for each $x_0 \in X$, $u$  is \emph{lower semicontinuous on $X$} which is denoted by $u \in \LSC(X)$.\syid{LSC@$\LSC(X)$!the space of all lower semicontinuous functions on $X$}
	\end{itemize}
\end{definition}
Before recalling some well-known properties, a few remarks are in order. First, any function $u$ is both upper and lower semicontinuous at isolated points $x_0 \in X$ since, by definition $\sup \emptyset = - \infty$ and $\inf \emptyset = + \infty$. Second, clearly $u$ is lower semicontinuous at $x_0$ if and only if $-u$ is upper semicontinuous at $x_0$ and hence one can (and we will) focus primarily on upper semicontinuous functions.  Third, there are other equivalent definitions of upper semicontinuity in terms of the topology of upper and lower level sets of $u$; for example, $u \in \USC(X)$ in the sense of Definition \ref{defn:usc_lsc} if and only if
$$
\mbox{$\{ x \in X: \ u(x) < \alpha \}$ is (relatively) open in $X$ for each $\alpha \in \R$.}
$$
Fourth, semicontinuous functions which take values in $\R$ or $[-\infty, +\infty]$ are also of use, but the choices made in Defintion \ref{defn:usc_lsc} will be useful for the viscosity calculus of Part~\ref{sas}. This choice is explained in the following remark.

\begin{remark}[Choice of codomain] \label{rmk:codom} 
Given $u \colon X \to [-\infty, +\infty)$ upper semicontinuous on $X$, we have the following:
	\begin{itemize}
		\item[(a)] if $X$ is closed, then 
		\[
		\hat u(x) \defeq \begin{cases}
		u(x) & \text{on $X$} \\
		-\infty & \text{outside $X$}
		\end{cases}
		\]
		defines an upper semicontinuous extension of $u$ to all of $\R^n$;
		\item[(b)] for each $K \subset X$ compact, if $u \not\equiv -\infty$, then it admits a finite maximum on $K$ (see Proposition~\ref{prop:usc}(c));
	\end{itemize}
	Analogous facts hold for  $u: X \to (-\infty, +\infty]$ which is lower semicontinuous on $X$.
\end{remark}

\begin{prop}[Elementary properties of semicontinuous functions]\label{prop:usc}  \sid{property!elementary!of semicontinuous functions}
Let $X \subset \R^n$. The following hold.
	\begin{itemize}
		\item[(a)] A real-valued function on $X$ is continuous at $x_0 \in X$ if and only if it is both upper and lower semicontinuous at $x_0$. In particular, $\USC(X) \cap \LSC(X) = C(X)$.\syid{C@$C(X)$!the space of all continuous functions on $X$}
		\item[(b)] The sum of two $[-\infty, +\infty)$-valued functions on $X$ which are both upper semicontinuous at $x_0 \in X$ is upper semicontinuous at $x_0$. In particular, $\USC(X) + \USC(X) \subset \USC(X)$.
		\item[(c)] For every compact subset $K \subset X$, any upper semicontinuous function on $K$ admits a maximum value which is finite if $u \not\equiv -\infty$ on $K$.
		\item[(d)] Suppose that $X = \R^n$ and $u \in \USC(\R^n)$ is \emph{anti-coercive}; that is, $u(x) \to - \infty$ as $|x| \to \infty$. Then $u$ is bounded above and the supremum $\sup_{\R^n} u$ is attained; that is, there exists $\bar x \in \R^n$ such that $\sup_{\R^n} u = \max_{\R^n} u = u(\bar x)$.
		\item[(e)] The pointwise infimum of any family of upper semicontinuous functions on $X$ is upper semicontinuous; that is, $\inf_{i \in I} u_i \in \USC(X)$ if $\{u_i\}_{i \in I} \subset \USC(X)$,
		where the index set $I$ has arbitrary cardinality. On the other hand, the pointwise supremum $\sup_{i \in I} u_i = \max_{i \in I} u_i \in \USC(X)$, provided that $I$ if finite.	
	\end{itemize}
\end{prop}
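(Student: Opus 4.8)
The plan is to verify each of the five items (a)–(e) directly from the definition of upper/lower semicontinuity in Definition~\ref{defn:usc_lsc}, treating isolated points trivially (where every function is both USC and LSC because $\sup\emptyset=-\infty$) and focusing on limit points $x_0\in X$. For item (a), I would argue that a real-valued $u$ is continuous at $x_0$ precisely when $\limsup_{x\to x_0}u(x)\le u(x_0)\le\liminf_{x\to x_0}u(x)$, and since always $\liminf\le\limsup$, this sandwich forces the limit to exist and equal $u(x_0)$; the identity $\USC(X)\cap\LSC(X)=C(X)$ is then immediate by applying this at every point. For item (b), I would use that $\limsup$ is subadditive, i.e. $\limsup_{x\to x_0}(u+v)(x)\le\limsup_{x\to x_0}u(x)+\limsup_{x\to x_0}v(x)\le u(x_0)+v(x_0)$, being careful that the codomain $(-\infty,+\infty]$ makes the sum always well-defined (no $\infty-\infty$); the inclusion $\USC(X)+\USC(X)\subset\USC(X)$ follows pointwise.

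For item (c), I would take a compact $K\subset X$ and a maximizing sequence $x_k\in K$ with $u(x_k)\to\sup_K u=:M\in[-\infty,+\infty]$. First $M<+\infty$: if not, pass to a subsequence converging to some $\bar x\in K$ (by compactness) along which $u(x_k)\to+\infty$, contradicting $u(\bar x)\ge\limsup u(x_k)=+\infty$ since $u(\bar x)<+\infty$. Then, extracting a convergent subsequence $x_{k_j}\to\bar x\in K$, upper semicontinuity gives $M=\limsup_j u(x_{k_j})\le u(\bar x)\le M$, so $u(\bar x)=M$ is attained; and $M=-\infty$ only if $u\equiv-\infty$ on $K$. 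For item (d), given anti-coercive $u\in\USC(\R^n)$, pick any $x_0$ with $u(x_0)>-\infty$ (such exists unless $u\equiv-\infty$, in which case the statement is trivial) and choose $R$ so large that $u(x)<u(x_0)$ for $|x|>R$; then $\sup_{\R^n}u=\sup_{\overline{B_R}}u$, and applying item (c) to the compact set $\overline{B_R}$ yields a maximizer $\bar x$, whence $u$ is bounded above and the sup is attained.

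For item (e), for the infimum I would use the characterization via open sublevel sets: $\{x: (\inf_i u_i)(x)<\alpha\}=\bigcup_i\{x: u_i(x)<\alpha\}$, a union of (relatively) open sets, hence open, so $\inf_i u_i\in\USC(X)$ regardless of the cardinality of $I$; alternatively, directly $\limsup_{x\to x_0}\inf_i u_i(x)\le\inf_i\limsup_{x\to x_0}u_i(x)\le\inf_i u_i(x_0)$. For the finite supremum, $\{x:(\max_{i}u_i)(x)<\alpha\}=\bigcap_{i}\{x:u_i(x)<\alpha\}$ is a \emph{finite} intersection of open sets, hence open. I expect the only real subtlety — the "main obstacle," though it is mild — to be the bookkeeping around extended-real values: ensuring the arguments in (b) and (c) never run into $\infty-\infty$ and that the degenerate case $u\equiv-\infty$ is handled separately in (c) and (d); everything else is a routine unwinding of the definitions and the equivalent formulation via sublevel sets already recorded in the remarks preceding the proposition.
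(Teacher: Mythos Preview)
Your proposal is correct and follows essentially the same approach as the paper: items (a)--(d) match the paper's arguments almost verbatim (limsup/liminf sandwich, subadditivity of $\limsup$, maximizing sequence plus compactness, reduction to a closed ball via anti-coercivity). For item (e) the paper opts for the direct $\limsup$ inequality you list as your alternative, whereas your primary argument uses the sublevel-set characterization; both are standard and the paper explicitly records that characterization just before the proposition, so this is a cosmetic difference rather than a genuine one.
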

\begin{proof}
	\underline{{(a)}} \ It is enough to note that the continuity of $u$ in $x_0$, written as $\lim_{X \ni x \to x_0} u(x)$,
	is clearly equivalent to the two inequalities \eqref{usc1} and \eqref{lsc1}. 
	
	\noindent\underline{{(b)}} \ It is enough to note that the sup operation is sublinear
	$$
	\sup_{x \in \dt{B}_{\rho}(x_0) \cap X}(u(x) + v(x)) \leq  \sup_{x \in \dt{B}_{\rho}(x_0) \cap X}u(x) +  \sup_{x \in \dt{B}_{\rho}(x_0) \cap X} v(x)
	$$
	for any pair of functions $u$ and $v$ on $X$, and then one passes to the limit as $\rho \dto 0$ if both $u$ and $v$ are upper semicontinuous at $x_0$.
	
	\noindent\underline{{(c)}} \ We can assume that $K$ is not a singleton $\{x_0\}$, since the conclusion is trivial in this case. Then, if $u \in \USC(K)$ is not identically $-\infty$, consider a maximizing sequence $\{ x_j \}_{j\in\N} \subset K$; that is,
	$u(x_j) \to \sup_K u$ as $j\to\infty$.
	Since $K$ is sequentially compact, there exists a point $x_0 \in K$ and a subsequence $x_{j_k} \to x_0$ as $k \to + \infty$. Since $u \not\equiv -\infty$, by the choice of the codomain and by the semicontinuity of $u$, one has 
	\begin{equation*} 
	{+\infty} > u(x_0) \geq \limsup_{k\to\infty} u(x_{j_k}) = \lim_{k \to +\infty} u(x_{j_k}) = \sup_K u,
	\end{equation*}
	so that $x_0 \in K$ realizes $\sup_K u$.
	
	\noindent\underline{{(d)}} \ Let $M \defeq \sup_{\R^n} u$. By the anti-coercivity of $u$, there exists an open ball $B$, centered at the origin, such that $\restr{u}{B\compl} < M$. Therefore $\sup_{\R^n} u = \sup_{\barr B} u$, and the conclusion follows from the previous point.
	
	\noindent\underline{{(e)}}\ For the first claim, if $v\defeq  \inf_{i \in I} u$ then one has
	\begin{equation}\label{inf1}
	v(x) \leq u_i(x) \quad \forall \, x \in X,\, \forall \, i \in I.
	\end{equation}
	Now, for each $x_0 \in X$ by upper semicontinuity and \eqref{inf1} one finds
	\begin{equation*}
	u_i(x_0) \geq \limsup_{X \ni x \to x_0} u_i(x) \geq \limsup_{X \ni x \to x_0} v(x) \quad \forall \, i \in I,
	\end{equation*}
	and taking the infimum over $i \in I$ yields $v(x_0) \geq \limsup_{X \ni x \to x_0} v(x)$.
	The second claim of part (d) is left to the reader.
\end{proof}

\begin{remark}\label{rem:usc_ext_X_compact}
	As a consequence of Proposition~\ref{prop:usc}(c), the extension $\hat{u}$ of Remark~\ref{rmk:codom}(a) is bounded above if $X$ is compact.
\end{remark}

\section{Semiconvex approximation via sup-convolution} \label{ch:qca}

One reason why semiconvex functions are so interesting in general potential theories and the viscosity theory of nonlinear PDEs is the widely used fact that any upper semicontinuous $u$ function (which is bounded above) is a decreasing limit of a sequence of semiconvex functions. The convergence of the semiconvex approximation is pointwise and can be locally uniform if  $u$ is more regular. This fact is well known and for further details see, for example, \cite{lioham, crankoc, hldir09}. The semiconvex approximation of $u$ is given by \emph{sup-convolutions} of $u$ which are defined next.

\begin{definition} \label{def:supconv} \sid{sup-convolution}
	Let $X \subset \R^n$ and let $u \in \USC(X)$ be bounded above. For each $\epsilon >0$ the $\epsilon$-\emph{sup-convolution} of $u$ the function defined by\syid{ueps@\detokenize{$u^\epsilon$}!the $\epsilon$-sup-convolution of $u$}
	\begin{equation} \label{supconv}
	u^\epsilon(x) \defeq \sup_{y \in X} \Big( u(y) - \frac1{2\epsilon} |y-x|^2 \Big), \quad \text{for each} \ x \in  \R^n.
	\end{equation}
\end{definition}

Before proving the results we need, here are four noteworthy facts.

\begin{remark}
	One can visualize how a function is transformed by the sup-convolution operation by noticing that $u^\epsilon$ is the upper envelope of all quadratic functions of radius $-2\epsilon$ with vertex at some point of the graph of $u$.\footnote{By \emph{upper envelope} of a family of functions $\scr F$ we mean the function $g \defeq \sup_{f \in \scr F} f$. Also note that a quadratic function with negative radius is well-defined: it just opens downwards.} Also, notice that, even if we decided to ``preserve'' the domain of $u$ in \Cref{def:supconv} by defining its sup-convolution only on $X$, $u^\epsilon$ is in fact well-defined by \eqref{supconv} on the whole space $\R^n$.
\end{remark}

\begin{remark}
	The use of the term \emph{convolution} for $u^\epsilon$ defined above is justified by the fact that it is obtained by a transformation which is an actual convolution. Indeed, if we consider the functions to be valued in the so-called \emph{tropical semiring} $(\R \cup \{-\infty\}, \vee, +)$, we see that $u^\epsilon = u \ast (-\frac1{2\epsilon}|\cdot|^2)$.
	
	Moreover, it is  not difficult to show that the sup-convolution is the pointwise limit of integal convolutions (mollifications). The proof of this fact is a direct application of a general large deviations result, due to Varadhan; it is given in \cite{capdol} for the analogous result concerning the {\em inf-convolution}, whose definition is given in the following remark.   
\end{remark}

\begin{remark}
	In \Cref{baspropsc} below, we prove that sup-convolutions are semiconvex functions. In the obvious manner, one can also define the $\epsilon$-\emph{inf-convolution} 
	\begin{equation} \label{infconv}
u_\epsilon(x) \defeq \inf_{y \in X} \Big( u(y) + \frac1{2\epsilon} |y-x|^2 \Big), \quad \text{for each} \ x \in  \R^n.
\end{equation}	
and prove that it is semiconcave. Moreover, it is not difficult to see that if $\epsilon$ is sufficiently small, then the $\epsilon$-sup-convolution (resp.\ $\epsilon$-inf-convolution) of a semiconcave (resp.\ semiconvex) function remains semiconcave (resp.\ semiconvex). This fact, along with the characterization given in \Cref{charc11},  proves a part of a well-known result of Lasry and Lions~\cite{laslio} on \emph{sup-inf-convolutions} being $C^{1,1}$.
\end{remark}

\begin{remark}
The sup-convolution and inf-convolution are also used to give representation formulas for generalized solutions to Cauchy problems for Hamilton-Jacobi equations on $\R^n$ by way of the Hopf--Lax formula. This was first shown in \cite{hopf} and extended to the viscosity setting in \cite{barev}, with many subsequent developments. The simplest example is that, when $X = \R^n$, the function $u(x, \epsilon) \defeq u_\epsilon(x)$ defined in \eqref{infconv} is the solution at time $t =\epsilon$ of 
\[
\partial_t u + \frac{1}{2} |Du|^2 = 0 \quad \text{with} \quad u(x,0) = u(x).
\]
This helps to understand the regularizing effect of the Hamilton--Jacobi equation, since the inf-convolution of a semicontinuous initial datum is semiconcave. 
\end{remark}

Four important properties of the sup-convolution are gathered in the following theorem. We have already partially revealed the first one.

\begin{thm} \label{baspropsc}
	Let $u \in \USC(X)$ be bounded above, let $u^\epsilon$ be its $\epsilon$-sup-convolution. The following hold:
	\begin{enumerate}[label=(\roman*)]
		\item	$u^\epsilon$ is $\frac1\epsilon$-semiconvex on $\R^n$;
		\item	if $X$ is closed then the supremum in (\ref{supconv}) is attained; that is, for every $x \in \R^n$ there exists $\xi = \xi(\epsilon,x) \in \R^n$ such that $u^\epsilon(x) = u(\xi) - \frac1{2\epsilon}|\xi-x|^2$;
		\item	$u^\epsilon$  decreases pointwise to $u$ on $X$ as $\epsilon\dto 0$; also, if $x \in X$ is a local maximum point for $u$, then $u^\epsilon(x) = u(x)$ for any $\epsilon$ sufficiently small;
		\item	if $u$ is also bounded from below, and thus $|u| \leq M$ for some $M>0$, then
		\begin{equation} \label{supcball}
		u^\epsilon = \max_{z \in \barr B_\delta} \Big( u(\cdot-z) - \frac1{2\epsilon} |z|^2 \Big), \quad \text{where $\delta \defeq 2\sqrt{\epsilon M}$},
		\end{equation}
		on $X^\delta \defeq \{ x \in X :\ d(x,\de X) > \delta \}$.
	\end{enumerate}
\end{thm}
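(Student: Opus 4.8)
The plan is to establish the four properties of the supconvolution $u^\epsilon$ essentially in the order stated, since each one is short and mostly self-contained, though property \textit{(iv)} depends on a rephrasing of the contact-set analysis from \Cref{sec:vertex_map}.

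\textbf{Property \textit{(i)}.} First I would observe that for each fixed $y\in X$ the function
\[
x \mapsto u(y) - \tfrac1{2\epsilon}|y-x|^2 + \tfrac1{2\epsilon}|x|^2 = u(y) - \tfrac1{2\epsilon}|y|^2 + \tfrac1\epsilon\pair{y}{x}
\]
is affine in $x$, hence convex. Since $u$ is bounded above, the supremum over $y\in X$ of these affine functions is finite at every $x$, and a pointwise supremum of convex functions is convex (as recalled right after \Cref{defn:convex}). Therefore $u^\epsilon + \tfrac1{2\epsilon}|\cdot|^2$ is convex on $\R^n$; that is, $u^\epsilon$ is $\tfrac1\epsilon$-quasi-convex. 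Note this simultaneously shows $u^\epsilon(x)<+\infty$ for all $x$, and $u^\epsilon(x) \geq u(x)$ by taking $y = x$.

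\textbf{Property \textit{(ii)}.} Fix $x\in\R^n$ and set $M_0 \defeq \sup_X u < +\infty$. For $|y-x|$ large, $u(y)-\tfrac1{2\epsilon}|y-x|^2 \leq M_0 - \tfrac1{2\epsilon}|y-x|^2 \to -\infty$, so the supremum defining $u^\epsilon(x)$ is unchanged if we restrict $y$ to a sufficiently large closed ball $\barr B_R(x)$. When $X$ is closed, $X\cap\barr B_R(x)$ is compact, and $y\mapsto u(y)-\tfrac1{2\epsilon}|y-x|^2$ is upper semicontinuous there (sum of $u\in\USC$ and a continuous function, cf.~\Cref{prop:usc}(b)), hence attains its maximum by \Cref{prop:usc}(c) at some $\xi=\xi(\epsilon,x)$. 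This gives the claimed representation; equivalently, one can invoke \Cref{p:uvm} with negative radius, but the direct compactness argument is cleanest.

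\textbf{Property \textit{(iii)}.} Monotonicity in $\epsilon$ is immediate: if $0<\epsilon_1<\epsilon_2$ then $\tfrac1{2\epsilon_1}\geq\tfrac1{2\epsilon_2}$, so each competitor in the sup for $u^{\epsilon_1}$ is dominated by the corresponding one for $u^{\epsilon_2}$, giving $u^{\epsilon_1}\leq u^{\epsilon_2}$ pointwise. For the limit, I already have $u^\epsilon(x)\geq u(x)$ for $x\in X$; for the reverse, given $x\in X$ pick, using \textit{(ii)} on a neighborhood (or directly via the restricting-ball argument when $X$ is not closed), near-maximizers $\xi_\epsilon$ with $u^\epsilon(x)\leq u(\xi_\epsilon)-\tfrac1{2\epsilon}|\xi_\epsilon-x|^2 + \epsilon$. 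Since $u^\epsilon(x)$ is bounded below by $u(x)$ and above by $M_0$, the term $\tfrac1{2\epsilon}|\xi_\epsilon-x|^2$ stays bounded, forcing $\xi_\epsilon\to x$ as $\epsilon\dto 0$; then upper semicontinuity of $u$ at $x$ gives $\limsup_{\epsilon\dto 0}u^\epsilon(x)\leq\limsup_{\epsilon\dto 0}u(\xi_\epsilon)\leq u(x)$. Combined with monotonicity this yields $u^\epsilon(x)\dto u(x)$. Finally, if $x$ is a local maximum for $u$, say $u(y)\leq u(x)$ for $|y-x|\leq\eta$, then for any competitor $y$ with $|y-x|\leq\eta$ we have $u(y)-\tfrac1{2\epsilon}|y-x|^2\leq u(x)$, while for $|y-x|>\eta$ we have $u(y)-\tfrac1{2\epsilon}|y-x|^2\leq M_0 - \tfrac{\eta^2}{2\epsilon}$, which is $\leq u(x)$ once $\epsilon$ is small; hence $u^\epsilon(x)=u(x)$ for small $\epsilon$.

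\textbf{Property \textit{(iv)}.} Assume $|u|\leq M$. For $x\in X_\delta$ with $\delta=2\sqrt{\epsilon M}$, I claim the maximizer $\xi$ from \textit{(ii)} satisfies $|\xi-x|\leq\delta$: indeed $u(\xi)-\tfrac1{2\epsilon}|\xi-x|^2 = u^\epsilon(x)\geq u(x)\geq -M$, so $\tfrac1{2\epsilon}|\xi-x|^2\leq u(\xi)+M\leq 2M$, giving $|\xi-x|\leq 2\sqrt{\epsilon M}=\delta$ (and $\xi\in X$ since $x\in X_\delta$). Writing $z\defeq x-\xi\in\barr B_\delta$, the value is $u(x-z)-\tfrac1{2\epsilon}|z|^2$, so $u^\epsilon(x)\leq\max_{z\in\barr B_\delta}(u(x-z)-\tfrac1{2\epsilon}|z|^2)$; the reverse inequality is automatic because every $z\in\barr B_\delta$ with $x-z\in X$ is an admissible competitor in the defining supremum, and those $z$ with $x-z\notin X$ can be excluded since then $u(x-z)$ is not defined but $x\in X_\delta$ guarantees $x-z\in X$ for all $|z|\leq\delta$. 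This gives the displayed identity on $X_\delta$.

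The main obstacle is bookkeeping around the domain $X$ when it is not closed: properties \textit{(ii)} and the maximizer argument in \textit{(iv)} are cleanest when $X$ is closed (so compactness applies), and for general $X$ one must either work locally or pass to the closure. The essential analytic content — boundedness of the displacement $|\xi-x|$ forced by the a priori bounds on $u^\epsilon$ — is the same in all cases, so I would state \textit{(ii)} under the closedness hypothesis as written and handle \textit{(iii)}'s limit via the restricting-ball reduction, which needs only $\sup_X u<+\infty$.
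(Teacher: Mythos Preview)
Your proof is correct and follows essentially the same route as the paper. The only notable cosmetic difference is in \textit{(iv)}: you invoke the maximizer from \textit{(ii)} (which needs $X$ closed) and then bound its displacement a posteriori, whereas the paper argues directly that any $y$ with $|y-x|>\delta$ contributes at most $u(x)\leq u^\epsilon(x)$ to the supremum, thereby avoiding the closedness hypothesis for this part; you already flag this domain bookkeeping in your final paragraph, and the fix is exactly the direct inequality you used to bound $|\xi-x|$.
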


\begin{proof}
	\underline{{(i)}} \ It suffices to note that
	\[
	u^\epsilon + \frac1{2\epsilon}|\cdot|^2 = \sup_{y \in \R^n} \bigg( u(y) -  \frac1{2\epsilon}|y|^2 +  \frac1{\epsilon}\pair y\cdot \bigg)
	\]
	is the supremum of a family of affine functions, hence it is convex.
	
	\noindent\underline{{(ii)}} \ For each $x \in X$ fixed, one has
	\begin{equation}\label{max1}
	u^{\epsilon}(x) = \sup_{y \in X} v_x(y), \quad \text{where} \quad  v_x(y)\defeq  u(y) - \frac1{2\epsilon} |y-x|^2.
	\end{equation}
	Each function $v_x$ is upper semicontinuous on $X$ since it is the sum of upper semicontinuous functions. As $X$ is closed, extend each $v_x$ to be the upper semicontinuous function $\hat{v}_x$ defined on all of $\R^n$ by assigning the value $-\infty$ on $X\compl$ (see Remark~\ref{rmk:codom}(a)). Notice that with this extension, one can rewrite \eqref{max1} as
	\begin{equation}\label{max2}
	u^{\epsilon}(x) = \sup_{y \in \R^n}\hat{v}_x(y) \quad \forall \, x \in \R^n.
	\end{equation}
	Moreover, since  $\hat{v}_x \equiv -\infty$ on $X\compl$ and is defined by \eqref{max1} on $X$ (where $u$ is bounded above), one has that $\hat{v}_x$ is anti-coercive. Hence, for each $x \in \R^n$ and $\epsilon > 0$ fixed, applying Proposition~\ref{prop:usc}{(d)} to $\hat{v}_x$ gives the existence of a point $\xi = \xi(x, \epsilon) \in \R^n$ which realizes $u^{\epsilon}(x)$ given by \eqref{max2}.
	
	\noindent\underline{{(iii)}} \ Since $y = x \in X$ competes in the definition \eqref{supconv} of $u^{\epsilon}(x)$ and since the penalization term in \eqref{supconv} is non positive, one  has
	\begin{equation} \label{uepsdecr}
	u(x)\leq u^{\epsilon'}(x) \leq u^\epsilon(x) \quad \text{whenever $\epsilon' \leq \epsilon$}, \quad \forall \, x \in X.
	\end{equation}
	The argument of point {(ii)} (where each $\hat{v}_x$ is anti-coercive) shows that for each $x \in \R^n$ there exists a ball $B_{\rho(x, \epsilon)}(x)$ about $x$ such that
	\begin{equation} \label{questa}
	u^\epsilon (x) = \max_{\barr B_{\rho(x, \epsilon)}(x)} \left(u - \frac1{2\epsilon} | \cdot - \, x|^2\right) \leq \max_{\barr B_{\rho(x, \epsilon)}(x)} u,
	\end{equation}
	where we can write ``$\max$'' in place of ``$\sup$'' since both $u$ and its quadratic penalizations are upper semicontinuous and the closed balls are compact. It is easy to see that for each fixed $x$ we have $\rho(x,\epsilon) \dto 0$ as $\epsilon \dto 0$; indeed, if not, then for all $\epsilon$ sufficiently small, $\arg\max_{y \in X} \left( u - \frac1{2\epsilon} |\cdot -\, x|^2\right) \subset B_r(x)\compl$, for some $r = r(x) > 0$ independent of $\epsilon$, yielding $u^\epsilon(x) \leq \sup_{B_r(x)} u - \frac{r^2}{2\epsilon} < u(x)$ if $\epsilon$ is sufficiently small, which contradicts \eqref{uepsdecr}. Hence, the rightmost term in \eqref{questa} converges to $u(x)$ as $\epsilon \dto 0$,\footnote{The term converges to $\max\{ u(x), \limsup_{y\to x} u(y) \}$, where $ \limsup_{y\to x} u(y) \leq u(x)$ by the upper semicontinuity of $u$.} yielding $u^\epsilon(x) \dto$ $u(x)$.
	
	By \eqref{questa} we also see that if $x$ is a local maximum point for $u$, then for $\epsilon$ so small that $x$ is a global maximum on $\barr B_{\rho(x)}(x)$ we have $u^\epsilon(x) \leq u(x)$ and hence the equality.
	
	\noindent\underline{{(iv)}} \ If $u$ is bounded with $|u| \leq M$ then for each $y$ such that $|y-x|^2 > 4\epsilon M =\vcentcolon \delta^2$ one has
	\[
	u(y) - \frac{1}{2\epsilon} |y-x|^2 - u(x) < M - \frac{1}{2\epsilon} 4 \epsilon M + M = 0;
	\] 
	that is, 
	$$
	\sup_{y \in X \cap B_\delta(x)\compl} \Bigr( u(y) - \frac{1}{2\epsilon} |y-x|^2 \Bigr) \leq u(x) \leq u^\epsilon(x).
	$$ 
	Hence, another expression for the sup-convolution is given by  
	$$
	u^\epsilon(x) = \sup_{y \in X \cap \bar B_\delta(x)} \left( u(y) - \frac{1}{2\epsilon} |y-x|^2 \right),$$
	which is \eqref{supcball} after the change of variables $x-y= z$, provided that $B_\delta(x) \ssubset X$ (which is true if $x \in X_\delta$). \syid{subsetcpt@\detokenize{$X \ssubset Y$}!\detokenize{$\barr X$ is compact in $Y$}}
\end{proof}

\begin{remark}
	Property {(iv)} is crucial for the semiconvex approximation technique described in the Part~\ref{sas} on nonlinear potential theories determined by a subequation $\call F$. In particular, property {(iv)} ensures that the sup-convolution of an \Fd subharmonic function remains \Fd subharmonic on a suitably shrunk domain (cf.~\Cref{prop:approx}).
\end{remark}

A finer result on the correspondence between upper contact points and associated upper contact jets of a function $u$ with those of its  sup-convolutions $u^{\epsilon}$ is the following, which Crandall--Ishii--Lions~\cite[Lemma~A.5]{user} call \emph{magical properties} of the sup-convolution. \sid{property!magical!of the sup-convolution}

\begin{prop} \label{magprop}
	Let $u \in \USC(\R^n)$ be bounded above and, for $\epsilon >0$, let $u^\epsilon$ be its $\epsilon$-sup-convolution. If $(p,A)$ is an upper contact jet for $u^\epsilon$ at $x$, then $(p,A)$ is also an upper contact jet for $u$ at $x+ \epsilon p$. 
	
	Furthermore, if $(0,A)$ is the limit as $j \to +\infty$ of a sequence of upper contact jets $(p_j, A_j)$ for $u^\epsilon$ at points $x_j \to 0$, then the corresponding sequence of upper contact points for $u$ satisfies $x_j + \epsilon p_j \to 0$ and $u$ is continuous at $x = 0$ along this sequence contact points.
\end{prop}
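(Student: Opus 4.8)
The plan is to exploit the defining formula \eqref{supconv} for $u^\epsilon$ directly. First I would unwind what it means for $(p,A)$ to be an upper contact jet for $u^\epsilon$ at $x$: there is a quadratic $\phi$ with $J^2_x\phi = (u^\epsilon(x), p, A)$ and $u^\epsilon \le \phi$ near $x$. Since $u \in \USC(\R^n)$ and $X = \R^n$ is closed, \Cref{baspropsc}(ii) gives a point $\xi = \xi(\epsilon, x)$ realizing the supremum, so $u^\epsilon(x) = u(\xi) - \tfrac1{2\epsilon}|\xi - x|^2$. The key elementary observation is that for \emph{any} $x'$ near $x$ one has $u^\epsilon(x') \ge u(\xi) - \tfrac1{2\epsilon}|\xi - x'|^2$ (the point $\xi$ merely competes in the sup defining $u^\epsilon(x')$), hence
\[
u(\xi) - \tfrac1{2\epsilon}|\xi - x'|^2 \le u^\epsilon(x') \le \phi(x') \qquad \text{for $x'$ near $x$.}
\]
Evaluating at $x' = x$ forces equality throughout, so the quadratic $x' \mapsto \phi(x') + \tfrac1{2\epsilon}|\xi - x'|^2$ is nonnegative near $x$ and vanishes at $x$; differentiating at $x$ gives $D\phi(x) + \tfrac1\epsilon(x - \xi) = 0$, i.e.\ $\xi = x + \epsilon p$. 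Then I would translate the displayed inequality by the change of variable $y = x' + (\xi - x) = x' + \epsilon p$: writing $\psi(y) \defeq \phi(y - \epsilon p) + \tfrac1{2\epsilon}|\epsilon p|^2$ — wait, more cleanly, one checks that $y \mapsto \phi(x' )$ with $x' = y - \epsilon p$ together with the pointwise bound $u(y) - \tfrac1{2\epsilon}|y - x'|^2 \le u^\epsilon(x') \le \phi(x')$ yields $u(y) \le \phi(y - \epsilon p) + \tfrac1{2\epsilon}|y - \xi + x' - x'|^2$; a short computation shows the relevant quadratic has the same Hessian $A$ and the same gradient $p$ at $y = \xi = x + \epsilon p$, and equals $u(\xi)$ there, proving $(p,A) \in J^{2,+}_{x+\epsilon p} u$.

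For the second assertion, suppose $(p_j, A_j) \in J^{2,+}_{x_j} u^\epsilon$ with $x_j \to 0$ and $(p_j, A_j) \to (0, A)$. By the first part, $(p_j, A_j) \in J^{2,+}_{x_j + \epsilon p_j} u$, and $x_j + \epsilon p_j \to 0 + \epsilon \cdot 0 = 0$, which is the claimed convergence of contact points. For the continuity of $u$ at $0$ along these points, I would use the upper contact inequality at the contact jet level together with the $\USC$ property. On one hand, upper semicontinuity of $u$ gives $\limsup_j u(x_j + \epsilon p_j) \le u(0)$. On the other hand, from $u^\epsilon(x_j) = u(\xi_j) - \tfrac1{2\epsilon}|\xi_j - x_j|^2$ with $\xi_j = x_j + \epsilon p_j$ we get $u(\xi_j) = u^\epsilon(x_j) + \tfrac\epsilon2 |p_j|^2$; since $u^\epsilon$ is $\tfrac1\epsilon$-quasi-convex hence continuous (\Cref{thm:convex_Lip}(b)), $u^\epsilon(x_j) \to u^\epsilon(0)$, and $|p_j| \to 0$, so $u(\xi_j) \to u^\epsilon(0)$. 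Finally $u^\epsilon(0) \ge u(0)$ by \eqref{uepsdecr}, so combining, $u(0) \le u^\epsilon(0) = \lim_j u(\xi_j) \le \limsup_j u(\xi_j) \le u(0)$, forcing $u^\epsilon(0) = u(0)$ and $u(\xi_j) \to u(0)$; this is exactly continuity of $u$ at $0$ along the sequence of contact points.

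I expect the main obstacle to be the bookkeeping in the change of variable $y = x' + \epsilon p$ and verifying that the shifted quadratic upper test function for $u$ at $x + \epsilon p$ really has gradient $p$ and Hessian $A$ — one must be careful that the "near $x$" neighborhood for $u^\epsilon$ translates to a genuine "near $x + \epsilon p$" neighborhood for $u$, and that the identity $u^\epsilon(x) = \phi(x)$ and the location $\xi = x + \epsilon p$ are used consistently. Everything else is a direct consequence of \eqref{supconv}, \Cref{baspropsc}, and the elementary facts about $\USC$ functions and quasi-convex (hence continuous) functions already established.
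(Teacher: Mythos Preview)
Your proposal is correct and follows essentially the same route as the paper: pick a maximizer $\xi$ for the sup defining $u^\epsilon(x)$, use that $\xi$ competes in the sup at nearby points $x'$ to sandwich between the quadratic penalization and the test function $\phi$, identify $\xi = x + \epsilon p$, and then translate the test inequality to $u$; the second part is argued identically in the paper via $u(\xi_j) = u^\epsilon(x_j) + \tfrac{\epsilon}{2}|p_j|^2$, continuity of $u^\epsilon$, $u \le u^\epsilon$, and upper semicontinuity of $u$.

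The one noteworthy variation is how you obtain $\xi = x + \epsilon p$: you observe that the smooth function $x' \mapsto \phi(x') + \tfrac{1}{2\epsilon}|\xi - x'|^2 - u(\xi)$ has a local minimum at $x$ and set its gradient to zero, which is cleaner than the paper's argument (the paper instead plugs $z = \xi$ and $y = x + \tfrac{t}{\epsilon}(x + \epsilon p - \xi)$ into the master inequality and derives a sign contradiction as $t \to 0$). Your change-of-variable step is a bit garbled in the writeup, but the computation you intend --- with $x' = y - \epsilon p$ one gets $u(y) \le \phi(y - \epsilon p) + \tfrac{\epsilon}{2}|p|^2$, and this shifted quadratic has value $u(\xi)$, gradient $p$, Hessian $A$ at $y = \xi$ --- is exactly right.
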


\begin{proof}
	Let $\xi\in \R^n$ such that
	\begin{equation} \label{eq0:magprop}
	u(\xi) - \frac1{2\epsilon} |\xi-x|^2 = u^\epsilon(x);
	\end{equation}
	note that such a point $\xi$ exists by part (b) of \Cref{baspropsc}. For every $z\in \R^n$ and every $y \in B_{\rho}(x)$, with $\rho > 0$ sufficiently small, we have
	\begin{equation} \label{eq1:magprop}
	u(z) - \tfrac1{2\epsilon}|y-z|^2 \leq  u^{\epsilon}(y) \leq u^{\epsilon}(x) + \pair{p}{y-x}+ Q_A(y-x),
	\end{equation}
	where the first inequality follows from the definition of $u^\epsilon$ and the second inequality holds since $(p,A)$ is an upper contact jet for $u^\epsilon$ at $x$. Now using 
	the choice of $\xi$ in \eqref{eq0:magprop} we can rewrite \eqref{eq1:magprop} as
	\begin{equation} \label{eq2:magprop}
	u(z) - \tfrac1{2\epsilon}|y-z|^2 \leq  u(\xi)- \tfrac1{2\epsilon}|\xi-x|^2 + \pair{p}{y-x}+ Q_A(y-x),
	\end{equation}
	which holds for every $z \in \R^n$ and every  $y \in B_{\rho}(x)$. Then, by choosing $z=y-x+\xi$ in \eqref{eq2:magprop} and using the fact that $y \in B_\rho(x)$ is arbitrary, one has
	\[
	u(z) \leq u(\xi) + \pair{p}{z-\xi} + Q_A(z-\xi), \quad \forall \, z \in B_\rho(\xi);
	\]
	that is, $(p,A)$ is an upper contact jet for $u$ at $\xi$. 
	
	For the first part of the proposition, it remains only to show $\xi = x + \epsilon p$. One makes the choices 
	\[
	z = \xi \quad \text{and} \quad y=x + \frac{t}{\epsilon}(x +\epsilon p -\xi )
	\]
	with
	\[
	|t| < \frac{\rho}{\epsilon |x  + \epsilon p - \xi|} \quad \text{if $\xi \neq x+\epsilon p$}, \qquad \text{$t$ arbitrary if $\xi = x+\epsilon p$},
	\]
	so that $y \in B_{\rho}(x)$ and one can again apply ~\eqref{eq2:magprop} (multiplied by $\epsilon^2$) to find that as $t \to 0$,
	\[
	- \pair{x-\xi}{t(x + \epsilon p -\xi)} + o(t) \leq \epsilon \pair{p}{t(x+\epsilon p-\xi)} + o(t);
	\]
	that is,
	\begin{equation} \label{abs:magprop}
	t|x + \epsilon p - \xi |^2 + o(t) \geq 0.
	\end{equation}
	Since along a sequence $0 > t_n \to 0^-$ the left-hand side of the above inequality~\eqref{abs:magprop} is eventually negative when $\xi \neq x+\epsilon p$, the only possibility for that to hold is that
	\begin{equation} \label{xi=:magprop}
	\xi = x + \epsilon p.
	\end{equation}
	
	It is now immediate to see that if $(p_j, A_j) \to (0, A)$ are upper contact jets for $u^\epsilon$ at $x_j \to 0$, then they are upper contact jets for $u$ as well, at $x_j + \epsilon p_j \to 0$. Finally, note that plugging \eqref{xi=:magprop} into \eqref{eq0:magprop}, one has $u^\epsilon(x) = u(x+\epsilon p) - \frac\epsilon2|p|^2$. Since  $u \leq u^\epsilon$, $u^\epsilon$ is continuous, and $u$ is upper semicontinuous, it follows that
	\[
	\begin{split}
	u(0) \leq u^\epsilon(0) &= \lim_{j\to\infty} \!\big( u^\epsilon(x_j) + \tfrac\epsilon2|p_j|^2 \big) \\&=  \liminf_{j\to\infty} u(x_j + \epsilon p_j) \leq \limsup_{j\to\infty} u(x_j + \epsilon p_j) \leq u(0);
	\end{split}
	\]
	this implies that $u(x_j + \epsilon p_j) \to u(0)$, which completes the proof.
\end{proof}

\section{The Crandall--Ishii--Lions Theorem on Sums} \label{ss:tosu}

The properties of the semiconvex approximation via sup-convolution contained in \Cref{baspropsc} will be used extensively in Part~\ref{sas}, when treating subharmonics in general potential theories. Moreover, as is well known, the semiconvex approximation of semicontinuous functions is also a key ingredient in the theory of viscosity solutions for nonlinear PDEs. In particular, together with Jensen's \Cref{u:jen}, one is able to prove the important Theorem on Sums (see the Appendix of~\cite{user}). As an application of what has been developed thus far, we give a short proof of that celebrated theorem. From our point of view, the idea is to use \Cref{magprop} to extend the semiconvex version of the Theorem on Sums (as discussed in \Cref{tosqc2}) to the conventional upper semicontinuous version.

We begin by recalling the statement of the general version of the conventional result.

\begin{thm}[On Sums; {\cite[Theorem 3.2]{user}}] \label{tosu} \sid{Theorem!on Sums!original version (Crandall--Ishii--Lions)}
	Let $\scr O_i$ be a locally compact subset of $\R^{N_i}$ for $i = 1, \dots, k$,
	\[
	\scr O \defeq \scr O_1 \times \cdots \times \scr O_k \subset \R^N, \quad N \defeq N_1 + \cdots + N_k,
	\]
	$u_i \in \USC(\scr O_i)$, and $\phi$ be continuously differentiable in a neighborhood of $\scr O$. Set
	\[
	w(x) \defeq u_1(x_1) + \cdots + u_k(x_k) \quad \text{for}\ x = (x_1,\dots,x_k) \in \scr O,
	\]
	and suppose that $\hat x = (\hat x_1, \dots, \hat x_k) \in \scr O$ is a local maximum of $w - \phi$ relative to $\scr O$. Then for each $\epsilon > 0$ there exists $X_i \in \call S(N_i)$ such that
	\[
	(D_i \phi(\hat x), X_i) \in \bar J^{2,+}_{\scr O_i} u_i(\hat x_i) \quad \text{for}\ i = 1, \dots, k,
	\]
	and the block diagonal matrix $X$ with entries $X_i$ satisfies \syid{normM@$\norm{A}$!the operator norm of the matrix $A$}
	\begin{equation} \label{matineq_tos}
	-\bigg(\frac1\epsilon + \norm{D^2\phi(\hat x)}\bigg) I \leq X \leq D^2\phi(\hat x) + \epsilon D^2\phi(\hat x)^2.
	\end{equation}
\end{thm}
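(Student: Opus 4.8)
The plan is to deduce the Crandall--Ishii--Lions Theorem on Sums (\Cref{tosu}) from its quasi-convex counterpart (\Cref{tosqc2}) by applying the sup-convolution to each $u_i$ and invoking the magic property of \Cref{magprop} to transfer upper contact jets back to the original functions. First I would carry out the standard reductions: extend each $u_i$ to all of $\R^{N_i}$ by setting it equal to $-\infty$ off $\scr O_i$ (using \Cref{rmk:codom}(a), since local compactness lets us work on a closed neighborhood), replace $u_i$ by $u_i - \frac{c}{2}|x_i - \hat x_i|^2$ for a large constant to turn the local maximum of $w - \phi$ into a strict one, and absorb $\phi$ by replacing $u_1$ with $u_1 - \phi(\cdot, \hat x_2, \dots, \hat x_k) + (\text{linear terms})$; more precisely, since $\phi$ is $C^1$ we can Taylor-expand $\phi$ at $\hat x$ to first order and bound the remainder, but because the conclusion involves $D^2\phi$ we must keep a quadratic correction. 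The cleanest route is the one used in the \emph{User's Guide}: subtract the second-order Taylor polynomial of $\phi$ at $\hat x$ from $w$, so that the new $\tilde w = w - \phi$ has a local max at $\hat x$ with the perturbing function now quadratic; then the matrix inequality for $\tilde w$'s jets translates into \eqref{matineq_tos} via the elementary estimate $A \leq B \Rightarrow A \leq B + \epsilon B^2$-type manipulation (this last algebraic step is routine and is where the precise form of the right-hand side of \eqref{matineq_tos} comes from).

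The heart of the argument is then as follows. After reductions we have bounded-above $u_i \in \USC(\R^{N_i})$ with $w = u_1 \oplus \cdots \oplus u_k$ having a strict local maximum at $\hat x$, i.e. $(0, -\mu I)$ is (for every $\mu > 0$ small) a strict upper contact jet for $w$ at $\hat x$ after a further harmless quadratic perturbation; equivalently $w$ is locally quasi-convex-able. Form the sup-convolutions $u_i^\epsilon$, which by \Cref{baspropsc}(i) are $\frac1\epsilon$-quasi-convex on all of $\R^{N_i}$, and set $w^\epsilon = u_1^\epsilon \oplus \cdots \oplus u_k^\epsilon$. By \Cref{baspropsc}(iii) the $u_i^\epsilon$ decrease to $u_i$ and preserve local maxima, so $w^\epsilon$ still has a local maximum near $\hat x$ — and a short argument (the standard one: the maximizer $\hat x^\epsilon$ of $w^\epsilon - \phi$ converges to $\hat x$, and $u_i^\epsilon(\hat x_i^\epsilon) \to u_i(\hat x_i)$) shows $(0, 0) \in J^{2,+}_{\hat x^\epsilon} w^\epsilon$ up to a vanishing perturbation. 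Now apply the quasi-convex Theorem on Sums \Cref{tosqc2} to $w^\epsilon = u_1^\epsilon \oplus \cdots \oplus u_k^\epsilon$ at $\hat x^\epsilon$: this produces points $x_{i,j} \to \hat x_i^\epsilon$ with $x_{i,j} \in \Diff^2 u_i^\epsilon$, and matrices $A_i^\epsilon = \lim_j D^2 u_i^\epsilon(x_{i,j})$ with $\operatorname{diag}(A_1^\epsilon, \dots, A_k^\epsilon) \leq 0$ and $\geq -\frac1\epsilon I$ (the quasi-convexity bound). At each $x_{i,j}$, $(Du_i^\epsilon(x_{i,j}), D^2 u_i^\epsilon(x_{i,j}))$ is an upper contact jet for $u_i^\epsilon$ (\Cref{baspropucp}, up to an $\epsilon_j$-inflation of the matrix), so by the magic property \Cref{magprop} it is an upper contact jet for $u_i$ at the shifted point $x_{i,j} + \epsilon\, Du_i^\epsilon(x_{i,j})$. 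Passing to the limit in $j$ and using the continuity-along-the-sequence conclusion of \Cref{magprop}, we get $(D_i\phi(\hat x), X_i) \in \bar J^{2,+}_{\scr O_i} u_i(\hat x_i)$ with $X_i = A_i^\epsilon$ (after undoing the earlier reductions, which shifts the gradient component to $D_i\phi(\hat x)$ and the matrix component by $D^2\phi(\hat x)$), and the block-diagonal $X$ satisfies the reduced inequality, which unwinds to \eqref{matineq_tos}.

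The main obstacle I anticipate is bookkeeping rather than a genuine conceptual difficulty: one must track carefully how the three reductions (localization/strictification, absorbing $\phi$, and the sup-convolution) interact with the gradient and Hessian components of the jets, and in particular verify that the quadratic second-order Taylor correction from $\phi$ combines with the $\frac1\epsilon$-quasi-convexity of the sup-convolutions to yield exactly the asymmetric bound $-(\frac1\epsilon + \|D^2\phi\|)I \leq X \leq D^2\phi + \epsilon (D^2\phi)^2$ rather than some cruder estimate. A second point requiring care is the closure: the theorem asserts membership in $\bar J^{2,+}$ (the closure of the set of upper contact jets, in the sense of \eqref{inclusionsformulations} and the surrounding discussion), not in $J^{2,+}$ itself, so I should be explicit that the limiting jet $(D_i\phi(\hat x), X_i)$ is obtained as a limit of genuine upper contact jets at nearby points with converging function values — which is precisely what the second half of \Cref{magprop} and \Cref{tosqc2} are designed to deliver. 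Modulo these careful passages, the proof is a clean assembly of \Cref{tosqc2}, \Cref{baspropsc}, \Cref{baspropucp} and \Cref{magprop}.
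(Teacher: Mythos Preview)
Your overall strategy matches the paper's: reduce, sup-convolve, apply the quasi-convex Theorem on Sums (\Cref{tosqc2}), then use the magic property (\Cref{magprop}). But there is a genuine gap in the ``heart of the argument'' paragraph, and it is not mere bookkeeping.

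After the (correct) reductions in your first paragraph, you arrive at the situation where $\phi$ is quadratic, say $\phi = Q_A$ with $A = D^2\phi(\hat x)$, and $(0, A)$ is an upper contact jet for $w = u_1 \oplus \cdots \oplus u_k$ at $\hat x = 0$. You then begin the second paragraph with ``After reductions \ldots $w$ [has] a strict local maximum at $\hat x$'', i.e.\ you are now assuming $A = 0$. This is not attainable: $Q_A$ in general has off-diagonal blocks, so it \emph{cannot} be absorbed into the separable summands $u_i$, and $w$ itself does not have a local maximum. Consequently your claim that ``$w^\epsilon$ still has a local maximum near $\hat x$'' has no basis, and without an upper contact jet for $w^\epsilon = u_1^\epsilon \oplus \cdots \oplus u_k^\epsilon$ you cannot invoke \Cref{tosqc2}.

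The paper fills exactly this gap with a single algebraic (Cauchy--Schwarz) inequality:
\[
\pair{Az}{z} \leq \pair{(A+\epsilon A^2)\xi}{\xi} + \lambda\, |z-\xi|^2, \qquad \lambda \defeq \tfrac1\epsilon + \norm{A},
\]
valid for all $z,\xi$. Combined with the upper-contact inequality $w(z) \leq Q_A(z)$ and taking the supremum over $z$, this yields directly that $(0,\, A+\epsilon A^2)$ is an upper contact jet for $w^{1/\lambda} = u_1^{1/\lambda} \oplus \cdots \oplus u_k^{1/\lambda}$ at $0$. This one line simultaneously (i) produces the upper contact jet for the sup-convolution that your outline could not obtain, (ii) explains the precise form $A + \epsilon A^2$ of the upper bound in \eqref{matineq_tos}, and (iii) fixes the sup-convolution parameter as $1/\lambda$ (not $\epsilon$), so that the $\lambda$-quasi-convexity of the $u_i^{1/\lambda}$ gives exactly the lower bound $-\lambda I = -\big(\tfrac1\epsilon + \norm{A}\big)I$. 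From there \Cref{tosqc2} and \Cref{magprop} finish the proof as you described. So the missing idea is this inequality: it is what replaces your attempted (and impossible) absorption of the non-separable quadratic $\phi$ into the individual $u_i$'s.
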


\begin{remark}
	The ``closure'' $\bar J^{2,+}_{\scr O_i} u_i(\hat x_i)$ appearing above is, with the terminology we have used so far, the set of jets $(p,A)$ such that there exists a sequence $(y_n, u(y_n), p_n, A_n) \to (\hat x_i, u_i(\hat x_i), p, A)$ where $(p_n, A_n)$ are global upper contact jets for $u_i$ at $y_n$ on $\scr O_i$.
\end{remark}

\begin{remark}
	As noted in \cite{user}, some straightforward reductions can be made without loss of generality:
	\begin{itemize}
		\item due to the local nature of upper contact jets, the local compactness of $\scr O_i$, and the possibility to trivially extend upper semicontinuous functions outside compact sets (cf.~Remarks~\ref{rmk:codom}(a) and~\ref{rem:usc_ext_X_compact}), one may suppose that $\scr O_i = \R^{N_i}$;
		\item up to translations, one may suppose that $\hat x = 0$, $u_i(0) = 0$, and $\phi(0) = 0$;
		\item by replacing $\phi$ and $u_i$ with $\phi - \pair{D\phi(0)}{\cdot\,}_{\R^N}$ and $u_i - \pair{D_i\phi(0)}{\cdot\,}_{\R^{N_i}}$, respectively, one may suppose that $D\phi(0) = 0$.
	\end{itemize}
	With these reductions, $w(x) \leq Q_{D^2\phi(0)}(x) + o(|x|^2)$ for $x \to 0$, hence by redefining $u_i \defeq -\infty$ outside a small ball about $0 \in \R^{N_i}$ one has that $w - Q_{D^2\phi(0)+\eta I}$, for any $\eta>0$ small, has a global maximum at $0 \in \R^N$; therefore, one last reduction is possible:
	\begin{itemize}
		\item one may suppose that $\phi$ is quadratic.
	\end{itemize}
	Indeed, if the inequality \eqref{matineq_tos} holds with $D^2\phi(0) + \eta I$ in place of $D^2\phi(0)$, it suffices to let $\eta \dto 0$ in order to have the desired conclusion.
\end{remark}

Finally, in addition to the reductions listed above, for the proof (of the theorem with these reductions) it suffices to consider a pair of functions; that is, $k = 2$, as noted in \cite{user} when they declare that Theorem~\ref{tosu} is {\em ``formulated in a useful but distracting generality''}. Taking these considerations together, one arrives at the following statement, which we also rephrased in the formalism of upper contact jets.

\begin{thm}[On Sums, reduced] \label{tos} \sid{Theorem!on Sums!reduced version}
	Let $u,v \in \USC(\R^n)$ be bounded above and with $u(0) = 0 = v(0)$. Define 
	\[
	w(x,y) \defeq u(x) + v(y)
	\]
	on $\R^n \times \R^n$ and suppose that $(0,A)$ is an upper contact jet for $w$ at $0 \in \R^{2n}$. Then $0 \in \R^n$ is a contact point for both $u$ and $v$, and for every $\epsilon >0$ there exist $A_1, A_2\in \call S(n)$ such that $(0,A_1), (0,A_2)$ are limits of upper contact jets for $u, v$, respectively, at points converging to $0$,
	with
	\begin{equation} \label{tosineq}
	-\Big(\,\frac1\epsilon + \Vert A\Vert \Big) \,I \leq \left(\! \begin{array}{cc}
	A_1 &  0 \\
	0  & A_2 \\
	\end{array}\! \right)
	\leq A + \epsilon A^2.
	\end{equation}
	Also, $u$ and $v$ are continuous along the respective sequences of contact points.
\end{thm}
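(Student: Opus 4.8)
The plan is to deduce Theorem~\ref{tos} from the quasi-convex version, Corollary~\ref{tosqc2}, via the sup-convolution and its magic property (Proposition~\ref{magprop}). First I would reduce to the case of functions on all of $\R^n$ which are bounded above (and even bounded, if convenient, by truncating $u$ and $v$ to $-\infty$ outside a small ball about $0$, which preserves the fact that $0$ is a local—hence global—maximum of $w$). Fix $\epsilon > 0$. Form the sup-convolutions $u^\epsilon$ and $v^\epsilon$, which by Theorem~\ref{baspropsc}(i) are $\tfrac1\epsilon$-quasi-convex on $\R^n$, and set $w^\epsilon(x,y) \defeq u^\epsilon(x) + v^\epsilon(y)$. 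Since $(0,A)$ is an upper contact jet for $w$ at $0$ and $0$ is (after reduction) a global maximum of $w - Q_A$, one checks that $(0, A + \epsilon A^2)$—or, more carefully, a jet of the form $(0, A_\epsilon)$ with $A_\epsilon \to A$ in a controlled way—is an upper contact jet for $w^\epsilon$ at $0$; the precise perturbation $A + \epsilon A^2$ comes exactly from how a paraboloid of "matrix" $A$ pushed down by the quadratic penalty $\tfrac1{2\epsilon}|\cdot|^2$ transforms, and this is the standard computation behind \eqref{matineq_tos}. I expect this bookkeeping—tracking that the upper contact jet for $w^\epsilon$ at $0$ has matrix part $\leq A + \epsilon A^2$—to be the main technical obstacle, though it is a finite-dimensional linear-algebra computation rather than a conceptual difficulty.

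Next I would apply Corollary~\ref{tosqc2} (the Theorem on Sums for quasi-convex functions) to $w^\epsilon = u^\epsilon(x) + v^\epsilon(y)$ with the upper contact jet just produced at $z = 0$, taking $E$ to be the full-measure set where both $u^\epsilon$ and $v^\epsilon$ are twice differentiable. This yields a sequence $(x_j, y_j) \to 0$ with $x_j \in \Diff^2 u^\epsilon$, $y_j \in \Diff^2 v^\epsilon$, and matrices $A_1, A_2 \in \call S(n)$ such that, along the sequence,
\[
D^2 u^\epsilon(x_j) \to A_1, \qquad D^2 v^\epsilon(y_j) \to A_2,
\]
together with convergence of the first-order data and the block inequality
\[
-\tfrac1\epsilon I \leq \left(\!\begin{array}{cc} A_1 & 0 \\ 0 & A_2 \end{array}\!\right) \leq A + \epsilon A^2;
\]
the lower bound $-\tfrac1\epsilon I$ is the quasi-convexity bound for $u^\epsilon, v^\epsilon$, and combining it with the upper bound (and $\|A + \epsilon A^2\|$ being controlled) gives the stated two-sided estimate \eqref{tosineq} after absorbing $\|A\|$ into the lower bound in the usual way. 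At this stage the jets $(Du^\epsilon(x_j), D^2u^\epsilon(x_j))$ are upper contact jets for $u^\epsilon$ at $x_j$ (by Proposition~\ref{baspropucp}, up to an arbitrarily small positive perturbation of the matrix, which does not affect the limit).

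Finally I would invoke the magic property, Proposition~\ref{magprop}: each upper contact jet $(p_j, B_j)$ for $u^\epsilon$ at $x_j$ is also an upper contact jet for $u$ at $x_j + \epsilon p_j$, and since along our sequence $(p_j, B_j) \to (0, A_1)$ with $x_j \to 0$, the displaced points $x_j + \epsilon p_j \to 0$ and $u$ is continuous along them; likewise for $v$. Passing $j \to \infty$ shows that $(0, A_1) \in \overline{J^{2,+}_{\R^n}} u(0)$ and $(0, A_2) \in \overline{J^{2,+}_{\R^n}} v(0)$ in the sense of the preceding remark, with the first-order components vanishing because they are limits of the $p_j \to 0$. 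The assertion that $0$ is itself a contact point of $u$ and of $v$ follows from claim~(i) of the Summand Theorem~\ref{pusc:sum} applied to $w = u(x) + v(y)$ at $0$ (the Hahn--Banach splitting off a supporting affine function for one variable at a time), or directly from the fact that $0$ is a global maximum of $w - Q_A$. Undoing the initial reductions—restoring $\phi$, the translations, and the general number $k$ of variables by an evident induction—yields Theorem~\ref{tosu}, and in particular the reduced statement Theorem~\ref{tos}.
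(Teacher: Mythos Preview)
Your strategy is exactly the paper's: pass to sup-convolutions, apply the quasi-convex Theorem on Sums (\Cref{tosqc2}), and then use the magic property (\Cref{magprop}) to transfer the jets back to $u$ and $v$. However, there is a genuine gap in the choice of sup-convolution parameter, and your hand-waving about ``absorbing $\|A\|$ into the lower bound'' papers over the real difficulty.

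With parameter $\epsilon$, the claim that $(0, A+\epsilon A^2)$ is an upper contact jet for $w^\epsilon$ at $0$ is false. Since $w \leq Q_A$ (globally, after the reductions), one computes
\[
w^\epsilon(\xi) \leq \sup_{z}\Big(Q_A(z) - \tfrac{1}{2\epsilon}|z-\xi|^2\Big) = \tfrac12\,\xi^T A(I-\epsilon A)^{-1}\xi,
\]
valid only when $\epsilon\|A\|<1$, and $A(I-\epsilon A)^{-1} = A + \epsilon A^2 + \epsilon^2 A^3 + \cdots \neq A+\epsilon A^2$. So the sharp upper bound in \eqref{tosineq} does not come out. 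The paper instead takes the sup-convolution parameter to be $1/\lambda$ with $\lambda \defeq \tfrac{1}{\epsilon} + \|A\|$; the elementary Cauchy--Schwarz inequality
\[
\pair{Az}{z} \leq \pair{(A+\epsilon A^2)\xi}{\xi} + \lambda\,|z-\xi|^2 \qquad \forall\, z,\xi
\]
then gives $w^{1/\lambda}(\xi) \leq Q_{A+\epsilon A^2}(\xi)$, so $(0, A+\epsilon A^2)$ is an upper contact jet for $w^{1/\lambda}$ at $0$, while $u^{1/\lambda}$ and $v^{1/\lambda}$ are $\lambda$-quasi-convex, producing precisely the lower bound $-(\tfrac{1}{\epsilon}+\|A\|)I$. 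The $\|A\|$ term is not ``absorbed after the fact'': it is forced by the algebra of that inequality, and choosing the regularization parameter to match it is exactly what makes both sides of \eqref{tosineq} appear simultaneously. Once you make that single correction, the remainder of your argument (including the appeal to \Cref{magprop} for the continuity along contact points) is correct and coincides with the paper's proof.
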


\begin{proof}
	Fix $\epsilon >0$. Easy calculations exploiting the Cauchy--Schwarz inequality yield 
	\[\begin{split}
	0 &\leq \Big(\sqrt{\epsilon} |A \xi| - \frac1{\sqrt\epsilon}|z-\xi|\Big)^2 \\
	&\leq \epsilon |A \xi|^2 - 2\pair{A\xi}{z-\xi} + \frac1\epsilon|z-\xi|^2 \\
	&= \pair{A\xi}{\xi} + \epsilon \pair{A\xi}{A\xi} + \frac1\epsilon|z-\xi|^2 + \pair{A(z-\xi)}{z-\xi} - \pair{Az}{z} \\
	&\leq \pair{(A+\epsilon A^2)\xi}{\xi} + \Big(\frac1\epsilon + \norm{A}\Big) |z-\xi|^2 - \pair{Az}{z};
	\end{split}\]
	that is,
	\[
	\pair{Az}{z} \leq \pair{(A+\epsilon A^2)}{\xi} + \lambda|z-\xi|^2 \qquad \forall\, z,\xi \in \R^{2n}, \ \ \text{with}\ \lambda \defeq \frac1\epsilon + \Vert A\Vert.
	\]
	Therefore, we have
	\[
	w(z) - \frac\lambda2|z-\xi|^2 \leq Q_{A+\epsilon A^2}(\xi).
	\]
	Now, by taking the supremum over $z\in \R^{2n}$ and by writing $\xi=(x,y)$, we have that $(0, A+\epsilon A^2)$ is an upper contact jet at $0$ for the sup-convolution
	\[
	w^{1/\lambda}(x,y) =  u^{1/\lambda}(x) +  v^{1/\lambda}(y).
	\]
	Indeed, $u^{1/\lambda}(0) \geq u(0) = 0$ by definition, and the same holds for $v$, while $w^{1/\lambda}(0) \leq 0$, and thus $u^{1/\lambda}(0) = 0 = v^{1/\lambda}(0)$. Now, by \Cref{tosqc2} we know that there exist jets $(0, A_i)$ which are the limits of upper contact jets for the $\lambda$-semiconvex functions $u^{1/\lambda}, v^{1/\lambda}$ at some points (of second-order differentiability) converging to $0$ and that the $A_i$'s satisfy~(\ref{tosineq}). The conclusion now follows from \Cref{magprop}.
\end{proof}

\begin{remark}
	In view of \Cref{tosqc2} and the proof of \Cref{magprop}, we can also say that one may choose the converging sequences of upper contact jets in \Cref{tos} to be $(Du^{1/\lambda}(x_j), D^2u^{1/\lambda}(x_j))$ and $(Dv^{1/\lambda}(y_j), D^2v^{1/\lambda}(y_j))$, for some sequences $\{x_j\}_{j \in \N}, \{y_j\}_{j \in \N}$ of points at which the sup-convolutions $u^{1/\lambda}, v^{1/\lambda}$ are twice differentiable.
\end{remark}

\part{General potential--theoretic analysis}\label{sas}

This second part consists of four chapters and is dedicated to general nonlinear second order potential theories and the fundamental role that semiconvex functions play in such theories. The presentation will involve four stages which include a derivation of the minimal conditions that a subequation constraint set should satisfy in order to have a robust potential theory of subharmonic functions. The fundamental roles of duality and monotonicity will be discussed, along with the main tools in the viscosity theory of subharmonics and the essential role that semiconvex functions play in that theory. The validity of the comparison principle will be examined in two distinct regimes of sufficient monotonicity and minimal monotonicity. The final brief chapter shows that the notion of semiconvexity is preserved by smooth changes of coordinates, which is the key point in order to transport semiconvex analysis to manifolds. It is included here because we make use of potential theoretic arguments.

\vspace{2ex}

\chapter{General potential theories} \label{chap:conset}

In this chapter we will discuss the main objects in general potential theories; that is, subequation constraint sets and their subharmonics. Both classical and viscosity formulations will be discussed as well as the relations between the two notions. A few representative potential theories will be presented; namely classical subharmonics, convex and semiconvex functions.

\section{Subequation constraint sets and their subharmonics}

We start with a discussion of the axioms which should be placed on a {\em subequation} constraint set $\call F$ in the space of  $2$-jets in order to have a well-defined and coherent potential theory of \Fd subharmonic functions. We will see that a monotonicity property with respect to the $\call S(n)$ variable is needed. In order to have a useful notion of duality, a certain topological stability axiom is useful, and leads to a natural notion of \Fd superharmonics by duality. Finally, in order to prevent certain obvious counterexamples to the validity of the comparison principle, another monotonicity property with respect to the $\R$ variable is needed. We recall that the comparison principle is an essential tool for treating the natural Dirichlet problem, since it ensures uniqueness of solutions and plays a key role in existence by Perron's method.

Potential theories associated to subequations are interesting from two different points of view. First, there are many interesting geometric contexts with naturally associated potential theories as reviewed in \cite[Section~1.1]{hpsurvey}. Second, subequation constraint sets and their associated potential theory can arise naturally in the study of a given (weakly elliptic) PDE, which we recall now for the benefit of those familiar with the standard viscosity theory of PDEs but not with the corresponding general potential theoretic context.

Suppose we are given an operator $F \in C(X \times \R \times \R^n \times \call S(n))$\sid{operator!general differential} on some open set $X\subset \R^n$ which determines a PDE
\begin{equation} \label{Fxududdu=0}
F(x, u(x), Du(x), D^2u(x)) = 0, \quad x \in X.
\end{equation} 
A classical \emph{subsolution}\sid{subsolution|seeonly{solution, sub-}}\sid{solution!sub-!classical} of (\ref{Fxududdu=0}) is a function $u\in C^2(X)$ which satisfies
\begin{equation} \label{Fxududdu>0}
F(x, u(x), Du(x), D^2u(x)) \geq 0 \quad \forall x \in X.
\end{equation}
The inequality \eqref{Fxududdu>0} identifies a closed subset $\call F \subset \call J^2(X) = X \times \R \times \R^n \times \call S(n)$,\footnote{Note that in fact this Cartesian product represents for us the bundle $\call J^2(X)$ of the $2$-jets of functions on $X$; indeed, for an $m$-dimensional Riemannian manifold $M$ one has the following isomorphism for the bundle of the $2$-jets of functions on $M$:
	\[
	\call J^2(M) \simeq M\times \R \underset{M}{\oplus} T^*M \underset{M}{\oplus} \call S(T^*M),
	\]
	where $\call S(T^*M)$ denotes the space of all symmetric $(0,2)$-tensor fields on $M$.}\syid{J2X@\detokenize{$\mathcal{J}^2(X)$}!\detokenize{the bundle of $2$-jets over $X$; if $X \subset \R^n$, $\call J^2(X) = X \times \R \times \R^n \times \call S(n)$}}
\begin{equation}\label{compatibility}
\call F \defeq \{(x,p,r,A) \in \call J^2(X) : F(x, r, p, A) \geq 0 \},
\end{equation}
which is a natural constraint set associated to classical subharmonics where one says that $u \in C^2(X)$ is \emph{\Fd subharmonic on $X$} if 
\begin{equation*}
J^2_x u \defeq (u(x), Du(x), D^2u(x))\in \call F_x \qquad \forall x \in X,
\end{equation*}
where
\[
\call F_x \defeq \left\{(r,p,A)\in \R \times \R^n \times \call S(n):\ (x,r,p,A) \in \call F \right\}
\] 
is the fiber of $\call F$ over $x$.\syid{Fx@\detokenize{$\call F_x$}!the fiber of $\call F$ over $x$}\sid{fiber (of a subequation)} One extends the notions of subsolutions and subharmonics to upper semicontinuous functions by using a pointwise viscosity formulation in terms of $2$-jets of spaces of upper contact test functions; that is, spaces of upper contact jets. Under minimal conditions on $\call F$, there are many equivalent formulations as will be discussed below in Remark \ref{rem:equiv_FSH}. 

\begin{definition}\label{defn:FSH} Given a function $u\in\USC(X)$:
	\begin{itemize}
		\item[(a)] a function $\varphi$ which is $C^2$ near $x \in X$ is said to be a {\em $C^2$ upper test function for $u$ at $x$} if\sid{upper test function!$C^2$}
		\begin{equation}\label{TF}
		u-\varphi \leq \ 0  \ \ \text{near}\ x \ \ \ \text{and} \ \ \  
		u-\varphi = 0  \ \ \text{at} \ x;  
		\end{equation}	
		\item[(b)] the function $u$ is said to be {\em \Fd subharmonic at $x$}\sid{subharmonic|seeonly{harmonic, sub-}}\sid{harmonic (\emph{or} \Fd harmonic)!sub- (\emph{or} \Fd sub-)} if \begin{equation}\label{FSubx}
		\mbox{$J^2_{x} \varphi \in \call F_x$ for all $C^2$ upper test functions $\varphi$ for $u$ at $x$};
		\end{equation} 
		\item[(c)] the function $u$ is said to be {\em \Fd subharmonic on $X$} if $u$ is \Fd subharmonic at each $x \in X$. 
	\end{itemize} 
	
	The set of all \Fd subharmonic functions on $X$ will be denoted by $\call F(X)$.\syid{FofX@$\call F(X)$!the set of all \Fd subharmonic functions on $X$}
\end{definition}

The corresponding (and standard) notion of $u\in\USC(X)$ being a {\em viscosity subsolution of the PDE \eqref{Fxududdu=0} in $x \in X$} is that\sid{solution!sub-!viscosity}
\begin{equation}\label{Vsubx}
F(x, J^2_{x} \varphi) \geq 0 \quad \text{for all upper $C^2$ test functions $\varphi$ for $u$ at $x$}.
\end{equation}
The analogous notion of $u\in\LSC(X)$ being a {\em viscosity supersolution of the PDE \eqref{Fxududdu=0} in $x \in X$} is that\sid{solution!viscosity super-}\sid{supersolution|seeonly{solution, super-}}
\begin{equation}\label{Vsuperx}
F(x, J^2_{x} \varphi) \leq 0 \quad \text{for all lower $C^2$ test functions $\varphi$ for $u$ at $x$},
\end{equation}
where one merely inverts the inequalities in \eqref{TF} and \eqref{Vsubx}. 

In the case that a constraint $\call F \subset \call J^2(X)$ is determined by a differential operator $F \in C(\call J^2(X))$ by \eqref{compatibility}, the definition of supersolution \eqref{Vsuperx} suggests defining $u \in \LSC(X)$ to be {\em \Fd superharmonic in $x \in X$} if\syid{int@$\intr E$!the interior of the set $E$}\sid{superharmonic|seeonly{harmonic, super-}}\sid{harmonic (\emph{or} \Fd harmonic)!super- (\emph{or} \Fd super-)}
\begin{equation}\label{FSuperx}
\mbox{$J^2_{x} \varphi \not\in \intr \call F_x$ for all $C^2$ lower test functions $\varphi$ for $u$ at $x$.}
\end{equation}
Finally, a function $u \in C(X)$ is said to be {\em \Fd harmonic in $x \in X$}\sid{harmonic (\emph{or} \Fd harmonic)} if both \eqref{FSubx} and \eqref{FSuperx} hold. In the next chapter, we will use a natural notion of {\em duality} to reduce superharmonics to subharmonics of a dual constraint set. 

\begin{remark}[Weak ellipticity]\label{rem:weakly_elliptic} 
	In order to be meaningful, the viscosity formulation of \Fd subharmonic functions of Definition~\ref{defn:FSH} requires making a monotonicity assumption on the constraint set $\call F$ with respect to the $\call S(n)$ variable. More precisely one needs to assume that the constraint set $\call F$ fiberwise satisfies the {\em positivity} property:\sid{property!positivity (P)}\sid{ellipticity!weak}
	\begin{equation}\label{property_P}
	(r,p,A) \in \call F_x \implies (r,p,A+P) \in \call F_x \quad \forall P \geq 0 \ \text{in} \ \call S(n), \quad \forall x \in X.
	\end{equation}
	Indeed, if \eqref{property_P} fails then there exist $x_0 \in X$ and $P_0 > 0$ such that
	\begin{equation}\label{P1}
	\mbox{$(r,p,A) \in \call F_{x_0}$ \quad but \quad $(r,p,A + P_0) \not\in \call F_{x_0}$.}
	\end{equation}
	Let $u \in \call F(X)$ with $u(x_0) \neq -\infty$ and suppose that $\varphi$ is an upper test function for $u$ at $x_0$. Since $u$ is \Fd subharmonic in $x_0$ we have $(r,p,A)\defeq  J^2_{x_0} \varphi \in \call F_{x_0}$ and the function 
	\[
	\psi\defeq  \varphi + \frac{1}{2} \langle P_0(x - x_0), x - x_0 \rangle
	\]
	is also an upper test function for $u$ at $x_0$, but, by \eqref{P1}, it satisfies
	\[
	J^2_{x_0} \psi = J^2_{x_0} \varphi + P_0 \not\in \call F_{x_0},
	\]
	which contradicts $u$ being \Fd subharmonic in $x_0$. 
	
	When $\call F$ is {\em compatibile} with an operator $F \in C(\call J^2(X))$ in the sense that\sid{compatibility}
	\begin{equation*}
	\call F = \{(x,r,p,A) \in \call J^2(X): \ F(x,r,p,A) \geq 0\},
	\end{equation*}
	positivity of $\call F$ is implied by $F$ being {\em degenerate ellipitc}, i.e.\ \sid{ellipticity!degenerate|seeonly{weak}}
	\begin{equation*}
	F(x,r,p,A) \leq F(x,r,p,A+P) \quad \forall P \geq 0,
	\end{equation*}
	while positivity of $\call F$ implies that $F$ satisfies
	\[
	F(x,r,p,A)\geq 0 \quad \implies \quad
	F(x,r,p,A+P)\geq 0 \quad \forall P\geq 0.
	\] 
	This monotonicity should be viewed as the weakest possible notion of ellipticity suitable for a meaningful viscosity theory in both contexts.
\end{remark}

A widely used consequence of the positivity condition \eqref{property_P} should be noted. 

\begin{remark}[Positivity and coherence]\label{rem:coherence}
	Of fundamental importance in many constructions is to have the following \emph{coherence property}\sid{property!coherence} for \Fd subharmonics in the sense of Definition \ref{defn:FSH} with $\call F \neq \emptyset$: if $u$ is $C^2$ near $x_0 \in X$ then
	\begin{equation}\label{coherence}
	\mbox{$u$ is \Fd subharmonic at $x_0 \ \Leftrightarrow \ J^2_{x_0}u \in \call F_{x_0}$.}
	\end{equation}
	The implication $(\Rightarrow)$ is obvious since $u$ is its own upper test function at $x_0$. The reverse implication $(\Leftarrow)$ requires the positivity condition \eqref{property_P}. Indeed, suppose that $J^2_{x_0}u \in \call F_{x_0}$ and that $\varphi$ is $C^2$ upper test function for $u$ at $x_0$. Since $u - \varphi$ has a local maximum value of $0$ in $x_0$, by elementary calculus, one has
	\[
	\varphi(x_0) = u(x_0), \quad D\varphi(x_0) = Du(x_0), \quad \text{and} \quad D^2\varphi(x_0) \geq Du(x_0),
	\]
	hence, denoting by $P\defeq  D^2\varphi(x_0) - Du(x_0) \geq 0$ in $\call S(n)$,
	\[
	J^2_{x_0} \varphi = J^2_{x_0} u + P \quad \text{with} \ P \geq 0,
	\]
	which yields $J^2_{x_0} \varphi \in \call F_{x_0}$ if and only if the positivity property \eqref{property_P} holds.
\end{remark}
The same equivalence holds if $u$ is merely twice differentiable in $x_0$ by using the {\em little-o} formulation of upper test jets and second order Taylor expansions, provided that the fibers $\call F_x$ are closed. See Proposition \ref{elemprop}(iii).

\begin{remark}[Equivalent notions of \Fd subharmonicity]\label{rem:equiv_FSH} In the notation of Remark~\ref{rem:UCJ}, Definition~\ref{defn:FSH} says that
	\begin{equation}\label{equiv1}
	\mbox{$u$ is \Fd subharmonic in $x \ \Leftrightarrow \ (u(x), p, A) \in \call F_x \quad  \forall \, (p,A) \in J_{C^2}(x,u)$}.
	\end{equation}
	As noted in Remark~\ref{rem:UCJ}, we have the chain of inclusions \eqref{inclusionsformulations}
	and \cite[Lemma C.1]{chlp} shows that all of those spaces have the same closure in $\call J^2$ for each $x \in X$. Hence, if the fiber $\call F_x$ is closed in $\call J^2$,
	then in \eqref{equiv1} one can replace the space of upper contact jets $J_{C^2}(x,u)$ with any of the others in the chain \eqref{inclusionsformulations}. Moreover, if $\call F$ is closed in $\call J^2(X)$,
	then one can also replace $J_{C^2}(x,u)$ with a limiting space $\overline{J}_{C^2}(x,u)$ consisting of those $(p,A) \in \R^n \times \call S(n)$ for which there are sequences $\{x_k\}_{k \in \N} \subset X$ and $\{(p_k, A_k)\}_{k \in \N} \subset J_{C^2}(x,u)$ for which $(x_k, u(x_k),p_k,A_k) \to (x, u(x), p, A)$ as $k \to \infty$,
	and similarly for the other spaces of upper test jets. 
\end{remark}

The considerations above lead us to a first class of constraint sets $\call F$ which have a meaningful and flexible notion of upper semicontinuous \Fd subharmonic functions.

\begin{definition}\label{defn_prim_subeq}
	A subset $\call F \subset \call J^2(X)$ with $X$ open is called a \emph{primitive subequation (constraint set)}\sid{subequation (constraint set)!primitive} if\sid{property!closedness (C)}
	\begin{equation} \tag{C} \label{(C)}
	\mbox{$\call F$ is closed in $\call J^2(X)$,}
	\end{equation}
	and $\call F$ satisfies fiberwise the \emph{positivity} condition\sid{property!positivity (P)}
	\begin{equation} \tag{P} \label{(P)}
	(r,p,A) \in \call F_x \implies (r,p,A+P) \in \call F_x \quad \forall P> 0, \quad \forall x \in X.
	\end{equation}
\end{definition}

As noted in Remarks~\ref{rem:weakly_elliptic} and \ref{rem:coherence}, the positivity condition~\eqref{(P)} is essential for a meaningful notion of upper semicontinuous \Fd subharmonics and in order to have $C^2$-coherence, while the closedness condition~\eqref{(C)} ensures the equivalent formulations of Remark \ref{rem:equiv_FSH}. Condition~\eqref{(C)} also ensures that the elementary properties of $\call F(X)$ involving limits and upper envelopes hold (see \cite[Theorem~2.6]{hldir}; the same properties are also listed, for nonconstant coefficient subequations, in \Cref{elemprop}).

The two minimal conditions \eqref{(C)} and \eqref{(P)}, however, are not sufficient for all of our purposes. First, as already mentioned, in order to treat the natural Dirichlet problem for \Fd harmonic functions, we will need to prove {\em comparison}; that is, to show that
\begin{equation}\label{intro_CP}
u\leq v\ \text{on $\de X$} \implies u\leq v\ \text{on $X$}
\end{equation}
whenever $u$ and $v$ are \Fd subharmonic and \Fd superharmonic, respectively (or a sub/super-solution pair for a degenerate elliptic PDE). In order to avoid obvious counterexamples to comparison \eqref{intro_CP}, one needs to require that $\call F$ has another fiberwise monotonicity property with respect to $\R$ variable: that is, that $\call F$ satisfies the \emph{negativity} property\sid{property!negativity (N)}
\begin{equation} \tag{N} \label{(N)}
(r,p,A) \in \call F_x \implies (r+s, p, A) \in \call F_x \quad \forall s < 0, \quad \forall x \in X.
\end{equation}
The simplest counterexample is the following.
\begin{example}
	Consider, for $n=1$, the ordinary differential equation $u''+u = 0$ on $\R$, that is $F(u, u'') = 0$, with $F(r,A) = A+ r$. It is clear that the associated $\call F$ does not satisfy \eqref{(N)} and comparison does not hold on arbitrary bounded domains, but only on intervals of length less than $\pi$.
\end{example}

Property~\eqref{(N)} will have additional implications and utility, including the fact that it guarantees a \emph{sliding property} for subharmonics: if $u$ is a subharmonic, then so is $u-m$ for each constant $m \geq 0$. Indeed, if $(p,A)$ is an upper contact jet for $u$ at $x$, so it will be for $u-m$. This sliding property will play a key role in some upcoming constructions and is included, along with many elementary properties mentioned above, in \Cref{elemprop}. Hence, in general, one also wants property~\eqref{(N)} for a constraint set $\call F$. However, there are important exceptions to this, including when treating {\em generalized principal eigenvalues} and associated eigendirections where typically one has the opposite monotonicity in the $r$ variable (see \cite{BP21}, for example).

\begin{remark}[Proper ellipticity]
\sid{ellipticity!proper}
The combined monotonicity properties of positivity \eqref{(P)} and negativity \eqref{(N)} for a constraint $\call F$ determined by an operator $F$ with fibers
	\[
	\call F_x \defeq \left\{(r,p,A) \in \call J^2:\ F(x,r,p,A) \geq 0\right\}
	\]
	is implied by the operator $F$ being \emph{proper elliptic}\footnote{In the standard viscosity theory, $F$ is often said to be merely {\em proper}. We prefer the term proper elliptic to recall the crucial importance of weak (degenerate) ellipticity required for the viscosity theory.}; that is, for each $(x,r,p,A) \in \call J^2(X)$,
	\begin{equation*} 
	F(x,r+s,p,A+P) \geq F(x,r,p,A) \quad  \forall \,  s \leq 0 \ \text{in} \ \R,\ \ P \geq 0 \ \text{in} \ \call S(n).
	\end{equation*}
	
\end{remark}

Two final considerations are needed before arriving at the desired class of constraint sets.

\begin{remark}[Proper fibers and topological stability]
	Note that if some fiber $\call F_x$ of a constraint set were to be empty, then $X$ might not admit an \Fd subharmonic. To see this, the same argument we used at the beginning of the proof of \Cref{p:uvm} shows that the space of upper quadratic test jets $J^{2,+}_xu$ for $u$ at $x$ is non-empty for some $x \in X$ and hence for each $(p,A) \in J^{2,+}_xu$ the condition $(u(x), p, A) \in \call F_x$ fails if $\call F_x$ is empty.
	
	Moreover, if some fiber $\call F_x$ were to be equal to the entire space of $2$-jets $\call J^2$, the property of being \Fd subharmonic in $x$ is satisfied by all functions which are upper semicontinuous in some neighborhood of $x$. Hence one should consider only constraint sets $\call F$ such that, for all $x \in X$, the fiber $\call F_x$ is a closed, non-empty and proper subset of $\call J^2$.
	
	Finally, one obtains a well behaved and useful theory of constraint sets $\widetilde{\call F}$ which are {\em dual} to a given constraint set $\call F$ {provided} that one also assumes the following three conditions of {\em topological stability}:\sid{property!topological stability (T)}
	\begin{equation} \tag{T} \label{(T)}
	\call F = \barr{\intr{\call F}}, \qquad (\intr{\call F})_x = \intr(\call F_x) \quad \forall  x \in X, \qquad \call F_x = \barr{\intr{\call F}_x}\quad \forall  x \in X .
	\end{equation} 
	Notice that the first condition above implies the closedness condition \eqref{(C)} and the third condition above implies that all fibers are closed.
\end{remark}

The desired class of constraint sets $\call F$ with a robust associated potential theory is now at hand.

\begin{definition} \label{defnsubeq} \sid{subequation (constraint set)}
	A {\em subequation (constraint set)} on an open set $X \subset \R^n$ is a subset $\call F \subset \call J^2(X)$ with nontrivial fibers (that is, proper and non-empty fibers) which satisfies the {\em positivity property}
	\begin{equation} \tag{P} \label{(P)'} \sid{property!positivity (P)}
	(r,p,A) \in \call F_x \implies (r,p,A+P) \in \call F_x \quad \forall P> 0, \quad \forall x \in X,
	\end{equation}
	the {\em negativity property}
	\begin{equation} \tag{N} \label{(N)'} \sid{property!negativity (N)}
	(r,p,A) \in \call F_x \implies (r+s, p, A) \in \call F_x \quad \forall s < 0, \quad \forall x \in X,
	\end{equation}
	and the {\em topological property}\sid{property!topological stability (T)}
	\begin{equation} \tag{T} \label{(T)'}
	\call F = \barr{\intr{\call F}}, \qquad (\intr{\call F})_x = \intr(\call F_x) \quad \forall  x \in X \qquad \call F_x = \barr{\intr{\call F}_x} \quad \forall x \in X .
	\end{equation}
\end{definition}

In addition to ensuring a well-behaved duality including the {\em reflexivity} property (see \Cref{sec:duality1}) property \eqref{(T)'} also ensures the \emph{local existence of strict classical subharmonics}, when we adopt the following notion of strictness.

\begin{definition} \label{def:strict}
	A function $u\in C^2(X)$ is said to be \emph{strictly} \Fd subharmonic\sid{harmonic (\emph{or} \Fd harmonic)!sub- (\emph{or} \Fd sub-)!strictly} at $x$ if
	\[
	J^2_x u \in \intr{\call F}_x
	\]
	and {strictly} \Fd subharmonic on $X$ if this inclusion holds for each $x \in X$.
\end{definition}

\begin{remark}[Local existence of strict classical subharmonics] \label{exquadsub}
	Using the topological condition \eqref{(T)'}, one can prove that for each $x \in X$  there exists a quadratic function which is strictly \Fd subharmonic on a neighborhood of $x$. Indeed, by \eqref{(T)'} we know that $\intr{\call F}_x \neq \emptyset$ for all $x \in X$; hence, let $J = (r, p, A) \in \intr{\call F}_x$ and consider the quadratic function $\phi_{J}$ associated to $J$, namely
	\[
	\phi_{J} = r + \pair{p}{\cdot -\, x} + Q_A(\,\cdot - x),
	\]
	so that $J^2_x \phi_J= J$. Since $\phi_J$ is $C^2$ and $(x, J) \in \intr{\call F}$, we have that $J^2_y \phi_J \in \intr{\call F}_y$ for all $y$ in some neighborhood $U$ of $x$, hence $\phi_J \in \call F(U)$.
\end{remark}

Two interesting remarks that one finds in \cite{hlae} are worth noting. They play an important role in the {\em Almost Everywhere Theorem} (\Cref{aet}).

\begin{remark}\label{subinclusion}
	The notion of being \Fd subharmonic can be expressed in terms of an inclusion between subequations. Indeed, each upper semicontinuous function $w$ on $X$ determines a minimal primitive subequation $\call G$ such that $w$ is $\call G$-subharmonic, namely $\call G=\barr{\call J^{2,+}w}$, where\syid{J2pxu@\detokenize{$\call J^{2,+}u$}!see~(\ref{J2+w})}
	\begin{equation}\label{J2+w}
	\call J^{2,+}w\defeq  \{ (x,w(x),p,A): \ x \in X,\ (p,A) \in J^{2,+}_xw \}
	\end{equation}
	Therefore $w$ is \Fd subharmonic if and only if $\call J^{2,+}w \subset \call F$ (if and only if $\call G \subset \call F$, since $\call F$ is closed).
\end{remark}

\begin{remark} \label{ae?}
	We already found a subset which surely contains $\call J^{2,+}w$. Indeed, if we set \syid{JwE@$\call J(w,E)$!see~(\ref{JwE})}
	\begin{equation}\label{JwE}
	\call J(w,E) \defeq \big\{ (x, w(x), Dw(x), D^2w(x) + P):\ x \in E \cap \Diff^2(w),\ P \geq 0 \big\},
	\end{equation}
	then \Cref{pusc} says that if $w$ is locally semiconvex and $E$ has full measure, then $\call J^{2,+}w \subset \barr{\call J(w,E)}$.
\end{remark}

We conclude this section with a brief discussion of an important class of subequations that has been extensively studied in the monograph \cite{chlp}. 

\begin{definition}
	A (primitive) subequation $\call F$ is said a \emph{constant coefficient}\sid{subequation (constraint set)!constant coefficient}\sid{subequation (constraint set)!translation invariant|seeonly{constant coefficient}}, or \emph{translation invariant}, (primitive) subequation if the mapping $x \mapsto \call F_x$ is constant; that is if the fibers do not depend on the point $x$.
\end{definition}

\begin{remark} \label{rmkccs}
	Since our definition of subequation requires that all the fibers are nontrivial, it is important to specify the open set $X$ on which we are considering a subequation $\call F$. Nevertheless, if $\call F$ is a constant coefficient subequation on $X$, it can be naturally extended to the whole $\R^n$ by setting, for some $x_0 \in X$ fixed, $\call F_y = \call F_{x_0}$ for all $y \in \R^n$. Hence any constant coefficient subequation can be thought of as defined on all of $\R^n$ and we can identify it with a nontrivial subset of $\call J^2 \defeq \R \times \R^n \times \call S(n)$\syid{J2@$\call J^2$!the space of $2$-jets $\R \times \R^n \times \call S(n)$} that satisfies the conditions \eqref{(P)'}, \eqref{(N)'} and \eqref{(T)'}. Notice that in this case, the property \eqref{(T)'} reduces to the single condition $\call F = \overline{\intr \call F}$ in $\call J^2$. On the other hand, obviously, when speaking about \Fd subharmonics/superharmonics/harmonics, we must specify on which open set $X$ we are working. 
\end{remark}

\section{Some simple classical examples}

Finally, we present a few representative examples of constant coefficient subequations on an arbitrary open set $X \subset \R^n$, starting from the classical case of Laplacian subharmonics.

\begin{example}[The Laplacian subequation]
\sid{subequation (constraint set)!Laplacian}
The {\em Laplacian subequation} is $\call F = X \times \call F_{\Delta}$ where
	\[
	\call	F_{\Delta}\defeq  \{(r,p,A) \in \call J^2: \ {\rm tr}(A) \geq 0\}
	\]
	and its subarmonics are characterized by 
	\begin{equation*}
	\mbox{$\displaystyle{u \in \call F_{\Delta}(X) \ \Leftrightarrow \ u(x_0) \leq \frac{1}{|B_{\rho}(x_0)|} \int_{B_{\rho}(x_0)} u(x) \, dx}$ \quad for each $B_{\rho}(x_0) \Subset X$.}
	\end{equation*}
	This subequation is {\em self-dual} ($\widetilde{\call F} = \call F$) in terms of the duality operation discussed in the following chapter.
\end{example}

\begin{example}[The convexity subequation]\label{exe:P}
\sid{subequation (constraint set)!convexity}
 The {\em convexity subequation} is $\call F =  X \times \call M(\call P)$ where
	\[
	\call M(\call P) \defeq  \R \times \R^n \times \call P\defeq  \{(r,p,A) \in \call J^2: \ A \geq 0 \} 
	\]
	and its subharmonics are characterized by 
	\begin{equation*}
	\mbox{$u \in \call F (X) \ \Leftrightarrow \ u$ is locally convex } 
	\end{equation*}
	(away from the connected components on which $u \equiv - \infty$). In particular, if $X$ is also convex and $u$ is real valued then $u \in  \call F (X)$ if and only if $u$ is convex on $X$. On the other hand, the subharmonics of the dual subequation are the so-called {\em subaffine}\sid{subaffine!function} functions; that is, they satisfy the comparison principle with respect to all affine functions.
The characterizations of these two fundamental examples of constant coefficient pure second order suabequations will be briefly discussed in \Cref{sec:cccc} (see \Cref{prop:P_Ptilde}).
	
	The convexity subequation is the most basic example of a {\em monotonicity cone subequation}\sid{subequation (constraint set)!monotonicity cone} (a constant coefficient subequation which is also a closed convex cone with vertex at the origin; see \Cref{sec:duality2}). This particular cone is used in the proof of the comparison principle for all pure second order subequations and degenerate elliptic PDEs, as will be discussed in Chapter~\ref{chap:comp}. A {\em fundamental family}\sid{subequation (constraint set)!monotonicity cone!fundamental family}\sid{fundamental family|seeonly{subequation, monotonicity cone, fundamental family}} of such cones can be found in \cite[Chapter~5]{chlp}; see also \Cref{rem:CP_cc}.
\end{example}

Related examples are fundamental to our discussion.

\begin{example}[Semiconvex functions]\label{exe:qc}\sid{subequation (constraint set)!semiconvexity|seeonly{convexity, semi-}}\sid{subequation (constraint set)!convexity!semi-}
 For $\lambda >  0$, the {\em $\lambda$-semiconvexity subequation} is  $\call F_{\lambda} =  X \times \call M(\call P_{\lambda})$ where
	\[
	\call M(\call P_{\lambda}) \defeq  \R \times \R^n \times \call P_{\lambda}\defeq  \{(r,p,A) \in \call J^2: \ A + \lambda I \geq 0 \} 
	\]
	and its subharmonics are characterized by
	\begin{equation*}
	\mbox{$u \in \call F_{\lambda} (X) \ \Leftrightarrow \ u$ is locally $\lambda$-semiconvex} 
	\end{equation*}
	(away from the connected components on which $u \equiv - \infty$). In particular, if $X$ is also convex and $u$ is real valued then $u \in  \call F_{\lambda}(X)$ if and only if $u$ is $\lambda$-semiconvex on $X$.
	
	Moreover, it is clear that the notion of being \Fd subharmonic is local since the notion of upper contact jet is local (see Proposition \ref{elemprop}(i)). Hence one can characterize the space $ \mathsf{sc}_\mathrm{loc}(X)$ of {\em locally semiconvex functions on $X$}\syid{sclocX@\detokenize{$ \mathsf{sc}_\mathrm{loc}(X)$}!the space of locally semiconvex functions on $X$} as those $u \in \USC(X)$ such that
	\begin{equation}\label{qc_loc}
	\text{for each} \ x_0 \in X \ \text{there exist} \ \rho_0 > 0 \ \text{and} \ \lambda_0 > 0 \ \text{such that} \ u \in \call F_{\lambda_0}(B_{\rho_0}(x_0)).
	\end{equation}
\end{example}

\chapter{Duality and monotonicity in general potential theories} \label{chap:duality}

This chapter is dedicated to two fundamental notions in general second order potential theories. Duality is reviewed first with its important consequence of reformulating \Fd superharmonics as subharmonics for a dual subequation $\widetilde{\call F}$. Next, the unifying notion of monotonicity is reviewed. Together, they give rise to the duality-monotonicity method for proving the validity of the comparison principle, which is a key ingredient in the treatment of existence and uniqueness of solutions to the Dirichlet problem for \Fd harmonic functions.

\section{Duality}\label{sec:duality1}

A subequation $\call F$ naturally gives rise to a dual subequation $\widetilde{\call F}$, which will play a crucial role in all that follows. Consider the following notion of duality, introduced by Harvey and Lawson~\cite{hldir09, hldir} and defined in a purely set-theoretic way.

\begin{definition} \label{def:dual}
	Given a proper subset $\call F \subset \call J^2(X)$, we define its \emph{Dirichlet dual}\sid{Dirichlet dual|seeonly{subequation, dual}}\sid{subequation (constraint set)!dual (\emph{or} Dirichlet dual)}
	\[
	\tildee{\call F} \defeq  (-\intr{\call F})\compl = - (\intr{\call F})\compl,
	\]
	where the complement is relative to $\call J^2(X)$.\syid{tildeF@\detokenize{$\tildee{\call F}$}!the Dirichlet dual of the subequation $\call F$, also called the dual subequation of $\call F$}
\end{definition}

It turns out that if $\call F$ is a subequation, then $\tildee{\call F}$ is a subequation as well; this is a consequence of the following proposition, which collects some elementary properties of the Dirichlet dual. These properties are to be found in \cite[Section 4]{hldir09} in a pure second-order context and in \cite[Section 3]{hldir} for subequations on Riemannian manifolds, as well as  in \cite[Proposition~3.2]{chlp} for constant coefficient subequations.  We essentially reproduce the proof of this last proposition in order to stress that it also works well if one has non-constant coefficients. This happens because of a consequence of the topological condition \eqref{(T)'}, as noted in \cite{hldir}: if we define the dual of the fiber $\call F_x \subset \R \times \R^n \times \call S(n)$ relative to the ambient space $\R \times \R^n \times \call S(n)$, one can compute the dual of $\call F$ fiberwise; that is,
\[
\tildee{\call F} = \bigsqcup_{x\in X}  (-\intr{\call F}_x)\compl,
\]
where the complement is relative to $\R \times \R^n \times \call S(n)$, so there is no ambiguity in the notation $\tildee{\call F}_x$.

\begin{prop} \label{propdual}
	Let $\call F$ and $\call G$ be proper subsets of $\call J^2(X)$ and let $J \in \R \times \R^n \times \call S(n)$. Then the following hold:
	\begin{enumerate}[label={\it(\arabic*)}, topsep=7pt, itemsep=4pt, partopsep=1.1pt, parsep=1.1pt]
		\item\ $\call F_x \subset \call G_x \implies \tildee{\call G}_x \subset \tildee{\call F}_x$;
		\item\ $\call F_x + J \subset \call F_x \implies \tildee{\call F}_x + J \subset \tildee{\call F}_x$;
		\item\ $\tildee{\call F_x + J} = \tildee{\call F}_x - J$;
		\item\ $\call F$ satisfies \eqref{(P)'} $\implies$ $\tildee{\call F}$ satisfies \eqref{(P)'};
		\item\ $\call F$ satisfies \eqref{(N)'} $\implies$ $\tildee{\call F}$ satisfies \eqref{(N)'};
		\item\ $\call F =\barr{\intr{\call F}}$ $\iff$ $\hspace{2.5pt}\tildee{\hspace{-2.5pt}\tildee{\call F}} = \call F$;
		\item\ $\call F$ satisfies \eqref{(T)'} $\implies$ $\tildee{\call F}$ satisfies \eqref{(T)'};
		\item\ $\call F$ is a subequation $\implies$ $\tildee{\call F}$ is a subequation.
	\end{enumerate}
\end{prop}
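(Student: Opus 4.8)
The plan is to prove the eight items essentially in the listed order, after first isolating two purely formal facts about point‑set topology in $\call J^2(X)$ on which everything rests: for every $S$ one has $\intr(S\compl) = (\overline S)\compl$, and since $J\mapsto -J$ and $J\mapsto J+J_0$ are homeomorphisms of each fiber one has $\overline{-S}=-\overline S$, $\intr(-S)=-\intr S$, $\overline{S+J_0}=\overline S+J_0$, and $S\compl - J_0 = (S-J_0)\compl$. Granting these, item (3) is a one‑line computation: $\tildee{\call F_x+J}=(-\intr(\call F_x+J))\compl=(-\intr\call F_x-J)\compl=(-\intr\call F_x)\compl-J=\tildee{\call F}_x-J$. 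Item (1) is immediate, since $\call F_x\subset\call G_x$ gives $-\intr\call F_x\subset -\intr\call G_x$ and hence the reverse inclusion of complements. Item (2) is then (1) applied to the hypothesis $\call F_x+J\subset\call F_x$, combined with (3): $\tildee{\call F}_x\subset\tildee{\call F_x+J}=\tildee{\call F}_x-J$, i.e.\ $\tildee{\call F}_x+J\subset\tildee{\call F}_x$.

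Items (4) and (5) are then corollaries of (2): property \eqref{(P)} says $\call F_x+(0,0,P)\subset\call F_x$ for $P>0$ and property \eqref{(N)} says $\call F_x+(s,0,0)\subset\call F_x$ for $s<0$, so applying (2) with these jets transfers each monotonicity to $\tildee{\call F}$ fiberwise. For item (6) I would compute the double dual directly, using the two formal facts: $\tildee{\tildee{\call F}}=\big(-\intr\big((-\intr\call F)\compl\big)\big)\compl=\big(-(\overline{-\intr\call F})\compl\big)\compl=\overline{\intr\call F}$. Hence $\tildee{\tildee{\call F}}=\call F$ if and only if $\call F=\overline{\intr\call F}$. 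Note that this identity $\tildee{\tildee{\call G}}=\overline{\intr\call G}$ holds for \emph{every} proper $\call G$ (and, verbatim, inside a single fiber), and it is the engine behind the reflexivity part of (7).

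The delicate point — the one the surrounding text flags — is that all of the above can be carried out fiberwise, and this is exactly where the topological condition \eqref{(T)} enters: its middle clause $(\intr\call F)_x=\intr(\call F_x)$ is precisely what lets one write $\tildee{\call F}_x=(-\intr\call F_x)\compl$ with the complement taken in the single fiber $\R\times\R^n\times\call S(n)$, so that $\tildee{\call F}=\bigsqcup_x(-\intr\call F_x)\compl$ and the dual is genuinely computed fiber by fiber. With this in hand, item (7) splits into three checks. The global clause $\tildee{\call F}=\overline{\intr\tildee{\call F}}$ follows from (6): from $\call F=\overline{\intr\call F}$ we get $\tildee{\tildee{\call F}}=\call F$, hence $\tildee{\tildee{\tildee{\call F}}}=\tildee{\call F}$, while $\tildee{\tildee{(\tildee{\call F})}}=\overline{\intr\tildee{\call F}}$ by the universal double‑dual identity. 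The fiberwise clause $\tildee{\call F}_x=\overline{\intr\tildee{\call F}_x}$ is the same triple‑dual argument run inside the fiber space, using the third clause $\call F_x=\overline{\intr\call F_x}$ of \eqref{(T)}. Finally, for $(\intr\tildee{\call F})_x=\intr(\tildee{\call F}_x)$: from $\call F=\overline{\intr\call F}$ one gets $\intr\tildee{\call F}=(\overline{-\intr\call F})\compl=(-\call F)\compl$, so $(\intr\tildee{\call F})_x=(-\call F_x)\compl$; on the other hand, using the fiberwise clauses of \eqref{(T)}, $\intr(\tildee{\call F}_x)=(\overline{-\intr\call F_x})\compl=(-\overline{\intr\call F_x})\compl=(-\call F_x)\compl$ as well.

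Item (8) is then just bookkeeping: by (4), (5), (7) the dual $\tildee{\call F}$ inherits \eqref{(P)}, \eqref{(N)}, \eqref{(T)}, and it only remains to note that $\tildee{\call F}$ has nontrivial fibers — which holds because $\emptyset\neq\intr\call F_x\neq\R\times\R^n\times\call S(n)$ (the left inequality from $\call F_x=\overline{\intr\call F_x}$ being nonempty, the right from $\call F_x$ being proper), so that $\tildee{\call F}_x=(-\intr\call F_x)\compl$ is nonempty and proper. The main obstacle I anticipate is not any individual computation but keeping the bookkeeping straight between operations performed in $\call J^2(X)$ and operations performed inside a single fiber; the whole reason \eqref{(T)} is assumed rather than mere closedness is that its middle clause licenses passing freely between the two, and one must invoke it at exactly the right moments in (7) and (8).
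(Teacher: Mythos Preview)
Your proposal is correct and follows essentially the same approach as the paper: items (4), (5), (8) are reduced to (2) and (7), item (6) comes from the universal identity $\tildee{\tildee{\call F}}=\overline{\intr\call F}$, and item (7) is handled by computing $\intr\tildee{\call F}=(-\call F)\compl$ fiberwise using all three clauses of \eqref{(T)}. The only minor differences are cosmetic: you derive (2) from (1) and (3) whereas the paper proves (2) directly, and for the reflexivity clauses in (7) you use a triple-dual trick ($\tildee{\tildee{\tildee{\call F}}}=\tildee{\call F}$ combined with the universal double-dual identity) while the paper writes out a direct chain of equalities; your added check in (8) that $\tildee{\call F}_x$ is nonempty and proper is a detail the paper leaves implicit.
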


\begin{proof}
	Property~{(1)} follows from \Cref{def:dual}. Properties~{(4)} and~{(5)} follow from~{(2)}, and property~{(8)} follows from properties~{(4)}, {(5)} and~{(7)}. 
	
	For property~{(2)}, note that $\call F_x + J \subset \call F_x$ implies that $\intr{\call F}_x + J$ is an open subset of $\call F$, hence $\intr{\call F}_x + J \subset \intr{\call F}_x$, which yields, taking the complements, $\tildee{\call F}_x \subset \tildee{\call F}_x -J$, as desired. 
	Similarly, since $\intr{\call F}_x + J$ is the interior of $\call F_x + J$, we have $\tildee{\call F_x + J} = \tildee{\call F}_x - J$, which is property~{(3)}.
	
	Note now that by \Cref{def:dual}
	\begin{equation} \label{bidual}
	\hspace{2.5pt}\tildee{\hspace{-2.5pt}\tildee {\call F}} = \Bigl(\intr\bigl((\intr\call F)\compl\bigr)\Bigr)\compl;
	\end{equation}
	since for any subset $\call S \subset \call J^2(X)$ one has $\intr{\call S} = {\barr{\call S\compl}\,}\compl$, equality~(\ref{bidual}) yields $\hspace{2.5pt}\tildee{\hspace{-2.5pt}\tildee {\call F}} = \barr{\intr{\call F}}$, thus proving property~{(6)}.
	Similarly, if $\call F$ satisfies \eqref{(T)'}, consider the following chain of equalities:
	\[
	\barr{\intr{\tildee{\call F}}} = \barr{{\barr{\tildee{\call F}\,{\vphantom{I}}\compl}\,}\compl} = \barr{{\barr{ - \intr{\call F}}\,}\compl} = \barr{ - \call F\compl} = (-\intr{\call F})\compl = \tildee{\call F};
	\]
	this shows the first and the last of the three conditions in \eqref{(T)}, since it also holds with $\call F_x$ instead of $\call F$. Furthermore, we also read in it that
	\[
	(\intr{\tildee{\call F}})_x = -( \call F\,\compl)_x =  - (\call F_x)\compl= \intr(\tildee{\call F}_x)
	\]
	and hence the second condition is satisfied, thus proving property~{(7)}.
\end{proof}

\begin{remark} 
	Property~{(6)} is the aforementioned \emph{reflexivity} of $\call F$,\sid{property!reflexivity} which together with property~{(8)} shows that the $\sim$ operation gives a true duality.  Note also that this implies that we can reverse all the implications in \Cref{propdual}.
\end{remark}

By making use of duality, we can reformulate the notions of being \Fd subharmonic and \Fd harmonic, which we record in the following observation.

\begin{remark}[{\Fd superharmonics by duality}] \sid{harmonic (\emph{or} \Fd harmonic)!super- (\emph{or} \Fd super-)!by duality}
	For $v\in \LSC(X)$, one can define $v$ to be \Fd superharmonic on $X$ if  $-v$ is $\tildee{\call F}$-subharmonic on $X$. This is equivalent to \eqref{FSuperx}. Indeed, \eqref{FSuperx} for $v$ is  
	\[
	\mbox{$(v(x), q, B) \notin \intr{\call F}_x$ \quad for each $(q,B) \in J^{2,-}_x v$,}
	\]
	which is equivalent to 
	\[
	\mbox{$(-v(x), p, A) \notin -\intr{\call F}_x$ \quad for each $(p,A) = (-q,-B) \in J^{2,+}_x (-v)$,}
	\]
	because $- J^{2,-}_x v = J^{2,+}_x (-v)$ (see \Cref{def:ucp}). This means that $v$ is \Fd superharmonic at $x$ if and only if $-v$ is $\tildee{\call F}$-subharmonic at $x$, because 
	\[
	(-v(x), p, A) \notin -\intr{\call F}_x \iff (-v(x), p, A) \in \tildee{\call F}_x,
	\]
	by the definition on the dual of $\call F$.
	
	Finally, this yields the following equivalent definition of \Fd harmonic functions (in the viscosity sense):\sid{harmonic (\emph{or} \Fd harmonic)!by duality} for $u\in C(X)$ 
	$$
	\mbox{\emph{$u$ is \Fd harmonic on $X$} \quad if and only if \quad $u\in \call F(X)$ and $-u\in\tildee{\call F}(X)$.}
	$$
\end{remark}

\section{Monotonicity}\label{sec:duality2}

We now turn to the notion of monotonicity.\sid{monotone!operator|seeonly{operator, monotone}}\sid{monotone!subequation|seeonly{subequation, monotone}}\sid{subequation (constraint set)!monotone (\emph{or} $\call M$-monotone)} This fundamental notion appears in various guises. It is a useful and unifying concept. One says that a subequation $\call F$ is {\em $\call M$-monotone} for some subset $\call M \subset \call J^2(X)$ if 
\begin{equation}\label{monotone}
\call F_x + \call M_x \subset \call F_x \quad \forall x \in X.
\end{equation}
For simplicity, we will restrict attention to (constant coefficient) {\em monotonicity cones}; that is, monotonicity sets $\call M$ for $\call F$ which have constant fibers which are closed cones with vertex at the origin. 

First and foremost, the properties \eqref{(P)'} and \eqref{(N)'} are monotonicity properties. As noted previously, property \eqref{(P)'} for subequations $\call F$ corresponds to {\em degenerate elliptic} operators $F$ and properties \eqref{(P)'} and \eqref{(N)'} together correspond to {\em proper elliptic} operators. They can be expressed as the single monotonicity property 
\begin{equation*}
\call F_x + \call M_0 \subset \call F_x   \quad \forall x \in X
\end{equation*}
where\syid{M0@$\call M_0$!the minimal monotonicity cone in $\call J^2$, namely $\call N \times \{0\} \times \call P$}
\begin{equation}\label{MMC}
\call M_0 \defeq  \call N \times \{0\} \times \call P \subset \call J^2 = \R \times \R^n \times \call S(n)
\end{equation}
with\syid{N@$\call N$!the negativity cone in $\R$, namely $\{r \in \R:\, r \leq 0\}$}\syid{P@$\call P$!the positivity cone in $\call S(n)$, namely $\{A \in \call S(n):\, A \geq 0\}$}
\begin{equation} \label{def:np}
\call N \defeq  \{ r \in \R : \ r \leq 0 \} \quad \text{and} \quad  \call P \defeq  \{ P \in \call S(n) : \ P \geq 0 \} .
\end{equation}
Hence $\call M_0$ will be referred to as the {\em minimal monotonicity cone} in $\call J^2$.\sid{monotonicity cone!minimal} However,  it is important to remember that $\call M_0 \subset \call J^2$ is {not} a subequation since it has empty interior, so that property \eqref{(T)'} fails. 

Monotonicity is also used to formulate {\em reductions} when certain jet variables are ``silent'' in the subequation constraint $ \call F$. See \cite[Section~2.2]{hpsurvey} for details, as well as the examples in \Cref{sec:cccc}.\sid{subequation (constraint set)!reduced}

Most importantly, monotonicity, when combined with duality and {\em fiberegularity} in the variable coefficient case (the natural continuity of the fiber map $x \mapsto \call F_x$), yields a very general, flexible and elegant geometrical approach to comparison when a subequation $\call F$ admits a constant coefficient monotonicity cone subequation $\call M$. This will be described in \Cref{chap:comp}.

\chapter{Basic tools in nonlinear potential theory} \label{chap:tools}

In this chapter we present various results that are included in the ``basic tool kit of viscosity solution techniques'' in \cite{chlp}: the Bad Test Jet \Cref{l:btj}, the Definitional Comparison \Cref{defcompa}, and \Cref{elemprop} which gathers various widely used constructions of \Fd subharmarmonic functions obtained by algebraic operations, optimization procedures and limits. 

\section{The Bad Test Jet and Definitional Comparison Lemmas}

We begin with the first useful tool which allows one to check the validity of subharmonicity at a point by a contradiction argument. More precisely, if $u$ fails to be subharmonic at a given point, then one must have the existence of a \emph{bad test jet} at that point, as stated in the following lemma. This criterion is essentially the contrapositive of the definition of viscosity subsolution, when one takes \emph{strict} upper contact quadratic functions as upper test functions (see \cite[Lemmas~2.8 and C.1]{chlp}). Nevertheless, for the benefit of the reader, we provide a brief proof based on \Cref{defn:FSH}. 

\begin{lem}[Bad Test Jet Lemma] \label{l:btj} \sid{Lemma!Bad Test Jet}
	Given $u \in \USC(X)$, $x \in X$ and $\call F_x \neq \emptyset$ closed, suppose $u$ is not \Fd subharmonic at $x$. Then there exists $\epsilon > 0$ and a $2$-jet $J \notin \call F_x$ such that the quadratic function $\phi_J$ with $J^2_x\phi_J = J$ is an upper test function for $u$ at $x$ in the following $\epsilon$-strict sense:
	\begin{equation} \label{btj:i}
	u(y) - \phi_J(y) \leq -\epsilon|y-x|^2 \quad \text{$\forall y$ near $x$ (with equality at $x$)}.
	\end{equation}
\end{lem}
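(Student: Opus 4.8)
The plan is to argue by contradiction on the failure of $\call F$-subharmonicity at $x$, translating that failure into the existence of a suitable $C^2$ upper test function that violates the fiber condition, and then to convert this test function into one which contacts $u$ in the stronger $\epsilon$-strict (paraboloidal) sense by subtracting a small quadratic penalization. First I would recall from \Cref{defn:FSH} that $u$ is $\call F$-subharmonic at $x$ precisely when $J^2_x\varphi \in \call F_x$ for \emph{every} $C^2$ upper test function $\varphi$ for $u$ at $x$. Hence the hypothesis that $u$ fails to be $\call F$-subharmonic at $x$ yields a $C^2$ function $\varphi$ which is an upper test function for $u$ at $x$ (so $u - \varphi \leq 0$ near $x$ with equality at $x$) but whose $2$-jet $J_0 \defeq J^2_x\varphi = (u(x), D\varphi(x), D^2\varphi(x))$ does not lie in $\call F_x$.

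The next step is to pass from the abstract $C^2$ test function $\varphi$ to a quadratic one. Using the second-order Taylor expansion of $\varphi$ at $x$, one has $\varphi(y) = \varphi(x) + \pair{D\varphi(x)}{y-x} + Q_{D^2\varphi(x)}(y-x) + o(|y-x|^2)$ as $y \to x$; writing $\phi_{J_0}$ for the unique quadratic function with $J^2_x\phi_{J_0} = J_0$, this gives $u(y) - \phi_{J_0}(y) \leq (u(y)-\varphi(y)) + o(|y-x|^2) \leq o(|y-x|^2)$ near $x$, with equality at $x$; thus $J_0 \in J_o(x,u)$ is a little-o upper contact jet (cf.\ \Cref{rem:UCJ}). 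Now I would perturb: for $\epsilon > 0$ set $J_\epsilon \defeq J_0 - 2\epsilon(0,0,I) = (u(x), D\varphi(x), D^2\varphi(x) - 2\epsilon I)$, with associated quadratic $\phi_{J_\epsilon}(y) = \phi_{J_0}(y) - \epsilon|y-x|^2$. The little-o estimate above then gives, for $y$ in a small enough (shrinking with $\epsilon$) punctured neighborhood of $x$,
\[
u(y) - \phi_{J_\epsilon}(y) = \big(u(y) - \phi_{J_0}(y)\big) + \epsilon|y-x|^2 \leq o(|y-x|^2) + \epsilon|y-x|^2,
\]
which is \emph{not} immediately $\leq -\epsilon|y-x|^2$. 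So instead I would use $-2\epsilon I$ to absorb the $o$-term: choose $\rho > 0$ so that $|u(y)-\phi_{J_0}(y)| \leq \epsilon|y-x|^2$ for $|y-x| < \rho$, and with $\phi_{J_\epsilon}(y) \defeq \phi_{J_0}(y) - 2\epsilon|y-x|^2$ (i.e.\ $J_\epsilon = J_0 - 4\epsilon(0,0,I)$, matrix part $D^2\varphi(x) - 4\epsilon I$) obtain $u(y) - \phi_{J_\epsilon}(y) \leq \epsilon|y-x|^2 - 2\epsilon|y-x|^2 = -\epsilon|y-x|^2$ near $x$, which is exactly \eqref{btj:i}.

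The last point to settle is that $J_\epsilon \notin \call F_x$ for all sufficiently small $\epsilon > 0$. Here I would invoke the closedness of the fiber $\call F_x$ in $\call J^2$ (which holds since $\call F$ is a primitive subequation, condition \eqref{(C)}/property (T); this is exactly what makes the Bad Test Jet Lemma work). Since $J_\epsilon \to J_0$ in $\call J^2$ as $\epsilon \dto 0$ and $J_0 \notin \call F_x$ with $\call F_x$ closed, there is $\epsilon_0 > 0$ such that $J_\epsilon \notin \call F_x$ for all $\epsilon \in (0,\epsilon_0]$. Fixing any such $\epsilon$ and setting $J \defeq J_\epsilon$, $\phi_J \defeq \phi_{J_\epsilon}$ completes the proof. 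I expect the main (only) subtlety to be the bookkeeping in the perturbation step — making sure the quadratic penalization is large enough to dominate the little-o remainder while keeping the perturbed jet outside the closed fiber; everything else is the direct contrapositive of \Cref{defn:FSH} combined with a Taylor expansion. I would also remark parenthetically that, when $\call F$ is merely a primitive subequation and one wants the conclusion for the space $J_s$ of \emph{strict} (rather than $\epsilon$-strict) upper test jets, the same argument applies verbatim with any fixed $\epsilon > 0$, which is why the lemma is often quoted in that weaker form (cf.\ \cite[Lemma 2.8]{chlp}).
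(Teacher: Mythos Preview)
Your overall strategy is exactly the paper's: extract a $C^2$ upper test function $\varphi$ with $J_0 = J^2_x\varphi \notin \call F_x$, pass to the quadratic $\phi_{J_0}$ via Taylor, perturb the matrix part, and use that $\call F_x$ is closed to keep the perturbed jet outside the fiber. However, your perturbation goes in the \emph{wrong direction}, and the displayed computation is incorrect as written.

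With your definition $\phi_{J_\epsilon}(y) = \phi_{J_0}(y) - 2\epsilon|y-x|^2$ one has
\[
u(y) - \phi_{J_\epsilon}(y) = \big(u(y)-\phi_{J_0}(y)\big) \;+\; 2\epsilon|y-x|^2 \ \leq\ \epsilon|y-x|^2 + 2\epsilon|y-x|^2 = 3\epsilon|y-x|^2,
\]
not $-\epsilon|y-x|^2$; the sign in front of $2\epsilon|y-x|^2$ in your display is wrong. The point is that to force $u-\phi_J$ \emph{more negative} you must make $\phi_J$ \emph{larger}, i.e.\ \emph{add} to the matrix part. Setting instead $J_\epsilon \defeq J_0 + (0,0,4\epsilon I)$, so that $\phi_{J_\epsilon}(y) = \phi_{J_0}(y) + 2\epsilon|y-x|^2$, gives
\[
u(y) - \phi_{J_\epsilon}(y) = \big(u(y)-\phi_{J_0}(y)\big) - 2\epsilon|y-x|^2 \ \leq\ \epsilon|y-x|^2 - 2\epsilon|y-x|^2 = -\epsilon|y-x|^2,
\]
and $J_\epsilon \to J_0 \notin \call F_x$ with $\call F_x$ closed yields $J_\epsilon \notin \call F_x$ for small $\epsilon$, exactly as you argue in your last paragraph. (Minor aside: you only have the one-sided bound $u(y)-\phi_{J_0}(y) \leq \epsilon|y-x|^2$, not the absolute value you wrote; this is all that is needed.) With this sign corrected your proof is complete and coincides with the paper's argument, which perturbs by $+(0,0,2\epsilon I)$ for the same reason.
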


\begin{proof}
	If $u$ fails to be \Fd subharmonic at $x$, then according to \Cref{defn:FSH} and \Cref{rem:equiv_FSH} there exists a quadratic upper test function $\varphi_{J'}$ for $u$ at $x$ such that $J' \defeq J^2_x \varphi_{J'} \not\in \call F_x$. Defining $J \defeq J'+(0,0, 2\epsilon I)$, this means that $\phi_{J} \defeq \phi_{J'} + \epsilon|\cdot - \,x|^2$ satisfies \eqref{btj:i}.
	The conclusion follows by noting that, since the complement of $\call F_{x}$ in $\call J^2$ is open, we have that $J \notin \call F_{x}$ for $\epsilon$ small.
\end{proof}

The second tool is a \emph{comparison principle} whose validity characterizes the \Fd subharmonic functions for a given subequation $\call F$. We begin by recalling two equivalent forms of potential theoretic comparison which will play an important role in all of our proofs of comparison principles.

\begin{definition}
\sid{comparison (principle)}
	Given a subequation $\call F$, we say that \emph{comparison holds for $\call F$ on a domain $\Omega \subset \R^n$} if
	\begin{equation} \tag{CP} \label{(CP)}
	u \leq w\ \text{on $\de\Omega$} \implies u\leq w\ \text{on $\Omega$}
	\end{equation}
	for all $u\in \USC(\barr \Omega)$, $w \in \LSC(\barr \Omega)$ which are \Fd subharmonic, \Fd superharmonic on $\Omega$, respectively. 
\end{definition}
Making use of the duality reformulation with $v \defeq -w$, comparison \eqref{(CP)} is equivalent to
\begin{equation} \tag{CP$'$} \label{CP'}
u +v\leq 0\ \text{on $\de\Omega$} \implies u+v\leq 0\ \text{on $\Omega$}
\end{equation}
for all $u,v \in \USC(\barr\Omega)$ which are, respectively, \Fd , $\tildee{\call F}$-subharmonic on $\Omega$.
This form \eqref{CP'} is in turn is equivalent to the following \emph{zero maximum principle}:\sid{zero maximum principle (ZMP)|seeonly{maximum principle, zero}}\sid{maximum!principle!zero}
\begin{equation} \tag{ZMP} \label{(ZMP)}
z \leq 0\ \text{on $\de\Omega$} \implies z\leq 0\ \text{on $\Omega$}
\end{equation}
for all $z \in \USC(\barr \Omega) \cap \bigl(\call F(\Omega) + \tildee{\call F}(\Omega)\bigr)$.

We are now ready for the lemma, which states that comparison holds if the function $z$ in \eqref{(ZMP)} is the sum of a \Fd subharmonic and a $C^2$ strict $\tildee{\call F}$-subharmonic.\sid{Lemma!Definitional Comparison}\sid{comparison (principle)!definitional|seeonly{Lemma, Definitional Comparison}}\sid{definitional comparison|seeonly{Lemma, Definitional Comparison}}
It is called \emph{definitional} comparison because it relies only upon the ``good'' definitions we gave for subequations $\call F$ and \Fd subharmonic functions. It was first stated and proven in \cite[Lemma~3.14]{chlp} for constant coefficient subequations; we reproduce here its proof to highlight that it holds in the nonconstant coefficient case, too.

\begin{lem}[Definitional Comparison] \label{defcompa}
	Let $\call F$ be a subequation and $u \in \USC(X)$.
	\begin{enumerate}[label=\it(\roman*)]
		\item	If $u$ is \Fd subharmonic on $X$, then the following form of the comparison principle holds for each bounded domain $\Omega \ssubset X$:
		\[
		u+v \leq 0\ \text{on}\ \de \Omega \implies u+v \leq 0\ \text{on}\  \Omega
		\]
		whenever $v \in \USC(\barr \Omega) \cap C^2(\Omega)$ is strictly $\tildee{\call F}$-subharmonic on $\Omega$.
		\item	Conversely, suppose that for each $x\in X$ there is a neighborhood $\Omega \ssubset X$ of $x$ where the above form of comparison holds. Then $u$ is \Fd subharmonic on $X$. 
	\end{enumerate}
\end{lem}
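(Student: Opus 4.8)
The plan is to prove the two parts essentially independently, with part (i) being the substantive one and part (ii) a localization-and-contrapositive argument built on the Bad Test Jet Lemma. For part (i), suppose $u$ is $\call F$-subharmonic on $X$, let $\Omega \ssubset X$ be a bounded domain, let $v \in \USC(\barr\Omega) \cap C^2(\Omega)$ be strictly $\tildee{\call F}$-subharmonic on $\Omega$, and assume $u + v \leq 0$ on $\de\Omega$. Seeking a contradiction, suppose $\sup_{\barr\Omega}(u+v) =: m > 0$. Since $u + v \in \USC(\barr\Omega)$ and $\barr\Omega$ is compact, by Proposition~\ref{prop:usc}(c) the supremum is attained at some $x_0 \in \barr\Omega$; because $u+v \leq 0$ on $\de\Omega$ and $m > 0$, necessarily $x_0 \in \Omega$. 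First I would slide: set $\tilde u \defeq u - m$, which is still $\call F$-subharmonic by property \eqref{(N)} (the sliding property noted in the excerpt). Then $\tilde u + v$ has an interior maximum of $0$ at $x_0$, i.e.\ $\tilde u \leq -v$ near $x_0$ with equality at $x_0$, so $-v$ is a $C^2$ upper test function for $\tilde u$ at $x_0$. Since $\tilde u$ is $\call F$-subharmonic at $x_0$, we get $J^2_{x_0}(-v) \in \call F_{x_0}$. On the other hand, $v$ being \emph{strictly} $\tildee{\call F}$-subharmonic at $x_0$ means $J^2_{x_0} v \in \intr{\tildee{\call F}}_{x_0}$, and since (as in the $\call F$-superharmonics-by-duality remark) $J^2_{x_0}(-v) = -J^2_{x_0} v$, this gives $-J^2_{x_0}v \in -\intr{\tildee{\call F}}_{x_0} = \big((\intr\tildee{\call F})\compl\big)\compl{}^{\mathsf{c}}$... more cleanly: $\tildee{\call F} = (-\intr\call F)\compl$, so $\intr\tildee{\call F} = \big(\,\barr{-\intr\call F}\,\big)\compl \subset (-\intr\call F)\compl$, whence $J^2_{x_0}v \in (-\intr\call F)_{x_0}\compl$, i.e.\ $-J^2_{x_0}v \notin \intr\call F_{x_0}$, and by property \eqref{(T)} (so that $\intr\call F_{x_0} = \intr(\call F_{x_0})$) combined with... actually the cleanest route is: $v$ strictly $\tildee{\call F}$-subharmonic at $x_0$ forces $-v$ to \emph{not} satisfy $J^2_{x_0}(-v) \in \call F_{x_0}$. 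I would spell this out by noting $\intr\tildee{\call F}_{x_0} \subset \tildee{\call F}_{x_0} = (-\intr\call F_{x_0})\compl$, so $-J^2_{x_0}v \in (\intr\call F_{x_0})\compl \subset (\call F_{x_0})\compl$ once we know $\intr\call F_{x_0}$ meets... hmm — the genuinely clean statement is that $J \in \intr\tildee{\call F}_{x_0} \iff -J \notin \call F_{x_0}$, which follows because $\intr\tildee{\call F}_{x_0} = \intr\big((-\intr\call F_{x_0})\compl\big) = \big(\barr{-\intr\call F_{x_0}}\big)\compl = (-\barr{\intr\call F_{x_0}})\compl = (-\call F_{x_0})\compl$, using \eqref{(T)} in the form $\call F_{x_0} = \barr{\intr\call F_{x_0}}$. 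So $-J^2_{x_0}v = J^2_{x_0}(-v) \notin \call F_{x_0}$, contradicting what subharmonicity of $\tilde u$ gave us. This contradiction shows $m \leq 0$, proving (i).

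For part (ii), I would argue by contrapositive: suppose $u$ is \emph{not} $\call F$-subharmonic at some $x \in X$. By hypothesis there is a neighborhood $\Omega \ssubset X$ of $x$ on which the comparison form of (i) holds; shrinking $\Omega$ we may take it to be a small ball $B_\rho(x)$. By the Bad Test Jet Lemma~\ref{l:btj}, there exist $\epsilon > 0$ and a $2$-jet $J \notin \call F_x$ whose associated quadratic $\phi_J$ satisfies $u(y) - \phi_J(y) \leq -\epsilon|y-x|^2$ for $y$ near $x$, with equality at $x$. I would then perturb $\phi_J$ to produce a \emph{strictly} $\tildee{\call F}$-subharmonic test competitor. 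Set $v \defeq -\phi_J + c$ for a constant $c$ chosen so that $u + v \leq 0$ on $\de B_\rho(x)$ but $u(x) + v(x) > 0$; this is possible: on $\de B_\rho(x)$ the quadratic estimate gives $u + v = u - \phi_J + c \leq -\epsilon\rho^2 + c$ so take $c < \epsilon\rho^2$, while at $x$ we get $u(x) + v(x) = c > 0$ provided $c > 0$, so any $c \in (0,\epsilon\rho^2)$ works. The remaining point is that $v$ must be strictly $\tildee{\call F}$-subharmonic on $B_\rho(x)$, which I would get from $J \notin \call F_x$: since $\call F_x$ is closed (property \eqref{(T)} implies closed fibers) and $J\notin\call F_x$, for $y$ near $x$ we have $J^2_y(-v) = J^2_y \phi_J \to J \notin \call F_x$ as $y \to x$, but this only gives information at $x$ itself — I need $J^2_x v \in \intr\tildee{\call F}_x$, which by the identity above is equivalent to $-J^2_x v = J \notin \call F_x$: true. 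For nearby $y$, since $\intr\tildee{\call F}$ is open and $(x, J^2_x v) \in \intr\tildee{\call F}$, continuity of $y \mapsto J^2_y v$ (it is $C^2$) gives $J^2_y v \in \intr\tildee{\call F}_y$ for all $y$ in a possibly smaller ball, so after shrinking $\rho$ once more, $v$ is strictly $\tildee{\call F}$-subharmonic on $B_\rho(x)$ (here I am using, as in Remark~\ref{exquadsub}, that a $C^2$ function with a strict jet inclusion at a point keeps that inclusion on a neighborhood). Now $v \in \USC(\barr{B_\rho(x)}) \cap C^2(B_\rho(x))$ is strictly $\tildee{\call F}$-subharmonic with $u + v \leq 0$ on $\de B_\rho(x)$, yet $u(x) + v(x) > 0$ violates the comparison conclusion — so the comparison form of (i) fails on $B_\rho(x)$, contradicting the hypothesis. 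Hence $u$ is $\call F$-subharmonic at every $x$, i.e.\ on $X$.

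The main obstacle I anticipate is getting the duality bookkeeping exactly right — specifically the clean equivalence $J \in \intr\tildee{\call F}_x \iff -J \notin \call F_x$, which relies essentially on property \eqref{(T)} (so that $\call F_x = \barr{\intr\call F_x}$ and the interior/closure operations in $\call J^2$ agree fiberwise, as recorded in Proposition~\ref{propdual}(7)); without \eqref{(T)} one only gets $J \in \intr\tildee{\call F}_x \Rightarrow -J \notin \call F_x$, which is in fact the direction actually used in both parts, so even a reader worried about \eqref{(T)} can be reassured. A secondary minor point is the role of \eqref{(N)}: it is used precisely once, in part (i), to ensure the sliding $u \mapsto u - m$ preserves subharmonicity so that $-v$ becomes a legitimate upper test function at the interior maximum; I would flag this explicitly since it is the reason the negativity axiom enters the name ``definitional.''
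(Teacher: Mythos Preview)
Your proof is correct and follows essentially the same route as the paper: part (i) via an interior positive maximum and the duality identity $\intr\tildee{\call F}_{x_0} = (-\call F_{x_0})\compl$ (the paper tests $u$ with $-v+m$, you slide $u$ by $m$ and test with $-v$ --- these are trivially equivalent uses of \eqref{(N)}), and part (ii) via the Bad Test Jet Lemma and a constant-shifted $-\phi_J$. One small slip: in part (ii) you write $-J^2_x v = J$, but since $v = -\phi_J + c$ you actually get $-J^2_x v = J - (c,0,0)$; this is harmless because $\call F_x\compl$ is open, so for $c$ small enough (guaranteed once you shrink $\rho$, as you do anyway) you still have $J - (c,0,0) \notin \call F_x$ --- the paper handles exactly this point by taking $c = \epsilon\rho^2$ and shrinking $\rho$.
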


\begin{proof}
	If the form of comparison in {(i)} fails for some domain $\Omega \ssubset X$ and some regular strict $\tildee{\call F}$-subharmonic function $v$, then $u+v \in \USC(\barr \Omega)$ will have a positive maximum value $m>0$ at an interior point $x_0 \in \Omega$ and hence $\phi\defeq -v+m$ is $C^2$ near $x_0$ and satisfies $u-\phi \leq 0$ near $x_0$, with equality at $x_0$.
	Since $u$ is \Fd subharmonic at $x_0$, this implies that
	\[
	J^2_{x_0}(-v+m) = - J^2_{x_0}v + (m, 0,0) \in \call F_{x_0},
	\]
	which contradicts the property \eqref{(N)'} for $\call F$ since $m>0$ and $J^2_{x_0}v \in \intr{\tildee{\call F}}_{\!x_0} = (-\call F_{x_0})\compl$.
	
	Conversely, suppose that $u$ fails to be \Fd subharmonic at some $x_0\in X$. By the Bad Test Jet \Cref{l:btj}, there exist $\rho, \epsilon > 0$ and an upper contact jet $(p,A) \in J^{2,+}_{x_0} u$ such that $J=(u(x_0), p, A) \notin \call F_{x_0}$ and
	\begin{equation} \label{btj}
	u(x) - \phi_J(x) \leq -\epsilon|x-x_0|^2 \quad \text{on $B_\rho(x_0)$, \ with equality at $x_0$},
	\end{equation}
	where $\phi_J$ is the upper contact quadratic function for $u$ at $x_0$ with $ J^2_{x_0}\phi_J = J$. Consequently, the function $-\phi_J$ is smooth and strictly $\tildee{\call F}$-subharmonic at $x_0$ and therefore, by choosing $\rho$ sufficiently small, the function $\tilde v \defeq -\phi_J + \epsilon \rho^2$ will be smooth and strictly $\tildee{\call F}$-subharmonic on $B_\rho(x_0)$.\footnote{Notice that one can make such a choice since there exist $r,R>0$ such that
		\[
		\call B_R\big(J^2_{x}(-\phi_J)\big) \subset \tildee{\call F}_{x} \quad \forall x \in B_r(x_0),
		\]
		and thus $J^2_{x}(-\phi_J+\epsilon\rho^2) \in \intr \tildee{\call F}_x$ for each $x \in B_\rho(x_0)$, provided that $\rho < \min \big\{ r, \sqrt{R/\epsilon} \big\}$.} Since reducing $\rho$ preserves the validity of (\ref{btj}), the form of comparison in {(i)} fails for $\tilde v$ on $B_\rho(x_0)$ since
	\[
	u+\tilde v = 0 \ \text{on $\de B_\rho(x_0)$} \quad \text{but}\quad u(x_0)+\tilde v(x_0) = \epsilon \rho^2 > 0. \qedhere
	\]
\end{proof}

\begin{remark}[Applying definitional comparison] 
	Sometimes it is useful to prove the contrapositive of the form of comparison in part {(i)} of \Cref{defcompa} in order to conclude \Fd subharmonicity. That is to say, in order to show by {(ii)} that $u$ is  \Fd subharmonic on $X$ one proves that, for each $x \in X$ there is a neighborhood $\Omega \ssubset X$ of $x$ where
	\begin{equation} \label{appdefcompa}
	(u+v)(x_0) > 0 \ \text{for some $x_0 \in \Omega$} \ \implies \ (u+v)(y_0) > 0 \ \text{for some $y_0 \in \de\Omega$}
	\end{equation}
	for every $v \in \USC(\barr \Omega) \cap C^2(\Omega)$ which is strictly $\tildee{\call F}$-subharmonic on $\Omega$. Conversely, one can also infer that the implication (\ref{appdefcompa}) holds whenever one knows that $u$ is  \Fd subharmonic on $X$. In situations where we are interested in proving the  \Fd subharmonicity of a function which is somehow related to a given  \Fd subharmonic, this helps to close the circle (for example, see the proof of upcoming \Cref{elemprop}).
\end{remark}

\section{Fundamental properties of subharmonics}

Finally, we give a collection of tools which represents elementary properties shared by functions in $\call F(X)$, the set of \Fd subharmonics on $X$.\sid{property!elementary!of \Fd subharmonics}\sid{property!fundamental (of subharmonics)|seeonly{elementary}} They are to be found in \cite[Section~4]{hldir09} for pure second-order subequations, in \cite[Theorem 2.6]{hldir} for subequations on Riemannian manifolds, in \cite[Proposition D.1]{chlp} for constant-coefficient subequations. We place it here because by invoking the definitional comparison lemma one can perform most of the proofs along the lines of those in \cite{hldir09}; indeed, we are going to use the definitional comparison in order to make up for the lack, for arbitrary subequations, of a result like \cite[Lemma 4.6]{hldir09}.

\begin{prop}[Elementary properties of $\call F(X)$] \label{elemprop}
	Let $X \subset \R^n$ be open, and let $\call F$ be a subequation on $X$. Then the following properties hold:
	\begin{enumerate}[left=\parindent, align=left, itemsep=1.1pt, label=(\roman*)] 
		\item {\bf local}: given $u \in \USC(X)$,\sid{property!local}
		\[
		\mbox{$u$ is locally \Fd subharmonic $\iff$ $u \in \call F(X)$;}
		\]
		\item {\bf maximum}:	$u,v \in \call F(X)$ $\implies$ $\max\{u,v\}\in \call F(X)$;\sid{property!maximum} \sid{maximum!property|seeonly{property, maximum}}
		\item {\bf coherence}: if $u \in \USC(X)$ is twice differentiable at $x_0\in X$, then\sid{property!coherence}
		\[
		\text{$u$ is \Fd subharmonic at $x_0$}\ \iff\ \text{$J^2_{x_0} u \in \call F_{x_0}$;}
		\]
		\item {\bf sliding}: $u\in \call F(X)$ $\implies$ $u-m \in \call F(X)$ for any $m >0$;\sid{property!sliding}
		\item {\bf decreasing sequences}: given a (pointwise) decreasing sequence $\{u_k\}_{k\in \mathbb{N}} \subset \call F(X)$, one has $ {\lim_{k\to\infty} u_k \in \call F(X)}$;\sid{property!decreasing sequence(s)}
		\item {\bf uniform limits}: if $\{u_k\}_{k\in \mathbb{N}} \subset \call F(X)$  converges to $u$ locally uniformly on $X$, then $u \in \call F(X)$;\sid{property!uniform limit(s)}
		\item {\bf families locally bounded above}: if $\scr F \subset \call F(X)$ is a family of functions which are locally uniformly bounded above,\sid{property!families locally bounded above} and $u \defeq \sup_{w \in \scr F} w$ is the associated Perron function, then $u^* \in \call F(X)$, where $u^*$ denotes the upper semicontinuous envelope of $u$.\footnote{Recall that the \emph{upper semicontinuous envelope} of a function $g$ is defined as the function
			\[
			g^\ast (x) \defeq \lim_{r\dto 0}\sup_{y \in B_r(x)} g(y).
			\]
			It is immediate to see that the \emph{upper semicontinuous envelope operator} ${}^* \colon g \mapsto g^*$ is the identity on the set of all upper semicontinuous functions. Also, we called \emph{Perron function} the upper envelope of the family $\scr F$, since $\scr F$ is a family of subharmonics.}
	\end{enumerate}
	Furthermore, if $\call F$ has constant coefficients, the following property also holds:
	\begin{enumerate}[resume, left=\parindent, align=left, itemsep=1.1pt, label=(\roman*)]
		\item {\bf translation}:\sid{property!translation} $u \in \call F(X)$ $\iff$ $u_y\defeq u(\,\cdot - y) \in \call F(X + y)$, for any $y \in \R^n$.
	\end{enumerate}
\end{prop}

\begin{proof}
	Property {(i)} follows immediately from the local nature of the notion of \Fd subharmonicity. 
	For property (ii), let $w\defeq \max\{u,v\}$ and note that if $(p,A) \in J^{2,+}_x w$, then either $(p,A) \in J^{2,+}_x u$ or  $(p,A) \in J^{2,+}_x v$ since $w(x)$ is either $u(x)$ or $v(x)$ and, in general, $u(y), v(y) \leq w(y)$; therefore, in either case, $(w(x),p,A) \in \call F_x$.
	Property (iii) follows from the positivity condition \eqref{(P)'}, thanks to the elementary fact that, for $u$ which is at least twice differentiable at $x$, if $(p,A)$ is an upper contact jet for $u$ at $x$, then $(p,A) = (Du(x), D^2u(x) + P)$ for some $P\geq 0$.
	Property (iv) follows immediately from the negativity condition \eqref{(N)'}.
	
	To prove property (v), we are going to use the {definitional comparison}. Since the decreasing limit $u$ of $\{u_k\}_{k\in \mathbb N} \subset \USC(X)$ belongs to $\USC(X)$, by \Cref{defcompa} it suffices to show that for each $\Omega \ssubset X$, one has $u+v \leq 0$ on $\Omega$ for each $v \in \USC(\barr\Omega) \cap C^2(\Omega)$ which is strictly $\tildee{\call F}$-subharmonic on $\Omega$ with $u+v \leq 0$ on $\de\Omega$.
	For each $\epsilon >0$ fixed, consider the sets
	\[
	\mathfrak E_k \defeq (u_k+v)^{-1}([\epsilon, + \infty)) \cap \de\Omega.
	\]
	By the upper semicontinuity of each $u_k$ (and thus of each $u_k+v$), $\mathfrak E_k$ are closed subsets of the compact set $\de \Omega$ and hence each $\mathfrak E_k$ is compact; since $\{u_k\}_{k \in \mathbb N}$ is decreasing, they are nested (that is, $\mathfrak E_{k+1} \subset \mathfrak E_k$ for all $k \in \mathbb N$), and since $u+v \leq 0$ on $\de \Omega$, they have empty intersection, which means that
	\[
	\bigcap_{k\in\mathbb{N}} \mathfrak E_k = (u+v)^{-1}([\epsilon, +\infty)) \cap \de \Omega = \emptyset.
	\]
	Therefore, by the well-known property of nested families of compact sets, $\mathfrak E_k$ must be empty for all large $k$. This means that $u_k+ v < \epsilon$ on $\de \Omega$ for $k$ large and since $v - \epsilon$ is still smooth and a strict $\tildee{\call F}$-subharmonic (by the sliding property (iv)), the definitional comparison lemma yields $u_k + v \leq \epsilon$ on $ \Omega$. Hence $u+v \leq u_k + v \leq \epsilon$ on $\Omega$, for every $\epsilon >0$. Taking the limit $\epsilon \dto 0$ gives $u+v \leq 0$ on $\Omega$ and thus $u\in \call F(X)$, again by definitional comparison.
	
	As for property~(vi), note that if $u_k$ converges to $u$ uniformly on the compact set $\barr\Omega$ and if we suppose $u+v \leq 0$ on $\de \Omega$ as above, then, for any $\epsilon > 0$ fixed, $u_k + v < \epsilon$ on $\de\Omega$ for $k$ large. The definitional comparison lemma yields $u_k + v \leq \epsilon$ on $\Omega$, hence $u+v = (u-u_k) + (u_k +v) < 2\epsilon$ on $\Omega$ for $k$ large, where one exploits the uniform convergence to say that $u-u_k < \epsilon$ on $\Omega$ if $k$ is large. Taking the limit $\epsilon \dto 0$ gives $u+v \leq 0$ on $\Omega$ and thus, by the converse implication of the definitional comparison, $u \in \call F(X)$.
	
	Property~(vii) is also proved by using the definitional comparison. If $u^\ast +v \leq 0$ on $\de \Omega$, then $w+ v \leq 0$ on $\de \Omega$ for all $w \in \scr F$; then, by the implication {(i)} of the Definitional Comparison \Cref{defcompa}, $w+ v \leq 0$ on $\Omega$ for all $w \in \scr F$ and thus $u+v \leq 0$ on $\Omega$. Since $v$ is smooth and since the upper semicontinuous envelope operator preserves inequalities, we conclude that $u^* + v = (u+v)^* \leq 0$ on $\Omega$ and the thesis follows from the implication {(ii)} of \Cref{defcompa}.
	
	Finally, we observe that property (viii) holds only in a constant coefficient context, as its validity for every $y\in \R^n$ is actually equivalent to the fiber $\call F_x$ being independent of the point $x\in X$. Indeed, since by the respective definitions $u_y(x+y) = u(x)$ and  $J^{2,+}_x u= J^{2,+}_{x+y} u_y$, one has, for $z \in X+y$, that $J^{2,+}_z u_y \subset \call F_{z-y}$, and thus property (viii) holds if and only if $\call F_{z-y} = \call F_z$.
\end{proof}

As a final remark, the coherence property {(iii)} says that an upper semicontinuous function which is twice differentiable and classically \Fd subharmonic on $X$ (that is, it satisfies the condition~(\ref{coherence} for all $x\in X$), it is \Fd subharmonic in the viscosity sense. One might hope to have the same conclusion even if~(\ref{coherence}) is satisfied almost everywhere (with respect to the Lebesgue measure).
This is true if $u$ is locally semiconvex and is known as the {\em Almost Everywhere Theorem} from \cite{hlae}. This deep result, which was first established in the pure second-order case in \cite[Corollary 7.5]{hldir09}, will be discussed in the next chapter.

\chapter{Semiconvex functions and subharmonics}\label{chap:SCSH}

In this chapter we treat the main theme of this work on the efficient use of semiconvex functions in general potential theories. We begin with the Almost Everywhere Theorem which says that in the potential theory defined by a given subequation constraint set $\call F$,  a locally semiconvex function is \Fd subharmonic on an open set if it is \Fd subharmonic on a set of full (Lebesgue) measure. Moreover, by Alexandrov's Theorem and coherence, \Fd subharmonicity reduces to a classical statement almost everywhere. Next we discuss the Subharmonic Addition Theorem which gives conditions under which the algebraic sum formula of jet addition implies the functional analytic sum formula of subharmonic addition for the associated subharmonics. Combined with monotonicity and duality this leads to a robust method for establishing comparison principles, which will be discussed in the following chapter.

\section{The Almost Everywhere Theorem}

The first essential result is the following theorem.

\begin{thm}[Almost Everywhere Theorem]\label{aet} \sid{Theorem!Almost Everywhere}
	Suppose that $\call F$ is a primitive subequation on an open set $X\subset \R^n$ and
	$u \colon X \to \R$ is a locally semiconvex function. Then
	\[
	J_x^2u \in \call F_x\ \text{for almost all $x\in X$}\quad \iff\quad u\in \call F(X).
	\]
\end{thm}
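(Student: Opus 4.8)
The plan is to prove the two implications separately, the forward one being immediate and the reverse one being the substance. First I would dispose of the trivial direction: if $u\in\call F(X)$ then $u$ is $\call F$-subharmonic at every $x$; in particular, wherever $u$ is twice differentiable (which is almost everywhere by Alexandrov's \Cref{aleks:qc}), the coherence property (Remark~\ref{rem:coherence}, or \Cref{elemprop}(iii)) gives $J^2_xu\in\call F_x$. So it remains to show that $J^2_xu\in\call F_x$ for a.e.\ $x$ forces $u\in\call F(X)$.

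The core of the argument is local and uses duality together with the hard-analysis results of Part~I. By \Cref{elemprop}(i) it suffices to prove $\call F$-subharmonicity in a neighborhood of each point, so I may assume $u$ is $\lambda$-quasi-convex on a fixed ball $B$. I would argue by contradiction using the Bad Test Jet \Cref{l:btj}: if $u$ fails to be $\call F$-subharmonic at some $x_0$, there exist $\epsilon>0$ and a jet $J=(u(x_0),p,A)\notin\call F_{x_0}$ such that $(p,A)$ is an $\epsilon$-strict upper contact jet for $u$ at $x_0$. Since the complement of $\call F_{x_0}$ in $\R\times\R^n\times\call S(n)$ is open, one can enlarge the matrix slightly and still stay outside $\call F_{x_0}$; more to the point, the key is to apply the Jensen--S{\l}odkowski Theorem~\ref{jenslod} (or the {\small\sf PUSC\;of\;SD} half of the Upper Contact Jet \Cref{ucjt}, which is the form tailored to this use). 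Taking $E$ to be the full-measure set on which $J^2_xu\in\call F_x$ \emph{and} $u$ is twice differentiable, \Cref{pusc} produces a sequence $x_j\to x_0$ with $x_j\in E\cap\Diff^2u$, $Du(x_j)\to p$, and $D^2u(x_j)\to\bar A\leq A$. For each such $x_j$, coherence gives $J^2_{x_j}u=(u(x_j),Du(x_j),D^2u(x_j))\in\call F_{x_j}$. Now I pass to the limit: $u(x_j)\to u(x_0)$ (continuity of quasi-convex functions), $Du(x_j)\to p$, $D^2u(x_j)\to\bar A$, and since $\call F$ is closed (property (C) of a primitive subequation), the limit jet $(u(x_0),p,\bar A)$ lies in $\call F_{x_0}$. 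Finally, positivity (P) and $\bar A\leq A$ give $(u(x_0),p,A)\in\call F_{x_0}$, i.e.\ $J\in\call F_{x_0}$ — contradicting the choice of the bad test jet. Hence $u$ is $\call F$-subharmonic at $x_0$, and since $x_0$ was arbitrary, $u\in\call F(X)$.

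I expect the main obstacle to be purely organizational rather than conceptual: one must make sure \Cref{pusc} is applicable, which requires the contact jet $(p,A)$ at $x_0$ to be honestly an upper contact jet (it is, being $\epsilon$-strict, hence in particular in $J^{2,+}_{x_0}u$), and requires the chosen set $E$ to have full measure near $x_0$ — which holds because the a.e.\ hypothesis and Alexandrov's theorem each cut out a null set. A secondary point of care is that the positivity needed at the end is the version with $P\geq 0$ (not merely $P>0$); this follows from the closedness of the fibers by approximating $\bar A$ from above by $\bar A+\delta I$ with $\delta\dto 0$, or directly from property (P) as stated for subequations. One should also note that the continuity of $u$ used in passing $u(x_j)\to u(x_0)$ is guaranteed by \Cref{thm:convex_Lip}(b) applied to $u+\frac\lambda2|\cdot|^2$. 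With these checks in place, the proof is a short chain of limit-passage arguments anchored on \Cref{jenslod} and the closedness and monotonicity axioms of $\call F$.
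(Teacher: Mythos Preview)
Your proof is correct and takes essentially the same approach as the paper: both rest on \Cref{pusc} (the {\small\sf PUSC\;of\;SD} lemma), positivity~\eqref{(P)}, and closedness~\eqref{(C)} to push classical jets along a full-measure set into the viscosity jet at an arbitrary point. The paper packages this via the set-theoretic inclusions of Remarks~\ref{subinclusion} and~\ref{ae?} (namely $\call J^{2,+}u\subset\overline{\call J(u,E)}\subset\call F$), whereas you unpack the same argument into a contradiction using the Bad Test Jet \Cref{l:btj} followed by an explicit limit passage; the paper itself notes in \Cref{rem:aet} that this more explicit variant is the one found in \cite{hlae} and \cite[Lemma~2.10]{chlp}.
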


\begin{proof}
	By \Cref{ae?} with $E \defeq \{x \in X:\,  J^2_xu \in \call F_x\}$, we know that
	\[
	\call J^{2,+}u \subset \barr{\call J(u,E)},
	\]
	where $\call J^{2,+}u$ and $\call J(u,E)$ are defined in \eqref{J2+w} and \eqref{JwE}, respectively. 
	By property \eqref{(P)} one has $\call J(u,E) \subset \call F$ and by property \eqref{(C)} also $\barr{\call J(u,E)} \subset \call F$. Hence $u \in \call F(X)$ by \Cref{subinclusion}.
	
	Conversely, for every $P>0$, we know that $(Du(x), D^2u(x) + P)$ is an upper contact jet for $u$ at $x \in \Diff^2(u)$. Therefore, in particular, $J^2_x u + (0,0,\epsilon I) \in \call F_x$ for all $\epsilon >0$. By taking $\epsilon \dto 0$ and using the fact that $\call F$ is closed, we get $J_x^2u \in \call F_x$ for all $x\in \Diff^2 u$ and hence the conclusion follows from Alexandrov's \Cref{aleks:qc}.
\end{proof}

Before moving on to the next fundamental result, a technical remark is in order.

\begin{remark}[on the proof of \Cref{aet}]
The proof above relies heavily on Remarks \ref{subinclusion} and \ref{ae?}, which, in some sense, ``hide the hard analysis'' of the combined use of the Jensen--S{\l}odkowski \Cref{jenslod} and of Alexandrov's \Cref{aleks:qc}. For somewhat more explicit proofs one can consult \cite{hlae}, or \cite[Lemma 2.10]{chlp} where the Bad Test Jet Lemma is also used.
\end{remark}

\section{The Subharmonic Addition Theorem}

We are now ready for the second fundamental result. As noted in \cite{hlae}, an immediate consequence of the Almost Everywhere \Cref{aet} is a {\em subharmonic addition theorem} for locally semiconvex functions. That is, by restricting attention to subharmonics which are locally semiconvex, a purely algebraic statement of \emph{jet addition} always implies the functional analytic statement of \emph{subharmonic addition}. This can be written as\sid{subharmonic addition|seeonly{Theorem, Subharmonic Addition}}
\begin{equation}\label{SAT}
\call F_x + \call G_x \subset \call H_x \ \ \forall \, x \in X \ \ \implies\ \ (\call F(X) \cap  \mathsf{sc}_\mathrm{loc}(X)) +  (\call G(X) \cap  \mathsf{sc}_\mathrm{loc}(X)) \subset \call H(X),
\end{equation}
where $ \mathsf{sc}_\mathrm{loc}(X)$ is the space of all locally semiconvex functions on $X$, which were characterized in formula \eqref{qc_loc} of \Cref{exe:qc}. The result is formalized in the following way.

\begin{thm}[Subharmonic addition for semiconvex functions]\label{add}\sid{Theorem!Subharmonic Addition!for semiconvex functions}
	Let $\call F$ and $\call G$ be primitive subequations on an open set $X\subset \R^n$ and define $\call K \defeq \barr{\call F + \call G}$; let $u\in \call F(X)$ and $v\in \call G(X)$ be locally semiconvex functions on $X$. Then $u+v \in \call K(X)$.
\end{thm}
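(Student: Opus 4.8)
The plan is to reduce the claim to an application of the Almost Everywhere Theorem~\ref{aet}, using Alexandrov's \Cref{aleks:qc} to pass to the level of classical $2$-jets on a set of full measure. First I would observe that the sum $w \defeq u+v$ of two locally quasi-convex functions is itself locally quasi-convex: around any $x_0$ there is a ball $B$ and parameters $\lambda_u, \lambda_v \geq 0$ with $u + \tfrac{\lambda_u}{2}|\cdot|^2$ and $v + \tfrac{\lambda_v}{2}|\cdot|^2$ convex on $B$, hence $w + \tfrac{\lambda_u + \lambda_v}{2}|\cdot|^2$ is convex on $B$ (sum of convex functions). So $w$ is a legitimate input for \Cref{aet} with the subequation $\call K = \barr{\call F + \call G}$, which is a primitive subequation: it is closed by construction, and it satisfies positivity \eqref{(P)} because $\call F + \call G$ does (if $(r,p,A) \in \call F_x$ and $(s,q,B) \in \call G_x$ then adding $P \geq 0$ to $A$ keeps $(r,p,A+P) \in \call F_x$ by \eqref{(P)} for $\call F$, so $(r+s, p+q, A+B+P) \in \call F_x + \call G_x$), and \eqref{(P)} passes to the closure by \Cref{propdual}-type reasoning — or more simply because positivity is preserved under closure of a set that already has it.

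Next I would exploit Alexandrov: by \Cref{aleks:qc} both $u$ and $v$ are twice differentiable (Peano) almost everywhere on $X$, hence so is $w$, and on the full-measure set $E \defeq \Diff^2 u \cap \Diff^2 v$ one has, by the uniqueness in \Cref{lem:Diff2} and linearity of the Taylor expansion, $Dw(x) = Du(x) + Dv(x)$ and $D^2 w(x) = D^2 u(x) + D^2 v(x)$ for every $x \in E$. Now the coherence property (Remark~\ref{rem:coherence}, or \Cref{elemprop}(iii)) applies to $u$ and $v$: since $u \in \call F(X)$ and $u$ is twice differentiable at $x \in E$, we get $J^2_x u = (u(x), Du(x), D^2 u(x)) \in \call F_x$, and likewise $J^2_x v \in \call G_x$. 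Adding these two jets componentwise gives
\[
J^2_x w = \big(w(x), Dw(x), D^2w(x)\big) = J^2_x u + J^2_x v \in \call F_x + \call G_x \subset \barr{\call F_x + \call G_x} = \call K_x
\]
for every $x \in E$, where the inclusion hypothesis $\call F_x + \call G_x \subset \call K_x$ is used (note $\barr{\call F + \call G}$ fibers over $x$ as $\barr{\call F_x + \call G_x}$, at least up to the usual care with closures in the jet bundle). Thus $J^2_x w \in \call K_x$ for almost all $x \in X$.

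Finally I would invoke the Almost Everywhere \Cref{aet} in the direction ``$J^2_x w \in \call K_x$ a.e. $\Rightarrow w \in \call K(X)$'', which applies precisely because $w$ is locally quasi-convex and $\call K$ is a primitive subequation. This yields $u + v = w \in \call K(X)$, as desired. I expect the only real subtlety — not a deep obstacle, but the point that needs care — to be the bookkeeping around the closure operation: verifying that $\barr{\call F + \call G}$ is genuinely a primitive subequation (closedness is immediate; positivity must be checked as above) and that its fiber over $x$ contains $\call F_x + \call G_x$, so that the a.e. classical inclusion really lands inside $\call K_x$. Everything else is routine once $w$ is recognized as locally quasi-convex and the Alexandrov/coherence combination is applied summand by summand.
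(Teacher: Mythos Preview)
Your proposal is correct and follows essentially the same route as the paper: both arguments use Alexandrov's theorem together with coherence (equivalently, the reverse implication of the Almost Everywhere Theorem) to obtain $J^2_x u \in \call F_x$ and $J^2_x v \in \call G_x$ almost everywhere, add the jets to land in $\call K_x$, and then apply the forward implication of the Almost Everywhere Theorem to the locally quasi-convex sum $w = u+v$. Your write-up simply makes explicit a few points the paper leaves implicit or defers to a subsequent remark (local quasi-convexity of the sum, and the verification that $\call K = \barr{\call F + \call G}$ is a primitive subequation).
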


\begin{proof}
	Since $u \in \call F(X)$ and $v \in \call G(X)$ are locally semiconvex, by the reverse implication in the Almost Everywhere \Cref{aet} we have $J^2_x u \in \call F_x$ and $J^2_x v \in \call G_x$ for almost every $x \in X$. Hence $J^2_x(u+v) = J^2_x u + J^2_x v \in \call K_x$ almost everywhere, with $u + v$ locally semiconvex. The forward implication of the Almost Everywhere \Cref{aet} now yields $u+v \in \call K(X)$. 
\end{proof}

\begin{remark}
	Theorem \ref{add} implies subharmonic addition in the form \eqref{SAT} even if $\call H \supset \call F + \call G$ is not a (primitive) subequation. This is because $\call K$ is the minimal primitive subequation containing $\call F + \call G$. Indeed, the (fiberwise) sum satisfies \eqref{(P)}, therefore so does its closure. 
\end{remark}

\begin{remark}
	Taking the closure of the sum is necessary in order to assure that property \eqref{(C)} holds. For example, consider $\call F$ and $\call G$ induced by the two equations $F(u) = 0$ and $G(u) = 0$ on $\R^n$, with 
	\[
	F(r) \defeq \chi_{[0, +\infty)}(r) \sin^2(\pi r^2) - \tfrac34 \quad \text{and} \quad G(r) \defeq \chi_{(-\infty, 0]}(r) \sin^2(\pi (r+\alpha)^2)  - \tfrac34,
	\]
	for some $\alpha \in (\frac13, \frac23)$, where $\chi_E$ denotes the characteristic function of the set $E$ (that is, $\chi(r) = 1$ if $r \in E$ and $\chi(r) = 0$ if $r \notin E$). It is easy to see that $(0,0,0,0) \notin \call F + \call G$, yet it is a limit point, since for every positive integer $k$, we have that  $\sqrt{1/3 + k} - \sqrt{2/3 - \alpha + k} = O\big(1/\sqrt k\big)$ belongs to the $\R$ component of each jet in $\call F + \call G$.
\end{remark}

With respect to \Cref{add} on subharmonic addition, two observations are in order. The first observation contains a reformulation of the Summand Theorem \ref{pusc:sum} which provides a stronger version of \Cref{add} while the second observation notes, as in \cite{hlae}, that the semiconvexity assumption on the subharmonics is not needed in the constant coefficient case.

\begin{remark}[A stronger form of subharmonic addition] \label{rmkone} \sid{Theorem!Subharmonic Addition!stronger form}
	The Summand Theorem~\ref{pusc:sum} leads to a slightly stronger form of \Cref{add}.
	Indeed, it tells us that if $(p,A)$ is an upper contact jet for the sum $w = u+v$ at $x$, then there exist two jets 
	\[
	(x, u(x), Du(x), B') \in \barr{\call J^{2,+}u}, \quad (x, v(x), Dv(x), C) \in \barr{\call J^{2,+}v}
	\]
	which sum to $(w(x), p, A)$. To see this, note that, considering the sequence $\{x_j\}_{j \in \N}$ in \Cref{pusc:sum}(ii), it is sufficient to take $B' = A + P$, where  $P = A - B - C \geq 0$, or, more succinctly, note that 
	\[
	\call J^{2,+}u + \call J^{2,+}v \subset \call J^{2,+}(u+v) \subset \barr{\call J^{2,+}u} + \barr{\call J^{2,+}v},
	\]
	where the former inclusion is trivial and the latter is given by \Cref{pusc:sum}.
	
	In other words,  \Cref{pusc:sum} assures that every upper contact $2$-jet of the sum $u+v$ of two semiconvex functions $u \in \call F(X)$ and $v \in \call G(X)$ can be represented as the sum of two $2$-jets of $\call F$ and $\call G$, respectively, hence $u+v \in (\call F + \call G)(X)$. This will be used in the proof of the strict comparison for semiconvex functions (\Cref{scqc}).
\end{remark}

\begin{remark}
	If $\call F$ and $\call G$ are constant coefficients subequations, then the assumption that $u$ and $v$ are semiconvex can be dropped; therefore, we have 
	\[
	\call F(X) + \call G(X) \subset \barr{\call F + \call G}\,(X).
	\]
	This will be proved next and follows from the semiconvex approximation technique that was introduced in \cite{hldir09} in a context of pure second-order subequations and that works fine also for general constant coefficient subequations.
\end{remark}

As we anticipated in the previous part, the sup-convolution of Definition \ref{def:supconv} is known to provide an effective way to approximate upper semicontinuous functions by a semiconvex \emph{regularization} of them (recall the four basic properties of the sup-convolution collected in \Cref{baspropsc}).
The next proposition, which imitates \cite[Theorem~8.2]{hldir09}, shows that, for constant coefficent subequations $\call F$, the sup-convolution of a bounded \Fd subharmonic remains \Fd subharmonic, provided that we are far enough from the boundary of the domain $X$. 

\begin{prop}[Semiconvex approximation of \Fd subharmonics] \label{prop:approx} \sid{semiconvex!approximation|seeonly{approximation, semiconvex}}\sid{approximation!semiconvex}
	Let $\call F$ be a constant coefficient subequation and suppose that $ u \in \call F(X)$ is bounded. For $\epsilon >0$, let $\delta \defeq 2\sqrt{\epsilon \sup_X\! |u|}$, and $X_\delta \defeq \{ y \in X:\ d(y, \de X) > \delta\}$. Then the $\epsilon$-sup-convolution of $u$ is \Fd subharmonic on $X_\delta$; in symbols, $u^\epsilon \in \call F(X_\delta)$. \syid{dist@$d(x,E)$!the distance between the point $x$ and the set $E \subset \R^n$, namely \detokenize{$\inf\{\vert x-y\vert :\, y \in E\}$}} \syid{Xdelta@$X_\delta$!the $\delta$-enlargement (or $\delta$-neighborhood) of the set $X$}
\end{prop}

\begin{proof}
	Since $\call F$ is constant coefficient, we have that $u_z \defeq u(\cdot - z) \in \call F(X_\delta)$ for all $z \in B_\delta$. Indeed, it is easy to see that $J^{2,+}_x u_z \subset \call F_{x-z} = \call F_x$ for every $x \in X_\delta$. If we now take
	\[
	\scr F \defeq \bigg\{ u_z - \frac{1}{2\epsilon}|z|^2:\ z \in B_\delta \bigg\},
	\]
	then $\scr F \subset \call F(X_\delta)$ by the negativity condition \eqref{(N)'} and, by (\ref{supcball}), its upper envelope is $u^\epsilon$, and it coincides with $(u^\epsilon)^\ast$ because it is semiconvex, hence upper semicontinuous. Therefore, we have $u^\epsilon \in \call F(X_\delta)$, by property (vii) of \Cref{elemprop}.
\end{proof}

The final result of this section is an important application of the semiconvex approximation technique. We are going to deduce a Subharmonic Addition Theorem for merely upper semicontinuous subharmonics for constant coefficient subequations from its semiconvex counterpart (\Cref{add}). This was first stated in \cite[Theorem~7.1]{chlp} and we essentially reproduce the original proof. 

\begin{cor}[Subharmonic addition for constant coefficient subequations] \label{ccsa}\sid{Theorem!Subharmonic Addition!for constant coefficient subequations}
	Let $\call F$, $\call G$ and $\call H$ be constant coefficient subequations. Then, for any open set $X\subset \R^n$, jet addition implies subharmonic addition; that is,
	\[
	\call F + \call G \subset \call H \implies \call F(X) + \call G(X) \subset \call H(X).
	\]
\end{cor}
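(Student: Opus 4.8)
The plan is to reduce the statement for merely upper semicontinuous subharmonics to the quasi-convex case established in \Cref{add}, using the quasi-convex approximation via supconvolution (\Cref{baspropsc} and \Cref{prop:approx}) together with the decreasing-sequence closure property of subharmonics (\Cref{elemprop}(v)). The key point is that, in the constant coefficient setting, supconvolutions preserve subharmonicity on slightly shrunken domains, and they decrease pointwise to the original function.

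First, I would handle a harmless reduction: since $\call F$-subharmonicity is a local property (\Cref{elemprop}(i)) and the conclusion $u+v\in\call H(X)$ is local, it suffices to prove that $u+v$ is $\call H$-subharmonic on a neighborhood of each point $x_0\in X$. Fixing a small ball $B=B_\rho(x_0)\ssubset X$, I may replace $u$ and $v$ by $\max\{u, -N\}$ and $\max\{v,-N\}$ on $\barr B$ for $N$ large (these remain $\call F$- resp. $\call G$-subharmonic by \Cref{elemprop}(ii), since constants are subharmonic for any constant coefficient subequation by property \eqref{(N)} applied to the jet of a constant), so that on $\barr B$ both functions are bounded; note that for $\call H$-subharmonicity at $x_0$ we only need information near $x_0$, so the truncation outside a smaller ball is irrelevant once $N$ is large enough that the truncation is inactive near $x_0$ — actually it is cleaner to simply assume $u,v$ bounded on $\barr B$ from the start since $u,v\in\USC(\barr B)$ are bounded above there and we can truncate below. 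Then for $\epsilon>0$ form the supconvolutions $u^\epsilon$ and $v^\epsilon$, which by \Cref{baspropsc}(i) are $\frac1\epsilon$-quasi-convex, hence locally quasi-convex, and by \Cref{prop:approx} satisfy $u^\epsilon\in\call F(B_\delta)$ and $v^\epsilon\in\call G(B_\delta)$ on the shrunken ball $B_\delta=\{y\in B: d(y,\de B)>\delta\}$, where $\delta=2\sqrt{\epsilon\sup_{\barr B}(|u|\vee|v|)}\to 0$ as $\epsilon\dto 0$.

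Now apply the quasi-convex Subharmonic Addition \Cref{add} on the open set $B_\delta$: since $\call F+\call G\subset\call H$ and $\call H$ is a (primitive) subequation, hence closed, we have $\barr{\call F+\call G}\subset\call H$, and \Cref{add} gives $u^\epsilon+v^\epsilon\in\barr{\call F+\call G}\,(B_\delta)\subset\call H(B_\delta)$. By \Cref{baspropsc}(iii), $u^\epsilon\dto u$ and $v^\epsilon\dto v$ pointwise as $\epsilon\dto 0$, and moreover each family is monotone decreasing in $\epsilon$ (the supconvolutions decrease as $\epsilon$ decreases); hence the sum $u^\epsilon+v^\epsilon$ decreases pointwise to $u+v$. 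Fixing any ball $B'\ssubset B$ and choosing $\epsilon$ small enough that $B'\subset B_\delta$, the decreasing-sequence property \Cref{elemprop}(v) applied along a sequence $\epsilon_k\dto 0$ yields $u+v\in\call H(B')$. Since $B'$ (and $x_0$) were arbitrary, $u+v\in\call H(X)$.

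The main obstacle is the bookkeeping around the shrinking domains: \Cref{prop:approx} only gives subharmonicity of $u^\epsilon$ on $B_\delta$, not on all of $B$, so one must interleave the limit $\epsilon\dto 0$ with the exhaustion of $B$ by the $B_\delta$'s; this is handled by first fixing the smaller ball $B'$ on which we want the conclusion and only then sending $\epsilon\to 0$, so that eventually $B'\subset B_\delta$ and \Cref{elemprop}(v) applies on $B'$. A secondary technical point is ensuring boundedness of $u$ and $v$ so that \Cref{baspropsc} and \Cref{prop:approx} apply; this is arranged by the local truncation from below described above, using that constant coefficient subequations contain the jets of constants by property \eqref{(N)}, so that $\max\{u,-N\}$ remains $\call F$-subharmonic by \Cref{elemprop}(ii). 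Everything else is a direct invocation of the cited results.
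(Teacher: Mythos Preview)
Your overall strategy is exactly the paper's: localize, reduce to bounded functions, take supconvolutions, apply \Cref{add}, and pass to the limit via \Cref{elemprop}(v). The bookkeeping with the shrinking domains $B_\delta$ is handled correctly.

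There is, however, a genuine gap in your truncation step. You claim that constants are $\call F$-subharmonic ``for any constant coefficient subequation by property \eqref{(N)} applied to the jet of a constant''. This is false. Property \eqref{(N)} says only that if $(r,p,A)\in\call F$ then $(r+s,p,A)\in\call F$ for all $s<0$; it does \emph{not} guarantee that $(c,0,0)\in\call F$ for any $c$. A concrete counterexample: take $\call F=\{(r,p,A)\in\call J^2:\ \mathrm{tr}\,A\geq 1\}$. This is a constant coefficient subequation (closed, satisfies \eqref{(P)}, \eqref{(N)}, and \eqref{(T)}), but the $2$-jet of any constant is $(c,0,0)$ with $\mathrm{tr}\,0=0<1$, so no constant is $\call F$-subharmonic. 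Hence $\max\{u,-N\}$ need not lie in $\call F(B)$, and your appeal to \Cref{elemprop}(ii) fails. Your subsequent remark that ``the truncation is inactive near $x_0$ once $N$ is large'' does not save the argument either, since $u$ may take the value $-\infty$ at or arbitrarily close to $x_0$.

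The fix is exactly what the paper does: instead of truncating with a constant, truncate with a \emph{bounded quadratic} strict subharmonic. By \Cref{exquadsub} (which uses property \eqref{(T)}), for each point there exist quadratic functions $\phi\in\call F(U)$ and $\psi\in\call G(U)$ on some neighborhood $U$; these are bounded on $\barr B$ for small balls $B\ssubset U$. Then $u_m\defeq\max\{u,\phi-m\}\in\call F(B)$ and $v_m\defeq\max\{v,\psi-m\}\in\call G(B)$ by \Cref{elemprop}(ii) and (iv), they are bounded, and they decrease to $u,v$ as $m\to\infty$. With this correction in place, the rest of your argument goes through and matches the paper's proof.
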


\begin{proof}
	Let $u\in \call F(X)$, $v \in \call G(X)$. Since the result is local, it is sufficient to show that, for each $x\in X$, one has $u+v \in \call H( U)$, where $ U$ is any neighborhood of $x$. Hence by upper semicontinuity we may assume that $u$ and $v$ are bounded above. Also, we may assume that $u$ and $v$ are bounded below, since it suffices to consider, for any positive integer $m$, the subharmonics $\max\{u, \phi - m\} \in \call F( U)$ and $\max\{v, \psi- m\} \in \call G( U)$, for some bounded quadratic functions $\phi \in \call F( U)$ and $\psi \in \call G( U)$, and then take the limit $m\to \infty$. Recall that we have already highlighted in \Cref{exquadsub} that the existence of such quadratic functions is guaranteed by the topological property \eqref{(T)'}, and that their translations by negative constants are still subsolutions by the \emph{sliding property} (\Cref{elemprop}(iv)) yielded by the properness condition \eqref{(N)'}, while the \emph{truncated} functions are subharmonic thanks to the \emph{maximum property} (\Cref{elemprop}(ii)). With these assumptions, by \Cref{prop:approx} there exist sequences $\{u_j\}_{j\in\N}$ and $\{v_j\}_{j\in\N}$ of semiconvex subharmonics near $x$ that converge monotonically downward to $u$ and $v$, respectively. By \Cref{add} we know that $u_j + v_j \in \call H(X)$ and by the \emph{decreasing sequence property}  (\Cref{elemprop}(v)) we conclude that $u+v \in \call H(X)$. 
\end{proof}

We conclude this section with some remarks on the utility, limitations and generalizations of the subharmonic addition theorem for constant coefficient subequations in \Cref{ccsa}. First, we give one important use of it, which concerns the validity of the comparison principle and will be the subject of \Cref{chap:comp}.

\begin{remark}[Subharmonic addition and the comparison principle]\label{rem:SAT_CC}\sid{monotonicity-duality method}
	If a subequation constraint set $\call F$ on $X$ (not necessarily with constant coefficients) is $\call M$-monotone for some constant coefficient monotonicity cone subequation $\call M$ as in \eqref{monotone},
	then duality gives the jet addition formula
	\[
	\call F_x + \widetilde{\call F}_x \subset \widetilde{\call M} \quad \forall\, x \in X;
	\]
	indeed, as for any $J \in \call F_x$ we have $J + \call M \subset \call F_x$, taking the duals and recalling the properties (1) and (3) of \Cref{propdual}, we deduce that $J + \tildee{\call F}_x \subset \tildee{\call M}$.
	 
	Using then $\call G = \widetilde{\call F}$ and $\call H = \widetilde{\call M}$, the validity of a subharmonic addition theorem would yield
	\begin{equation}\label{monodual}
	\call F (X) + \widetilde{\call F} (X) \subset \widetilde{\call M}(X),
	\end{equation}
	and then the validity of the comparison principle on $\Omega \ssubset X$ can be reduced to the validity of the {\em Zero Maximum Principle} \eqref{(ZMP)} for $z \in \USC(\overline{\Omega}) \cap  \widetilde{\call M}(\Omega)$.
	
	Corollary \ref{ccsa} says that if $\call F$ has constant coefficients, then \eqref{monodual} indeed holds. This is the {\em monotonicty-duality} method for proving comparison, which is extensively treated in the constant coefficient setting in \cite{chlp} including a classification of monotonicity cone subequations $\call M$ and the validity of \eqref{(ZMP)} for the dual cone subharmonics. 
\end{remark}

Next, we discuss the limitations and generalizations of Corollary \ref{ccsa}.

\begin{remark}[Subharmonic addition for variable coefficient subequations]
\sid{fiberegular|seeonly{property, fiberegularity}}\sid{property!fiberegularity}
	In the constant coefficient setting, the semiconvex approximation preserves \Fd subharmonicity. This is essentially due to validity of the translation property (viii) of Proposition~\ref{elemprop}, which one loses when passing to nonconstant coefficients (as seen at the end of the proof of that proposition). In the variable coefficient setting, one can extend the semiconvex approximation technique to obtain the subharmonic addition theorem and its consequences provided that the variable coefficient subequation $\call F$ is both {\em $\call M$-monotone} (for some monotonicty cone subequation $\call M$) and {\em fiberegular}. 
	
	The $\call M$-monotonicty of $\call F$ always holds for variable coefficient pure second order and gradient-free subequations, but when there is gradient dependence, one also needs {\em directionality}, as studied in \cite{cprdir}. \sid{property!directionality} Moreover, in this case, the assumption of $\call M$-monotonicity is also needed for the monotonicity-duality method for proving the comparison principle, as noted in Remark \ref{rem:SAT_CC}. 
	
	The fiberegularity of $\call F$ is just the continuity (with respect to the Hausdorff metric) of the fiber map $\Theta: X \to \scr P(\call J^2)$ defined by $\Theta_x\defeq  \call F_x$ for each $x \in X$. Using this regularity property and the $\call M$-monotonicity, one can replace the translation property by a {\em uniform translation property},\sid{property!translation!uniform} which roughly speaking means that \emph{\em if $u \in \call F(\Omega)$, then there are small $C^2$ strictly \Fd subharmonic perturbations of {all small translates} of  $u$ which belong to  $\call F(\Omega_{\delta})$}, where  $\Omega_{\delta}\defeq  \{ x \in \Omega: d(x, \partial \Omega) > \delta \}$ (see \cite[Theorem~3.3]{cprdir} for a precise statement and the proof). This uniform translation property was developed first in the variable coefficient pure second order case in \cite{cpaux} and generalized to the variable gradient-free case in \cite{cpmain}, where we stress again that the needed $\call M$-monotonicity holds in these two gradient-free cases.
\end{remark}

\chapter{Comparison principles}\label{chap:comp}

This chapter is dedicated to the proof of comparison principles in general potential theories; that is,  the proof that if a subharmonic-superharmonic pair for a subequation $\call F$ are ordered on the boundary of a bounded domain, then they are ordered in the interior of the domain. Two distinct situations will be presented. The first situation concerns the use of the monotonicity-duality method which works when $\call F$ has sufficient monotoncity. The second situation concerns the extreme case when $\call F$ has only the minimal monotonicity that any subequation must have. In this situation, a result on strict comparison is presented for semiconvex subharmonics and dual subharmonics.

\section{Preliminaries}

We begin by recalling that, before stating the definitional comparison, we rewrote the comparison principle on $\Omega \ssubset X$\sid{comparison (principle)}
\begin{equation*} 
\begin{cases}
\text{$u$ \Fd subharmonic, $v$ \Fd superharmonic on $X$} \\
u \leq v\ \text{on $\de\Omega$}
\end{cases}
\implies u\leq v\ \text{on $\Omega$}
\end{equation*}
as the zero maximum principle for sums\sid{maximum!principle!zero}
\begin{equation} \label{eq:ZMP} 
\begin{cases}
w \in \call F(X) + \tildee{\call F}(X) \\
w \leq 0\ \text{on $\de\Omega$}
\end{cases}
\implies w\leq 0\ \text{on $\Omega$},
\end{equation}
which merely exploits the fact that $v$ is \Fd superharmonic if and only if $-v \in \tildee{\call F}(X)$.
This suggests that a key role in the proof of comparison could be played by a subharmonic addition theorem; that is, one would like to know if 
\[
\call F_x + \tildee{\call F}_x \subset \call G_x \quad \forall x \in X \quad\implies\quad \call F(X) + \tildee{\call F}(X) \subset \call G(X).
\]
This would be particularly useful if two things were true. First, that $\call G$ does not depend on the specific form of $\call F$, but, for example, only on the monotonicity properties of $\call F$. Second, if we are able to say that comparison holds for $\call G$-subharmonics, for example, by way of a suitable characterization of $\call G(X)$.

In order to find a suitable $\call G$, recall that we noted that 
\begin{equation*}\label{jetadd:f}
\call F_x + \tildee{\call F}_x \subset \tildee{\call M}_x \quad \forall \, x \in X
\end{equation*}
whenever $\call F$ is $\call M$-monotone in the sense of \eqref{monotone}.
Therefore, in this case one may choose $\call G = \tildee{\call M}$.

Moreover, one knows that such a subset $\call M$ always exists. Indeed, regardless of the subequation $\call F$, thanks to the conditions \eqref{(P)'} and \eqref{(N)'} a set (with constant fibers) for which \ref{monotone} holds is 
\[
\call M = \call N \times \{0\} \times \call P,
\]
with $\call N$ and $\call P$ defined as in \eqref{def:np}; that is, \eqref{monotone} always holds for $\call M$ being the minimal monotonicity cone $\call M_0$ defined in \eqref{MMC}.
Note that $\call M_0$ is a constant coefficient primitive subequation (it is closed and satisfies property \eqref{(P)}) and it also satisfies the negativity property \eqref{(N)'}; however, we already noted that it is not a subequation because, having empty interior due to the factor $\{0\}$, it fails to satisfy the topological property \eqref{(T)'}. 

Assuming that $\call M$ can be enlarged to be a \emph{monotonicity cone subequation} for which \eqref{monotone} holds, the only missing piece would be to prove that the zero maximum principle holds for functions in $\tildee{\call M}(X)$.  Notice that choosing $\call M$ as large as possible facilitates the proof of \eqref{(ZMP)} for $\tildee{\call M}(X)$, which becomes smaller as $\call M$ increases. This represents the main observation of the monotonicity-duality method of Harvey and Lawson, as discussed in \cite{chlp}.\sid{monotonicity-duality method} 

Having derived the monotonicity-duality method, we will present a few illustrative applications of it.
First, in Section \ref{sec:cccc}, a pair of constant coefficient situations coming from \cite{chlp,hldir09} for which comparison holds on all bounded domains and the needed \eqref{(ZMP)} for the $\widetilde{\call M}$-subharmonics is accomplished by characterizing them. Then, in Section \ref{sec:SC}, we will discuss {\em strict comparison} for semiconvex functions in the variable coefficient setting and for semicontinuous functions in the constant coefficient setting. 

\section{Comparison with constant coefficients and sufficient monotonicity} \label{sec:cccc} 

We will present two basic examples of classes of $\call M$-monotone subequations for which comparison holds on all bounded domains. The classes are determined by the two most elementary  monotonicity cone subequations $\call M$, which have interesting characterizations of the dual subharmonics $\tildee{\call M}(X)$, as known from \cite{chlp,hldir09}. These characterizations easily lead to a direct proof that the $\tildee{\call M}$-subharmonics satisfy \eqref{(ZMP)} on all bounded domains, and hence the comparison principle in all $\call M$-monotone potential theories. The two classes discussed here can be included in a much more general result that will be recalled in \Cref{rem:CP_cc} below.

\begin{definition} 
\sid{subequation (constraint set)!constant coefficient!pure second-order}
	A \emph{pure second-order constant coefficient subequation} is a  closed subset of $\call J^2$ of the form  $\call F = \R \times \R^n \times \call A$ with $\call A \subset \call S(n)$ satisfying the positivity property ($\call P$-monotonicity)
	\begin{equation}\label{pos_PSO}
	\call A  + \call P \subset \call A, 
	\end{equation}
	where $\call P = \{ P \in \call S(n): P \geq 0\}$.
\end{definition}

\begin{remark} \label{identify}
	We have used \Cref{rmkccs} to identify a constant coefficient subequation $\call F \subset \call J^2(\R^n)$ with a subset $\R \times \R^n \times \call S(n)$. Notice that such an $\call F$ is indeed a subequation because the negativity property \eqref{(N)} is automatic and the topological property \eqref{(T)} follows from the $\call P$-monotonicity, which implies the monotonicity property $\call F + (\R \times \R^n \times \call P) \subset \call F$,
	where the monotonicity $\call F + (\R \times \R^n \times \{0\}) \subset \call F$ is an elegant way to say that $\call F$ is of pure second order.
	In addition, since the only component of a jet that is actually relevant to define a subsolution is the one living in $\call S(n)$, we will often identify $\call F$ with $\call A \subset \call S(n)$ that satisfies the positivity condition \eqref{pos_PSO}. Hence, we can say that a  pure second-order constant coefficient subequation is a closed subset $\call A \subset \call S(n)$ which is $\call P$-monotone. 
\end{remark}

The most basic constant coefficient pure second order subequations can be defined in terms of the ordered eigenvalues
\[
\lambda_1(A) \leq \cdots \leq \lambda_n(A)
\]
of $A \in \call S(n)$.
Of particular importance are the \emph{convexity subequation}
\[
\call P \defeq \{ A \in \call S(n):\ A \geq 0 \} = \{ A \in \call S(n):\ \lambda_1(A) \geq 0 \},
\]
(see (\ref{def:np}) and \Cref{exe:P}) and its dual subequation 
\[
\tildee{\call P} \defeq \{ A \in \call S(n):\ -A \not\in \intr \, \mathcal{P} \} = \{ A \in \call S(n):\ \lambda_n(A) \geq 0 \},
\]
which is known as  and the \emph{subaffine subequation},\sid{subaffine!subequation|seeonly{subequation, subaffine}}\sid{subequation (constraint set)!subaffine} \sid{subequation (constraint set)!co-convex|seeonly{subaffine}} for reasons which we now review. 

\begin{definition}\label{defn:SA}
	A function $w\in \USC(X)$ is called \emph{subaffine on $X$}, and we write $w \in \mathrm{SA}(X)$,\syid{SAX@$\mathrm{SA}(X)$!the set of all subaffine functions on $X$} if, for every open subset $\Omega \ssubset X$ and each affine function $a \in \mathrm{Aff}$,\sid{subaffine!function}
	\begin{equation} \label{affcomp}
	w \leq a\ \text{on $\de \Omega$} \implies  w \leq a\ \text{on $\Omega$}.
	\end{equation}
\end{definition}

\begin{remark}
	We have denoted the space of all affine functions by $\mathrm{Aff}$,\syid{Aff@$\mathrm{Aff}$!the space of all affine functions} without specifying their domain because it is known that any $a$ which is affine on an open subset $\Omega$ has a unique extension to the whole space $\R^n$ (see, e.g., \cite{vesely}).
\end{remark}
 
The following characterizations of $\call P$ and $\tildee{\call P}$ subharmonics are the content of \cite[Proposition~4.5]{hldir09} (see also \cite[Theorem~10.7]{chlp}).

\begin{prop}\label{prop:P_Ptilde}
Let $X \subset \R^n$ be open and let $u \in \USC(X)$. Then one has the following:
\begin{enumerate}[label=(\alph*)]
	\item $u \in \call P (X)$ $\iff$ $u$ is locally convex (away from the connected components on which $u \equiv - \infty$);
	\item $u \in \tildee{\call P}(X)$ $\iff$ $u$ is subaffine on $X$ (that is, $\tildee{\call P}(X) = \mathrm{SA}(X)$).
\end{enumerate}	
\end{prop}

Notice that, for real-valued $u$, \Cref{prop:P_Ptilde}(a) yields
\[
u \ \text{locally convex in $X$} \iff u \in \call P(X),
\]
which is the characterization of locally convex functions as those functions having non-negative Hessian in the viscosity sense, as we anticipated the discussion preceding \Cref{prop:Hpd}. 

We refer the reader to \cite{hldir09} for the complete proof of \Cref{prop:P_Ptilde},\footnote{Note that \cite{hldir09} the \Fd subharmonic functions were said to be \emph{of type $\call F$}.} which follows from two interesting facts: first,
\begin{equation*}
v \in \mathrm{SA}(X) \iff u + v \in \mathrm{SA}(X) \ \ \forall \, u \ \text{locally convex (and $\R$-valued)};
\end{equation*}
second, denoting by $\barr{\mathrm{C}}\mathrm{onvex}(X)$ the set of upper semicontinuous functions which are locally either convex or identically $-\infty$, \begin{equation*}
u \in \barr{\mathrm{C}}\mathrm{onvex}(X)  \iff u + v \in \mathrm{SA}(X) \ \ \forall \, v \in \mathrm{SA}(X).
\end{equation*}
These are the content of \cite[Propositions~2.5 and 2.6]{hldir09}, respectively. Moreover, they are, in the end, consequences of the jet-addition formula
\[
\call P + \tildee{\call P} \subset \tildee{\call P},
\]
which comes from the fact that $\call P$ is a monotonicity cone for itself and hence also for the dual $\tildee{\call P}$. More precisely, they represent consequences of the most elementary example of the Subaffine Theorem~\ref{t:subaff} below, in the case where $\call F = \call P$.


Indeed, note that $\call P$ is a monotonicity cone subequation for all pure second-order constant coefficient subequations (and it is minimal in this sense), hence a straightforward application of the Subharmonic Addition Theorem (\Cref{ccsa}) yields the following result (while for a proof that uses S{\l}odkowski's Largest Eigenvalue Theorem and semiconvex approximation, the reader can instead refer to \cite{hldir09}).

\begin{thm}[{Subaffine Theorem; \cite[Theorem 6.5]{hldir09}}] \label{t:subaff} \sid{subaffine!theorem|seeonly{Theorem, Subaffine}} \sid{Theorem!Subaffine}
	Let $\call F$ be a pure second-order constant coefficient subequation. Then, for any open set $X \subset \R^n$,
	\[
	\call F(X) + \tildee{\call F}(X) \subset \mathrm{SA}(X).
	\]
\end{thm}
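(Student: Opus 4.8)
The plan is to read the Subaffine Theorem as the special case of the Subharmonic Addition Theorem for constant coefficient subequations (Corollary \ref{ccsa}) corresponding to the choice $\call H = \widetilde{\call P}$, combined with the duality identity $\call F + \widetilde{\call F} \subset \widetilde{\call P}$ at the jet level and the characterization $\widetilde{\call P}(X) = \mathrm{SA}(X)$ in \eqref{p=sa}. Concretely, first I would record that a pure second-order constant coefficient subequation $\call F = \R \times \R^n \times \call A$ is $\call P$-monotone in the sense that
\[
\call F + \call M \subset \call F, \qquad \call M \defeq \call N \times \{0\} \times \call P,
\]
since $\call A + \call P \subset \call A$ by \eqref{pos_PSO}, the $\R$-component only decreases, and the gradient component is silent. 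Actually one has the slightly stronger $\call F + (\R \times \R^n \times \call P) \subset \call F$, so $\call F$ is monotone with respect to the monotonicity cone subequation $\call M_{\call P} \defeq \R \times \R^n \times \call P$ whose dual is computed fiberwise to be $\widetilde{\call M_{\call P}} = \R \times \R^n \times \widetilde{\call P}$.

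Next I would invoke the duality machinery of Proposition \ref{propdual}: from $\call F + \call M_{\call P} \subset \call F$, properties \emph{(1)}, \emph{(2)}, \emph{(3)} give, for every jet $J \in \call F$,
\[
J + \call M_{\call P} \subset \call F \implies \widetilde{\call F} \subset \widetilde{\call M_{\call P}} - J,
\]
whence the jet addition formula $\call F + \widetilde{\call F} \subset \widetilde{\call M_{\call P}} = \R \times \R^n \times \widetilde{\call P}$. Since $\call F$, $\widetilde{\call F}$ (by Proposition \ref{propdual}\emph{(8)}) and $\widetilde{\call M_{\call P}}$ are all constant coefficient subequations, Corollary \ref{ccsa} applies with $\call G = \widetilde{\call F}$ and $\call H = \widetilde{\call M_{\call P}}$, yielding
\[
\call F(X) + \widetilde{\call F}(X) \subset \widetilde{\call M_{\call P}}(X) = \widetilde{\call P}(X)
\]
for every open $X \subset \R^n$, where in the last step one uses that a jet lies in the fiber $(\widetilde{\call M_{\call P}})_x = \R \times \R^n \times \widetilde{\call P}$ exactly when its $\call S(n)$-component lies in $\widetilde{\call P}$, so that $\widetilde{\call M_{\call P}}$-subharmonicity coincides with $\widetilde{\call P}$-subharmonicity (the $r$ and $p$ slots impose no constraint). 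Finally, applying the identification \eqref{p=sa}, $\widetilde{\call P}(X) = \mathrm{SA}(X)$, gives the desired inclusion $\call F(X) + \widetilde{\call F}(X) \subset \mathrm{SA}(X)$.

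There is essentially no hard analysis left here, since all the heavy lifting — the Almost Everywhere Theorem, the quasi-convex approximation of Proposition \ref{prop:approx}, and hence Corollary \ref{ccsa} — has already been done. The only point deserving care is the bookkeeping about silent variables: one must check that for a pure second-order subequation the dual $\widetilde{\call M_{\call P}}$ really is $\R \times \R^n \times \widetilde{\call P}$ and that $\widetilde{\call M_{\call P}}$-subharmonicity is literally $\widetilde{\call P}$-subharmonicity, so that Corollary \ref{ccsa} can be quoted verbatim and then post-composed with \eqref{p=sa}. An alternative route, which I would mention as a remark, is the original one of \cite{hldir09}: prove the jet addition $\call A + \widetilde{\call A} \subset \widetilde{\call P}$ directly, then pass to subharmonics by quasi-convex (sup-convolution) approximation together with S{\l}odkowski's Largest Eigenvalue Theorem \ref{slodle} to handle the passage from the almost-everywhere statement to everywhere; but with Corollary \ref{ccsa} already in hand the short argument above is cleaner.
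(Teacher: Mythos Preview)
Your proposal is correct and follows exactly the approach the paper indicates: the paper does not give a formal proof but states just before the theorem that it is ``a straightforward application of the Subharmonic Addition Theorem (\Cref{ccsa})'' using that $\call P$ is a monotonicity cone subequation for all pure second-order constant coefficient subequations, together with the identification \eqref{p=sa}. Your detailed verification of the jet addition $\call F + \widetilde{\call F} \subset \widetilde{\call M_{\call P}}$ via Proposition~\ref{propdual} and your remark on the alternative route from \cite{hldir09} both match the paper's discussion precisely.
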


At this point it is evident that comparison holds, provided that one has the following strengthening of the property of subaffinity.

\begin{lem} \label{str:sub}
	Let $\Omega \subset \R^n$ be a bounded domain and $w\in \USC(\barr\Omega)\cap \mathrm{SA}(\Omega)$. Then \eqref{affcomp} holds for all $a \in \mathrm{Aff}$.
\end{lem}

\begin{proof}
	Assume without loss of generality that $a=0$. We consider an exhaustion of  $\Omega$ by an increasing sequence of compact sets $\{ K_j\}_{j \in \N}$ and set $U_\delta \defeq \{ x\in \barr \Omega:\, w(x) < \sup_{\de\Omega} w + \delta\}$, for any $\delta > 0$. Since $w\in \USC(\barr\Omega)$, we know that $U_\delta$ is an open neighborhood of $\de\Omega$ in $\barr \Omega$. Therefore $\de K_j \subset U_\delta$ for all  $j$ sufficiently large and, since $w$ is subaffine, $\sup_{K_j} \leq \sup_{\de \Omega} w + \delta$.
	This proves that $\sup_{\Omega} w \leq \sup_{\de \Omega} w + \delta$; since $\delta$ is arbitrary, $w\leq 0$ on $\de\Omega$ and $w$ is continuous, we conclude that $w \leq 0$ in $\barr\Omega$, which is the desired conclusion.
\end{proof}

\begin{thm}[Comparison for pure second-order constant coefficient subequations]\label{thm:C_pso}\sid{comparison (principle)!for pure second order constant coefficient subequations} \sid{Theorem!Comparison|seeonly{comparison (principle)}}
	Let $\call F$ be a pure second-order constant coefficient subequation. Then comparison holds on every domain $\Omega \ssubset \R^n$:
	\[
	u+v \leq 0\ \text{on $\de \Omega$} \implies u+v \leq 0\ \text{on $ \Omega$}
	\]
	if $u \in \USC(\barr \Omega) \cap \call F(\Omega)$ and $v \in \USC(\barr \Omega) \cap \tildee{\call F}(\Omega)$.
\end{thm}

\begin{proof}
	It follows from~\Cref{str:sub} with $a=0$, since $u+v$ is a subaffine function by the Subaffine \Cref{t:subaff}.
\end{proof}

The case of pure second-order constant coefficient subequations is a subset of the one we are going to discuss now, yet we considered historically significant to introduce them separately. Again, we will show that comparison holds on all bounded domains.

\begin{definition} 
\sid{subequation (constraint set)!constant coefficient!gradient-free}
	A \emph{gradient-free constant coefficient subequation} is a closed subset of $\R \times \R^n \times \call S(n)$ of the form $\call F = \call R \times \R^n \times \call A$ with $\call R \subset \R$ and $\call A \subset \call S(n)$ satisfying the conditions of negativity and positivity, respectively; that is, 
	\begin{equation*}
	\call R  + \call N \subset \call R \quad \text{and} \quad \call A  + \call P \subset \call A,
	\end{equation*}
	where $\call N = \{ r \in \R: r \leq 0\}$ and $\call P = \{ P \in \call S(n): P \geq 0\}$.
\end{definition}

\begin{remark}
	Just as in \Cref{identify}, we have identified the constant coefficient $\call F$ with a subset of $\call J^2$ and it is indeed a subequation, since the remaining topological property \eqref{(T)'} again follows by the $(\call N \times \R^n \times \call P)$-monotonicity of $\call F$. 
	Moreover, one can again {\em reduce} the subequation by suppressing the ``silent'' gradient variable,\sid{subequation (constraint set)!reduced} which then allows one to further identify $\call F$ with a subset of $\R \times \call S(n)$. Hence, one can say that a gradient-free constant coefficient subequation is a closed subset $\call F \subset \R \times \call S(n)$ which is $\call Q$-monotone where
	\begin{equation}\label{def_Q}
	\call Q\defeq  \call N \times \call P \subset \call \R \times \call S(n).
	\end{equation}
\end{remark}

The most basic example of a gradient-free constant coefficient subequation is the (reduced) monotonicity cone subequation $\call Q$ defined in \eqref{def_Q}. Its dual subequation is the so-called \emph{subaffine-plus subequation}\sid{subaffine!subequation!-plus|seeonly{subequation, subaffine, -plus}}\sid{subequation (constraint set)!subaffine!-plus}\sid{subaffine-plus|seeonly{subaffine, \dots, -plus}}
\begin{equation*}
\tildee{\call Q} = \{ (r,A) \in \R \times \call S(n): \ r \leq 0 \  \text{or} \ A \geq 0 \}.
\end{equation*}
Also in this case the name comes from a characterization of dual subharmonics: by \cite[Theorem 10.7]{chlp}, an upper semicontinuous function on $X$ is $\tildee{\call Q}$-subharmonic if and only if it is subaffine-plus on $X$; that is,
\[
\tildee{\call Q}(X) = \mathrm{SA}^+(X),
\] 
where $\mathrm{SA}^+(X)$ is the set of all subaffine-plus functions on $X$, which are defined as follows.\syid{SAXp@$\mathrm{SA}^+(X)$!the set of all subaffine-plus functions on $X$}

\begin{definition}
	For $K\subset \R^n$ compact, we denote by \syid{Affp@$\mathrm{Aff}^+(X)$!the set of all affine-plus functions on $X$}
	\[
	\mathrm{Aff}^+(K) \defeq \{ \restr{a}{K}:\ a \in \mathrm{Aff},\ a \geq 0\ \text{on $K$} \}
	\]
	the set of \emph{affine-plus functions on $K$}.
\end{definition}

\begin{definition} \sid{subaffine!function!-plus}
	A function $w\in \USC(X)$ is called \emph{subaffine-plus on $X$}, and we write $w \in \mathrm{SA}^+(X)$,  if \eqref{affcomp} holds for every open subset $\Omega\ssubset X$ and each $a \in \mathrm{Aff}^+(\barr\Omega)$.
	\end{definition}

\begin{remark}
As shown in \cite[Theorem~10.7]{chlp}, one has the following interesting characterization of the subaffine-plus functions on open subsets $X$ of $\R^n$:
	\[
	\mathrm{SA}^+(X) = \{ u \in \USC(X): \ u^+ \in \mathrm{SA}(X)\},
	\]
	so that the $\widetilde{\call Q}$-subharmonics on $X$ are those upper semicontinuous functions whose positive part is subaffine on $X$; note that this is equivalent to 
	\[
	\tildee{\call Q}(X) = \{ u \in \USC(X): \ u^+ \in \tildee{\call P}(X)\}.
	\]
	\end{remark}

In analogy with the pure second-order case, $\call Q$ is a monotonicity cone subequation for all gradient-free subequations (and it is minimal in this sense), hence it is now immediate to see that, by~\Cref{ccsa}, we have an analogue of the Subaffine Theorem.

\begin{thm}[{Subaffine-Plus Theorem; \cite[Theorem 10.8]{chlp}}] \sid{subaffine!theorem!-plus|seeonly{Theorem, Subaffine, -Plus}}\sid{Theorem!Subaffine!-Plus}
	If $\call F$ is a gradient-free constant coefficient subequation, then for any open set $X \subset \R^n$
	\[
	\call F(X) + \tildee{\call F}(X) \subset \mathrm{SA}^+(X).
	\]
\end{thm}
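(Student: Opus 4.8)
The plan is to deduce this result directly from the Subharmonic Addition Theorem for constant coefficient subequations (\Cref{ccsa}), exactly as the pure second-order Subaffine Theorem (\Cref{t:subaff}) was deduced. First I would observe that if $\call F$ is a gradient-free constant coefficient subequation, then by \Cref{rem:CC_GF} it is $\call Q$-monotone, where $\call Q = \call N \times \call P$ is the reduced monotonicity cone subequation of \eqref{def_Q}; that is, $\call F_x + \call Q \subset \call F_x$ for all $x$. Applying properties \emph{(1)}, \emph{(2)} and \emph{(3)} of \Cref{propdual} fiberwise, for each $J \in \call F_x$ the inclusion $J + \call Q \subset \call F_x$ yields $\tildee{\call F}_x \subset \tildee{\call Q} - J$, and hence the jet addition formula
\[
\call F_x + \tildee{\call F}_x \subset \tildee{\call Q}, \qquad \forall\, x \in X,
\]
where $\tildee{\call Q}$ is the subaffine-plus subequation of \eqref{def_Qtilde}.

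Next I would invoke \Cref{ccsa}: since $\call F$, $\tildee{\call F}$ and $\tildee{\call Q}$ are all constant coefficient subequations (the first by hypothesis, the second by \Cref{propdual}\emph{(8)}, the third as it is the fixed cone $\tildee{\call Q}$), the jet addition $\call F + \tildee{\call F} \subset \tildee{\call Q}$ upgrades to subharmonic addition
\[
\call F(X) + \tildee{\call F}(X) \subset \tildee{\call Q}(X).
\]
Finally, using the characterization $\tildee{\call Q}(X) = \mathrm{SA}^+(X)$ recalled above (from \cite[Theorem 10.7]{chlp}), this is precisely the assertion $\call F(X) + \tildee{\call F}(X) \subset \mathrm{SA}^+(X)$.

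I do not anticipate a serious obstacle here, since the argument is a verbatim analogue of the pure second-order case with $\call P$ replaced by $\call Q$ and $\mathrm{SA}$ replaced by $\mathrm{SA}^+$; the only points requiring a sentence of care are (a) checking that $\tildee{\call Q}$ genuinely has $\call F$ and $\tildee{\call F}$ summing into it, which is the fiberwise duality computation above, and (b) making sure the "reduction" of the silent gradient variable is compatible with the duality operation — but this is exactly the content of \Cref{rem:CC_GF} and \Cref{propdual} working fiberwise, so nothing new is needed. If one wished to avoid citing the characterization $\tildee{\call Q}(X) = \mathrm{SA}^+(X)$, one could alternatively state the theorem with $\tildee{\call Q}(X)$ in place of $\mathrm{SA}^+(X)$ and defer the identification; but since the excerpt has already recalled that characterization, the cleanest route is the one above.
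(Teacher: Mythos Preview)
Your proposal is correct and follows essentially the same approach as the paper: the paper simply remarks that $\call Q$ is a monotonicity cone subequation for all gradient-free constant coefficient subequations and then says the result is immediate from \Cref{ccsa}, exactly as in the pure second-order case. You have spelled out in more detail the intermediate jet-addition step $\call F_x + \tildee{\call F}_x \subset \tildee{\call Q}$ via \Cref{propdual} and the identification $\tildee{\call Q}(X) = \mathrm{SA}^+(X)$, but the underlying argument is the same.
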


Also, since $0\in \mathrm{A}^+(\R^n)$, we have comparison. We omit the proof, which is essentially the same as in the case of pure second-order subequations. 

\begin{thm}[Comparison for gradient-free constant coefficient subequations]\label{thm:C_gf} \sid{comparison (principle)!for gradient-free constant coefficient subequations} 
	Let $\call F$ be a gradient-free constant coefficient subequation. Then comparison holds on every domain $\Omega \ssubset \R^n$:
	\[
	u+v \leq 0\ \text{on $\de \Omega$} \implies u+v \leq 0\ \text{on $ \Omega$}
	\]
	if $u \in \USC(\barr \Omega) \cap \call F(\Omega)$ and $v \in \USC(\barr \Omega) \cap \tildee{\call F}(\Omega)$.
\end{thm}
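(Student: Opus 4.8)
The plan is to mirror exactly the proof structure already used for the pure second-order case in Theorem~\ref{thm:C_pso}, since the gradient-free setting is its natural extension once one replaces affine functions by affine-plus functions and the subaffine property by the subaffine-plus property. First I would invoke the Subaffine-Plus Theorem to reduce the comparison statement to a statement purely about subaffine-plus functions: given $u \in \USC(\barr\Omega) \cap \call F(\Omega)$ and $v \in \USC(\barr\Omega) \cap \tildee{\call F}(\Omega)$, their sum $w \defeq u + v$ lies in $\call F(\Omega) + \tildee{\call F}(\Omega) \subset \mathrm{SA}^+(\Omega)$, and clearly $w \in \USC(\barr\Omega)$ as the sum of two upper semicontinuous functions (cf.\ Proposition~\ref{prop:usc}(b)). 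Thus everything reduces to showing that a function $w \in \USC(\barr\Omega) \cap \mathrm{SA}^+(\Omega)$ with $w \leq 0$ on $\de\Omega$ satisfies $w \leq 0$ on $\Omega$.

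The second step is to prove this ``strengthening of subaffine-plus up to the boundary'', which is the exact analogue of Lemma~\ref{str:sub}. Here one observes that the zero function $0 = a_+$ with $a_+ \equiv 0$ belongs to $\mathrm{Aff}^+(K)$ for every compact $K$ (as remarked in the excerpt, $0 \in \mathrm{A}^+(\R^n)$), so the subaffine-plus comparison with affine-plus test functions can be applied with the test function identically zero. The argument is then identical to the proof of Lemma~\ref{str:sub}: one exhausts $\Omega$ by an increasing sequence of compact sets $\{K_j\}_{j\in\N}$, sets $U_\delta \defeq \{x \in \barr\Omega : w(x) < \sup_{\de\Omega} w + \delta\}$ for $\delta > 0$, which is an open neighbourhood of $\de\Omega$ in $\barr\Omega$ by upper semicontinuity, observes that $\de K_j \subset U_\delta$ for $j$ large, applies the subaffine-plus property on $K_j$ with the affine-plus function $\delta + \sup_{\de\Omega} w^+$ (or simply with $0$ shifted appropriately, taking $\delta$ large enough that this shifted function is nonnegative on $K_j$) to get $\sup_{K_j} w \leq \sup_{\de\Omega} w + \delta$, and lets $j \to \infty$ and then $\delta \dto 0$, using $w \leq 0$ on $\de\Omega$ and the continuity of $w$ on $\barr\Omega$ to conclude $w \leq 0$ on $\barr\Omega$.

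The only genuine subtlety—and the step I would flag as the main obstacle—is the bookkeeping at the boundary with affine-plus rather than affine test functions: one must ensure the constant function used as an affine-plus test function on each compact $K_j$ is indeed nonnegative there, so that it qualifies as an element of $\mathrm{Aff}^+(K_j)$. This is harmless when $\sup_{\de\Omega} w \leq 0$ (the relevant case, since $w \leq 0$ on $\de\Omega$), because then the constant $\delta$ itself is a valid affine-plus test function on all of $\barr\Omega$ for every $\delta > 0$, and the argument of Lemma~\ref{str:sub} goes through verbatim. Since the excerpt explicitly says the proof is ``essentially the same as in the case of pure second-order subequations'', I would simply write: the result follows from the subaffine-plus analogue of Lemma~\ref{str:sub} applied with the affine-plus test function $0$, together with the Subaffine-Plus Theorem, exactly as in the proof of Theorem~\ref{thm:C_pso}.
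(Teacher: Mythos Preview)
Your proposal is correct and follows exactly the approach indicated by the paper, which explicitly omits the proof as being ``essentially the same as in the case of pure second-order subequations'' once one observes that $0 \in \mathrm{Aff}^+(\R^n)$. Your identification of the only new bookkeeping point---that the constant test function must be nonnegative to qualify as affine-plus, which is automatic here since one uses the test function $0$ (or any $\delta > 0$)---is precisely the content of the paper's one-line remark ``since $0 \in \mathrm{A}^+(\R^n)$, we have comparison.''
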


We conclude this section by noting that the results presented here are part of a wider program, which exploits a general method for determining the validity of the zero maximum principle for dual cone subharmonics.

\begin{remark}[A general comparison theorem for constant coefficient potential theories]\label{rem:CP_cc} \sid{comparison (principle)!general for constant coefficient potential theories}
	The comparison principles of \Cref{thm:C_pso,thm:C_gf} are also corollaries of the general comparison principle for constant coefficient potential theories contained in \cite[Theorem~7.5]{chlp}. This result states that comparison for a constant coefficient subequation $\call F$ on a domain $\Omega$ holds if $\call F$ is $\call M$-monotone for a monotonicity cone subequation $\call M$ for which there exists a $C^2$-strictly $\call M$-subharmonic function $\psi$ on $\Omega$, as described in \Cref{rem:SAT_CC} above. The point is that the existence of $\psi$ ensures the validity of \eqref{(ZMP)} for the $\widetilde{\call M}$-subharmonics on $\Omega$.
	
	With the notation of \cite{chlp}, \Cref{thm:C_pso,thm:C_gf} correspond to the monotonicity cone subequations
	\begin{equation}\label{elem_cones}
	\call M(\call P) = \R \times \R^n \times \call P \quad \text{and} \quad \call M(\call N, \call P) = \call M(\call N) \cap \call M(\call P) = \call N \times \R^n \times \call P,
	\end{equation}
	respectively. These two cones do admit a (quadratic) $C^2$-strictly $\call M$-subharmonic function $\psi$ on every bounded domain $\Omega$, as shown in  \cite[Theorem~6.6]{chlp}. Moreover, the two cones in \eqref{elem_cones} are two members of a {\em fundamental family} of monotonicity cone subequations which are generated by double and triple intersections of $\call M(\call P)$ above, $	\call M(\call N) \defeq  \call N \times \R^n \times \call S(n)$ and the three one-parameter cones\sid{subequation (constraint set)!monotonicity cone!fundamental family}
	\[
	\begin{gathered}
	\call M(\gamma) \defeq  \{ (r,p,A) \in \call J^2: \ r \leq - \gamma |p| \}, \quad \gamma \in (0, +\infty),
	\\
	\call M(R) \defeq  \left\{ (r,p,A) \in \call J^2: \  A \geq \frac{|p|}{R} I  \right\}, \quad R \in (0, +\infty),
	\\
	\call M(\call D) \defeq  \R \times \call D \times \call S(n), \quad \call D \subset \R^n \ \text{is a {\em directional cone}},
	\end{gathered}
	\]
	a directional cone being a closed convex cone with vertex at the origin and non-empty interior. Notice that $\call M(\call P)$ and $\call M(\call N)$ are the ``limits'' of $\call M(R)$ and $\call M(\gamma)$ for $R \nearrow + \infty$ and $\gamma \searrow 0$, respectively.
	
	For all of the cones in the fundamental family, the validity of the zero maximum principle for the $\widetilde{\call M}$-subharmonics on $\Omega$, and hence the validity of comparison for any $\call M$-monotone potential theory, is known from the detailed analysis in \cite{chlp}. There is a simple dichotomy: comparison holds on every bounded $\Omega$ except for the case when the cone has a generator $\call M(R)$ with $R$ finite, and in this case comparison holds on all domains $\Omega$ which are contained in a translate of the truncated cone $\call D \cap B_R(0)$.
	
	The family above is fundamental in the sense that any monotonicity cone subequation contains one of the fundamental cones, and hence for every constant coefficient $\call M$-monotone potential theory, the comparison principle holds (with possibly a restriction on the size of the domain $\Omega$). 
\end{remark}

As a final consideration, the analysis discussed in Remark~\ref{rem:CP_cc} has two additional extensions. First, by exploiting the {\em correspondence principle} of \cite[Theorem~11.13]{chlp}\sid{correspondence principle} one deduces the validity of the comparison principle for every constant coefficient and $\call M$-monotone differential operator $F$ which is {\em compatible} with an $\call M$-monotone constant coefficient subequation $\call F$. Many applications of this are discussed in \cite[Part~IV]{chlp}. Second, in the variable coefficient case, if $\call F$ is also {\em fiberegular}, then the analysis above extends to the potential theory defined by $\call F$ as well as to associated PDE for any operator $F$ which is compatible with $\call F$. This program was carried out in \cite{cprdir}, which also gives characterizations of $\widetilde{\call M}$-subharmonics for the other monotonicty cones $\call M$ in the fundamental family, extending the characterizations above for the cones $\call M(\call P)$ and $\call M(\call N, \call P)$; see \cite[Section~6]{cprdir}.

The following section treats some complementary results to \cite{chlp} and \cite{cprdir}. 

\section{Strict comparison with minimal monotonicity and semiconvex functions}\label{sec:SC}

In this section, we present some comparison principles which are not included in the aforementioned \cite{chlp} and \cite{cprdir}. These results give additional situations in which one has sufficient monotonicity in order to prove a form of comparison. In fact, the minimal monotonicity present in every subequation becomes sufficient if one weakens comparison to a notion of {\em strict comparison}. In the variable coefficient setting, this form of comparison is shown to hold for semiconvex functions, while, in the constant coefficient setting, it is shown to hold for functions which are merely semicontinuous. Two approaches for the semicontinuous version will be presented, both of which use semiconvex approximation; one in an implicit sense and one in an explicit sense, which will be compared in Remark~\ref{rem:SC}.

We begin by recalling that, by properties \eqref{(P)} and \eqref{(N)}, every (variable coefficient) subequation $\call F$ has a constant coefficient monotonicity cone; namely the \emph{minimal monotonicity cone} \sid{monotonicity cone!subequation|seeonly{subequation, monotonicity cone}}\sid{monotonicity cone!minimal}
$$
\call M_0 =\call N \times \{0\} \times \call P,
$$
which, \emph{a priori}, might be the maximal monotonicity cone for $\call F$. As noted previously, $\call M_0$ is not a subequation because it has empty interior and hence does not satisfy the topological property \eqref{(T)}. In such a situation, the methods described in the previous section do not apply. However, one can ask under what situations $\call M_0$-monotonicity for $\call F$ still yields comparison in the potential theory determined by $\call F$. One way is to focus on \emph{semiconvex} subharmonics and to add a \emph{strictness} assumption; to that end, we give the following notion of \emph{strong strictness}.

\begin{definition} \label{def:strictv}
	We say that $u \in \USC(X)$ is \emph{strongly strictly \Fd subharmonic on $X$} if $u \in \call G(X)$ for some subequation $\call G \subset \intr{\call F}$. We will write $u \in \call F_{\rm strict}(X)$.\sid{harmonic (\emph{or} \Fd harmonic)!sub- (\emph{or} \Fd sub-)!strongly strictly}\syid{FstrictX@$\call F_{\rm strict}(X)$!the set of all strongly strict \Fd subharmonics on $X$}
\end{definition}

Note that this notion is, in general, stronger than the one of \Cref{def:strict}, even for classical subsolutions.

\begin{example}
	For $n=1$ and $X=(0,1)$, consider $F(x,p) = x-p$. Then $u(x) = \frac{x^3}3$ is a strict smooth subsolution on $X$, according to \Cref{def:strict}, but it is not strongly strict in the sense of \Cref{def:strictv}. 
\end{example}

Harvey and Lawson~\cite{hlae} showed that one always has the form \eqref{eq:ZMP} of comparison if the subsolutions are semiconvex and one of them is strongly strict, just like the Definitional Comparison \Cref{defcompa} tells that \eqref{eq:ZMP} holds, even without the semiconvexity assumption, if one of the subsolutions is strict and smooth.

\begin{thm}[Strict comparison for semiconvex functions] \label{scqc} \sid{comparison (principle)!strict!for semiconvex functions}
	Let $\Omega \subset \R^n$ be a bounded domain and let $\call F$ be a subequation on $\Omega$.  Then strict comparison holds for semiconvex functions; that is,
	\[
	\begin{dcases}
	u \in \USC(\barr\Omega) \cap \call F_{\rm strict}(\Omega) \cap  \mathsf{sc}_\mathrm{loc}(\Omega)
	\\
	v \in \USC(\barr\Omega) \cap \tildee{\call F}(\Omega) \cap  \mathsf{sc}_\mathrm{loc}(\Omega)
	\\
	u+v \leq 0\ \text{on $\de\Omega$}
	\end{dcases} 
	\quad
	\implies
	\quad
	u+v \leq 0\ \text{on $\Omega$}.
	\]
\end{thm}

\begin{proof}
	Suppose, by contradiction, that strict comparison fails on $\Omega \ssubset \R^n$. Then $w= u+v$ must have a positive interior maximum $m\defeq w(x_0) > 0$ at $x_0 \in \Omega$. Hence $w$ has $(0,0)$ as an upper contact jet at $x_0$ and Theorem \ref{pusc:sum} on summands yields
	\[
	(u(x_0), p, A) \in \call G_{x_0}, \qquad (v(x_0), q, B) \in \tildee{\call F}_{x_0}, 
	\]
	with $p+q = 0$ and $A+B = -P \leq 0$. This contradicts $\call G_{x_0} \subset \intr{\call F}_{x_0}$, since by positivity and negativity
	\[
	-(v(x_0), q, B) = (u(x_0)-m, p, A+P)  \in \call G_{x_0},
	\]
	but $-(v(x_0), q, B) \notin \intr{\call F}_{x_0}$ by the definition of the dual subequation.
\end{proof}

\begin{example}
	\Cref{scqc} is indeed useful if we can assure \emph{a priori} that the subharmonics and dual subharmonics of a given subequation are semiconvex. For example, consider the subequation associated to the inhomogeneous Monge--Amp\`{e}re equation
	\begin{equation}\label{maex}
	\det D^2 u = f
	\end{equation}
	for some $f\in C(\Omega)$ which is positive and bounded from above. If we look for convex solutions $u$, we immediately see that they are strongly convex, in the sense that $\lambda_1({D^2u}) \geq C$ on $\Omega$, for some $C>0$; indeed by semicontinuity and compactness we know that $f \geq c$ on $\Omega$, for some $c > 0$, while $\det D^2u \leq M$ for some upper bound $M>0$ on $f$. Therefore, if we restrict ourselves to strongly convex sub-- and supersolutions, we trivially have that subsolutions are semiconvex. Also, since the fibers of the subequation associated to (\ref{maex}) are
	\[
	\call F_x = \R \times \R^n \times \big\{ A \in \Sc(n):\ \det A - f(x) \geq 0 \big\}, 
	\]
	we have that
	\[
	\tildee{\call F}_x = \R \times \R^n \times \big\{ B \in \Sc(n):\ \det (-B) - f(x) \leq 0 \big\}
	\]
	and the strong convexity assumption on the supersolutions leads us to require that the matrices $B$ are negative definite with their maximum eigenvalue $\lambda_n (B ) \leq \ell$ for some $\ell < 0$, independent of $x$. Therefore, we see that
	\[
	-\lambda_1(B)\cdot(-\ell)^{n-1} \leq \det(-B) \leq M \quad\implies\quad  \lambda_1(B) \geq -\Lambda \defeq \frac{-M}{(-\ell)^{n-1}},
	\]
	hence the dual subsolutions we are interested in are $\Lambda$-semiconvex.
\end{example}

The assumption that $u$ and $v$ are semiconvex can be dropped if $\call F$ and $\call G$ are constant coefficient subequations. The following result is \cite[Corollary~C.3]{hldir}. 
\begin{thm}[Strict comparison for constant coefficient subequations] \label{sccc} \sid{comparison (principle)!strict!for constant coefficient subequations}
	Let $\call F$ be a constant coefficient subequation.  Then strict comparison holds on every bounded domain $\Omega \subset \R^n$; that is,
	\[
	\begin{dcases}
	u \in \USC(\barr\Omega) \cap \call F_{\rm strict}(\Omega) 
	\\
	v \in \USC(\barr\Omega) \cap \tildee{\call F}(\Omega)
	\\
	u+v \leq 0\ \text{on $\de\Omega$}
	\end{dcases} 
	\quad
	\implies
	\quad
	u+v \leq 0\ \text{on $\Omega$}.
	\]
\end{thm}

We will omit the short proof, which one finds in \cite{hldir}, noting only that it follows the same argument of its semiconvex counterpart (\Cref{scqc}), where the Theorem in Summands~\ref{pusc:sum} for locally semiconvex functions is replaced by a potential-theoretic version of the Theorem on Sums~\ref{tosu} for semicontinuous functions.  Even though we will sketch an alternate proof of Theorem \ref{sccc}, for the benefit of the reader and in order to comment on the two approaches, we present Harvey and Lawson's potential-theoretic version of the Theorem on Sums \cite[Theorem~C.1]{hldir}. 

\begin{thm}[On Sums; simplified potential theoretic version] \label{tospt} \sid{Theorem!on Sums!simplified potential theoretic version}
	Let $\call F, \call G \subset \call J^2(X)$ be two subequations and let $u \in \call F(X)$, $v \in \call G(X)$. Suppose that $u+v$ does not satisfies the Zero Maximum Principle on $\Omega \ssubset X$. Then there exist a point $x_0 \in \Omega$ where $u(x_0) + v(x_0) >0$, a net $\{(x_\epsilon, y_\epsilon)\}_{\epsilon > 0} \subset \Omega \times \Omega$ converging to $(x_0, x_0)$, and matrices $A_\epsilon, B_\epsilon \in \call S(n)$ such that
	\begin{gather} \label{tosptinc}
	\Big(u(x_\epsilon), \frac{x_\epsilon-y_\epsilon}\epsilon, A_\epsilon\Big) \in \call F_{x_\epsilon}, \quad \Big(v(y_\epsilon), -\frac{x_\epsilon-y_\epsilon}\epsilon, B_\epsilon\Big) \in \call G_{y_\epsilon},
	\\[3pt] \notag
	u(x_\epsilon) + v(y_\epsilon) \dto u(x_0) + v(x_0),
	\\[3pt] \notag
	|x_\epsilon - y_\epsilon| = o(\sqrt\epsilon\,),
	\\[3pt] \label{tosptineq}
	-\frac3\epsilon \left(\! \begin{array}{cc}
	I &  0 \\
	0  & I \\
	\end{array}\! \right) \leq \left(\! \begin{array}{cc}
	A_\epsilon &  0 \\
	0  & B_\epsilon \\
	\end{array}\! \right) \leq \frac3\epsilon
	\left(\! \begin{array}{cc}
	I &  -I \\
	-I  & I \\
	\end{array}\! \right).
	\end{gather} 
\end{thm}

\begin{proof}
	Let $w(x,y) \defeq u(x) + v(y)$. For any $\epsilon > 0$, let $(x_\epsilon, y_\epsilon)$ be a maximum of $w(x,y) - Q_{\epsilon^{-1} I}(x-y)$ on $\barr\Omega \times \barr\Omega$;  by \cite[Proposition~3.7]{user}, by taking $\epsilon$ small enough, we may suppose that $(x_\epsilon, y_\epsilon) \in \Omega \times \Omega$. Then by \Cref{tosu} there exist $A_\epsilon, B_\epsilon$ such that \eqref{tosptinc} holds (since $u \in \call F(X)$, $v \in \call G(X)$ and the fibers of subequations are closed) and
	\[
	-\frac1\epsilon \bigl( 1 + \Vert J\Vert \bigr) \,I \leq \left(\! \begin{array}{cc}
	A_\epsilon &  0 \\
	0  & B_\epsilon \\
	\end{array}\! \right)
	\leq \frac1\epsilon \bigl( J + J^2 \bigr), 
	\qquad J \defeq \left(\! \begin{array}{cc}
	I &  -I \\
	-I  & I \\
	\end{array}\! \right),
	\]
	which is easily seen to be \eqref{tosptineq}. Finally, all the remaining properties to prove easily follow again from \cite[Proposition~3.7]{user}.
\end{proof}

We conclude with some reflections on the role of semiconvex approximation in strict comparison under minimal monotonicity.

\begin{remark}[Strict comparison and semiconvex approximation]\label{rem:SC} \sid{semiconvex!approximation}
	The proof of Theorem \ref{sccc} above is an ``almost conventional'' viscosity theory proof in that it exploits a potential-theoretic formulation of the conventional Theorem on Sums, based on the doubling of variables and penalization. Semiconvex approximation, by way of the sup-convolution, appears implicitly in the proof, since it plays a crucial role in the proof of the Theorem on Sums. 
	
	One might ask if an explicit use of the semiconvex approximation technique can reduce  Theorem \ref{sccc} to the semiconvex version of Theorem \ref{scqc}. If so, one would be extending the semiconvex approximation technique from sufficient monotonicity $\call M$ (as discussed in the previous section) to minimal monotonicity $\call M_0$, at least for strict comparison. We note that one encounters a possible obstruction to this intent; namely, one would like to know that the semiconvex approximations preserve subharmonicity for the subequations $\call G$ and $\widetilde{\call F}$, and \Cref{prop:approx} says that this is true, provided that such subharmonics are bounded.
	Boundedness from above on compact sets is ensured by the upper semicontinuity of the subharmonics; the problem is that they may not be bounded from below.
	
	The decreasing limits property of \Fd subharmonics in \Cref{elemprop}(v) suggests that one might first approximate an \Fd subharmonic $u$ with a deceasing sequence $\{u_m\}_{m \in \N}$ of functions which are bounded from below and \Fd subharmonic, and a simple truncation from below with $u_m \defeq  \max \{ u,- m\}$ works if each constant $m \in \N$ is an \Fd subharmonic by the maximum property of \Cref{elemprop}(ii); more generally, as seen in the proof of \Cref{ccsa}, one can take $u_m \defeq  \max \{ u, \varphi - m\}$ for any fixed \Fd subharmonic which is bounded from below. Unfortunately, the existence of such a bounded subharmonic $\varphi$ is, in general, only guaranteed locally, as discussed in Remark \ref{exquadsub}.
	
	Therefore, in a proof of strict comparison with minimal monotonicity $\call M_0$ by an explicit semiconvex approximation, one should first show that one can assume without loss of generality that the subharmonics are bounded (for example, by proving the existence of  a bounded global subharmonic for $\call G$ and $\widetilde{\call F}$) or find a way to carry the global information of the behavior of $u+v$ on $\de\Omega$ into a neighborhood of some given point (for instance, a point in $\Omega$ at which $u+v$ assumes its positive maximum, if one is proceeding by contradiction). If instead, one tries to work only locally as in the proof of \Cref{scqc}, then the most immediate path probably leads to a proof with the flavour of the Theorem on Sums, since it is likely that one eventually ends up needing results like \Cref{pusc:sum} and \Cref{magprop}.
\end{remark}

To complete the discussion, we give a proof of strict comparison for constant coefficient potential theories with minimal monotonicity $\call M_0$, provided that the following additional hypothesis is made:
\begin{equation}\label{sc_hyp}
\mbox{there exist $\Phi \in \call G(\Omega)$ and $\Psi \in \tildee{\call F}(\Omega)$ which are bounded (from below) on $\barr\Omega$.}
\end{equation}

\begin{proof}[Proof of \Cref{sccc} assuming \eqref{sc_hyp}] 
	By contradiction, suppose that strict comparison fails for some $\Omega \ssubset \R^n$; hence there exists $\barr x \in \Omega$ such that $u(\barr x) + v(\barr x) \defeq  M > 0$. For each $\eta \in (0, M)$ fixed, by the sliding property (\Cref{elemprop}(iv)), $\hat u \defeq u-\eta \in \call G(\Omega)$ and one still has $\hat u(\barr x) + v(\barr x) > 0$. 
	
	Now, with $\Phi$ and $\Psi$ satisfying \eqref{sc_hyp}, define for each for $m \in \N$
	\[
	u_m \defeq \max\{ \hat u, \Phi - m\} \quad \text{and} \quad v_m \defeq \max\{ v, \Psi - m\}.
	\] 
	Note that $u_m$ and $v_m$ are bounded on $\barr\Omega$, $u_m \dto u$ and $v_m \dto v$ pointwise on $\barr\Omega$ as $m \to +\infty$, and $u_m(\barr x) + v_m(\barr x) \geq \hat u(\barr x) + v( \barr x) > 0$ for all $m \in \N$. In addition, one has $u_m \in \call G(\Omega)$ and $v_m \in \tildee{\call F}(\Omega)$, by the maximum property (\Cref{elemprop}(ii)).
	Semiconvex approximation (\Cref{prop:approx}) yields the sup-convolutions $u_m^\epsilon \in \call G(\Omega_\delta)$ and $v_m^\epsilon \in \tildee{\call F}(\Omega_\delta)$,  where 
	\[
	\delta = \delta(\epsilon) \defeq 2\sqrt{\epsilon \max\Bigl\{\max_{\barr{\Omega}}|u_m|,\, \max_{\barr{\Omega}}|v_m|\Bigr\}}\,.
	\]
	These semiconvex approximations satisfy $u_m^\epsilon(\barr x) + v_m^\epsilon(\barr x) \geq u_m(\barr x) + v_m(\barr x) > 0$ for each small $\epsilon > 0$.
	Therefore, by the strict comparison for semiconvex functions (\Cref{scqc}), for any $\epsilon >0$ so small that $\barr x \in \Omega_\delta$, there exists a point $y_\epsilon$ such that 
	\begin{equation} \label{disban}
	u_m^\epsilon(y_\epsilon) + v_m^\epsilon(y_\epsilon) > 0,\quad y_\epsilon\in \de\Omega_\delta.
	\end{equation}
	Consider now a vanishing sequence $\epsilon_k \dto 0$. By the compactness of $\barr{\Omega}$, up to a subsequence, $y_k \defeq y_{\epsilon_k} \to \barr y \in \de \Omega$ as $k\to\infty$. Set $\delta_k \defeq  \delta(\epsilon_k)$ and note that, by (\ref{disban}), one has
	\[
	u_m^{\epsilon_k}(y_k) + v_m^{\epsilon_k}(y_k) > 0, \quad y_k \in \de\Omega_{\delta_k} \qquad \forall\, k\in \mathbb{N}.
	\]
	Also, by (\ref{supcball}), one has
	\begin{equation}\label{sc_bound}
	u_m^{\epsilon_k}(y_k) \leq \max_{\barr B_{\delta_k}(y_k)} u_m \qquad  \forall k\in \mathbb{N}.
	\end{equation}
	Now, since $\delta_k \dto 0$ and $y_k \to \barr y$, for each  $\rho > 0$ fixed one has  $B_{\delta_k}(y_k) \subset B_\rho(\barr y)$ if $k$ is large enough.  Hence, taking the limsup in \eqref{sc_bound}, one obtains
	\[
	\limsup_{k\to\infty} u_m^{\epsilon_k}(y_k) \leq \max_{\barr B_\rho( \barr y)} u_m \qquad \forall \rho > 0,
	\]
	and then taking the limit $\rho \dto 0$ yields
	\[
	\limsup_{k\to\infty} u_m^{\epsilon_k}(y_k) \leq \limsup_{z \to  \barr y} u_m(z) \leq u_m( \barr y),
	\]
	where the last inequality comes from the upper semicontinuity of $u_m$.
	The same argument applied to $v_m$ gives that $u_m( \barr y) + v_m( \barr y) \geq 0$
	and, letting $m \to +\infty$, one has $\hat u( \barr y) + v( \barr y) \geq 0$;
	that is,
	\[
	u( \barr y) + v( \barr y) \geq \eta > 0, \quad  \barr y \in \de \Omega,
	\]
	which gives the desired contradiction.
\end{proof}

\chapter{From Euclidean spaces to manifolds: a brief note}\label{sec:mflds}

In this chapter we show how the notion of semiconvexity can be transported from Euclidian spaces to manifolds. The key point will be to show that the notion of semiconvexity is preserved under smooth coordinate changes.

 \section{Objectives}
 
 In our presentation, for simplicity, we have worked only in Euclidean spaces. However, many of the results discussed here extend to manifolds; for example, the Almost Everywhere Theorem~\ref{aet}, the Subharmonic Addition Theorem~\ref{add} and the Strict Comparison Theorem~\ref{scqc} have been given in \cite{hlae} in a more general form on manifolds. We note that, roughly speaking, \emph{all local results naturally extend from Euclidean spaces to manifolds}, via a suitable use of local coordinates. 
 
 To make this passage, one needs to extend the definition of a locally semiconvex function on an open subset of $\R^n$ (see Definition \ref{def:qc}) to functions defined on a manifold $M$ of dimension $n$. The crucial observation for this extension is to prove that the class of locally semiconvex functions on an open subsets of $\R^n$ is \emph{stable under pull-backs via diffeomorphisms} (that is, smooth changes of coordinates).
 
 We will prove this claim and then formulate a definition of a locally convex function on a manifold.

 \section{Invariance of local semiconvexity under coorinate changes}
 
 The discussion above is formalized in the following statement.
 
 \begin{thm} \label{BM_lqcstable}
 	Suppose that $\Phi: X \to Y$ is a smooth diffeomorphism between open subsets $X,Y \subset \R^n$. Then $\Phi$ pulls back locally semiconvex functions on $Y$ to locally semiconvex functions on $X$; that is,
 	\begin{equation*}
 	\mbox{$u: Y \to \R$ is locally semiconvex \quad $\implies \quad u \circ \Phi: X \to \R$ is locally semiconvex.}
 	\end{equation*}
 \end{thm}
 
We are going to reduce Theorem \ref{BM_lqcstable} to Lemma \ref{lem:smooth_reduction} below. This reduction involves several simplifications, where the final step will be a potential-theoretic argument which exploits the decreasing limit property 
 	and smooth approximations (mollifications) that preserve convexity and Lipschitz constants. The properties of the mollifications that we will need are contained in the following result which augments Lemma \ref{convqc}. We make use of an even mollifier; that is,\sid{mollifier}
 	\begin{equation*}
 	\mbox{ $\eta \in C^{\infty}(\R^n)$, \quad $\eta$ non-negative, \quad ${\rm supp}(\eta) \subset B_1(0)$, \quad $\int_{\R^n} \eta = 1$},
 	\end{equation*}
 	and
 	\begin{equation}\label{mollifier2}
 	\eta(-z) = \eta(z) \quad \forall z \in \R^n. 
 	\end{equation}
 	
 	\begin{lem}\label{lem:reg_convex} Suppose that $u$ is convex and $K$-Lipschitz on a convex neighborhood $\cN(\overline{Y})$  of $\overline{Y}$ convex. For each $\varepsilon > 0$, consider the mollified function \sid{approximation!smooth}\sid{mollification|seeonly{approximation, smooth}}\syid{uepsb@$u_\epsilon$ (\emph{see also} $\eta_\epsilon$)!see Lemma~\ref{convqc} or~(\ref{mollify})}
 		\begin{equation}\label{mollify}
 		u_{\varepsilon}(y)\defeq  (u \ast \eta_{\varepsilon})(y) = \int_{\R^n} u(y-z) \eta_{\varepsilon}(z) \, \di z = \int_{\R^n} u(y-z) \eta(z/\varepsilon) \, \varepsilon^{-n} \,\di z,
 		\end{equation}
 		for $y \in Y^{\varepsilon} \defeq  \{ y \in \cN(\overline{Y}): \ {\rm dist}(y, \partial \cN(\overline{Y})) > \varepsilon\}$ is convex and contains $\overline{Y}$ for all sufficiently small $\varepsilon$. The following hold: 
 		\begin{itemize}
 			\item[(a)] $u_{\varepsilon}$ is smooth and convex on $Y^{\varepsilon}$;
 			\item[(b)] $u_{\varepsilon}$ is $K$-Lipschitz on $Y^{\varepsilon}$;
 			\item[(c)] $u_{\varepsilon} \searrow u$ on compact subsets of $\cN(\overline{Y})$ as $\varepsilon \searrow 0$ 
 		\end{itemize}
 	\end{lem}
	
		\begin{proof}
		 By the condition on the support of $\eta$, $u_{\varepsilon}$ defined by \eqref{mollify} can also be expressed as
 		\begin{equation}\label{mollif2}
 		u_{\varepsilon}(y)\defeq  \int_{B_{\varepsilon}(0)} u(y-z) \eta_{\varepsilon}(z) \, \di z
 		\end{equation}
 		and hence is clearly well-defined on $Y^{\varepsilon}$, which contains $\overline{Y}$ for all small $\varepsilon$. The smoothness of $u_{\varepsilon}$ on $Y^{\varepsilon}$ is a standard consequence of the dominated convergence theorem for $u$ continuous. The convexity of $u_{\varepsilon}$ follows from
 		by the same argument used for $f$ convex in Lemma~\ref{convqc} by replacing $\R^n$ with $Y^{\varepsilon}$, which completes part (a).
 		
 		For property (b), note that for each $y_1, y_2$ in $Y^{\varepsilon}$ and each $\varepsilon$ small we have
 		\[
 		|u_{\varepsilon}(y_1) - u_{\varepsilon}(y_2)| \leq \int_{\R^n} K|y_1 - y_2| \, \eta_{\varepsilon}(z) \, \di z = K|y_1 - y_2|,
 		\]
 		where we have used that $u$ is $K$-Lipschitz and $\eta$ is non-negative with integral equal to $1$.
 		
 		It remains only to show the decreasing limit property of part (c), which is a consequence the convexity of $u$ and the properties of the mollifier $\eta$. First note that, since $u$ is convex and $\eta$ is even (that is, \eqref{mollifier2} holds),
 		\[ \begin{split}
 		u(y) = \int_{\R^n} u(y) \eta_{\varepsilon}(z) \, \di z & \leq \frac{1}{2} \int_{\R^n} u(y -z) \eta_{\varepsilon}(z) \, \di z +  \frac{1}{2} \int_{\R^n} u(y + z) \eta_{\varepsilon}(z) \, \di z \\
 		& \leq \frac{1}{2} \int_{\R^n} u(y -z) \eta_{\varepsilon}(z) \, \di z +  \frac{1}{2} \int_{\R^n} u(y - z) \eta_{\varepsilon}(-z) \, \di z = u_{\varepsilon}(y),
 		\end{split}
 		\]
 		and hence $u \leq u_{\varepsilon}$ on $Y^{\varepsilon}$. Given the pointwise (locally uniform) convergence of $u_\epsilon$ to $u$, it remains only to show the monotonicity
 		\begin{equation}\label{eps_delta}
 		0 < \delta < \varepsilon \quad \implies \quad u_{\delta}(y) \leq u_{\varepsilon}(y) \quad \forall \, y \in Y^{\varepsilon} \subset Y^{\delta}.  
 		\end{equation}
 		The claim \eqref{eps_delta} is a simple consequence of the following {\em four-point inequality} for the convex function $u$: if $0 < \delta < \varepsilon$ then
 		\begin{equation}\label{4point}
 		u(y - \delta z) + 	u(y + \delta z) \leq 	u(y - \varepsilon z) + 	u(y + \varepsilon z),
 		\end{equation}
 		which holds for each $y \in Y^{\varepsilon}$ if $\varepsilon$ is sufficiently small. To prove \eqref{4point}, it is enough to note that with 
 		\[
 		t= \frac{\varepsilon + \delta}{2 \varepsilon} \in (0,1) \quad \text{and} \quad 1- t = \frac{\varepsilon - \delta}{2 \varepsilon} \in (0,1)
 		\]
 		one has
 		\[
 		y - \delta z = t(y - \varepsilon z) + (1-t)(y + \varepsilon z) \quad \text{and} \quad 	y + \delta z = t(y + \varepsilon z) + (1-t)(y - \varepsilon z).
 		\]
 		For the decreasing property \eqref{eps_delta}, starting from \eqref{mollif2}, changing variables and using that $\eta$ is even, one finds that
 		\[ \begin{split}
 		u_{\varepsilon}(y) & = \int_{B_{\varepsilon}(0)} u(y - z) \eta\!\left(\frac{z}{\varepsilon}\right) \varepsilon^{-n} \, \di z = \int_{B_{1}(0)} u(y - \varepsilon z)  \eta(z)  \, \di z \\
 		& =  \int_{B_{1}(0) \cap \{z_1 > 0\}} \bigl(u(y - \varepsilon z) + u(y + \varepsilon z) \bigr) \,  \eta(z)  \, \di z;
 		\end{split}
 		\]
 		using this formula also for $u_{\delta}$, the four-point inequality \eqref{4point} gives \eqref{eps_delta}.
 	\end{proof}
 
 	We proceed now with the simplifications. First, note that the result of \Cref{BM_lqcstable} is local. If one were to use directly the Definition~\ref{def:qc}, the proof would entail showing that for each pair of points $(x,y) \in X \times Y$ with $y = \Phi(x)$ and $u$ being $\lambda$-semiconvex on some ball $B_{\rho}(y) \Subset Y$ for some $\lambda > 0$, one has $v \defeq u \circ \Phi$ is $\Lambda$-semiconvex on some ball $B_{\sigma}(x) \subset \Phi^{-1}(B_{\rho}(y)) \Subset X$ for some $\Lambda > 0$. Instead, we exploit this local nature to make the first two simplifying assumptions in the statement of the theorem. By replacing $Y$ with some smaller ball than $B_{\rho}(y)$, we can assume that
 	\begin{equation}\label{simple1}
 	\mbox{$\Phi: \cN(\overline{X})  \to \cN(\overline{Y}) $ is a diffeomorphism},
 	\end{equation}
 	where  $\cN(\overline{X})$ and $\cN(\overline{X})$ are open neighborhoods of $\overline{X}$ and $\overline{Y}$ both compact and
 	\begin{equation}\label{simple2}
 	\mbox{$u$ is $\lambda$-semiconvex on $\cN(\overline{Y})$ convex.}
 	\end{equation}
 	With these reductions, it is enough to prove that
 	\begin{equation}\label{reduction}
 	\mbox{$v = u \circ \Phi$ is $\Lambda$-semiconvex on $X$, for some $\Lambda > 0$.}
 	\end{equation}
	
 	We can further simplify by assuming that
 	\begin{equation}\label{simple3}
 	\mbox{$u$ is convex on $\cN(\overline{Y})$ convex.}
 	\end{equation}
 	To see this, replace $u$ by $u_{\lambda}(y)\defeq  u(y) + \frac{\lambda}{2} |y|^2$, which is convex on $\overline{Y}$ by \eqref{simple2}. Since the pullback function
 	$$
 	v_{\lambda}(x)\defeq  u_{\lambda}(\Phi(x)) = u(\Phi(x)) + \frac{\lambda}{2} |\Phi(x)|^2 = v(x) + \frac{\lambda}{2} |\Phi(x)|^2,
 	$$
 	differs from $v$ by the smooth function $\varphi(x)\defeq  \frac{\lambda}{2} |\Phi(x)|^2$, we can make use of the standard fact that if $\varphi \in C^2(X)$
 	\begin{equation*}
 	\mbox{$v$ is locally-semiconvex on $X \ \ \Leftrightarrow \ \ v + \varphi$ is locally-semiconvex on $X$,}
 	\end{equation*}
 	which follows from the local boundedness of the Hessian of $\varphi \in C^2(X)$.
 	
 	Next, since $u$ is convex on the convex neighborhood $\cN(\overline{Y})$, by replacing $\cN(\overline{Y})$ with a smaller convex neighborhood of $\overline{Y}$ compact, we can assume that
 	\begin{equation}\label{simple5}
 	\mbox{$u$ is $K$-Lipschitz on $\cN(\overline{Y})$, for some $K > 0$},
 	\end{equation}
 	which follows from Theorem \ref{thm:convex_Lip}(b).
 	
 	With these reductions, it suffices to show \eqref{reduction}, assuming \eqref{simple1}, \eqref{simple3} and \eqref{simple5}; that is, assuming that $\Phi: \cN(\overline{X}) \to \cN(\overline{Y})$ is a diffeomorphism and
 	\begin{equation}\label{simple6}
 	\mbox{$u$ is convex and $K$-Lipschitz on $\cN(\overline{Y})$ convex}.
 	\end{equation}

 	We are now ready for the last reduction, in which we can also assume that
	\begin{equation} \label{simple7}
	\mbox{$u$ is smooth in a neighborhood of $\overline{Y}$}.
	\end{equation}
	To do that, consider the potential theory determined by the following subequation:
 	\begin{equation*}
 	\call F = \{(r,p,A) \in \R \times \R^n \times \call S(n): \ |p| \leq K, \ A \geq 0\}.
 	\end{equation*}
 	Any function $u$ satisfying \eqref{simple6} will be \Fd subharmonic $\cN(\overline{Y})$ in the viscosity sense of Definition \ref{defn:FSH}. Lemma \ref{lem:reg_convex} shows that the regularizations $u_{\varepsilon}$ are smooth \Fd subharmonics on $\cN(\overline{Y})$ for each $\varepsilon$ sufficiently small and decrease to $u$ as $\varepsilon \searrow 0$. Hence, for this constant coefficient convex subequation, any $u$ which is \Fd subharmonic on a neighborhood of $\overline{Y}$ is the decreasing limit of smooth \Fd subharmonics. 
 	Now, the pullback approximations $v_{\varepsilon} \defeq  u_{\varepsilon}\circ \Phi = (u \ast \eta_{\varepsilon}) \circ \Phi$ will then decrease pointwise to $v = u \circ \Phi$ as $\varepsilon \searrow 0$. Hence, in order to show \eqref{reduction}, it is enough to show that each pullback regularization $v_{\varepsilon}$ is $\Lambda$-semiconvex on $X$ for some $\Lambda > 0$. This is because decreasing limits $v_{\varepsilon} \searrow v$ of $\Lambda$-semiconvex functions are $\Lambda$-semiconvex (on connected components where $v$ is not identically $-\infty$, which happens only where $u$ has this property). This completes the justification of the last reduction \eqref{simple7}.
	
	Considering the simplifications provided by \eqref{simple6} and \eqref{simple7}, proving \Cref{BM_lqcstable} now amounts to showing that the following result holds. 
 	
 	\begin{lem}\label{lem:smooth_reduction} Suppose that $\Phi : \cN(\overline{X}) \to \cN(\overline{Y})$ is a diffeomorphism between open neighborhoods of $\overline{X}$ and $\overline{Y}$, where $X$ and $Y$ are open and relatively compact in $\R^n$. Assume \eqref{simple6} and \eqref{simple7}; that is,
 		\begin{equation*}
 		\mbox{$u$ is smooth, convex and $K$-Lipschitz on $\cN(\overline{Y})$ convex.}
 		\end{equation*} 
 		Define the constant\footnote{Note that
		\[
		\cN(\overline{X}) \ni x \mapsto \pair{D^2 \Phi(x)e}{e} = \sum_{i,j=1}^n \frac{\de^2\Phi(x)}{\de x_{i}\de x_j}\,e_ie_j \in \R^n,
		\]
		being $\Phi = (\Phi_1,\dots,\Phi_n)$.}
 		\begin{equation}\label{Hessian_Phi}
 		C\defeq  \sup \bigl\{ \bigl| \langle D^2 \Phi(x) e, e \rangle \bigr|: \ x \in \overline{X}, \ |e| = 1 \bigr\}.
 		\end{equation}
 		Then $v\defeq  u \circ \Phi$ is $\Lambda$-semiconvex on $X$ for $\Lambda = KC$.
 	\end{lem}
 	
 	\begin{proof}
 		Since $v$ is smooth, it is enough to show that $D^2v(x) + \Lambda I \geq 0$ on $X$, which holds if we can show that
 		\begin{equation*}
 		\pair{D^2v(x) e}{e} \geq -KC \quad \forall  x \in \overline{X},\ \forall  e \in \mathbb S^{n-1}.
 		\end{equation*}	
 		By applying twice the chain rule to $v = u \circ \Phi$ one finds
 		\[
 		\begin{split}
 		\pair{D^2v(x) e}{e} & = \sum_{i,j = 1}^n \frac{\partial^2 v}{\partial x_i\partial x_j}(x)e_i e_j \\ 
 		& = \pair{(D^2 u(\Phi(x))(D\Phi(x) e)}{D\Phi(x) e} + \sum_{k = 1}^n  \frac{\partial u}{\partial y_k}(\Phi(x)) \pair{D^2\Phi_k(x) e}{e}.
 		\end{split}
 		\]
 		Since $u$ is convex, the first term is non-negative. It remains to show that the second term is bounded below by $-KC$. This follows from the Cauchy--Schwartz inequality, since $u$ is $K$-Lipschitz and recalling the definition of $C$ (cf.~\eqref{Hessian_Phi}).
 	\end{proof}
	
	Hence, Theorem \ref{BM_lqcstable} is also proven. As a consequence of it, one can give the following definition.
	\begin{definition}
 	Let $M$ be an $n$-dimensional differentiable manifold, and let $u \colon M \to \R$. We say that $u$ is locally semiconvex on $M$ if $u \circ \phi^{-1}$ is locally semiconvex on $\phi(U) \subset \R^n$ for any chart $(U,\phi)$ of $M$.
	 \end{definition}
 Notice that \Cref{BM_lqcstable} guarantees that such a definition is chart-independent; indeed, given two charts $(U,\phi)$ and $(V,\psi)$ with $U \cap V \neq \emptyset$, then $u \circ \phi^{-1}|_{\phi(U \cap V)}$ is locally semiconvex if and only if $u \circ \psi^{-1}|_{\psi(U\cap V)}$ is locally semiconvex, since $\Phi \defeq \phi \circ \psi^{-1}|_{\psi(U\cap V)} \colon \psi(U \cap V) \to \phi(U \cap V)$  is a diffeomorphism.

\appendix

\chapter{The Legendre transform and Alexandrov's theorem}\label{ap:legalex}

This appendix, together with the next appendix, provides a self-contained proof of Alexandrov's~\Cref{aleks} on second order differentiability of convex functions. The proof is based on two ingredients. The first ingredient concerns properties of the Legendre transform, which is well known to be an important tool in the study of convex functions. Roughly speaking, this transform provides a means of passing from first-order differentiability to second-order differentiability for convex functions. The second ingredient is a Lipschitz version of Sard's theorem (\Cref{sard}).

Our proof of Alexandrov's theorem follows the main steps of Harvey and Lawson's proof in \cite{hlqc}, which are basically the same as those proposed by Crandall, Ishii, and Lions in~\cite{user}. A different proof exploiting Lebesgue's Differentiation Theorem and mollifications is given in the monograph of Evans and Gariepy~\cite{evansgar}.

We begin with the definition of Legendre transform. Given $f\colon X \to \R$, with $X \subset \R^n$, recall the \Cref{def:subd} of its subdifferential $\de f$.

\begin{definition}
	The {\em Legendre transform of $f$} is the function $g \colon \de f(X) \to \R$ defined by the relation\sid{Legendre transform}\syid{Leg@$\scr Lu$!the Legendre transform of $u$}
	\begin{equation}	\label{leg:def}
	f(x) + g(y) = \pair xy \qquad \forall (x,y)\in\de f;
	\end{equation}
	that is, one defines
	\begin{equation}	\label{leg:def2}
	g(y) \defeq \pair xy - f(x) \quad \forall\, y \in \de f(x), \ x \in X
	\end{equation}
	We will also use the notation $g = \scr Lf$.
\end{definition}

\begin{remark}[Alternate definition of the Legendre transform] \label{Lt:aboveremark}
	Notice that the definition \eqref{leg:def2} is somewhat implicit; that is, in order to define $g(y) = \scr L f(y)$ for a given $y \in \de f(X) = {\rm dom}(g)$, one must identify a fiber $\de f(x)$ in which $y$ lives. A more explicit way to define the Legendre transform $g$ of $f$ involves an optimization procedure based on the observation that
	\[
	(x, y) \in \de f \quad \iff \quad
	f(z) - \pair{y}{z} \geq f(x) - \pair{y}{x} \quad \forall z\in X;
	\]
	hence $(x,y) \in \de f$ if and only if the function $f - \pair y\cdot$ assumes its minimum value on $X$ in the point $x$. That minimum value defines $-g(y)$; that is,
	\[
	-g(y) \defeq \inf_{z \in X} \big(f(z) - \pair{y}{z} \big) \quad \forall y \in \de f(X),
	\]
	or, equivalently,
	\begin{equation}\label{def:leg_sup}
	g(y) \defeq \sup_{z \in X} \big(\pair{y}{z} - f(z) \big) \quad \forall y \in \de f(X).
	\end{equation}
	We will see that these equivalent formulations \eqref{leg:def2} and \eqref{def:leg_sup} are both useful.
\end{remark}

\begin{remark}
If we consider $(-\infty,+\infty]$-valued functions (which are not identically equal to $+\infty$), then the identity in \eqref{def:leg_sup} provides a definition of $g = \scr L f$ for all $y \in \R^n$, with possibly $g(y) = +\infty$ for $y \notin \de f(X)$. This corresponds to the \emph{convex conjugate} of the extension $\tilde f$ of $f$ defined by letting $\tilde f = +\infty$ outside $X$.
\end{remark}

The first result we prove is that the Legendre transform is an involution that produces a convex function whose subdifferential is the inverse of the subdifferential of $f$, in the sense of multivalued maps.

\begin{prop} \label{leg:gen}
	Let $f: X \to \R$, with $X \subset \R^n$, and suppose that $\de f(X) \neq \emptyset$. Then the following hold:
	\begin{itemize}
		\item[(a)] $y \in \de f(X)$ if and only if 
		\begin{equation} \label{leg:sup}
		\scr L f (y) = \sup_{z\in X} \left( \pair yz - f(z) \right) < +\infty;
		\end{equation}
		\item[(b)] $\scr Lf: \de f(X) \to \R$ is convex;
		\item[(c)] for each $x\in X$ such that $\de f(x) \neq \emptyset$ and $y\in \de f(X)$,
		\[
		(x,y) \in \de f \quad\iff \quad (y,x) \in \de (\scr L f);
		\]
		\item[(d)] $\tilde X \defeq \{ x \in X:\, \de f(x) \neq \emptyset \} \subset \de(\scr Lf)(\de f(X))$ and $\restr{\scr L(\scr L f)}{\tilde X} = f$.
	\end{itemize}
\end{prop}

\begin{proof}
	The equivalence of part (a) has been presented in \Cref{Lt:aboveremark}. Part (b) follows from the fact that $\scr L f$ is the supremum of the family of affine functions $\{\pair\cdot z - f(z)\}_{z\in X}$; indeed, the convexity of $\scr Lf$ comes from the property (c) recalled in \Cref{rmk:obfc} (also note that $\de f(X)$ is convex by \Cref{lem:intpropsubdiff}(a), since $\de f(x)$ is convex even when $f$ is not, as noted in the proof).
	
	For the implication $(\Longrightarrow)$ of part (c), let $(x,y) \in X \times \de f(X)$. Suppose that $(x,y)\in\de f$; that is, $y \in \de f(x)$. In order to show that $(y,x) \in \de g$, where $g = \scr L f$, one needs to show that $y \in \de f(x)$; that is,
	\begin{equation}\label{dual_leg1}
	g(y) + \pair{x}{w-y} \leq g(w), \quad \forall \, w \in {\rm dom}(g) = \de f(X).
	\end{equation}
	For arbitrary $w\in \de f(X)$ we have $w \in \de f(z)$ for some $z\in X$ and hence
	\begin{equation}\label{dual_leg2}
	f(z) + \pair{w}{\hat{x}-z} \leq f(\hat{x}), \quad \forall \, \hat{x} \in X;
	\end{equation}
	by the definition \eqref{leg:def2} of the Legendre transform we have $g(w)\defeq  \pair{z}{w} - f(z)$ and $g(y)\defeq  \pair{x}{y} - f(x)$,
	which imply
	\begin{equation*}
	g(y) + \pair{x}{w-y} = \pair{x}{w} - f(x) = g(w) + \pair{x-z}{w} + f(z) - f(x) \leq g(w),
	\end{equation*}
	where the last inequality uses \eqref{dual_leg2} in $\hat{x} = x$, thus giving \eqref{dual_leg1}. Notice that we also have $\tilde X \defeq \{ x \in X:\, \de f(x) \neq \emptyset \} \subset \de g(\de f(X))$.
	
	For the involution claim of part (d), notice that we have shown that $(x,y) \in \de f$ implies that $(y,x) \in \de g$, where $g$ is convex. Hence, for each $x \in \tilde X$, by the definition \eqref{leg:def} we have
	\begin{equation*} \label{eq:Linvol}
	\scr L(\scr Lf)(x) = \scr Lg(x) \defeq  \pair xy - g(y) \defeq  \pair xy - \pair yx + f(x) = f(x), 
	\end{equation*}
	which proves {(d)}.
	
	Finally, we need to prove the implication $(\Longleftarrow)$ of part (c).  Let $(x,y) \in \tilde X \times \de f(X)$ and suppose that $(y,x) \in \de g$. We now have by \eqref{leg:def2} and the involution property of part (d)
	\[
	\pair{x}{y} =  g (y) + \scr{L}g (x) =  \scr L f (y) + f(x),
	\]
	and hence by \eqref{leg:sup}
	\[
	\pair xy = \sup_{z\in X} \bigl( \pair{y}{z} - f(z) \bigr) + f(x),
	\]
	yielding $f(z) \geq  f(x) + \pair y{z-x}$ for each $z \in X$; that is, $(x,y) \in \de f$.
\end{proof}

\begin{remark}\label{rem:subdiff}
	The properties (c) and (d) of \Cref{leg:gen} simplify when $f$ is convex and $X$ is open, in which case $\tilde X = X$. However, it remains false, in general, that
	\[
	(x,y) \in \de f \ \iff \ (y,x) \in \de g,
	\]
	without further assumptions on $x$ and $y$.
	This is because the implication $(\Longleftarrow)$ requires \emph{a priori} that $x \in X$. In other words, $(y,x) \in \de g$ does not imply that $(x,y) \in \de f$ if $x$ is only assumed to belong to $\de g(\de f(X)) \supset X$, which is all that is needed for the definition of the subdifferential of $g$.
	
	It can in fact happen that $\de g(\de f(X)) \supsetneqq X$, which also implies that $\scr L\!\scr L f \neq f$, in the sense that the domain of the former is larger than that of the latter. For example, consider $f = |\cdot|$, defined on $X=(-1,1) \subset \R$; it is easy to see that
	\[
	\de f(x) = \begin{cases}
	-1 & \text{if $x \in (-1,0)$} \\
	[-1,1] & \text{if $x = 0$} \\
	1 & \text{if $x \in (0,1)$},
	\end{cases}
	\]
	so that, according to \eqref{leg:def}, $g \defeq \scr Lf \equiv 0$ on $\de f(X) = [-1,1]$. Therefore,
	\[
	\de g(y) = \begin{cases}
	(-\infty, 0] & \text{if $y = -1$} \\
	0 & \text{if $y \in (-1,1)$} \\
	[0,+\infty) & \text{if $y = 1$}
	\end{cases}
	\]
	and $\scr L g = |\cdot|$ on $\de g(\de f(X)) = \de g([-1,1]) = \R \supsetneqq X$.
	
	For the sake of completeness, we mention that this example is based on the fact that the function $\tilde f$ defined on $\R$ by $\tilde f = f$ in $(-1,1)$ and $f \equiv +\infty$ in $\R \setminus (-1,1)$ is not lower semicontinuous function. Indeed, the Fenchel--Moreau Theorem states that given $f \colon \R^n \to (-\infty,+\infty]$ proper (that is, $\mathrm{dom}(f) \defeq \{x \in \R^n:\, f(x) < +\infty \} \neq \emptyset$), the identity $\scr L\!\scr L f = f$ is equivalent to $f$ being lower semicontinuous and convex. In other words, part~(d) of \Cref{leg:gen} simply becomes the identity $\scr L \scr L f = f$ (which also entails that $\de g(\de f(X)) = X$) when $f \colon X \to \R$ is convex and continuous on $X$ convex and closed.
	
	 On the other hand, in general, the double transform $\scr L \scr L f$ provides the largest lower semicontinuous convex function $g \leq f$ (as it can be noted in the above example). The reader can consult, e.g., \cite{rock,vesely} for further details.
\end{remark}

Having defined and examined some basic properties of the Legendre transform of an arbitrary function, we will know show that much more can said about the Legendre transform of convex functions of the form 
\begin{equation} \label{fform}
f = ru + \tfrac 12 |\cdot |^2, \quad r>0, \ u \ \text{convex}.
\end{equation}
Recall that our final goal is to prove Alexandrov's theorem and note that $u$ is twice differentiable at some point if and only if $f$ is; this will be used later.

\begin{prop} \label{leg:f}
	Let $u\colon X \to \R$ be a convex function and let $r > 0$; define $f$ as in~\emph{(\ref{fform})}. Then the following hold:
	\begin{itemize}
		\item[(a)] the subdifferential $\de f \colon X \to \scr P(\R^n)$ is an expansive map; that is,
		\begin{equation} \label{leg:espan}
		|y_2 - y_1| \geq |x_2 - x_1| \quad \forall\, y_j \in \de f(x_j), \ j=1,2;
		\end{equation}
		\item[(b)] the subdifferential $G\defeq \de g$ of $g \defeq \scr L f$ is a single-valued function from $\de f(X)$ onto $X$, and it is the inverse of $\de f$; that is,
		\[
		G(\de f(x)) = x \quad \forall x \in X;
		\] 
		\item[(c)] $G$ is contractive ($1$-Lipschitz); that is,
		\begin{equation} \label{leg:contr}
		|G(y_2) - G(y_1)| \leq |y_2 - y_1| \quad \forall\, y_j \in \de f(x_j), \ j=1,2.
		\end{equation}	
	\end{itemize}
	If, in addition, $u$ is bounded, then, letting $M\defeq \sup_X |u|$, $\delta \defeq 2\sqrt{rM}$ and $X^\delta \defeq \{ x\in X: d(x,\de X) > \delta \}$, the following hold:
	\begin{itemize}
		\item[(d)]  $X^\delta\subset \de f(X)$ and $\de f(X^{\delta}) \subset X$;
		\item[(e)] $g\defeq \scr L f\in C^1(X^\delta)$, with $Dg = G$ on $X^\delta$.
	\end{itemize}
\end{prop}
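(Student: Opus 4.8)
\textbf{Proof proposal for Proposition \ref{leg:f}.}

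The plan is to establish the six claims roughly in the stated order, leveraging the fact that $f = ru + \tfrac12|\cdot|^2$ is obtained from the convex function $ru$ by adding the quadratic $\tfrac12|\cdot|^2$, so that by \Cref{lem:subdiffsum} one has the clean identity $\de f(x) = r\,\de u(x) + x$ for every $x \in X$. For part (a), given $y_j \in \de f(x_j)$ write $y_j = x_j + r p_j$ with $p_j \in \de u(x_j)$; then the monotonicity of the subdifferential of $u$ (\Cref{subdmon}) gives $\pair{p_2 - p_1}{x_2 - x_1} \geq 0$, whence
\[
\pair{y_2 - y_1}{x_2 - x_1} = |x_2 - x_1|^2 + r\pair{p_2 - p_1}{x_2 - x_1} \geq |x_2 - x_1|^2,
\]
and the Cauchy--Schwarz inequality yields \eqref{leg:espan}. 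For part (b), recall from \Cref{leg:gen}(c) that $(x,y) \in \de f \iff (y,x) \in \de g$; combined with the expansiveness just proved, this forces $\de g(y)$ to be a singleton for each $y \in \de f(X)$ (if $x, x' \in \de g(y)$ then $y \in \de f(x) \cap \de f(x')$, and \eqref{leg:espan} with $y_1 = y_2 = y$ gives $|x - x'| \leq 0$), so $G := \de g$ is a well-defined single-valued map $\de f(X) \to X$, and the biconditional shows $G(\de f(x)) = x$, i.e.\ $G$ inverts $\de f$. Part (c) is then immediate: $G$ is a single-valued convex subdifferential, so it is monotone (\Cref{subdmon}), and monotonicity together with \eqref{leg:espan} (rewritten as $|G(y_2) - G(y_1)| \leq |y_2 - y_1|$ once we know $y_j \in \de f(G(y_j))$) gives the contractivity \eqref{leg:contr}; alternatively one argues directly that for $x_j = G(y_j)$ one has $\pair{y_2-y_1}{x_2-x_1} \geq |x_2-x_1|^2$ from part (a) and also $\leq |y_2-y_1||x_2-x_1|$ from Cauchy--Schwarz.

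For parts (d) and (e), assume now $u$ is bounded with $M = \sup_X|u|$, $\delta = 2\sqrt{rM}$, and $X_\delta = \{x \in X : d(x,\de X) > \delta\}$. The key observation is that $\de f(x)$ consists exactly of the gradients at $x$ of affine supporting functions of $f$ from below; equivalently, by the optimization formulation in \Cref{Lt:aboveremark}, $y \in \de f(x)$ iff $z \mapsto f(z) - \pair yz$ attains its infimum over $X$ at $x$. To show $X_\delta \subset \de f(X)$, fix $v \in X_\delta$ and consider minimizing $z \mapsto f(z) - \pair vz = ru(z) + \tfrac12|z|^2 - \pair vz = ru(z) + \tfrac12|z - v|^2 - \tfrac12|v|^2$ over $X$. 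This is essentially the supconvolution construction: on the closed ball $\barr B_\delta(v) \subset X$ the continuous (lower semicontinuous suffices) function attains a minimum at some point $x^\ast$, and the bound $|u| \leq M$ together with $\delta^2 = 4rM$ forces any minimizer to lie in the interior of $\barr B_\delta(v)$ — indeed for $|z - v| = \delta$ one has $ru(z) + \tfrac12|z-v|^2 \geq -rM + 2rM = rM > rM \geq ru(v) + 0$, the value at $z = v$, so $v$ itself beats the boundary and the minimum over $X$ is interior and equals the minimum over $\barr B_\delta(v)$. Hence $v \in \de f(x^\ast)$ for that interior $x^\ast$, giving $v \in \de f(X)$; this shows $X_\delta \subset \de f(X) = \mathrm{dom}\,g$. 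For $\de f(X_\delta) \subset X$: if $x \in X_\delta$ and $y \in \de f(x)$, then $G(y) = x \in X_\delta \subset X$, but more to the point $G$ maps into $X$ by part (b), so $\de f(X_\delta) \subset G^{-1}(X) \cap \de f(X)$ — this inclusion is actually automatic from part (b) and needs no size restriction, but stating it for $X_\delta$ is what we use. Finally, for part (e): on $X_\delta$ we now know $g$ is convex with $\de g = G$ single-valued; by \Cref{deusingv} (a convex function whose subdifferential is single-valued is $C^1$ with gradient equal to the subdifferential), we conclude $g \in C^1(X_\delta)$ with $Dg = G$ there.

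The main obstacle I anticipate is the careful bookkeeping in part (d): one must verify that the minimizer of $f - \pair v\cdot$ over the closed ball $\barr B_\delta(v)$ genuinely lies in the open ball (so that it is a minimizer over all of $X$ and yields a genuine element of $\de f$), and this is precisely where the choice $\delta = 2\sqrt{rM}$ is forced by comparing the value $\tfrac12|z-v|^2 - rM$ on the sphere $|z-v|=\delta$ against the value $\leq rM$ at the center — this is the same mechanism as property \emph{(iv)} of \Cref{baspropsc} for supconvolutions (indeed $\scr Lf$ restricted to $X_\delta$ is, up to the affine change of variables, a rescaled supconvolution of $ru$). Everything else is a routine assembly of monotonicity/expansiveness of subdifferentials, the duality $\de f \leftrightarrow \de g$ from \Cref{leg:gen}, and the convex $C^1$-criterion of \Cref{deusingv}; no new ideas are needed beyond correctly tracking the domains, which is exactly the subtlety the preceding remark warns about when it notes $\de g(\de f(X)) \supsetneq X$ in general.
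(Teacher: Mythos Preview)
Your treatment of parts (a), (c), (e), and the first inclusion of (d) matches the paper's approach (modulo a typo: your ``$rM > rM$'' should be that the boundary value is $\geq rM$ while the center value is $\leq rM$; in fact one only needs the minimizer to lie in $\barr B_\delta(v) \subset X$, not in the open ball, since a global minimum over $X$ is what yields $v \in \de f(x^\ast)$).

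There are, however, two genuine gaps.

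\emph{Part (b).} Your argument ``if $x, x' \in \de g(y)$ then $y \in \de f(x) \cap \de f(x')$'' tacitly assumes $x, x' \in X$. But \Cref{leg:gen}(c) only gives the biconditional for $x \in X$, and you yourself recall that in general $\de g(\de f(X)) \supsetneq X$. So a priori $\de g(y)$ could contain points outside $X$, for which you cannot invoke expansiveness of $\de f$. The paper closes this by first showing $\de g(y) \cap X$ is a singleton $\{x\}$ via your argument, and then observing that $\de g(y)$ is convex (\Cref{lem:intpropsubdiff}(a)) while $X$ is open: any second point $x' \in \de g(y) \setminus X$ would force the segment $[x,x']$ to contain points of $\de g(y) \cap X$ distinct from $x$.

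\emph{Part (d), second inclusion.} You have misread the statement. The claim $\de f(X_\delta) \subset X$ asserts that if $x \in X_\delta$ and $y \in \de f(x)$ then $y$ itself lies in $X$ --- not that $G(y)$ does. This is \emph{not} automatic from (b); knowing $G(y) = x \in X$ says nothing about where $y$ sits. The paper proves it by a separate estimate: since $x$ minimizes $h_y := ru + \tfrac12|\cdot - y|^2$ over $X$, one has $|x-y|^2 \leq 2r(u(z)-u(x)) + |z-y|^2$ for all $z \in X$, hence $|x-y| \leq \delta + d(y,X)$. If $y \notin X$, pick $w \in \de X \cap [x,y]$; then $d(x,\de X) \leq |x-w| = |x-y| - |y-w| \leq \delta$, contradicting $x \in X_\delta$. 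This geometric step is essential and is missing from your proposal.
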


\begin{proof}
	First of all, by \Cref{lem:subdiffsum} applied to $ru$ and $\frac12{|\cdot|^2}$ we have $\de f = r\de u + \I$. Therefore if $x\in X$, $y,p\in\R^n$ are related by $y = x + rp$, then
	\begin{equation}\label{subd_legendre}
	(x,y) \in \de f \iff (x,p) \in \de u.
	\end{equation}
	
	For the expansivity claim of part (a), consider  $(x_j, y_j) \in \de f$, with $j=1,2$ and $p_j \defeq (y_j - x_j)/r$. Using \eqref{subd_legendre}, we have $p_j \in \de u(x_j)$ and then by the monotonicity of $\de u$ (\Cref{subdmon}) we have
	\begin{equation*}
	\pair{p_2 - p_1}{x_2 - x_1} \geq 0, \quad \forall \, p_j \in \de u(x_j),\ j = 1,2,
	\end{equation*}
	from which it follows that
	\[
	|y_2-y_1||x_2-x_1| \geq \pair{y_2-y_1}{x_2-x_1} = |x_2-x_1|^2 + r\pair {p_2-p_1}{x_2-x_1} \geq |x_2 - x_1|^2,
	\]
	which yields \eqref{leg:espan}. 
	
	For the claims of part (b), let $x_1,x_2 \in G(y) \cap X$ with $y \in \de f(X)$. Then by \Cref{leg:gen}{(c)} one has $y \in \de f(x_1) \cap \de f(x_2)$, and thus $x_1 = x_2$ by \eqref{leg:espan}. Therefore $G(\de f(x)) = \{ x \}$ for each $x \in X$; indeed, this easily follows from the fact that $G(\de f(x))$ is convex (by \Cref{lem:intpropsubdiff}{(a)} and \Cref{leg:gen}{(b)}) and $X$ is open. This shows that $G$ is single-valued and it is the inverse of $\de f$.
	
	The contractivity of $G$ in part (c) follows easily. Indeed, in light of part (b), one has $x_j = G(y_j)$ in \eqref{leg:contr}; that is,
	\[
	|G(y_2) - G(y_1)| \leq |y_2 - y_1| \quad \forall y \in \de f(X).
	\]
	
	Now assuming that $u$ convex is also bounded, we verify the claims of parts (d) and (e). 
	First for $y\in X^\delta$ we will show that $y \in \de f(X)$. To that end, we first note that the continuous function defined by $h_y \defeq ru + \frac12|\cdot-\,y|^2$ satisfies
	\[
	\inf_X  h_y = \min_{\barr B_\delta(y)} h_y.
	\]
	Indeed, if $z\notin \barr B_\delta(y)$ we have (where by hypothesis $|u| \leq M$ on $X$):
	\[
	h_y(z) - h_y(y) = ru(z) + \tfrac12|z-y|^2 - ru(y) > -2rM + \tfrac12\delta^2 = 0;
	\]
	therefore,  $\min_{\barr B_\delta(y)} h_y \leq h_y(y) \leq \inf_{X\setminus \barr B_\delta(y)} h_y$. Now let $x \in \barr B_\delta(y)$ be a point which realizes the minimum of $h_y$ on $X$. Hence, $0\in \de h_y(x)$; that is, $0 = rp + (x-y)$ for some $p\in \de u(x)$. We deduce that $y\in\de f(X)$ by \eqref{subd_legendre}, which proves the first inclusion in part (d).
	
	Next, for $y\in \de f (X^{\delta})$ we will show that $x \in X$. Since $y\in \de f (X^{\delta})$, we have $y \in \de f (x)$ for some $x\in X^{\delta}$. We know that $f- \pair y\cdot $ has a minimum point on $X$ at $x$. This is equivalent to $h_y$ having a minimum point on $X$ at $x$, since by definition $h_y = f -\pair y\cdot + \frac12|y|^2$. Then, $h_y$ has a minimum point at $x$, and we have
	\[
	|x-y|^2 \leq 2r(u(z)-u(x)) + |z-y|^2 \quad \forall z\in X, 
	\]
	yielding
	\[
	|x-y|^2 \leq 4rM + |z-y|^2 \quad \forall z\in X
	\]
	and thus, taking the infimum over $z\in X$,
	\begin{equation} \label{usata:lemLf}
	|x-y| \leq \sqrt{\delta^2 + d(y, X)^2} \leq \delta + d(y, X).
	\end{equation}
	Suppose now that $y\notin X$ and consider the linear segment $I\defeq[x,y]$; since it is connected, there exists some point $w\in\de X \cap I$, otherwise $\{X,\barr X{}\compl\}$ would be a separation of $I$. We have
	\[
	d(x,\de X) \leq |x-w| = |x-y| - |y-w| \leq \delta,
	\]
	where we used \eqref{usata:lemLf} for the last inequality, since $|y-w| \geq d(y,\barr X) = d(y,X)$. This contradicts the hypothesis that $x\in X^{\delta}$, thus $y\in X$.
	
	Finally, the claims of part (e) follow easily. Since $G=\de g$ is single-valued, we know that $G=Dg$ on $X_\delta \subset \de f(X)$, and that $G$ is continuous; see \Cref{deusingv}.
\end{proof}

As anticipated, the proof of Alexandrov's theorem will depend on two ingredients (\Cref{mplt} and \Cref{sard} below). The former is a crucial property of the Legendre transform, which provides a useful sufficient condition for $f$ defined by~(\ref{fform}), with $u$ bounded, to be twice differentiable. It essentially states that the subdifferential $G$ of the Legendre transform $g$ of $f$ maps the complement of its critical points $k_G$ to a set of points where $f$ is twice differentiable; that is, $G(X^\delta \setminus k_G) \subset \mathrm{Diff}^2 f$, where $X^\delta$ is defined as in \Cref{leg:f}. The latter is a Lipschitz version of Sard's theorem. It allows one to handle the set of ``bad'' points (namely the set of critical values of $G$, which is a Lipschitz function by \Cref{leg:f}{(c)}), by showing that this set has zero Lebesgue measure.

Let us recall what critical points and critical values are, and state and prove the former ingredient (\Cref{mplt}). Then we will state the latter ingredient (\Cref{sard}) and use it in the proof of Alexandrov's theorem, while the reader is invited to have a look at the next Appendix~\ref{proofsard} for a technical measure-theoretic proof of \Cref{sard}, based on Besicovitch's covering theorem.

\begin{definition} \sid{critical point/value(s)} \syid{kG@$k_G$!the set of critical points of $G$}
	Let $\Omega \subset \R^n$ be open and let $G\colon \Omega \to \R^n$. We define the set of \emph{critical points} of $G$ to be
	\[
	k_G \defeq \big\{x\in \Omega:\ \text{either $DG(x)$ does not exists or $\det DG(x) = 0$} \big\},
	\]
	and we call $G(k_G)$ the set of \emph{critical values} of $G$.
\end{definition}

The first needed ingredient is the following.

\begin{lem}[Magical property of the Legendre transform]	\label{mplt} \sid{property!magical!of the Legendre transform}
	Let $f$ be defined by (\ref{fform}); that is, $f \defeq  ru + \frac{1}{2} | \cdot |^2$ with $r > 0$ a real number and $u$ a convex function which is bounded on $X$. Let $G$ and $X^\delta$ be defined as in \Cref{leg:f}. Assume the following:
	\begin{itemize}
		\item[(i)] $G$ is differentiable at $y_0 \in X_\delta$, with $B\defeq DG(y_0)$;
		\item[(ii)] $x_0 \defeq G(y_0)$ is not a critical value of $G$; that is, ${\rm det} \, DG(y_0) \neq 0$;
		\item[(iii)] $f$ is differentiable at $x_0$.
	\end{itemize}
	Then $f$ is twice differentiable at $x_0$, with $D^2f(x_0) = B^{-1}$.
\end{lem}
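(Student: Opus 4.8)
The plan is to exploit the duality between $f$ and its Legendre transform $g=\scr Lf$ together with the single-valuedness of $G=\de g$ established in \Cref{leg:f}(b). The key identity is that $G=\de f^{-1}$, so $G(y_0)=x_0$ means $y_0\in\de f(x_0)$, and by \Cref{leg:gen}(c) this is the same as $x_0\in\de g(y_0)$, i.e.\ $x_0=G(y_0)$ in the single-valued sense on $X_\delta$. Since $f$ is differentiable at $x_0$ (hypothesis (iii)), we actually have $y_0=Df(x_0)$; this is the unique subgradient. I will first record the Taylor expansion of $G$ at $y_0$ guaranteed by hypothesis (i):
\[
G(y)=x_0+B(y-y_0)+o(|y-y_0|)\qquad\text{as }y\to y_0,
\]
where $B=DG(y_0)$ is invertible by hypothesis (ii).

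Next I would invert this relation. Because $G$ is nonexpansive (\Cref{leg:f}(c)) and $B$ is invertible, one shows that $G$ is locally ``bi-Lipschitz-like'' near $y_0$ in the weak sense that $|G(y)-x_0|$ is comparable to $|y-y_0|$ for $y$ near $y_0$; more precisely, from the expansion one gets $|G(y)-x_0|\geq \tfrac12\|B^{-1}\|^{-1}|y-y_0|$ for $y$ close enough to $y_0$, hence as $x=G(y)\to x_0$ one has $|y-y_0|=O(|x-x_0|)$, and then the $o(|y-y_0|)$ term is also $o(|x-x_0|)$. Now for $x$ in a neighborhood of $x_0$, any $y\in\de f(x)$ satisfies $G(y)=x$ (single-valuedness of $G$ on $\de f(X)\supset X_\delta$, using \Cref{leg:f}(d) to stay inside the relevant sets), so substituting $x=G(y)$ into the inverted expansion yields
\[
y=y_0+B^{-1}(x-x_0)+o(|x-x_0|)\qquad\text{as }x\to x_0.
\]
This says precisely that the (a priori multivalued) map $x\mapsto\de f(x)$ has a first-order expansion near $x_0$; equivalently $Df$ is differentiable at $x_0$ with $D(Df)(x_0)=B^{-1}$.

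Finally I would upgrade the differentiability of the gradient map to twice differentiability of $f$ in the Peano sense. From $Df(x)=y_0+B^{-1}(x-x_0)+o(|x-x_0|)$ and $Df(x_0)=y_0$, one integrates along the segment $[x_0,x]$: writing $f(x)-f(x_0)=\int_0^1\pair{Df(x_0+t(x-x_0))}{x-x_0}\,\di t$ (valid since $f$ is convex, hence locally Lipschitz, and the integrand is the a.e.\ derivative of $t\mapsto f(x_0+t(x-x_0))$, which is absolutely continuous), and substituting the expansion of $Df$, the $o$-term integrates to $o(|x-x_0|^2)$ and one obtains
\[
f(x)=f(x_0)+\pair{y_0}{x-x_0}+\tfrac12\pair{B^{-1}(x-x_0)}{x-x_0}+o(|x-x_0|^2),
\]
which is exactly twice differentiability at $x_0$ with $D^2f(x_0)=B^{-1}$ (note $B^{-1}$ is symmetric, being the differential of a monotone gradient map, or one can symmetrize and appeal to uniqueness in \Cref{lem:Diff2}). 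The main obstacle I anticipate is the bookkeeping in the inversion step: one must carefully control the passage from ``$y\to y_0$'' asymptotics to ``$x\to x_0$'' asymptotics while $x$ ranges over a neighborhood but $y$ is only a priori a subgradient, using the single-valuedness of $G$ and the nonexpansivity to pin down $y$ uniquely and to get the two-sided comparison $|y-y_0|\asymp|x-x_0|$; the invertibility of $B$ is what makes this work, and it is exactly where hypothesis (ii) enters essentially.
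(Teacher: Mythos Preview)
Your approach is correct and takes a genuinely different route from the paper's. After the same normalization ($x_0=0$, $f(0)=Df(0)=0$, so $y_0=0$), the paper does \emph{not} invert the expansion of $G$ and integrate. Instead it exploits the Legendre identity $f(x)+g(y)=\pair{x}{y}$ to write, for any pair $(x,y)\in\de f$,
\[
f(x)-Q_{A}(x)\;=\;Q_{B}(y)-g(y)\;+\;\tfrac12\pair{y-Ax}{x}\;+\;\tfrac12\pair{x-By}{y},
\]
and then controls the right-hand side: it first proves that $g$ is twice differentiable at $0$ with $D^2g(0)=B$ (via the mean value theorem applied to $g-Q_B$, using $Dg=G$), and separately estimates the two cross terms. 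Both approaches finish with the same conversion $|y-y_0|=O(|x-x_0|)$ coming from the invertibility of $B$. Your route is more direct---it gives a first-order expansion of the multivalued map $\de f$ near $x_0$ and upgrades it to a second-order Peano expansion of $f$ by integrating along segments (absolute continuity of the convex restriction)---while the paper's route makes the Legendre duality transparent and yields $D^2g(y_0)=B$ as a byproduct, at the cost of more algebraic bookkeeping.

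One clarification on the inversion step you flagged: single-valuedness of $G$ and nonexpansivity only give the lower bound $|x-x_0|\leq|y-y_0|$; to first force $y\to y_0$ as $x\to x_0$ (which you need before the local estimate $|G(y)-x_0|\geq\tfrac12\|B^{-1}\|^{-1}|y-y_0|$ becomes applicable) you should invoke the continuity property \eqref{contconv} of the subdifferential at points where it is single-valued, which applies here since $\de f(x_0)=\{y_0\}$ by hypothesis~(iii). The paper's proof needs the same fact at the analogous passage and also leaves it implicit.
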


\begin{remark} \label{rmk:mplt}
	In short, \Cref{mplt} tells that, if $|u| \leq M$ on $X$, $f \defeq ru + \frac12|\cdot|^2$ with $r>0$, $G \defeq \de \scr Lf$, then, with $\delta \defeq 2\sqrt{rM}$,
	\[
	G\bigl( X^{\delta} \setminus k_{G} \bigr) \cap \mathrm{Diff}^1 f \subset \mathrm{Diff}^2 f
	\]
	and
	\[
	D^2 u \circ G = r^{-1}( (DG)^{-1} - I ) \quad \text{on $\big( X^{\delta} \setminus k_G \big) \cap G^{-1}(\mathrm{Diff}^1 f)$}.
	\]
	Clearly, one can also replace each $\mathrm{Diff}^k f$ by $\mathrm{Diff}^k u$, since they are the same set because $\frac12|\cdot|^2$ is smooth.
\end{remark}

\begin{proof}[Proof of \Cref{mplt}]
	First of all, we recall that $G = \de g \colon \de f(X) \to \R^n$ is single-valued (hence $G = Dg$) and contractive by \Cref{leg:f}(b)-(c). Now, with $x_0 = G(y_0)$ for $y_0 \in \de f(x_0)$  we have  $Df(x_0) = y_0$ since
	\[
	x_0 \in \de g(y_0) = \{ G(y_0)\} \ \iff \ y_0 \in \de f(x_0) = \{ Df(x_0)\} 
	\]
	by Proposition~\ref{leg:gen} and the hypothesis (iii) that $f$ is differentiable in $x_0$. 
	
	Assume now, without loss of generality, that $x_0 = 0$ and that $f(0) = Df(0) = 0$. This is possible since the function $\tilde f \defeq f(\cdot + x_0) -f(x_0) - \pair {y_0}\cdot$
	will satisfy $\tilde f (0) = 0$ and $D \tilde f(0) = Df(x_0) - y_0 = 0$ and will be twice differentiable in $0$ if and only $f$ is twice differentiable in $x_0$. 
	
	By the hypotheses (i) and (ii), $0$ is not a critical value of $G$ and hence $B\defeq  DG(0)$ is invertible. The proof of the lemma reduces to showing that
	\begin{equation}\label{D2f(0)}
	f(x) - Q_A(x) = o(|x|^2),  \quad \text{for $x \to 0$},
	\end{equation}
	where $A\defeq B^{-1}$ and $Q_A(x) \defeq \frac12\pair{Ax}x$.
	
	\smallskip
	\noindent\underline{Step 1.}
	By \Cref{leg:f}(d), given $x\in X^\delta$ there exists $y\in X$ such that $y \in \de f(x)$, then by \Cref{leg:gen}(c), $x \in \de g(y) = \{G(y)\}$; that is, $x=G(y)$. For this pair $(x,y) \in \de f$, by the definition of the Legendre transform one can write
	\begin{equation}\label{fg_identity}.
	f(x) - Q_A(x)  = Q_B(y) - g(y) + \tfrac12\pair{y-Ax}x + \tfrac12\pair{x-By}y.
	\end{equation}
	
	\smallskip	
	\noindent\underline{Step 2.}
	Since $G$ is differentiable in $y_0 = 0$ with $G(0) = 0$ and $B=DG(0)$, one has 
	\begin{equation}\label{taylor1}
	x =G(y) = By + o(|y|) \quad \text{for $y \to 0$},
	\end{equation}
	which also yields
	\begin{equation} \label{taylor4}
	\pair{y-Ax}y \leq \norm{A} \, |x-By| \, |x| = o(|y|^2).
	\end{equation}
	Indeed, since $A = B^{-1}$, $G$ is contractive, and $|x-By| = o(|y|)$ by \eqref{taylor1}, for $y \to 0$ one has
	\[
	\pair{y-Ax}y = \pair{-A(x-By)}{x} \leq \norm{A} \, |x - By| \, |x| \leq \norm{A} \, |xBy| \, |y| = o(|y|^2).
	\]
	
	\smallskip	
	\noindent\underline{Step 3.}
	The asymptotic expansion \eqref{taylor1} allows us to show that the Legendre transform $g$ of $f$ is twice differentiable with 
	\begin{equation}\label{leg:cl:der}
	D^2g(0) = B.
	\end{equation}
	Note that by definition $g(0) = 0$ (since $f(0) = 0$ and $(0,0) \in \de f$) and by hypothesis $G(0) = Dg(0) = 0$. Hence  the claim \eqref{leg:cl:der} is equivalent to 
	\begin{equation}\label{D2g(0)}
	g(y) - Q_B(y) = o(|y|^2) \quad \text{for $y \to 0$.}
	\end{equation}
	To see that \eqref{D2g(0)} holds, one first uses the mean value theorem for each $y$ fixed near $0$ to conclude that there exists $z\in[0,y]$ such that
	\begin{equation}\label{MVT}
	g(y) - Q_B(y) = \pair {Dg(z)- Bz}{y}.
	\end{equation}
	Then, using \eqref{taylor1} one has
	\[
	Dg(z) - Bz = G(z) - Bz = o(|z|) \quad \text{for $z \to 0$},
	\]
	which, since $|z(y)| \leq |y|$, together with \eqref{MVT} implies
	$$
	| g(y) - Q_B(y)| \leq |DG(z) - Bz| \, |y| = o(|y|^2) \quad \text{for $y \to 0$},
	$$
	as desired in \eqref{D2g(0)}. 
	
	\smallskip	
	\noindent\underline{Step 4.}
	Finally, we conclude the proof of the lemma by establishing \eqref{D2f(0)}. Using \eqref{taylor1}, \eqref{taylor4} and \eqref{D2g(0)} on the terms on the right-hand side of \eqref{fg_identity} yields
	\begin{equation*}
	f(x) - Q_A(x)  = o(|y|^2) \quad \text{for $y \to 0$},
	\end{equation*}
	which implies the needed \eqref{D2f(0)} since we can show that
	\begin{equation}\label{bigO}
	|y| = O(|x|) \quad \forall \, y \ \text{with $|y|$ small.}
	\end{equation}
	To see this, note that since $A = B^{-1}$, one has 
	\begin{equation*}
	|y| = |ABy| \leq || A|| \, (|x-By| + |x| ), \quad \forall\, x,y \in \R^n,
	\end{equation*}
	thus by \eqref{taylor1} for any $\epsilon > 0$ there exists $\delta > 0$ such that
	\[
	|y| \leq \norm{A} (\varepsilon |y| + |x|) \quad \forall \, y \in B_{\delta}(0);
	\]
	in particular, choosing $\varepsilon < 1/(2\norm{A})$ gives
	\[
	|y| \leq 2 \norm{A} \, |x| \quad \forall \, y \in B_{\delta}(0),
	\]
	while since $G$ is contractive, $|x| = |G(y)| \leq |y|$, and hence \eqref{bigO} is proved, and the proof complete.
\end{proof}

The second needed ingredient is the following, which will be proven in Appendix \ref{proofsard}.

\begin{thm}[Sard's theorem for Lipschitz functions] \label{sard} \sid{Theorem!Sard}
	Let $\Omega \subset \R^n$ be open. The set of critical values of a Lipschitz function $G: \Omega \to \R^n$ has Lebesgue measure zero.
\end{thm}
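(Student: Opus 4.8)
The plan is to split the critical set $k_G \subset \Omega$ into the part $N$ where $G$ fails to be differentiable and the part $A := k_G \setminus N$, consisting of points at which $G$ is differentiable but $\det DG = 0$, and to show that $G(N)$ and $G(A)$ are both Lebesgue-null; since $G(k_G) = G(N) \cup G(A)$, this proves the theorem. The set $N$ is disposed of immediately: by Rademacher's \Cref{rade} one has $|N| = 0$, and since a Lipschitz map carries Lebesgue-null sets to Lebesgue-null sets (see, e.g., \cite[Theorem~2.8(i)]{evansgar}), also $|G(N)| = 0$. Here it is convenient to replace $\Omega$ first by an exhausting sequence of bounded open sets $\Omega_k \ssubset \Omega$ with $\Omega = \bigcup_{k\in\N} \Omega_k$, so that $G$ is globally $L_k$-Lipschitz on $\overline{\Omega_k}$; by countable subadditivity it suffices to prove $|G(A \cap \Omega_k)| = 0$ for each fixed $k$.

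\textbf{The slab estimate.} The core is to show that near a point $x_0 \in A \cap \Omega_k$ the map $G$ shrinks measure. Fix $\epsilon > 0$. Since $T := DG(x_0)$ is singular, its image lies in a hyperplane $H_{x_0} \subset \R^n$ through the origin of dimension $\le n-1$; and differentiability gives $G(y) = G(x_0) + T(y-x_0) + R(y)$ with $|R(y)| = o(|y-x_0|)$. Hence there is $r_{x_0} > 0$ with $\overline{B_{r_{x_0}}(x_0)} \subset \Omega_k$ such that, for all $r \in (0, r_{x_0}]$, the image $G(B_r(x_0))$ is contained in the "slab"
\[
\big\{ z \in \R^n :\ \mathrm{dist}\big(z,\ G(x_0) + H_{x_0}\big) \le \epsilon r,\ \ |z - G(x_0)| \le (L_k+1)\, r \big\},
\]
whose Lebesgue measure is at most $C_n (L_k+1)^{n-1}\, \epsilon\, r^n$ for a purely dimensional constant $C_n$ (a ball of radius $(L_k+1)r$ inside an $(n-1)$-plane, thickened by $2\epsilon r$). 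Thus $|G(B_r(x_0))| \le C_k\, \epsilon\, r^n$ for all $r \le r_{x_0}$, with $C_k := C_n (L_k+1)^{n-1}$ \emph{independent of $x_0$}.

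\textbf{Covering.} Now apply Besicovitch's covering theorem (\Cref{app:bes}) to the family $\{ B_{r_{x_0}}(x_0) : x_0 \in A \cap \Omega_k \}$: one extracts a countable subfamily $\{ B_{r_i}(x_i) \}_{i\in\N}$ which still covers $A \cap \Omega_k$ and whose multiplicity is bounded by a dimensional constant $\nu_n$, i.e.\ $\sum_{i} \mathbf{1}_{B_{r_i}(x_i)} \le \nu_n$; moreover all these balls lie in a fixed bounded open set $V$ (e.g.\ the $\mathrm{diam}(\Omega_k)$-neighbourhood of $\Omega_k$). Then
\[
|G(A \cap \Omega_k)| \le \sum_{i\in\N} |G(B_{r_i}(x_i))| \le C_k\,\epsilon \sum_{i\in\N} r_i^n = \frac{C_k\,\epsilon}{\omega_n}\sum_{i\in\N} |B_{r_i}(x_i)| \le \frac{C_k\,\nu_n}{\omega_n}\,\epsilon\, |V|,
\]
using countable subadditivity of outer measure for the first inequality and the bounded multiplicity for the last. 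Letting $\epsilon \dto 0$ yields $|G(A \cap \Omega_k)| = 0$, and summing over $k$ (together with $|G(N)| = 0$) gives $|G(k_G)| = 0$.

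\textbf{Main obstacle.} The delicate point is the slab estimate, specifically keeping the constant in $|G(B_r(x_0))| \le C_k\,\epsilon\, r^n$ uniform in $x_0$: this is what forces one to control $\|DG(x_0)\|$ by the local Lipschitz constant (hence the preliminary reduction to bounded pieces $\Omega_k$) rather than leaving it point-dependent. A secondary point of care is invoking the correct form of Besicovitch's theorem — the one producing a countable subcover of \emph{bounded overlap}, not merely a disjoint subfamily — which is precisely the content of \Cref{app:bes}, proved in this appendix.
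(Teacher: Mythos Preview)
Your proof is correct and follows essentially the same architecture as the paper's: decompose $k_G$ into the non-differentiability set (handled by Rademacher plus the fact that Lipschitz maps preserve null sets) and the singular-derivative set (handled by a local image-measure estimate globalized via Besicovitch's covering theorem). The only cosmetic difference is in the local estimate: the paper proves the slightly more general bound $m(G(B_\rho(a))) \le (|\det DG(a)| + \epsilon)\, m(B_\rho(a))$ (its \Cref{app:ball}) via the linearization and the change-of-variables formula, and then specializes to $|\det DG| = 0$, whereas you argue the singular case directly with the slab geometry and use the local Lipschitz constant to control the in-plane diameter; both arguments are the same idea and feed into Besicovitch in the same way.
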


\begin{remark}
	As the reader might have noticed, our choice of defining the critical points of $G$ as those points at which $G$ either is not differentiable or has singular derivative (sometimes ones uses only the second condition) suggests that the proof of Sard's theorem will make use of Rademacher's theorem on the almost everywhere differentiability of Lipschitz functions.
\end{remark}

\begin{proof}[Proof of Alexandrov's \Cref{aleks}]
	Without loss of generality, we can suppose that $u$ is in fact bounded on $X$; indeed, if not, it suffices to consider an exhaustion by compact convex sets $K_j \uto X$ (for instance, one can take $K_j \defeq \big\{ x \in X :\ d(x,\de X) \geq 2^{-j}, \, |x| < 2^j \big\}$ for all $j\in\N$), prove the theorem for $u|_{\intr\! K_j}$ (which is bounded since $u$ is continuous on $X$ by \Cref{thm:convex_Lip}), and then note that $\mathrm{Diff}^2 u$ has full measure as well, since 
	\[
	X \setminus \mathrm{Diff}^2 u  \subset \bigcup_{j\in\N} \left( \intr K_j \setminus \mathrm{Diff}^2 u \right),
	\]
	which by the $\sigma$-subadditivity of the Lebesgue measure yields $|X \setminus \mathrm{Diff}^2 u| = 0$.
	
	Hence, we will prove the theorem for $u$ bounded on $X$. Fix $r>0$ and let $f \defeq ru + \frac12|\cdot|^2$. By \Cref{mplt} (see also \Cref{rmk:mplt}), we know that
	\begin{equation} \label{proofalex:incl}
	G(X^\delta \setminus k_G) \cap \mathrm{Diff}^1 u \subset \mathrm{Diff}^2 u, 
	\end{equation}
	where (see \Cref{leg:f})
	\begin{equation} \label{proofalex:delta}
	\delta = 2\sqrt{r\norm{u}_\infty}.
	\end{equation}
	By \Cref{rade:conv}, $\mathrm{Diff}^1 u$ has full measure, and, since $G$ is Lipschitz by \Cref{leg:f}{(c)}, $G(k_G)$ has measure zero by \Cref{sard}. Hence from \eqref{proofalex:incl} we have
	\begin{equation} \label{alexproof:ineq}
	\big|\mathrm{Diff}^2 u \big| \geq \big|G(X_\delta)\big| \geq \big|X^{2\delta}\big|,
	\end{equation}
	where the latter inequality comes from the fact that $X^{2\delta} \subset G(X^\delta)$. Indeed, by \Cref{leg:f}{(d)}, $\de f(X^{2\delta}) \subset X^\delta$, which by part {(b)} of the same proposition is equivalent to $X^{2\delta} \subset G(X^\delta)$, as we claimed.
	
	Notice now that $r > 0$ is arbitrary, thus we can let $r \dto 0$; that is, recalling \eqref{proofalex:delta}, we can let $\delta \dto 0$ in \eqref{alexproof:ineq}. We obtain $|\mathrm{Diff}^2 u | \geq | X |$, and thus $|\mathrm{Diff}^2 u | = | X |$, since the reverse inequality holds trivially. This proves that $\mathrm{Diff}^2 u$ has full measure (in $X$), which is the desired conclusion.
\end{proof}

\chapter{Proof of Sard's theorem for Lipschitz functions}\label{proofsard}

Since \Cref{sard} states that a set of ``bad'' points (the critical values of a Lipschitz function $G$) has zero measure, it is not surprising that one needs to appeal to some measure-theoretic covering lemma. We will make use of a theorem of Besicovitch~\cite{besic}, which we will prove in \Cref{app:bes} by adapteing the proof of a more general form contained in~\cite{fed:geo}. 

We will also need the following elementary estimate on controlling the {\em Lebesgue outer measure $m$} (see, e.g., \cite[Chapter~1]{stein} for its definition and basic properties) of images of small balls about points of differentiability of a vector field $G$ on $\R^n$.

\begin{lem} \label{app:ball}
	Suppose that $G\colon \Omega \to {\R^n}$ is differentiable at $a \in \Omega$ with $\Omega \subset \R^n$ open. Then for each $\epsilon >0$ there exists $\barr\rho = \barr\rho(a,\epsilon)>0$ such that 
	\[
	m\big(G\big(B_\rho(a)\big)\big) \leq \left(|{\det DG(a)}|+\epsilon\right)m(B_\rho(a)) \qquad \forall \rho\leq \barr\rho.
	\]
\end{lem}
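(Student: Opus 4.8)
\textbf{Proof proposal for Lemma \ref{app:ball}.} The plan is to exploit the definition of differentiability of $G$ at $a$ directly, together with the elementary geometric fact that an invertible affine map $L$ scales Lebesgue measure by the factor $|\det L|$, while a (possibly non-invertible) linear map $DG(a)$ sends a ball of radius $\rho$ into an ellipsoid whose measure is comparable to $|\det DG(a)|\,\rho^n$ plus a controllable error coming from the linearization remainder. Concretely, fix $\epsilon>0$. Since $G$ is differentiable at $a$, we may write, for $x$ near $a$,
\[
G(x) = G(a) + DG(a)(x-a) + R(x), \qquad |R(x)| \leq \eta(|x-a|)\,|x-a|,
\]
where $\eta(t)\to 0$ as $t\to 0^+$. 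Hence there exists $\barr\rho>0$ such that $|R(x)| \leq \sigma\,|x-a|$ for all $x \in B_{\barr\rho}(a)$, where $\sigma>0$ is a small number to be chosen at the end in terms of $\epsilon$ (and of $\norm{DG(a)}$, $n$). For $\rho \leq \barr\rho$ this gives the inclusion
\[
G\big(B_\rho(a)\big) \subset G(a) + DG(a)\big(B_\rho(0)\big) + B_{\sigma\rho}(0).
\]

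The second step is to estimate the outer measure of the right-hand set. The set $DG(a)(B_\rho(0))$ is (the closure of) an ellipsoid; enlarging it by the Minkowski sum with $B_{\sigma\rho}(0)$ only thickens each semiaxis by at most $\sigma\rho$, so one obtains the bound
\[
m\Big(DG(a)\big(B_\rho(0)\big) + B_{\sigma\rho}(0)\Big) \leq \prod_{i=1}^{n}\big(s_i\rho + \sigma\rho\big) = \rho^n \prod_{i=1}^n (s_i + \sigma),
\]
where $s_1,\dots,s_n \geq 0$ are the singular values of $DG(a)$, so that $\prod_i s_i = |\det DG(a)|$. Expanding the product, $\prod_i(s_i+\sigma) = |\det DG(a)| + \sigma\, P(s_1,\dots,s_n,\sigma)$ where $P$ is a polynomial bounded, say, by a constant $C_n$ depending only on $n$ and on an upper bound for the $s_i$ (hence on $\norm{DG(a)}$) as long as $\sigma \leq 1$. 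Therefore
\[
m\big(G(B_\rho(a))\big) \leq \big(|\det DG(a)| + C_n\sigma\big)\,\rho^n = \big(|\det DG(a)| + C_n\sigma\big)\,\frac{m(B_\rho(a))}{\omega_n}\cdot\omega_n,
\]
and since $\rho^n = m(B_\rho(a))/\omega_n$ we may absorb the $\omega_n$; more simply, writing $m(B_\rho(a)) = \omega_n\rho^n$ throughout, the estimate reads $m(G(B_\rho(a))) \leq (|\det DG(a)| + C_n\sigma)\,m(B_\rho(a))$. Choosing $\sigma \defeq \min\{1,\epsilon/C_n\}$ and shrinking $\barr\rho$ accordingly yields the claimed inequality for all $\rho \leq \barr\rho$.

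I do not expect a genuine obstacle here; the only point requiring a little care is the geometric measure estimate for the thickened ellipsoid, i.e.\ the bound on $m(L(B_\rho) + B_{\sigma\rho})$ in terms of the singular values of $L$. One clean way to carry this out rigorously is to use a singular value decomposition $L = U\Sigma V^{\mathsf T}$ with $U,V$ orthogonal: orthogonal maps preserve $m$ and commute with taking balls, so the problem reduces to $\Sigma(B_\rho) + B_{\sigma\rho}$, which is contained in the box $\prod_i [-(s_i+\sigma)\rho,(s_i+\sigma)\rho]$, whose measure is exactly $2^n\rho^n\prod_i(s_i+\sigma)$ — one then only needs the harmless extra constant $2^n$, which can likewise be absorbed into $C_n$ after noting $m(B_\rho(a)) = \omega_n\rho^n$ and comparing. (If one prefers to avoid SVD, the same inclusion into a rotated box follows from the spectral theorem applied to $L^{\mathsf T}L$.) The degenerate case $\det DG(a) = 0$ is covered automatically: then some $s_i = 0$ and the bound still reads $m(G(B_\rho(a))) \leq C_n\sigma\, m(B_\rho(a)) \leq \epsilon\, m(B_\rho(a))$, which is what is needed for the Sard-type argument.
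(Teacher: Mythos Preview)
Your overall strategy (linearize, get the inclusion $G(B_\rho(a))\subset G(a)+DG(a)(B_\rho)+B_{\sigma\rho}$, then estimate the measure of the right-hand side) is exactly the one in the paper. The gap is in the measure estimate itself. The sentence ``enlarging it by the Minkowski sum with $B_{\sigma\rho}(0)$ only thickens each semiaxis by at most $\sigma\rho$'' implicitly claims that $L(B_\rho)+B_{\sigma\rho}$ is contained in the ellipsoid with semiaxes $(s_i+\sigma)\rho$. That is false: take $n=2$, $L=\mathrm{diag}(a,0)$, so $L(B_1)$ is the segment $[-a,a]\times\{0\}$; then $(a,r)\in L(B_1)+B_r$, but $a^2/(a+r)^2+r^2/r^2>1$, so $(a,r)$ is not in the ellipse with semiaxes $(a+r,r)$. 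Your fallback via the box $\prod_i[-(s_i+\sigma)\rho,(s_i+\sigma)\rho]$ is correct as an inclusion, but its measure is $2^n\rho^n\prod_i(s_i+\sigma)$, and since $m(B_\rho)=\omega_n\rho^n$ you end up with
\[
m\big(G(B_\rho(a))\big)\ \leq\ \frac{2^n}{\omega_n}\big(|\det DG(a)|+C_n\sigma\big)\,m(B_\rho(a)),
\]
with $2^n/\omega_n>1$ for $n\geq 2$. That proves only a constant-multiple version of the lemma, not the lemma as stated; the ``absorb the $\omega_n$'' step is where the argument becomes circular.

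The paper avoids any explicit geometric bound on the thickened ellipsoid by arguing via continuity: after scaling, one needs $m\big(\ell(B_1)+tB_1\big)\to m\big(\ell(B_1)\big)=|\det DG(a)|\,\omega_n$ as $t\downarrow 0$, which follows from outer regularity (or the Steiner formula) since the sets decrease to $\overline{\ell(B_1)}$. This gives the sharp leading constant $1$ with no work on singular values. Your weaker box bound would in fact suffice for the downstream Sard application (where one only uses $M=0$), but to prove the lemma as written you should replace the ellipsoid/box step by this limit argument.
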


\begin{proof}
	Since $G$ is differentiable at $a$, one has that 
	\[
	| G(x) - \ell(x) | \leq \omega(|x-a|)|x-a|, \quad \text{for all $x$ near $a$}, 
	\]
	where $\ell \defeq G(a) + \pair{DG(a)}{\cdot-a}$ is the linearization of $G$ at $a$ and $\omega\colon \R_+ \to \R_+$ is a modulus of continuity (so $\omega(0^+)=0$). This yields
	\[
	G\big(B_\rho(a)\big) \subset \ell\big(B_\rho(a)\big) + \omega(	\rho) \rho B_1(0),
	\]
	By the monotonicity of the outer measure $m$, one has 
	\[
	m(G(B_\rho(a))) \leq m(\ell(B_1(a)) + \omega(	\rho)  B_1(0))\rho^n,
	\]
	Now note that
	\[
	\lim_{\rho \dto 0} m(\ell(B_1(a)) + \omega(	\rho)  B_1(0)) = m(\ell(B_1(a))) = |\ell(B_1(a))| = |{\det DG(a)}|\, \alpha_n, 
	\]
	where $\alpha_n\defeq  m(B_1(0))$ and the latter equality comes from the change of variables formula. Therefore, there exists some $\barr\rho>0$ such that for $\rho\leq \barr\rho$ we have
	\[
	m\big(G\big(B_\rho(a)\big)\big) \leq m(\ell(B(a)) + \omega(	\rho)  B)\rho^n \leq (|{\det DG(a)}| + \epsilon)\,\alpha_n\rho^n
	\]
	and hence the result.
\end{proof}

Next is the needed covering lemma.

\begin{lem}[Besicovitch's covering theorem] \label{app:bes} \sid{Besicovitch's covering|seeonly{Theorem, Besicovitch's covering}} \sid{Theorem!Besicovitch's covering}
	Let $A\subset \R^n$ and $\rho\,\colon A \to \R_+$ be bounded. Then there exists $N\in\N$, depending only on $n$, and $N$ families of balls
	\[
	\call B_k = \big\{ B_j^k \defeq B_{\rho(a_j^k)}(a_j^k) :\ a_j^k\in A,\ j\in\N,\ B_j^k \cap B_i^k\ \forall\, i\neq j \big\}, \qquad k=1,\dots, N,
	\]
	such that 
	\[
	A \subset \bigcup_{k=1}^N \bigcup_{j\in\N} B_j^k.
	\]
\end{lem}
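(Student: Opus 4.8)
The plan is to run the classical two-part argument: a greedy extraction of a countable subfamily of balls that already covers $A$, followed by a decomposition of that subfamily into $N=N(n)$ pairwise-disjoint classes, the decomposition being licensed by a dimensional packing estimate.

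First I would perform the greedy selection. Put $M:=\sup_{a\in A}\rho(a)$, finite by hypothesis, and define points $a_1,a_2,\dots\in A$ recursively: with $a_1,\dots,a_{j-1}$ chosen, set $A_j:=A\setminus\bigcup_{i<j}B_{\rho(a_i)}(a_i)$; if $A_j=\emptyset$ stop, otherwise pick $a_j\in A_j$ with $\rho(a_j)>\tfrac{3}{4}s_j$, where $s_j:=\sup_{a\in A_j}\rho(a)$. Write $B_j:=B_{\rho(a_j)}(a_j)$ and $r_j:=\rho(a_j)$. Two facts drop out: since $a_j\notin B_i$ for $i<j$ one has $|a_i-a_j|\geq r_i$; and since the $A_j$ decrease, the $s_j$ are non-increasing, so $r_k\leq s_k<\tfrac{4}{3}r_k$ and, for every $i<k$, $r_i>\tfrac{3}{4}s_i\geq\tfrac{3}{4}s_k\geq\tfrac{3}{4}r_k$ — i.e.\ no earlier ball is much smaller than $B_k$.

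Next I would check that the $B_j$ cover $A$. If the recursion stops this is clear; if it runs forever, then from $|a_i-a_j|\geq r_i$ and $r_i+r_j<\tfrac{7}{3}r_i<3\,|a_i-a_j|$ one sees that the shrunk balls $B_{r_j/3}(a_j)$ are pairwise disjoint. For fixed $R$ the ones with $a_j\in B_R(0)$ all lie inside $B_{R+M}(0)$, so their volumes sum to a finite quantity over that index set; letting $R\to\infty$ forces $r_j\to 0$. Were some $a\in A$ in no $B_j$, then $a\in A_j$ for all $j$, whence $\rho(a)\leq s_j<\tfrac{4}{3}r_{j-1}\to 0$, contradicting $\rho(a)>0$. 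Hence $A\subset\bigcup_j B_j$.

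The genuine content, and the step I expect to be the main obstacle, is the coloring. I would establish a purely geometric lemma: there is $N=N(n)$ so that, processing the selected balls in selection order, when we reach $B_k$ the earlier balls $B_i$ ($i<k$) that meet $B_k$ use fewer than $N$ distinct ``colors'' in the scheme below. To prove it, split those $B_i$ according as $r_i\leq r_k$ or $r_i>r_k$. In the first (comparable) case the center $a_i$ lies in $B_{2r_k}(a_k)$ while the centers are pairwise more than $\tfrac{3}{4}r_k$ apart (by the lower bound on earlier radii), so disjoint balls of radius $\tfrac{3}{8}r_k$ about them fit in a fixed multiple of $B_k$ and a volume count bounds their number by a dimensional constant. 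In the second (large) case I would invoke Besicovitch's angular argument: if two such balls both meet $B_k$ while the later center avoids the earlier ball, then the rays from $a_k$ towards the two centers subtend an angle bounded below by an absolute constant, and $\R^n$ carries only boundedly many such directions. Adding the two bounds gives $N(n)$. Granting the lemma, I would color greedily — assign $B_j$ the smallest label in $\{1,\dots,N\}$ not already taken by an earlier ball meeting it — which succeeds since fewer than $N$ labels are blocked; the label classes $\call B_1,\dots,\call B_N$ are then families of pairwise-disjoint balls, their union is still all of $\{B_j\}_j$ and hence covers $A$, and re-indexing within each class yields exactly the families in the statement. The delicate point, which I would carry out in detail by specializing the argument of \cite{fed:geo} to balls, is the angular/volume estimate producing a constant that depends on $n$ alone.
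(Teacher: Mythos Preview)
Your approach---greedy selection followed by a greedy colouring, with the colouring justified by a volume/angular packing bound---is the standard ``constructive'' proof (as in Evans--Gariepy), and it is genuinely different from the paper's, which follows Federer and replaces both recursive steps by applications of Hausdorff's maximal principle. The paper extracts a maximal set $Q\subset A$ satisfying a separation condition (its Step~1), then iteratively peels off maximal disjoint subfamilies (Step~2), and the dimensional constant comes from a single projection-onto-the-sphere argument (Step~3) rather than your two-case split. Your organisation is more hands-on; the paper's is more abstract but handles unbounded $A$ without further work.

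There is, however, a real gap in your covering step. You claim that when the recursion runs forever, ``letting $R\to\infty$ forces $r_j\to 0$'', but this does not follow: knowing that only finitely many centres fall in each fixed $B_R(0)$ says nothing about $r_j$ along the whole sequence. Concretely, take $A=\mathbb{Z}\subset\mathbb{R}$ with $\rho\equiv 1$; your greedy rule permits $a_j=j-1$, so $r_j\equiv 1$ and the negative integers are never covered. The volume argument you have in mind needs all centres to sit in one bounded set, i.e.\ it needs $A$ bounded---an assumption the paper's statement does not make. The fix is routine but must be made explicit: either reduce to bounded $A$ first (e.g.\ tile $\mathbb{R}^n$ by cubes of side a fixed multiple of $M$, colour the cubes with $2^n$ colours so that like-coloured cubes are $2M$-separated, and apply the bounded case colour by colour; this inflates $N$ by a factor $2^n$, still dimensional), or abandon the countable recursion for a maximality argument as the paper does. (Also, in your last line of the covering step you want $s_j<\tfrac{4}{3}r_j$, not $\tfrac{4}{3}r_{j-1}$.) The rest of your outline---the comparable/large dichotomy, the angular lemma, and the greedy colouring---is correct and corresponds to the paper's Steps~3--4.
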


\begin{proof}
	We split the proof into four claims.
	
	\smallskip
	\noindent\underline{Claim 1}. \emph{Let $\tau >1$. Then
		\[
		A \subset \bigcup_{j\in \N} B_j,
		\]
		where $B_j = B_{\rho_j}(a_j)$ with $a_j\in A$ and $\rho_j=\rho(a_j)$ satisfying
		\[
		\text{either}\quad |a_i - a_j| > \rho_i > \rho_j/\tau \quad \text{or}\quad |a_i-a_j| > \rho_j > \rho_i/\tau.
		\]
	}
	Consider the class $\Lambda$ of subsets $\Q\subset A$ with the following two properties:
	\begin{enumerate}[left=\parindent,label=(\roman*),series=Bcond]
	\item either $|a - c| > \rho(a) > \rho(c)/\tau$ or $|a-c| > \rho(c) > \rho(a)/\tau$ for all $a,c\in \Q$;
	\item whenever $b\in A$, either $|a-b|\leq \rho(a)$ for some $a\in \Q$ or $|a-b| > \rho(a) > \rho(b)/\tau$ for all $a\in \Q$.
	\end{enumerate}
	By Hausdorff's maximal principle, $\Lambda$ has a maximal member $\mathfrak Q$. Then the family $\{B_{\rho(a)}(a) : a\in \mathfrak Q\}$ covers $A$. Indeed, otherwise 
	\[
	K\defeq \{ a\in A:\ |a-c| > \rho(a)\ \text{for all}\ a\in \mathfrak Q\} \neq \emptyset
	\]
	and thus we may choose $c\in K$ with $\tau\rho(c) > \sup_{a\in K} \rho(a)$. This implies $\mathfrak Q\subset \mathfrak Q \cup \{c\} \in \Lambda$, contrary to the maximality of $\mathfrak Q$. Also, since $A$ is second countable, there exists a countable subset of $\mathfrak Q$ which still covers $A$.
	
	\smallskip
	\noindent\underline{Claim 2}. \emph{Let $P\subset A$. Then there exists $R\subset P$ such that the following two properties hold:
	\begin{enumerate}[Bcond]
	\item $|a-b| > \rho(a)+\rho(b)$ for all $a,b\in \RR$;
	\item whenever $a\in P$, there exists $b\in R$ with $|a-b| \leq \rho(a) + \rho(b)$ and $\rho(b) > \rho(a)/\tau$.
	\end{enumerate}}
	Consider the class $\Lambda$ of those subsets $R\subset P$ satisfying (iii) and the following:
	\begin{enumerate}[Bcond]
	\item whenever $a\in P$, either $|a-b| > \rho(a) + \rho(b)$ for all $b\in R$ or (iv) holds.
	\end{enumerate}
	Again by Hausdorff's maximal principle, there exists a maximal member $\mathfrak R$ of $\Lambda$, which satisfies (iv); indeed, otherwise, arguing as in the previous step, property (v) would imply that there exists $a\in P\setminus \mathfrak R$ such that $\mathfrak R \cup \{ a \} \in \Lambda$, contrary to the maximality of $\mathfrak R$.
	
	\smallskip
	\noindent\underline{Claim 3}. \emph{Let $\epsilon \in (0,1)$ and $\tau \in (1,2-\epsilon)$ such that
		\begin{equation} \label{app:dis}
		\epsilon + \frac\tau{2-\epsilon} + \tau(\tau -1) < 1.
		\end{equation}
		Let $\Q\subset A$ satisfy (i) as well as the following property:
		\begin{enumerate}[Bcond]
		\item $|a-b| \leq \rho(a) + \rho(b)$ and $\rho(b) > \rho(a)/\tau$ for all $a,b\in \Q$.
		\end{enumerate}
		Then there exists $N \in \N$, depending only on $n$ and $\epsilon$, such that $\# \Q \leq N$.}\syid{card@$\#E$!the cardinality of the set $E$}
	
	First of all, note that such a $\tau$ exists (by continuity) since the left-hand side of \eqref{app:dis} for $\tau = 1$ and $\epsilon = 0$ is $\tfrac12$. Let now $\kappa = (2-\epsilon)/\tau$ and $a\in \Q$. Set
	\[
	\Q_1 \defeq\Q\cap \left(B_{\kappa\rho(a)}(a) \setminus \{a\}\right), \qquad \Q_2 \defeq\Q\cap B_{\kappa\rho(a)}(a)\compl.
	\]
	Let $b,c\in \Q_j$ for some $j\in\{1,2\}$ be distinct points with $|a-b| \geq |a-c|$ and $x\in\R^n$ such that
	\[
	|a-x| = |a-c|, \qquad |b-x| = |a-b| - |a-c|.
	\]
	Note that actually such an $x$ is uniquely determined by $a,b,c$ because of the strict convexity of the Euclidean norm, namely as
	\begin{equation} \label{app:sc}
	x = a + \frac{|a-c|}{|a-b|}(b-a).
	\end{equation}
	We see that 
	\[
	|x-c| \geq |a-c| + |b-c| - |a-b|
	\]
	and distinguish two cases. If $j=1$, since by properties (i) and (vi),
	\[
	|b-c| > \min\{\rho(b), \rho(c)\} > \rho(a)/\tau
	\]
	and similarly $|a-c| > \rho(a)/\tau$, while $|a-b| \leq \kappa\rho(a)$, with $\kappa\tau - 1 = 1-\epsilon >0$, we obtain
	\[
	|x-c| > |a-c| - (\rho(a)/\tau)(\kappa\tau-1) > \epsilon |a-c|.
	\]
	If $j=2$, note that we have the following inequalities:
	\[
	|a-b| \leq \rho(a) + \rho(b), \quad |a-c| > \max\{ \kappa\rho(a), \rho(c)/\tau \}, \quad
	|b-c|-\rho(b) > (1-\tau)\rho{(c)};
	\]
	then we obtain
	\[\begin{split}
	|x-c| &\geq |a-c| + |b-c| - \rho(a) -\rho(b) > |a-c| -\rho(a) - (\tau-1)\rho{(c)}\\
	&= |a-c|\biggl( 1- \frac{\rho(a)}{|a-c|} - (\tau-1) \frac{\rho{(c)}}{|a-c|} \biggr) > |a-c|\biggl( 1-\frac1\kappa - \tau(\tau-1) \biggr) \\
	&> \epsilon |a-c|,
	\end{split}\]
	where the last inequality comes from the condition~(\ref{app:dis}).
	
	At this point by~(\ref{app:sc}) one sees that 
	\[
	\epsilon < \frac{|x-c|}{|a-c|} = \left| \frac{b-a}{|b-a|} - \frac{c-a}{|c-a|} \right| = |\pi(b)-\pi{(c)}|,
	\]
	where $\pi(b)$ and $\pi{(c)}$ denote the projections of $b$ and $c$, respectively, onto the sphere $\de B_1(a)$. This implies that there is a one-to-one correspondence between points in $\Q_j$ and their projections and $\#\Q_j \leq K$ for some $K \in \N$. Indeed, we know that $\pi(\Q_j)$ is finite, because otherwise by the compactness of $\de B_1(a)$ it would have a limit point and thus there would exist points $\pi(b)$ and $\pi{(c)}$, with $|\pi(b) - \pi{(c)}| < \epsilon$. Clearly this argument is independent of $a$ and holds for every $\epsilon$-distanced subset of the unit sphere $\mathbb{S}^{n-1}$, hence $K$ depends only on $n$ and $\epsilon$.
	Putting all of this together, we have $\# \Q = \#(\Q_1\cup \Q_2 \cup \{a\}) = \#\Q_1 + \#\Q_2 + 1 \leq 2K + 1 =\vcentcolon N$.
	
	\smallskip
	\noindent\underline{Claim 4}. \emph{Let $\tau$ be as in Claim~3 with $\epsilon = \frac13$ and $P \defeq \{ B_j \}_{j\in\N}$ be as in Claim~1. Then $P$ is the union of $N$ families of pairwise disjoint balls.}
	
	Applying Claim~2, we define by induction subsets $P_k, R_k \subset P$, starting with $P_0 \defeq P$ and $R_0 \defeq \emptyset$ and letting, for $k\geq 1$, $P_k \defeq P_{k-1} \setminus \RR_{k-1}$, and $\RR_k \subset P_k$ satisfying (iii) and (iv). Note that condition (iii) implies that the family $\call B_k \defeq \{B_j^k \defeq B_{\rho_j^k}(a_j^k):\ a_j^k\in \RR_k\}$ is disjointed for each $k$ and by definition
	\[
	P_{N+1} = \bigcap_{k=1}^N\, (P \setminus \RR_k) = P \setminus \bigcup_{k=1}^N \RR_k.
	\]
	Hence we conclude the proof if we show that $P_{N+1} = \emptyset$. In fact, if $a\in P_{N+1}$, one uses property (iv) to select, for all $k=1,\dots, N$, some $b_j^k\in \RR_k$ with $|a-b_j^k| \leq \rho(a) + \rho_j^k$ and $\rho_j^k > \rho(a)/\tau$. Therefore $\Q\defeq \{a,b_1,\dots, b_N\}$ satisfies (i) and (iv), and also $\#\Q = N+1$ since the $\RR_j$'s are disjointed by definition; but this contradicts Claim~3, so it must be $P_{N+1} = \emptyset$ and the proof is concluded.
\end{proof}

Next, one can globalize the local measure estimate of Lemma \ref{app:ball} by way of the Besicovitch covering Lemma \ref{app:bes}.
\begin{prop}\label{prop:DG}
	Let $\Omega \subset \R^n$ be open, $G\colon \Omega\to {\R^n}$ and let $A\subset\Omega$ have $m(A)$ finite. Suppose $G$ is differentiable at every $a\in A$ and there exists a constant $M$ such that $\lvert\det DG\rvert\leq M$ in $A$. Then 
	\[
	m(G(A)) \leq cMm(A),
	\]
	where the constant $c$ depends only on $n$.
\end{prop}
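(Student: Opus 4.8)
The plan is to combine the pointwise volume estimate of Lemma~\ref{app:ball} with the Besicovitch covering Lemma~\ref{app:bes}, using outer regularity of the Lebesgue measure to make the local estimates uniform and then to add up over a controlled-overlap cover.

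First I would fix $\epsilon > 0$. Since $m(A) < +\infty$, outer regularity of the Lebesgue measure provides an open set $U$ with $A \subset U \subset \Omega$ and $m(U) \leq m(A) + \epsilon$. For each $a \in A$ the map $G$ is differentiable at $a$, so Lemma~\ref{app:ball} yields $\barr\rho(a,\epsilon) > 0$ with
\[
m\big(G(B_\rho(a))\big) \leq \big(|{\det DG(a)}| + \epsilon\big)\, m(B_\rho(a)) \leq (M+\epsilon)\, m(B_\rho(a)) \qquad \forall\, \rho \leq \barr\rho(a,\epsilon),
\]
where the second inequality uses the hypothesis $|{\det DG}| \leq M$ on $A$. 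I would then set $\rho(a) \defeq \min\big\{ \barr\rho(a,\epsilon),\ \tfrac12 d(a,\R^n\setminus U),\ 1 \big\}$, which is a bounded, strictly positive function on $A$; note that $B_{\rho(a)}(a) \subset U$ for every $a \in A$, and the displayed estimate persists for $\rho = \rho(a)$.

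Next I would apply Besicovitch's covering theorem (Lemma~\ref{app:bes}) to $A$ with this radius function: there exist $N = N(n) \in \N$ and $N$ countable families $\call B_k = \{ B_j^k \defeq B_{\rho(a_j^k)}(a_j^k)\}_{j\in\N}$, $k = 1,\dots,N$, each consisting of pairwise disjoint balls with centers in $A$, such that $A \subset \bigcup_{k=1}^N \bigcup_{j\in\N} B_j^k$. Hence $G(A) \subset \bigcup_{k,j} G(B_j^k)$, and by $\sigma$-subadditivity of the outer measure together with the pointwise estimate,
\[
m(G(A)) \leq \sum_{k=1}^N \sum_{j\in\N} m\big(G(B_j^k)\big) \leq (M+\epsilon) \sum_{k=1}^N \sum_{j\in\N} m(B_j^k).
\]
For each fixed $k$ the balls $B_j^k$ are pairwise disjoint and contained in $U$, so $\sum_{j} m(B_j^k) \leq m(U) \leq m(A)+\epsilon$; therefore $m(G(A)) \leq N(M+\epsilon)(m(A)+\epsilon)$. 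Letting $\epsilon \dto 0$ gives $m(G(A)) \leq N M\, m(A)$, which is the assertion with $c = N$ depending only on $n$.

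The argument is mostly bookkeeping once the two lemmas are available; the only delicate point is the construction of the radius function $\rho$, which must simultaneously be bounded (so that Lemma~\ref{app:bes} applies), small enough for the validity of the estimate in Lemma~\ref{app:ball}, and small enough that each selected ball stays inside $U$ (so that disjointness within a single family bounds the total volume by $m(U)$). No substantive obstacle arises, and the fact that $c$ depends on $n$ alone is automatic, since the Besicovitch constant $N$ depends only on $n$.
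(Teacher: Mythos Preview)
Your argument is correct and follows essentially the same approach as the paper: outer regularity to choose an open $U \supset A$ of controlled measure, Lemma~\ref{app:ball} to select radii, and Besicovitch to control the overlap of the resulting cover. The only cosmetic differences are that you invoke Lemma~\ref{app:bes} in its stated ``$N$ disjoint families'' form (the paper rephrases it as bounded $N$-fold overlap of a single countable family), and you take $m(U)\leq m(A)+\epsilon$ rather than $m(U)\leq 2m(A)$, yielding $c=N$ instead of $c=2N$; neither affects the substance.
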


\begin{proof}
	Fix $U\supset A$ open such that $m(U) \leq 2m(A)$; given $\epsilon >0$, by Lemma \ref{app:ball}, for each $a\in A$ there exists some ball $B_{\rho(a)}(a)$ such that 
	\[
	m(G(B_{\rho(a)}(a))) \leq (M+\epsilon) m(B_{\rho(a)}(a)).
	\]
	 Also, we may choose $\rho(a)$ in such a way that $B_{\rho(a)}(a) \subset U\cap \Omega$ for all $a\in A$; hence the map $\rho\,\colon A \to \R_+$ is bounded. Besicovitch's covering theorem (Lemma \ref{app:bes}) tells us that there exists a countable subset $\{a_i\}_{i\in\N} \subset A$ such that, letting $\rho_i \defeq \rho(a_i)$ and $B_i\defeq B_{\rho_i}(a_i)$, the family $\{B_i\}_{i\in\N}$ covers $A$ and every point $x\in\R^n$ is in at most $N$ of the balls $B_i$. Therefore $\sum_{i\in\N} \chi_{B_i} \leq N\chi_U$ and we have
	\[
	\sum_{i\in\N} m(B_i) \leq Nm(U) \leq 2N m(A)
	\]
	and thus
	\[\begin{split}
	m(G(A)) \leq m\biggl(G\biggl(\,\bigcup_{i\in\N} B_i \biggr)\biggr) \leq \sum_{i\in N} m(G(B_i)) \leq (M+\epsilon)\! \sum_{i\in\N} m(B_i) 
	\leq 2N(M+\epsilon)m(A).
	\end{split}\]
	As $\epsilon$ is arbitrary, we have the thesis, with $c=2N$.
\end{proof}

The treatment of the critical values corresponding to points of differentiability is now at hand.

\begin{cor}\label{cor:DG}
	Let $\Omega\subset\R^n$ be open and $G \colon \Omega \to \R^n$; set
	\[
	\call S_G\defeq \{x\in\Omega:\ G \ \text{is differentiable at $x$ and} \  \lvert\det DG(x)\rvert = 0\}.
	\]
	Then $G(\call S_G)$ has Lebesgue measure zero.
\end{cor}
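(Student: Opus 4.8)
The plan is to deduce this immediately from \Cref{prop:DG} with the choice $M = 0$, after a routine reduction to the case of finite measure via an exhaustion argument.

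First I would write $\Omega$ as a countable union of bounded open subsets, say $\Omega_k \defeq \Omega \cap B_k(0)$ for $k \in \N$, so that $m(\Omega_k) < +\infty$ and $\Omega = \bigcup_{k\in\N} \Omega_k$. Correspondingly set $A_k \defeq \call S_G \cap \Omega_k$, so that $\call S_G = \bigcup_{k\in\N} A_k$ and $m(A_k) \leq m(\Omega_k) < +\infty$ for each $k$.

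Next I would apply \Cref{prop:DG} to each $A_k$ separately. By the very definition of $\call S_G$, the map $G$ is differentiable at every point of $A_k \subset \call S_G$ and $|{\det DG}| \equiv 0$ on $A_k$, so one may take $M = 0$ in the statement of that proposition. This gives $m(G(A_k)) \leq c \cdot 0 \cdot m(A_k) = 0$ for every $k$, where $c = c(n)$ is the dimensional constant furnished by \Cref{prop:DG}.

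Finally, since $G(\call S_G) = G\big(\bigcup_{k\in\N} A_k\big) = \bigcup_{k\in\N} G(A_k)$, the countable subadditivity of the Lebesgue outer measure $m$ yields $m(G(\call S_G)) \leq \sum_{k\in\N} m(G(A_k)) = 0$, which is the desired conclusion. There is essentially no obstacle in this last step; the only mild point is the reduction to sets of finite measure, handled by the exhaustion $\{\Omega_k\}$ above, while of course all the substance is already contained in \Cref{prop:DG} (hence ultimately in \Cref{app:ball} together with the Besicovitch covering theorem).
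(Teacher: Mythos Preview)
Your proof is correct and follows essentially the same approach as the paper: apply \Cref{prop:DG} with $M=0$ on an exhaustion $A_k = \call S_G \cap B_k(0)$ and conclude by countable subadditivity. The only cosmetic difference is that the paper first singles out the case $m(\call S_G)<\infty$ before passing to the exhaustion, whereas you treat both cases uniformly.
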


\begin{proof}
	If $m(\call S_G)<\infty$, then the conclusion follows immediately from Proposition \ref{prop:DG}  with $A = \call{S}_G$ and $M = 0$. If $m(\call S_G) = \infty$, then consider the decomposition $\call S_G = \bigcup_{k \in N}A_k$ with $A_k\defeq  \call  S_G \cap B_k(0)$. One has $m(A_k) = 0$  for each $k$ by the previous step, and the conclusion follows from the subadditivity of $m$. 
\end{proof}

The proof of Sard's theorem follows easily from Corollary \ref{cor:DG} and Rademacher's theorem.

\begin{proof}[Proof of~\Cref{sard}]
	One needs to show that $m(G(k_G)) = 0$. The set of critical points can be decomposed into $k_G = \call N_G \cup \call S_G$, where $\call N_G$ denotes the set of points at which $G$ is not differentiable and $\call S_G$ is the set of points of differentiability where the Jacobian $DG$ is singular. Since $G$ is Lipschitz, $\call N_G$ has measure zero by Rademacher's Theorem~\ref{rade} and hence $G(\call N_G)$ has measure zero as well since $G$ is Lipschitz. By the above corollary, $G(\call S_G)$ has measure zero and we are done. \qedhere
\end{proof}

\backmatter


\let\indexspacestd\indexspace
\let\subitemstd\subitem
\renewcommand{\indexspace}{}
\renewcommand\subitem{$\cdot$~}
\printindex[symbol]
\renewcommand{\indexspace}{\indexspacestd}
\renewcommand{\subitem}{\subitemstd}
\printindex[subject]

\end{document}